\documentclass[11pt,reqno]{amsart}
\usepackage[letterpaper,textheight=9.25in,textwidth=7.1in,centering]{geometry}
\usepackage[T1]{fontenc}
\usepackage{placeins}
\usepackage{lmodern,microtype,amsmath,amssymb,mathtools,mathrsfs}
\microtypesetup{expansion=false}
\usepackage{amsthm,amscd}
\usepackage{hyperref}
\hypersetup{hidelinks,
 pdftitle={Ternary polynomial relations and periods of cusp forms},
 pdfauthor={Vasily Ilin and Ken Ono}}
\usepackage{array,enumitem,needspace,booktabs}
\numberwithin{equation}{section}
\setlist[enumerate]{leftmargin=*,itemsep=3pt,topsep=4pt}

\newtheorem{theorem}{Theorem}[section]
\newtheorem{proposition}[theorem]{Proposition}
\newtheorem{lemma}[theorem]{Lemma}
\newtheorem{corollary}[theorem]{Corollary}
\theoremstyle{definition}

\newtheorem*{example}{Example}
\theoremstyle{remark}

\theoremstyle{plain}

\newcommand{\QQ}{\mathbb Q}
\newcommand{\ZZ}{\mathbb Z}
\newcommand{\FF}{\mathbb F}
\newcommand{\CC}{\mathbb C}
\newcommand{\RR}{\mathbb R}
\newcommand{\St}{\operatorname{St}}
\newcommand{\Sym}{\operatorname{Sym}}
\newcommand{\Ind}{\operatorname{Ind}}
\newcommand{\Hom}{\operatorname{Hom}}
\newcommand{\Ext}{\operatorname{Ext}}
\newcommand{\Res}{\operatorname{Res}}
\newcommand{\sgnchar}{\operatorname{sgn}}
\newcommand{\vol}{\operatorname{vol}}

\newcommand{\GL}{\operatorname{GL}}
\newcommand{\SL}{\operatorname{SL}}

\title[Ternary polynomial relations and periods of cusp forms]{Ternary polynomial relations and periods of cusp forms}

\author{Vasily Ilin}
\address{Axiom Math, 124 University Avenue, Palo Alto, CA 94301}
\email{vasily@axiommath.ai}
\author{Ken Ono}
\address{Axiom Math, 124 University Avenue, Palo Alto, CA 94301}
\email{ken@axiommath.ai}

\subjclass[2020]{Primary 11F67; Secondary 11F11, 11F75, 13A50, 20E42}
\keywords{period polynomials, cusp forms, arithmetic groups, modular symbols}
\date{}
\newcommand{\Z}{\ZZ}
\newcommand{\cC}{\mathcal C}\newcommand{\cE}{\mathcal E}
\newcommand{\bn}{\overline N}\newcommand{\cp}{C_+}\newcommand{\vect}{\mathbf x}
\newcommand{\SO}{\mathrm{SO}}

\begin{document}

\begin{abstract}
Bachmann and van Ittersum conjectured that every polynomial
$f\in\QQ[x,y,z]$ satisfying
\[
  f(x,y,z)=f(y,z,x),\qquad
  f(x,y,z)+f(-y,x-y,z)+f(y-x,-x,z)=0
\]
is a sum of polynomials in two variables. We prove this conjecture
over every field of characteristic zero and determine all homogeneous
solutions. Moreover, restriction to $z=0$ identifies the
degree $n$ solution space with the space of binary period polynomials
of degree $n$ normalized to vanish on both coordinate axes.
The solution corresponding to such a polynomial $q$ is
\[
  f(x,y,z)=q(x,y)+q(y,z)+q(z,x).
\]
There are no nonzero homogeneous solutions of odd degree.
For even degrees $n$, the symmetric and alternating
solution spaces are each isomorphic over $\CC$ to the space of weight $n+2$
cusp forms on $\SL_2(\ZZ)$.  The proof connects the polynomial relations with the cohomology
of arithmetic groups through the rational Tits building for $\GL_3$.
Cusp forms enter both the description of the solutions
and the proof that there are no others. The main theorems and the essential
argument are formalized unconditionally in Lean~4 by AxiomProver.
\end{abstract}

\maketitle

\section{Introduction and statement of results}
Classical period polynomials associated with modular forms satisfy
linear relations in two variables. Bachmann and van Ittersum
considered a three-variable system built from one of these relations
\cite[p.~481]{ABBI26}. Let $F$ be a field of characteristic zero,
and suppose that $f\in F[x,y,z]$ satisfies
\begin{equation}\label{eq:intro-cyclic}
  f(x,y,z)=f(y,z,x)
\end{equation}
and
\begin{equation}\label{eq:intro-norm}
  f(x,y,z)+f(-y,x-y,z)+f(y-x,-x,z)=0.
\end{equation}
The first identity imposes cyclic symmetry. The second is the
binary period relation in $x,y$, with $z$ fixed. Its substitution
$(x,y)\mapsto(-y,x-y)$ has order three. We denote the sum over its
three iterates by $Nf$, the \emph{three-term norm}. Cyclic symmetry
gives the same relation in the other two pairs of variables.

Bachmann and van Ittersum conjectured that every rational solution
is a sum of polynomials in two variables. Equivalently, no monomial
of a solution involves all three variables. We prove this conjecture
over every field of characteristic zero, and we show that
every homogeneous solution of odd degree vanishes.

\begin{theorem}\label{thm:main}
Let $F$ be a field of characteristic zero. Every polynomial
$f\in F[x,y,z]$ satisfying \eqref{eq:intro-cyclic} and
\eqref{eq:intro-norm} can be written as
\begin{equation}\label{eq:intro-decomposition}
 f(x,y,z)=P(x,y)+Q(y,z)+R(z,x),\qquad P,Q,R\in F[X,Y].
\end{equation}
Equivalently, $f$ satisfies $\partial_x\partial_y\partial_z f=0$.
Every homogeneous solution of odd degree is zero.
\end{theorem}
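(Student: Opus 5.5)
Since both relations are linear and preserve degree, it suffices to treat a homogeneous solution $f$ of degree $n$. The plan is to view $f\in V_n:=\Sym^n(F^3)^\vee$ as a vector in a representation of $\GL_3(\ZZ)$. Let $\sigma$ be the cyclic permutation of the coordinates, and let $U$ be the order-three matrix acting on $(x,y)$ by $(x,y)\mapsto(-y,x-y)$ and fixing $z$. Conjugating $U$ by powers of $\sigma$ gives the corresponding order-three elements $U_{yz}$ and $U_{zx}$ in the other coordinate planes. Relation \eqref{eq:intro-norm}, combined with cyclic symmetry, says that $f$ is killed by all three norms $N_{xy}=1+U+U^2$, $N_{yz}$ and $N_{zx}$. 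Each norm annihilates $f$ exactly when $f$ satisfies the binary period relation in the corresponding plane.

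The first step is to reduce to the top-degree part $g:=\partial_x\partial_y\partial_z f$ modulo two-variable terms. Let $M_n\subset V_n$ be the span of the monomials involving all three variables, so that $V_n/(\text{two-variable polynomials})\cong M_n\cong V_{n-3}$ via $x y z\cdot h\leftrightarrow h$. This isomorphism is not equivariant. Instead, I would work in the quotient $\overline V_n:=V_n/\bigl(F[x,y]+F[y,z]+F[z,x]\bigr)_n$, which is stable under any matrix that preserves the set of coordinate planes. The element $U$ fixes $z$ and mixes only $x$ and $y$, so it preserves the plane $z=0$ but moves the planes $x=0$ and $y=0$ to other rational planes. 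Hence $\overline V_n$ is not $U$-stable, and this is exactly where the Tits building enters.

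The argument I would attempt is to introduce the Steinberg module $\St$ of $\GL_3(\QQ)$, built from the rational Tits building whose vertices are lines and planes in $\QQ^3$. Attach to each rational plane $W$ the subspace of polynomials pulled back from $W^\vee$. The sum over all planes of these subspaces gives a filtration of $V_n$. A solution $f$ defines a cocycle: for the standard flag one uses $f$, and translates are obtained by $\GL_3(\ZZ)$. Condition \eqref{eq:intro-norm} is the Manin three-term relation and cyclic symmetry is the two-term relation. These are the defining relations in the presentation of $H^*(\SL_3(\ZZ),\St\otimes V_n)$, the presentation of the Steinberg module coming from Ash--Rudolph unimodular symbols. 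By Borel--Serre duality, the obstruction, namely the part of $f$ not lying in the span of the plane-subspaces, lands in the top cohomology $H^3(\SL_3(\ZZ),V_n)$, equivalently in $H_0(\SL_3(\ZZ),\St\otimes V_n)$. The boundary contributions there come from parabolic (Levi $\GL_2$) pieces, which are exactly binary period polynomials, that is, cusp forms of weight $n+2$ for $\SL_2(\ZZ)$.

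The main obstacle is proving that the remaining interior cuspidal contribution vanishes. For $\SL_3(\ZZ)$ with the coefficient system $\Sym^n$ there is no cuspidal cohomology in the relevant degree: cuspidal cohomology occurs only in degrees $2$ and $3$, and only for self-dual coefficients of the form $\Sym^a\otimes\Sym^{a*}$, whereas $\Sym^n$ with $n>0$ is not self-dual. I would invoke this vanishing, together with a direct elementary check that the parabolic part is accounted for by $q(x,y)+q(y,z)+q(z,x)$. This shows that $f$ lies in the two-variable span, which is \eqref{eq:intro-decomposition}, and hence $\partial_x\partial_y\partial_z f=0$.

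For odd $n$, the restriction $q=f(x,y,0)$ is a binary period polynomial. It satisfies the three-term relation, and cyclic symmetry together with the decomposition forces the two-term relation $q(x,y)=-q(y,x)$ up to axis terms. I would then use the classical fact that odd-degree period polynomials are reduced to multiples of $x^n-y^n$ or vanish. A final direct check shows that $f=q(x,y)+q(y,z)+q(z,x)$ built from such a $q$ violates \eqref{eq:intro-norm} unless $q=0$.
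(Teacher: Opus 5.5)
There is a genuine gap at your central step. You claim that a solution defines an $\SL_3(\ZZ)$- or $\GL_3(\ZZ)$-equivariant functional on the full Steinberg module, because \eqref{eq:intro-cyclic} and \eqref{eq:intro-norm} are the defining relations of the unimodular-symbol presentation. They are not: that presentation also identifies symbols that differ by signs of the basis vectors. For instance, $g=\operatorname{diag}(-1,-1,1)\in\SL_3(\ZZ)$ fixes every vertex of the standard apartment, so equivariance would force $f(-x,-y,z)=f(x,y,z)$. The hypotheses give no such condition, and the actual solutions violate it. If $f=\Phi(q)$ with $q$ symmetric and odd in each variable, then $f(-x,-y,z)=q(x,y)-q(y,z)-q(z,x)$, which differs from $\pm f$ whenever $q\ne0$.

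What the hypotheses actually give, via the relations $1+A+A^2$, $C-1$ and the transposition relation, is an equivariant functional on the restricted module $\St^\uparrow$ of the good-vertex subgraph. The group here is the level-three group $K$ fixing $(1,1,1)$ modulo $3$, whose only signed permutations are permutations. Even this identification needs the restricted acyclicity argument of Appendix~\ref{app:topology}. Your cohomological step is essentially Proposition~\ref{thm:boundary-evaluation}: boundary homology surjects because inner $H^2$ vanishes for the non-self-dual $\Sym^n$, and boundary classes are then evaluated on the coordinate apartment. That proposition applies only to functionals defined on all of $\St$. Equivalently, in the $26$-coordinate model of Lemma~\ref{prop:26-model}, you would have to produce the two missing seeds $F(1,0,0)$ and $F(1,1,0)$ satisfying \eqref{eq:mixed1}--\eqref{eq:mixed2}.

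This extension problem is the heart of the paper, and your proposal has nothing in its place.
\begin{itemize}
\item In the symmetric sector one must show that the Bernoulli obstruction $J_f$ vanishes. The paper does this through the connecting class in $H^2(K^\dagger,E_n)$, convergent sums of cusp-form seeds, a Stokes comparison of plane and line restrictions, and nonvanishing of $L(h,k-2)/L(h,k-1)$.
\item Still in the symmetric sector, in odd degree a possible simple pole of the seed $P$ must then be removed. This uses Hecke stability of the liftable residues and $L(h,k-2)\ne0$.
\item In the alternating even sector the obstruction vanishes for stabilizer sign reasons.
\item In the alternating odd sector no extension is constructed at all; vanishing comes from the separate mod-$4$ argument of Theorem~\ref{mf:thm:main}.
\end{itemize}
So cusp forms are needed to prove there are no other solutions, not merely to describe boundary data.

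Two smaller points. First, you need vanishing of the inner cohomology $\operatorname{im}(H^2_c\to H^2)$, not just of cuspidal cohomology; the paper obtains it from a uniform spectral gap. Second, your odd-degree paragraph is incorrect as written, since cyclic symmetry does not give $q(x,y)=-q(y,x)$ (in the symmetric sector $q$ is symmetric). Given support, you would instead evaluate the ternary norm at $(x,0,z)$ to get separate parity of $q$, which kills the symmetric odd sector. The alternating odd sector is handled by the mod-$4$ argument rather than by support.
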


In fact, restriction to $z=0$ determines every solution. Write
$F[x,y,z]_n$ and $F[X,Y]_n$ for the homogeneous degree $n$ spaces,
and $W_n(F)$ for the degree $n$ solutions. Interchanging $x,y$ preserves
the relations, so cyclic symmetry gives the decomposition into symmetric
and alternating subspaces
\[
  W_n(F)
    =W_n^{\mathrm{sym}}(F)\oplus W_n^{\mathrm{alt}}(F).
\]
The reconstruction uses the cyclic sum of a binary polynomial $q$:
\[
  \Phi(q)(x,y,z)=q(x,y)+q(y,z)+q(z,x).
\]

\begin{theorem}\label{thm:symmetric-classification}
Let $n\ge0$ be even. A symmetric polynomial $f\in F[x,y,z]_n$ satisfies
\eqref{eq:intro-norm} if and only if
\begin{equation}\label{eq:main-classification}
 f=\Phi(q),
\end{equation}
where $q\in F[X,Y]_n$ is symmetric, odd in each variable, and satisfies
\begin{equation}\label{eq:binary-period-intro}
 q(X,Y)+q(-Y,X-Y)+q(Y-X,-X)=0.
\end{equation}
The binary polynomial is uniquely $q(X,Y)=f(X,Y,0)$.
\end{theorem}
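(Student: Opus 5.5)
The plan is to deduce the classification from Theorem~\ref{thm:main}, which we may assume, together with a few evaluations of the relation \eqref{eq:intro-norm} at special points. Throughout, ``symmetric'' means invariant under all permutations of $x,y,z$; in particular such an $f$ satisfies \eqref{eq:intro-cyclic}.

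\emph{The ``if'' direction.} Let $q$ be symmetric, odd in each variable, and satisfy \eqref{eq:binary-period-intro}. Then $\Phi(q)$ is clearly symmetric. It remains to check the three-term norm in $(x,y)$ with $z$ fixed.
\begin{itemize}
\item The three terms coming from $q(x,y)$ sum to zero by \eqref{eq:binary-period-intro}.
\item The remaining six terms are
\[
q(y,z)+q(z,x)+q(x-y,z)+q(z,-y)+q(-x,z)+q(z,y-x).
\]
Since $q$ is symmetric and odd in each variable, $q(z,-y)=-q(y,z)$, $q(-x,z)=-q(z,x)$ and $q(z,y-x)=-q(x-y,z)$. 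So these six terms cancel in pairs.
\end{itemize}

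\emph{The ``only if'' direction.} Let $f\in F[x,y,z]_n$ be symmetric and satisfy \eqref{eq:intro-norm}.

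\begin{enumerate}
\item By Theorem~\ref{thm:main}, $\partial_x\partial_y\partial_z f=0$, so $f$ is a sum of binary polynomials. For such polynomials the inclusion--exclusion identity
\[
f(x,y,z)=f(x,y,0)+f(0,y,z)+f(x,0,z)-f(x,0,0)-f(0,y,0)-f(0,0,z)+f(0,0,0)
\]
holds; it suffices to check it on each binary piece separately.
\item Put $q(X,Y)=f(X,Y,0)$ and $f(x,0,0)=cx^n$. By symmetry, $f(0,y,z)=q(y,z)$, $f(x,0,z)=q(z,x)$, and the three one-variable terms are $cx^n$, $cy^n$, $cz^n$. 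Hence
\[
f=\Phi(q)-c\,(x^n+y^n+z^n)+c\,\delta_{n,0}.
\]
\item To show $c=0$, evaluate \eqref{eq:intro-norm} at $x=y=0$. This gives $3f(0,0,z)=0$, so $c=0$; when $n=0$ it gives $f=0$ outright.
\item Setting $z=0$ in \eqref{eq:intro-norm} shows that $q$ satisfies \eqref{eq:binary-period-intro}. The polynomial $q$ is symmetric because $f$ is.
\item For oddness, first put $X=Y$ in \eqref{eq:binary-period-intro}. Using $q(x,0)=f(x,0,0)=0$, this gives $q(x,x)=0$.
\item Next evaluate \eqref{eq:intro-norm} at $y=0$:
\[
f(x,0,z)+f(0,x,z)+f(-x,-x,z)=0.
\]
Using $f=\Phi(q)$, the fact that $n$ is even, and $q(x,x)=0$, this becomes $2q(x,z)+2q(-x,z)=0$. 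So $q$ is odd in its first variable, and by symmetry odd in its second.
\end{enumerate}

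\emph{Uniqueness.} Suppose $f=\Phi(q)$ with $q$ of the stated type. Since $q$ is odd in each variable, it vanishes on both axes. Therefore
\[
\Phi(q)(X,Y,0)=q(X,Y)+q(Y,0)+q(0,X)=q(X,Y),
\]
so $q(X,Y)=f(X,Y,0)$ is forced.

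The only substantial input is Theorem~\ref{thm:main}, namely $\partial_x\partial_y\partial_z f=0$. Everything after it is formal, and the one step needing care is the order of the evaluations: $c=0$ and $q(x,x)=0$ must be established before the $y=0$ specialization yields oddness.
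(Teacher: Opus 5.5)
Your argument is correct and is essentially the paper's own. Once two-variable support is available, the paper likewise kills pure powers via $3f(0,0,z)=0$, restricts the norm to $z=0$, and gets $q(X,X)=0$ from the binary relation and the axis values. It then derives separate oddness from $(Nf)(x,0,z)=2\{q(x,z)+q(-x,z)\}$ and proves the converse by the same pairwise cancellation of the six mixed terms. Invoking Theorem~\ref{thm:main} is not circular, because the paper proves symmetric support in Section~\ref{sec:symmetric} before and independently of this classification, and all the real difficulty lies in that support theorem you assume.
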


To describe both symmetry types, we let $\mathscr P_n^0(F)$
be the space of degree $n$ homogeneous binary polynomials
satisfying
\begin{equation}\label{eq:normalized-period-space}
\begin{gathered}
  q(X,0)=q(0,Y)=0,\qquad q(X,Y)+q(-Y,X)=0,\\
  q(X,Y)+q(-Y,X-Y)+q(Y-X,-X)=0.
\end{gathered}
\end{equation}
These are the \emph{normalized binary period polynomials}
\cite[Section~2]{PasolPopa}. For positive even $n$, subtracting
$q(1,0)(X^n-Y^n)$ from a polynomial satisfying the two period relations
makes it vanish on both axes without changing those relations.
Write $\mathscr P_n^{0,\mathrm{ev}}(F)$ and
$\mathscr P_n^{0,\mathrm{odd}}(F)$ for the subspaces even and odd
in $X$, respectively.

\begin{corollary}\label{cor:complete-periods}
For every $n\ge0$, restriction is an $F$-linear isomorphism
\begin{equation}\label{eq:period-restriction}
 W_n(F)\longrightarrow\mathscr P_n^0(F),\qquad f\longmapsto f(X,Y,0).
\end{equation}
Its inverse is $q\mapsto\Phi(q)$. It identifies
$W_n^{\mathrm{sym}}(F)$ with $\mathscr P_n^{0,\mathrm{odd}}(F)$ and
$W_n^{\mathrm{alt}}(F)$ with $\mathscr P_n^{0,\mathrm{ev}}(F)$.
\end{corollary}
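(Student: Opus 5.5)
We deduce the corollary from Theorems~\ref{thm:main} and~\ref{thm:symmetric-classification}, together with a direct check that $\Phi$ maps $\mathscr P_n^0(F)$ into $W_n(F)$. Odd $n$ is trivial: $W_n(F)=0$ by Theorem~\ref{thm:main}. For $\mathscr P_n^0(F)$, substituting $(X,Y)\mapsto(-Y,X)$ twice gives $q(X,Y)=q(-X,-Y)=(-1)^nq(X,Y)$, so $q=0$. We therefore assume $n$ even. Since interchanging $x,y$ preserves both relations, write $f=f^{\mathrm{sym}}+f^{\mathrm{alt}}$.

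\emph{Restriction lands in $\mathscr P_n^0(F)$ and is injective.} Let $f\in W_n(F)$ and put $q(X,Y)=f(X,Y,0)$.
\begin{enumerate}
\item Setting $z=0$ in \eqref{eq:intro-norm} gives the three-term relation for $q$.
\item By Theorem~\ref{thm:main}, $f=P(x,y)+Q(y,z)+R(z,x)$.
\item For the symmetric part, Theorem~\ref{thm:symmetric-classification} gives $q$ symmetric and odd in each variable. Hence $q(-Y,X)=-q(Y,X)=-q(X,Y)$, which is the two-term relation. Oddness in each variable gives $q(X,0)=q(0,Y)=0$, and oddness in $X$ puts $q$ in $\mathscr P_n^{0,\mathrm{odd}}(F)$.
\item Injectivity on the symmetric part follows from $f=\Phi(q)$.
\end{enumerate}

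\emph{The alternating part: the main obstacle.} Theorem~\ref{thm:symmetric-classification} does not cover $f^{\mathrm{alt}}$, so we need an analogue. The plan is to prove that every $f\in W_n^{\mathrm{alt}}(F)$ equals $\Phi(q)$ with $q=f(X,Y,0)$ antisymmetric and even in each variable.
\begin{enumerate}
\item Cyclic symmetry lets us normalize $P=Q=R=:p$, up to terms depending on one variable. Averaging the decomposition over the $3$-cycle keeps it valid, and one-variable pieces can be absorbed.
\item Alternation gives $p(X,Y)=-p(Y,X)$ modulo one-variable terms. Since $f$ is alternating, it vanishes on $x=y$, which forces $f(x,x,z)=0$. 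With $n$ even this kills the single-variable terms, so $f=\Phi(q)$ with $q(X,0)=0$.
\item For parity, I would use that the relations are invariant under $(x,y,z)\mapsto(-x,-y,-z)$, combined with the decomposition, to show that the odd-in-$X$ part of $q$ yields a symmetric solution. That solution must vanish because $q$ is antisymmetric.
\item If this step resists direct argument, I would instead invoke the paper's later alternating analogue of Theorem~\ref{thm:symmetric-classification}, which follows from the same cohomological argument.
\end{enumerate}

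\emph{Surjectivity.} Given $q\in\mathscr P_n^0(F)$, decompose it into its even and odd parts in $X$; it suffices to show $\Phi(q)\in W_n(F)$ for each part.
\begin{enumerate}
\item Cyclic symmetry of $\Phi(q)$ is immediate.
\item For \eqref{eq:intro-norm}, apply $N$ in $(x,y)$ term by term. $N[q(x,y)]=0$ by the three-term relation.
\item The terms $q(y,z)+q(z,x)$ transform under $x\mapsto-y$, $y\mapsto x-y$ into sums of $q(\ell,z)+q(z,\ell')$ over the linear forms $\ell\in\{y,x-y,-x\}$, $\ell'\in\{x,-y,y-x\}$. The parity of $q$ in each variable, which follows from the two-term relation together with evenness or oddness in $X$, lets these six terms cancel in pairs.
\item Finally $\Phi(q)(X,Y,0)=q(X,Y)+q(Y,0)+q(0,X)=q(X,Y)$ by the axis normalization. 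This gives inverse maps, and the stated identification of the symmetric and alternating subspaces follows.
\end{enumerate}
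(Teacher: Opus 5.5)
\textbf{Gap: separate evenness in the alternating sector.} The failure is at the step you flag yourself. Take an alternating $f$ and put $q=f(X,Y,0)$. By alternation, $q(-Y,X)=-q(X,-Y)$, so the two-term relation $q(X,Y)+q(-Y,X)=0$ says exactly that $q$ is even in $Y$. So both ``restriction lands in $\mathscr P_n^0(F)$'' and the identification $W_n^{\mathrm{alt}}(F)\cong\mathscr P_n^{0,\mathrm{ev}}(F)$ depend on separate evenness of $q$, and your step (3) does not prove it.
\begin{enumerate}
\item In even degree the substitution $(x,y,z)\mapsto(-x,-y,-z)$ acts by $(-1)^n=1$, so its invariance carries no information.
\item The odd-in-$X$ part $q_o$ of an alternating $q$ of even degree is itself alternating, and odd in both variables. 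So $\Phi(q_o)$ would be an alternating polynomial, not a symmetric solution, and Theorem~\ref{thm:symmetric-classification} says nothing about it.
\item The claim that this ``solution must vanish because $q$ is antisymmetric'' has no justification.
\end{enumerate}
Your fallback (4) would close the gap only by citing the paper's Lemma~\ref{lem:alternating-classification}. That lemma is not a consequence of the cohomological argument: given Theorem~\ref{thm:main}, it comes from a short elementary computation, and that computation is exactly what your proposal lacks.

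\textbf{The missing computation.} Expand $N\Phi(q)$.
\begin{enumerate}
\item The three terms $q(x,y)+q(-y,x-y)+q(y-x,-x)$ form $N_2q(x,y)$, which vanishes by restricting the norm to $z=0$.
\item Using $q(z,w)=-q(w,z)$, the remaining six terms equal $r(y,z)-r(x,z)+r(x-y,z)$, where $r(u,v)=q(u,v)-q(-u,v)$. So $r$ is additive in its first variable.
\item Since $r(0,v)=0$ and we are in characteristic zero, additivity gives $r(u,v)=u\,h(v)$.
\item Because $q$ is alternating of even total degree, $r(v,u)=-r(u,v)$. Hence $r(u,u)=0$, i.e.\ $u\,h(u)=0$, so $h=0$.
\end{enumerate}
Thus $q$ is even in $X$, hence in $Y$, and the two-term relation follows.

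\textbf{A smaller point in surjectivity.} Splitting $q\in\mathscr P_n^0(F)$ into its $X$-parity parts and applying the three-term relation to each part uses an unstated fact: $q\mapsto q(-X,Y)$ preserves $N_2q=0$. It is easy, since with the two-term relation one gets $N_2q'(X,Y)=-N_2q(Y,X)$ in even degree, where $q'(X,Y)=q(-X,Y)$. But you can avoid the decomposition entirely. For any $q$, the two-term relation alone gives
\[
q(z,x)=-q(-x,z),\qquad q(z,-y)=-q(y,z),\qquad q(z,y-x)=-q(x-y,z),
\]
so the six mixed terms of $N\Phi(q)$ cancel directly. The matching of sectors then follows from injectivity of restriction.
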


The two normalized binary period spaces have a classical
interpretation in terms of cusp forms. Let
$\mathfrak H=\{\tau\in\CC:\Im\tau>0\}$ be the upper half-plane,
and let $S_k=S_k(\SL_2(\ZZ))$ denote the complex vector space
of holomorphic cusp forms of weight $k$ for the full modular group.
For even $n>0$ and $h\in S_{n+2}$, define its period polynomial by
\begin{equation}\label{eq:analytic-period}
  r_h(X,Y)=\int_0^{i\infty}h(\tau)(\tau Y-X)^n\,d\tau.
\end{equation}
The integral converges by cuspidality. Taking the odd part in $X$,
or normalizing the even part as above, gives maps from $S_{n+2}$
to $\mathscr P_n^{0,\mathrm{odd}}(\CC)$ and
$\mathscr P_n^{0,\mathrm{ev}}(\CC)$, respectively.
Eichler--Shimura theory states that both maps are complex linear
isomorphisms \cite{KohnenZagier,PasolPopa,ZagierPeriods}.

\begin{corollary}\label{cor:dimensions}
For every even $n>0$, the period maps induce complex linear isomorphisms
\[
 W_n^{\mathrm{sym}}(\CC)\cong S_{n+2},\qquad
 W_n^{\mathrm{alt}}(\CC)\cong S_{n+2}.
\]
For every characteristic zero field $F$ and every $n\ge0$, we have
\begin{equation}\label{eq:dimensions}
 \dim_F W_n^{\mathrm{sym}}(F)=\dim_F W_n^{\mathrm{alt}}(F)
   =\dim_{\CC}S_{n+2}.
\end{equation}
All these spaces vanish when $n=0$ or $n$ is odd. Consequently, the generating function of the dimensions is
\begin{equation}\label{eq:W-Hilbert}
 \sum_{n\ge0}\dim_F W_n(F)t^n=\frac{2t^{10}}{(1-t^4)(1-t^6)}.
\end{equation}
\end{corollary}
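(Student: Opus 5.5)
The argument combines Corollary~\ref{cor:complete-periods} with the Eichler--Shimura isomorphisms quoted above. Then comes a base-change argument for general $F$, and a Hilbert series computation.

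\emph{Complex isomorphisms.} Fix even $n>0$. By Corollary~\ref{cor:complete-periods} over $F=\CC$, restriction $f\mapsto f(X,Y,0)$ identifies $W_n^{\mathrm{sym}}(\CC)$ with $\mathscr P_n^{0,\mathrm{odd}}(\CC)$ and $W_n^{\mathrm{alt}}(\CC)$ with $\mathscr P_n^{0,\mathrm{ev}}(\CC)$. Each is linear with inverse $\Phi$. Composing with the inverses of the Eichler--Shimura isomorphisms $S_{n+2}\to\mathscr P_n^{0,\mathrm{odd}}(\CC)$ (odd part of $r_h$) and $S_{n+2}\to\mathscr P_n^{0,\mathrm{ev}}(\CC)$ (normalized even part of $r_h$) gives the two stated isomorphisms. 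Explicitly, $h\mapsto\Phi(r_h^{\mathrm{odd}})$ and $h\mapsto\Phi(\widetilde{r_h^{\mathrm{ev}}})$.

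\emph{Arbitrary $F$.} The conditions \eqref{eq:normalized-period-space}, together with the parity condition in $X$, form a homogeneous linear system on the coefficients of $q$ with coefficients in $\QQ$. The rank of a rational matrix does not change under field extension. Hence $\dim_F\mathscr P_n^{0,\mathrm{odd}}(F)=\dim_\QQ\mathscr P_n^{0,\mathrm{odd}}(\QQ)=\dim_\CC\mathscr P_n^{0,\mathrm{odd}}(\CC)$, and likewise for the even part. Corollary~\ref{cor:complete-periods} over $F$ transfers these dimensions to $W_n^{\mathrm{sym}}(F)$ and $W_n^{\mathrm{alt}}(F)$, which gives \eqref{eq:dimensions} for even $n>0$.

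\emph{Vanishing cases.} For odd $n$, Theorem~\ref{thm:main} gives $W_n(F)=0$, and $S_{n+2}=0$ since there are no nonzero odd-weight forms for $\SL_2(\ZZ)$. For $n=0$, $q$ is constant, and the condition $q(X,0)=0$ forces $q=0$. So $\mathscr P_0^0=0$ and $W_0=0$ by Corollary~\ref{cor:complete-periods}, while $S_2=0$.

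\emph{Generating function.} From the above, $\dim W_n=2\dim S_{n+2}$ for every $n$. It remains to use the classical identity
\[
\sum_{k}\dim S_k\,t^k=\frac{t^{12}}{(1-t^4)(1-t^6)}.
\]
This holds because $S_k=\Delta M_{k-12}$ and $M_*=\CC[E_4,E_6]$. Shifting $k=n+2$ divides this series by $t^2$, and doubling gives \eqref{eq:W-Hilbert}.

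The only step that needs care is the base change. One must check that the parity-split normalized period conditions are genuinely defined over $\QQ$, so that the dimension is independent of $F$. One must also check that Corollary~\ref{cor:complete-periods} holds over every characteristic zero $F$, which its statement asserts. Everything else is bookkeeping.
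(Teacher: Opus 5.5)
Your proposal is correct and follows essentially the same route as the paper. Both arguments compose the restriction isomorphism of Corollary~\ref{cor:complete-periods} (with inverse $\Phi$) with the two Eichler--Shimura period isomorphisms. Both then transfer dimensions to an arbitrary $F$ because the defining linear systems are rational (the paper's Proposition~\ref{prop:coefficient-base-change}), dispose of $n=0$ and odd $n$ by the classification, and double the shifted series $t^{12}/((1-t^4)(1-t^6))$ obtained from $S_*=\Delta_{\mathrm{mod}}\CC[E_4,E_6]$. The one point the paper spells out that you take as given is that the level-one coboundary $X^n-Y^n$ lies in the even part, which is why the normalization \eqref{eq:even-period-normalization} gives an isomorphism onto $\mathscr P_n^{0,\mathrm{ev}}(\CC)$.
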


The classification also determines a universal divisor. Set
\begin{equation}\label{eq:universal-divisor}
 D(x,y,z)=(x+y+z)(x+y)(y+z)(z+x).
\end{equation}
\begin{corollary}\label{cor:common-divisor}
Over every characteristic zero field, $D$ divides every solution of
\eqref{eq:intro-cyclic}--\eqref{eq:intro-norm}. Up to a nonzero scalar,
it is the greatest common divisor of all nonzero solutions.
\end{corollary}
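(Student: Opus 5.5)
The plan is to reduce to homogeneous binary data and then check each linear factor of $D$ directly. Both relations \eqref{eq:intro-cyclic}--\eqref{eq:intro-norm} are preserved by passing to homogeneous components and by the interchange $x\leftrightarrow y$. So it suffices to show that $D$ divides every $f\in W_n^{\mathrm{sym}}(F)$ and every $f\in W_n^{\mathrm{alt}}(F)$ for even $n>0$; by Theorem~\ref{thm:main} the odd-degree and $n=0$ parts vanish. By Corollary~\ref{cor:complete-periods}, such an $f$ equals $\Phi(q)$ with $q\in\mathscr P_n^{0}(F)$. Moreover $q$ is pure of parity: odd in $X$ in the symmetric case, even in $X$ in the alternating case. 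Since $n$ is even, this gives $q(X,-Y)=\varepsilon q(X,Y)$ with $\varepsilon=\pm1$.

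First I show that $x+y$ divides $\Phi(q)$. Substituting $y=-x$ gives
\[
\Phi(q)(x,-x,z)=q(x,-x)+q(-x,z)+q(z,x).
\]
The relation $q(X,Y)+q(-Y,X)=0$ at $(X,Y)=(z,x)$ gives $q(z,x)+q(-x,z)=0$, so the last two terms cancel. For the first term, put $(X,Y)=(0,x)$ in the three-term relation. This yields $q(0,x)+q(-x,-x)+q(x,0)=0$, and since $q$ vanishes on the axes, $q(-x,-x)=0$. By evenness of $n$, $q(x,x)=0$. The two-term relation at $(x,-x)$ then gives $q(x,-x)=-q(x,x)=0$. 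Hence $x+y$ divides $\Phi(q)$. By the cyclic symmetry \eqref{eq:intro-cyclic}, so do $y+z$ and $z+x$.

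Next I show that $x+y+z$ divides $\Phi(q)$. Substituting $z=-x-y$ gives
\[
\Phi(q)(x,y,-x-y)=q(x,y)+q(y,-x-y)+q(-x-y,x).
\]
Applying $q(a,-b)=\varepsilon q(a,b)$ to each term, with $q(-x-y,x)=\varepsilon q(-x-y,-x)$, this becomes
\[
\varepsilon\bigl[q(x,-y)+q(y,x+y)+q(-x-y,-x)\bigr].
\]
The bracket is exactly the three-term relation \eqref{eq:binary-period-intro} at $(X,Y)=(x,-y)$, so it vanishes. The four linear forms are pairwise non-associate, so their product $D$ divides every solution.

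For the gcd statement, it remains to show that the quotients $f/D$ have no common nonconstant factor. I will use the degree $10$ solutions: by Corollary~\ref{cor:dimensions}, $W_{10}^{\mathrm{sym}}$ and $W_{10}^{\mathrm{alt}}$ are each one-dimensional. Take $q_1$ to be the odd part of the period polynomial of $\Delta$, proportional to $4X^9Y-25X^7Y^3+42X^5Y^5-25X^3Y^7+4XY^9$, and $q_2$ the normalized even part. Form the sextics $g_i=\Phi(q_i)/D$ and check that they are coprime, for instance by a resultant computation or by restricting to a generic line. If they unexpectedly share a factor, I would add a degree $12$ or $14$ solution.

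This explicit coprimality check is the main obstacle. Everything else is formal from Corollary~\ref{cor:complete-periods}. Coprimality over $\QQ$ suffices for every characteristic zero $F$, because the $q_i$ are rational and gcds of polynomials are unchanged under extension of the coefficient field.
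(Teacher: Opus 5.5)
Your divisibility argument is correct and is essentially the paper's. The paper gets $\Phi(q)(u,-u,v)=0$ from $q(u,u)=q(u,-u)=0$ and $q(-u,v)=-q(v,u)$, and gets the factor $x+y+z$ by referring back to the polynomial lift $L(q)$. You instead derive $q(u,u)=0$ uniformly from the three-term relation at $(0,u)$, and you check $\Phi(q)(x,y,-x-y)=0$ directly as the three-term relation at $(x,-y)$. (A small citation slip: degree zero is not covered by Theorem~\ref{thm:main}, but $Nc=3c$ disposes of constants.)

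The gap is in the sharpness half. There you only announce that the sextics $g_1=f_{\mathrm s}/D$ and $g_2=f_{\mathrm a}/D$ should be checked for coprimality, with a fallback to higher degree if the check fails. That check is the entire content of the gcd assertion, so the proposal does not yet prove it. The paper closes it by hand. Expanding $\Phi(q_{\mathrm a})$ gives
\[
 f_{\mathrm a}=D\,(x-y)(x-z)(y-z)(x-y-z)(x-y+z)(x+y-z),
\]
so $g_2$ is a product of six rational linear forms, and it suffices that none of them divides $f_{\mathrm s}$. Because $f_{\mathrm s}$ is invariant under every permutation of $x,y,z$, these forms fall, up to sign, into the two orbits of $x-y$ and $x-y-z$. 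On the corresponding planes one has
\[
 f_{\mathrm s}(u,u,v)=2q_{\mathrm s}(u,v),\qquad
 f_{\mathrm s}(u+v,u,v)=2q_{\mathrm s}(u,v).
\]
The first identity follows from $q_{\mathrm s}(u,u)=0$ and symmetry. The second follows from symmetry and the identity $q(u,v)=q(u+v,v)+q(u,u+v)$, obtained from \eqref{eq:binary-addition} in the construction of $L(q)$. Neither restriction vanishes, so $\gcd(g_1,g_2)=1$. All the factors are linear forms over $\QQ$, so this holds verbatim over any characteristic zero field. The degree ten pair suffices, and no solution of degree $12$ or $14$ is needed; you should supply this verification rather than leave it as a planned computation.
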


\begin{example}
The first nonzero solutions occur in degree ten. The two binary
polynomials in the original problem \cite[p.~481]{ABBI26} can be
written as
\begin{align*}
  q_{\mathrm s}(X,Y)
    &=XY(X^2-Y^2)^2(X^2-4Y^2)(4X^2-Y^2),\\
  q_{\mathrm a}(X,Y)
    &=X^2Y^2(X^2-Y^2)^3.
\end{align*}
Both satisfy \eqref{eq:normalized-period-space}.
The first is symmetric and odd in each variable, and the second is
alternating and even in each variable. Their cyclic sums
$f_{\mathrm s}=\Phi(q_{\mathrm s})$ and
$f_{\mathrm a}=\Phi(q_{\mathrm a})$ form a basis of $W_{10}(F)$.
Over $\CC$, the binary polynomials $q_{\mathrm s}$ and
$q_{\mathrm a}$ are nonzero scalar multiples of the odd and
normalized even period polynomials, respectively, of Ramanujan's
weight twelve discriminant cusp form $\Delta(\tau)$.
\end{example}

\subsection*{Method of proof}
The difficulty is not to construct solutions from binary periods, but to
show that there are no others. The norm relation mixes two-variable
monomials with those involving all three variables. For example,
\begin{equation}\label{eq:support-mixing-example}
  y^2z\longmapsto(x-y)^2z=x^2z-2xyz+y^2z.
\end{equation}
Thus, we cannot discard the monomials that we wish to
exclude.

The geometric part of the proof takes place on the rational
Tits building for $\GL_3$. Roughly speaking, this building
organizes the rational subspaces of $\QQ^3$ and the ways in
which they arise from changes of basis. The functional equations
will become compatibility relations among certain distinguished
cycles in this building, and the original polynomial $f$ arises by evaluating the resulting functional on one
particular cycle.

We now describe this picture concretely. The substitutions in the
functional equations are invertible linear changes of variables with
integer coefficients, so we are naturally led to ordered bases of
$\QQ^3$, represented by primitive integral vectors, and to the rational
subspaces that they span. The rational Tits building for $\GL_3$ is the graph whose
vertices are the rational lines and planes in $\QQ^3$, with an edge
joining a line to a plane precisely when the line is contained in the
plane.

In rank three the building is one-dimensional, hence a graph, and
its apartments can be described explicitly. An ordered basis
$(v_1,v_2,v_3)$ determines the three lines
\[
    L_i=\QQ v_i
\]
and the three planes
\[
    P_{ij}=\QQ v_i+\QQ v_j.
\]
Each line $L_i$ lies in the two planes obtained by adjoining either
of the other two basis vectors; among these six subspaces, these are
the only incidences. Their incidence graph is therefore the hexagon
\[
    L_1\longrightarrow P_{12}\longrightarrow L_2
    \longrightarrow P_{23}\longrightarrow L_3
    \longrightarrow P_{31}\longrightarrow L_1.
\]
Such an oriented hexagon is called an \emph{apartment}. Changing the
basis changes the apartment, and the three-term polynomial relation
will correspond to a relation among these apartment cycles.

The congruence condition defining the restricted graph is preserved
by a natural subgroup of $\GL_3(\ZZ)$. Let
\[
   K=\{g\in\GL_3(\ZZ):(1,1,1)g\equiv(1,1,1)\pmod 3\}.
\]
Thus $K$ consists of the integral changes of basis that preserve the
coordinate sum functional modulo three, and it acts on the restricted
building.

Proposition~\ref{thm:good-apartment} gives the crucial
translation from the original polynomial problem to this geometry.
A solution determines a $K$-equivariant polynomial valued functional
on the cycles of the restricted graph, and conversely the defining
relations of such a functional are exactly the polynomial relations
above. Its value on the apartment associated with the standard basis
$(e_1,e_2,e_3)$ is the original polynomial $f$.

The problem is therefore converted into an extension problem.
The functional is initially defined only on the restricted graph,
whereas the geometric support argument applies to the full Tits
building. Proposition~\ref{thm:boundary-evaluation} shows that if the
functional extends to the full building, then its value on the
standard apartment is a sum of polynomials in two variables.
It remains to construct such an extension in each symmetry and parity
case, or to prove that the corresponding solution vanishes.
Sections~\ref{sec:symmetric} and \ref{sec:odd-alt} carry this out.

The first task is the common support argument.  To carry it out, we
pass from the Tits building to an arithmetic quotient of the space of
positive definite quadratic forms in three variables, taken modulo
positive scaling.  Its Borel--Serre compactification has boundary
faces indexed by rational lines and planes, with intersections indexed
by flags $L\subset P$.  Thus the same incidence geometry that appears
in the building reappears at the boundary of the arithmetic quotient.

A functional on the full building gives a top-dimensional homology
class on this quotient.  We prove that such a class comes from the
boundary.  By Poincar\'e--Lefschetz duality, this reduces to the
vanishing of the corresponding inner cohomology.  An explicit positive
lower bound for the relevant Laplacian proves this vanishing in the
degrees that occur here; the remaining small degrees are handled
directly.  It therefore remains to understand how a boundary class
evaluates on the coordinate apartment.  The contributions from the
line and plane faces are controlled by elementary unipotent shears,
while the contributions at their intersections are controlled by the
coboundary relation joining the two adjacent face cocycles.  Both types
of contribution have two-variable support.  This proves the support
statement for any functional that extends to the full building.

We next turn to the extension problem.  For symmetric solutions,
finite differences produce an explicit polynomial $J_f$.  Its
vanishing is exactly the compatibility condition needed to extract
the successive binary periods used to construct the extension.
We interpret $J_f$ as a boundary coordinate of a degree two
cohomology class attached to the restricted-graph functional.
The class already vanishes on the boundary faces corresponding to
retained vertices, so only the omitted faces remain to be controlled.

For this purpose we construct closed differential forms from classical
cusp forms and sum their translates over the arithmetic quotient.
The sums converge in the range needed here.  We make the same
construction with dual coefficients.  Applying Stokes's theorem to
the original and dual forms gives two boundary identities; comparing
them shows that the plane restrictions determine the line restrictions.
In even degree a sign symmetry forces the remaining plane restriction
to vanish.  In odd degree an explicit nonzero integral, together with
the nonvanishing of outer critical modular $L$-values, shows that a
nonzero omitted-plane restriction would produce a nonzero retained-line
restriction.  This contradicts the vanishing already known there.
Hence $J_f$ vanishes in both parities.

Once $J_f=0$, binary period extraction constructs the symmetric
extension.  In odd degree one candidate term may initially have a
simple pole.  Its residue is a binary period polynomial, and the space
of residues that can arise is stable under the Hecke operators.
Liftability forces one extreme period coefficient to vanish on this
space.  If the residue space were nonzero, Hecke stability would
produce an eigenperiod for which the same coefficient is a nonzero
modular $L$-value.  Thus the residue space is zero, the pole disappears,
and the extension is polynomial.

The alternating cases are shorter.  In even degree, signs in the
stabilizers of the omitted vertices and edges force the extension
obstruction to vanish.  In odd degree we use a separate integral
argument modulo four: an alternating integer polynomial of odd degree
whose norm is divisible by four must have all coefficients even.
Scaling a nonzero rational solution to a primitive integer polynomial
would contradict this statement.  Restriction to $z=0$ and cyclic
summation then complete the classification.

Sections~\ref{sec:polynomial}--\ref{sec:ordinary} establish the polynomial
tools, restricted presentation, and support theorem.
Sections~\ref{sec:symmetric}--\ref{sec:odd-alt} treat the symmetry types,
and Sections~\ref{sec:assembly}--\ref{sec:period-corollaries} finish the
proof and its corollaries. Appendices~\ref{app:topology}--\ref{mf:app:kernel} prove the restricted
presentation, the finite-difference and residue--Hecke comparisons, the
analytic boundary comparison, and the reduced norm-kernel description.

The external mathematical inputs are the augmented-frame theorems of
Church--Putman \cite[Theorems~B and C]{ChurchPutman}, the well-rounded
retract \cite{AshRetract}, and Borel--Serre duality and compactification
\cite{BorelSerre}. The analytic inputs are the spectral-gap estimate and
Kuga's formula \cite[Lemma~4.1 and (3.6.1)]{BergeronVenkatesh}, the
nilmanifold comparison with unipotent coefficients
\cite[Theorem~1.1]{Kasuya}, and the Eichler--Shimura and Haberland
identities \cite[Theorems~2.1 and 3.3]{PasolPopa}. The residue--Hecke
comparison uses Hecke-equivariant Shapiro \cite[Theorem~3.1]{AshDoud}.
The applications, coefficient conventions, and additional comparisons
needed here are given in the corresponding sections and appendices.

\section{Polynomial preliminaries}\label{sec:polynomial}

We separate solutions by degree and symmetry, establish their vanishing
on several planes, and extract binary periods when an explicit
finite-difference obstruction vanishes. We also fix the cusp form
conventions and prove the $L$-value nonvanishing used later.

Unless stated otherwise, coefficients are rational. Section~\ref{sec:assembly}
extends scalars to characteristic zero fields. Put $V_n=\QQ[x,y,z]_n$ and
use $\QQ[X,Y]_n$ for binary forms of degree $n$. Abbreviate $W_n=W_n(\QQ)$
and $\mathscr P_n^0=\mathscr P_n^0(\QQ)$.

Write $\GL_d(R)$ for the invertible $d\times d$ matrices over a
commutative ring $R$, and $\SL_d(R)$ for its determinant one subgroup.
For a vector space $V$, write $\Sym^n(V)$ for its $n$th symmetric power
and $V^*$ for its linear dual over the ground field.

The substitutions in the defining equations give linear operators
on $V_n$:
\begin{equation}\label{eq:polynomial-operators}
\begin{aligned}
  (Sf)(x,y,z)&=f(y,x,z),
      &&\text{exchange of the first two variables},\\
  (Cf)(x,y,z)&=f(y,z,x),
      &&\text{cyclic permutation},\\
  (Af)(x,y,z)&=f(-y,x-y,z),
      &&\text{the order three substitution}.
\end{aligned}
\end{equation}
Products denote composition, with the rightmost operator applied
first. Direct substitution gives
\[
  (A^2f)(x,y,z)=f(y-x,-x,z),\qquad A^3=I,
\]
where $I$ is the identity operator. Thus the three-term norm
introduced above is $N=I+A+A^2$, and the defining equations become
\begin{equation}
  Cf=f\qquad\text{and}\qquad Nf=0.
\end{equation}

Let
\[
  U_n=\QQ[x,y]_n+\QQ[y,z]_n+\QQ[z,x]_n.
\]
This is the span of monomials missing a variable. The support assertion is
\[
  W_n(\QQ)=\{f\in V_n:Cf=f,\ Nf=0\}\subseteq U_n.
\]

On cyclic polynomials, the $\pm1$ eigenspaces of $S$ are the symmetric
and alternating subspaces. The next lemma shows that both projections
preserve solutions and settles degrees at most three.

\begin{lemma}\label{lem:sector-decomp}
Homogeneous components of a solution are solutions. Its symmetric and
alternating parts $(f+Sf)/2$ and $(f-Sf)/2$ are also solutions. There are no
nonzero solutions of degree at most three.
\end{lemma}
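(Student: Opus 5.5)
The first two assertions follow from the fact that all the operators in \eqref{eq:polynomial-operators} are linear and commute with the relevant structure. The operators $S$, $C$, $A$ act by invertible linear substitutions of the variables, so each preserves every homogeneous space $V_n$. Writing a solution as $f=\sum_n f_n$ with $f_n\in V_n$, the identities $Cf=f$ and $Nf=0$ split degree by degree into $Cf_n=f_n$ and $Nf_n=0$. Hence every homogeneous component is a solution.

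For the symmetry projections I will check two operator identities on $V_n$:
\[
  SCS=C^{-1}=C^2,\qquad SAS=A^{-1}=A^2.
\]
The first holds because conjugating the $3$-cycle by a transposition inverts it. For the second, compute directly: $(SAS f)(x,y,z)$ is obtained by applying $A$ with the roles of $x,y$ swapped, which gives $f(y-x,-x,z)=(A^2f)(x,y,z)$. I will verify this substitution carefully, since the order of composition convention matters.

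From these identities, $SN=NS$ since $N=I+A+A^2$ is stable under $A\mapsto A^{-1}$. Also, $Cf=f$ implies $C^2f=f$, and therefore $C(Sf)=S(C^2f)=Sf$. So $S$ maps solutions to solutions. Consequently $(f\pm Sf)/2$ are solutions, using that $2$ is invertible.

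For degrees $n\le 3$, I will argue by direct linear algebra, one degree at a time, using the symmetric/alternating splitting.
\begin{itemize}
\item In degree $0$: a constant $c$ has $Nc=3c$, so $c=0$.
\item In degree $1$: a cyclic $f$ is $a(x+y+z)$, and $N(x+y+z)=(x-y-x)+\dots$. Summing the three images gives $x+(-y)+(y-x)=0$ for the $x,y$ part, plus $3z$, so $Nf=3az$ and $f=0$.
\item In degree $2$: cyclic forms are spanned by $p_2=x^2+y^2+z^2$ and $e_2=xy+yz+zx$. I will compute $N$ of each and read off the $z^2$ coefficient, which is $3$ times the $z^2$ coefficient of $f$, and the coefficients of $xz,yz$. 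This is a small linear system.
\item In degree $3$: cyclic forms form a $4$-dimensional space. A basis is $x^3+y^3+z^3$, $\sum x^2y$, $\sum xy^2$, $xyz$, with $S$ swapping the two middle ones. I will handle the symmetric ($3$-dimensional) and alternating ($1$-dimensional, $\sum x^2y-\sum xy^2$) parts separately.
\end{itemize}

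A useful general shortcut is the following. Since $A$ fixes $z$ and acts on $(x,y)$, the coefficient of $z^n$ in $Nf$ is $3$ times that of $f$, so cyclic solutions have no $x^n,y^n,z^n$ terms. More generally, for the $z^{n-k}$-graded pieces, $N$ acts on binary forms of degree $k$ in $(x,y)$, where $A$ acts via $\Sym^k$ of an order-three matrix. The invariants of $A$ give the obstruction.

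The main (mild) obstacle is the degree $3$ computation, where one must check that the $z$-linear and $z$-quadratic coefficient conditions together kill the remaining cyclic candidates. In particular, I will verify that $xyz$ and $\sum x^2y\pm\sum xy^2$ cannot combine to satisfy $Nf=0$. The $z$-coefficient of a cyclic form is a binary quadratic $q_2(x,y)$, and $Nq_2$ must vanish. The space of binary quadratics killed by $I+A+A^2$ is the $2$-dimensional non-trivial-character part. Combining this with the $z^2$-coefficient condition (a linear form $\ell$ in $x,y$, requiring $N\ell=0$, which is automatic) and cyclic symmetry should force $f=0$. I would carry this out explicitly in coordinates.
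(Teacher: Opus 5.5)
Your argument for the first two assertions is correct and matches the paper's: degree preservation, then $SAS=A^2$ and $SCS=C^{-1}$, whence $SN=NS$ and $C(Sf)=SC^2f=Sf$.

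\textbf{The gap: your degree-three mechanism does not work.} After the $z^3$ coefficient removes $x^3+y^3+z^3$, write $f=\beta(x^2y+y^2z+z^2x)+\gamma(xy^2+yz^2+zx^2)+\delta xyz$. Its $z^2$-piece is killed by $N$ automatically, as you note. Its $z^1$-piece is $\gamma x^2+\delta xy+\beta y^2$, with norm $(2\beta+2\gamma+\delta)(x^2-xy+y^2)$. So the two conditions you name impose a single linear equation. That leaves a two-dimensional space of survivors, spanned for instance by $\Delta=\sum x^2y-\sum xy^2$ and $\sum x^2y+\sum xy^2-4xyz$. In particular, the alternating candidate passes both of your tests identically. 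Appealing to cyclic symmetry does not help. For cyclic $f$, the three-term norm in any other pair of coordinates is $C^{\pm1}Nf$, so it imposes exactly the same equations. Your claim that these conditions ``should force $f=0$'' is therefore false.

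\textbf{The missing ingredient is the $z^0$-piece.} This is the binary cubic $f(x,y,0)=\beta x^2y+\gamma xy^2$. Since $N(x^2y)=3x^2y-x^3-y^3$ and $N(xy^2)=3xy^2-x^3-y^3$ are linearly independent, this piece forces $\beta=\gamma=0$. The $z^1$ equation then gives $\delta=0$. This is the source of the paper's values $N\Delta=3xy(x-y)$ and of the $z$-free part $-(x+y)(2x-y)(x-2y)$ of $Nm_{21}$.

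\textbf{Degree two has the same blind spot.} On cyclic quadratics, the $xz$ and $yz$ coefficients of $Nf$ vanish identically. The form $xy+yz+zx$ is detected only through $N(xy+yz+zx)=x^2-xy+y^2$, which involves no $z$ at all. If you compute the full norms of the basis elements, as the paper does, the low-degree vanishing goes through.
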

\begin{proof}
Each operator in \eqref{eq:polynomial-operators} preserves degree. Direct
substitution gives $SAS=A^2$ and $SCS=C^{-1}$. Hence $S$ preserves both
conditions $Nf=0$ and $Cf=f$. A three-cycle and a transposition generate the
full permutation group, which proves the two symmetry assertions. In degrees zero and one, the cyclic spaces are spanned by $1$ and
$x+y+z$. Their norms are $3$ and $3z$, respectively. In degree two, the
basis $x^2+y^2+z^2,xy+yz+zx$ has norms
$4(x^2-xy+y^2)+3z^2,\quad x^2-xy+y^2$.
These are linearly independent. The kernels in degrees zero, one, and two are therefore zero.

In degree three, the symmetric basis
$p_3=x^3+y^3+z^3$, $m_{21}=x^2y+x^2z+xy^2+xz^2+y^2z+yz^2$, and $xyz$ has norms
\[
 3z^3,\qquad -(x+y)(2x-y)(x-2y)+4z(x^2-xy+y^2),
 \qquad z(x^2-xy+y^2),
\]
which are linearly independent. The alternating basis
$\Delta=(x-y)(x-z)(y-z)$ has norm $3xy(x-y)\ne0$.
\end{proof}

Period extraction requires the following divisibility. Put
\[
  w=x+y+z,\qquad c_+=(x+y)(x+z)(y+z).
\]
The factors of $c_+$ define the \emph{opposite pair planes}
$x+y=0$, $x+z=0$, and $y+z=0$. The additional factor $w$ in even
degree will reduce period extraction to odd degree.
Write $N_{ij}$ for the norm in coordinates $i,j$, fixing the remaining
coordinate. Thus $N_{12}=N$, and symmetry gives $N_{13}f=N_{23}f=0$.

\begin{lemma}\label{lem:sym-boundary-factors}
A symmetric homogeneous solution $f$ is divisible by $c_+$. In positive even
degree it is divisible by $wc_+$.
\end{lemma}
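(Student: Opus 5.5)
Since $f$ is cyclic and symmetric, it is invariant under the full permutation group on $x,y,z$. It also satisfies $N_{12}f=N_{13}f=N_{23}f=0$ and $f(-v)=(-1)^nf(v)$. The factors of $c_+$ are permuted by $S_3$, so it suffices to show that $f$ vanishes on the plane $x+y=0$. As the linear forms $x+y$, $x+z$, $y+z$ are pairwise coprime, divisibility by $c_+$ follows. Likewise $w$ is coprime to $c_+$, so in even degree it then suffices to show that $f$ vanishes on $w=0$. The plan is to restrict the three norm relations to these planes, obtain closed functional equations for the binary restrictions, and show these equations have only the zero solution.

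For the plane $x+y=0$, set $g(s,t)=f(s,-s,t)$, a binary form of degree $n$ with $g(s,t)=g(-s,t)$ by the transposition $S$. I evaluate $N_{12}f=0$ at $(s,-s,t)$. The substitution $A$ carries the direction $(1,-1)$ through $(1,2)$ and $(-2,-1)$, so this gives
\[
g(s,t)+f(s,2s,t)+f(-2s,-s,t)=0.
\]
I then evaluate $N_{13}f=0$ and $N_{23}f=0$ at suitably chosen points, for instance $(s,2s,t)$ and its images under $S_3$. The aim is to express $f(s,2s,t)$ and $f(-2s,-s,t)$ back in terms of values on the plane $x+y=0$. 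This should close the system into a relation of the form $g=\sum_i \pm g\circ\gamma_i$ for a finite set of explicit linear maps $\gamma_i$ of $(s,t)$.

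Next I expand $g=\sum_k a_k s^kt^{n-k}$ and compare coefficients. The evenness of $g$ in $s$ already kills the odd $k$. For the remaining coefficients, the $\gamma_i$ should act triangularly in the power of $s$, with diagonal entries such as $1\pm 2^k$ or $1-(-2)^k$. These entries never cancel to zero in characteristic zero, except possibly in the extreme degrees $k=0$ and $k=n$. I will handle those extreme coefficients directly: $a_0=f(0,0,1)$ and the coefficient of $t^n$ in a solution are forced to vanish by evaluating $Nf=0$ at $(0,0,1)$, which gives $3f(0,0,1)=0$. The coefficient $a_n$ is handled by symmetry in the same way.

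The plane $w=0$ in even degree is treated the same way with $h(s,t)=f(s,t,-s-t)$. $S_3$ acts on this plane through the linear action of $S_3$ on $(s,t)$, and $N_{12}$ restricted to the plane is a relation among $h$, $A$-translates, and values off the plane. I expect even degree to enter in the form $f(-v)=f(v)$. This is needed to identify the points $(-y,x-y,z)$ reached from $w=0$ with points reflected back onto the plane, or onto planes $x+y=0$ where $f$ already vanishes by the first step.

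The main obstacle is the closure step: choosing the evaluation points so that the off-plane values cancel between the three norm relations and leave a relation involving only $g$ (respectively $h$). If no such closure is found, the fallback is to argue by induction on degree. The derivative $\partial_z f$ of a symmetric solution satisfies the analogous relations in the $x,y$ variables. Comparing the restriction to $x+y=0$ with the order-three symmetry of the lines $(1,-1)$, $(1,2)$, $(-2,-1)$ then gives the needed vanishing coefficient by coefficient.
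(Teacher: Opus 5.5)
\textbf{There is a genuine gap.} Your strategy is the right one: restrict the norm relations to $x+y=0$ and to $w=0$, close them into functional equations for $g$ and $h$, and show those equations force vanishing. This is also how the paper proceeds. But your proposal stops exactly at the step that carries all the content. You never exhibit a combination of norm evaluations in which the off-plane values cancel.

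With $\nu=(-1)^n$, the paper's identity is
\[
\begin{aligned}
&(N_{23}f)(-s,-t,s)-\nu(N_{23}f)(-s-t,-t,s)\\
&\qquad-(N_{23}f)(-s-t,-s,-s)+(N_{12}f)(-s-t,0,s)=\nu\{g(s,t)-g(s+t,t)\}.
\end{aligned}
\]
Every other term cancels in pairs by full symmetry and $f(-v)=\nu f(v)$. For $w=0$ in even degree the identity is $(N_{12}f)(-s-t,t,s)-(N_{23}f)(-s-2t,-t,s)=h(s,t)-h(s+t,t)$. There $\nu=1$ is used to cancel $f(-s-2t,-s,-s-t)$ against $f(s+2t,s+t,s)$, which confirms your guess about where even degree enters. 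Without identities of this kind there is no proof.

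Your fallback does not repair this. $\partial_z f$ commutes only with $N_{12}$ and is not cyclic. It is therefore not a solution, and no inductive hypothesis applies to it.

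\textbf{The coefficient mechanism you anticipate is not the one that occurs.} The closed relation is the pure translation $\gamma(s,t)=(s+t,t)$, which is unipotent. In $g-g\circ\gamma$ the coefficient of $s^kt^{n-k}$ is $-\sum_{j>k}\binom{j}{k}a_j$, so every diagonal entry is zero. An argument resting on nonvanishing diagonal entries such as $1\pm2^k$ therefore gives nothing.

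What works instead:
\begin{enumerate}
\item Translation invariance in $s$ over $\QQ(t)$ makes $g$ constant in $s$. Equivalently, the subdiagonal entries $\binom{j}{j-1}=j$ kill $a_n,a_{n-1},\dots,a_1$ in turn.
\item Then $g(s,t)=g(0,t)=f(0,0,t)=0$.
\end{enumerate}

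Your separate treatment of $a_n$ ``by symmetry in the same way'' is unjustified. No single norm evaluation gives $f(1,-1,0)=0$ the way $(Nf)(0,0,1)=3f(0,0,1)$ does. The claim is unnecessary once the translation relation is in hand.

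For $h$, translation invariance only shows that $h$ is independent of $s$. You also need $h(s,t)=h(t,s)$, which comes from the transposition, to conclude that $h$ is constant. Positive degree then gives $h=0$.
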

\begin{proof}
Write $f\in V_n$, put $\nu=(-1)^n$, and let $g(s,t)=f(s,-s,t)$.
Expansion and symmetry give
\begin{align*}
 &(N_{23}f)(-s,-t,s)-\nu(N_{23}f)(-s-t,-t,s)\\
 &\quad \quad -(N_{23}f)(-s-t,-s,-s)+(N_{12}f)(-s-t,0,s)
   =\nu\{g(s,t)-g(s+t,t)\}.
\end{align*}
Therefore, $g$ is translation-invariant in $s$ over $\QQ(t)$, hence constant in $s$.
The norm at $(0,0,t)$ makes it zero. Symmetry gives the three factors of $c_+$.
For even degree, put $h(s,t)=f(s,t,-s-t)$. The identity
\[
 (N_{12}f)(-s-t,t,s)-(N_{23}f)(-s-2t,-t,s)
     =h(s,t)-h(s+t,t)
\]
makes $h$ independent of $s$. Since $h(s,t)=h(t,s)$, it is constant.
Positive homogeneity gives $h=0$. The distinct linear factors are
relatively prime, which proves the product divisibility.
\end{proof}

\subsection{Binary periods and discrete antiderivatives}

We use both binary period spaces in the classification, but only the
symmetric space in period extraction.

We write $\epsilon=1$ for the symmetric case and $\epsilon=-1$
for the alternating case. A ternary polynomial $f$ has
\emph{symmetry $\epsilon$} if it is cyclic and $Sf=\epsilon f$.
For binary polynomials, define the three-term norm by
\[
  N_2q(X,Y)
    =q(X,Y)+q(-Y,X-Y)+q(Y-X,-X).
\]
For $d\ge0$, let $\mathcal B_d^\epsilon$ be the space of
polynomials $q\in\QQ[X,Y]_d$ satisfying
\begin{equation}\label{eq:signed-binary}
\begin{gathered}
  q(Y,X)=\epsilon q(X,Y),\qquad
  q(-X,Y)=q(X,-Y)=-\epsilon q(X,Y),\\
  q(X,0)=q(0,Y)=0,\qquad N_2q=0.
\end{gathered}
\end{equation}
We put $\mathcal B_d^\epsilon=0$ for $d<0$ and abbreviate
$\mathcal B_d=\mathcal B_d^+$ and
$\mathcal B_d^0=\mathcal B_d^-$.
In the notation of the introduction, these are
\[
  \mathcal B_d=\mathscr P_d^{0,\mathrm{odd}}(\QQ),
  \qquad
  \mathcal B_d^0=\mathscr P_d^{0,\mathrm{ev}}(\QQ).
\]
The parity conditions imply $q(-X,-Y)=q(X,Y)$, so both spaces
vanish in odd total degree. For $\epsilon=1$, the axis conditions
follow from oddness in each variable. For $\epsilon=-1$, they
impose the normalization discussed in the introduction.

Using the symmetry and parity conditions in
\eqref{eq:signed-binary}, we can rewrite $N_2q=0$ as
\begin{equation}\label{eq:binary-addition}
  q(X,Y)-q(X-Y,Y)+\epsilon q(X,X-Y)=0.
\end{equation}
For $q\in\mathcal B_d^\epsilon$ with $d>0$, define its
\emph{polynomial lift} from the cyclic sum $\Phi(q)$ by
\begin{equation}\label{eq:polynomial-lift}
  L(q)(x,y,z)
    =\frac{\Phi(q)(x,y,z)}{x+y+z}
    =\frac{q(x,y)+q(y,z)+q(z,x)}{x+y+z}.
\end{equation}
To see that this quotient is a polynomial, substitute
$(X,Y)=(x+y,y)$ into \eqref{eq:binary-addition} and use
$q(y,x)=\epsilon q(x,y)$ to obtain
\[
  q(x,y)=q(x+y,y)+q(x,x+y).
\]
The parity conditions then give
$\Phi(q)(x,y,-x-y)=0$, so $w=x+y+z$ divides $\Phi(q)$.
Thus $L(q)$ is homogeneous of degree $d-1$ and has symmetry
$\epsilon$. We set $L(0)=0$ and write
$L(\mathcal B_d^\epsilon)$ for the image of this construction.
We use these lifts in period extraction, without assuming that they
satisfy the ternary norm equation.

We next introduce the operator that relates finite differences
to derivatives. Let
\begin{equation}\label{eq:Bernoulli}
  B(T)=\frac{T}{e^T-1}
      =\sum_{r\ge0}\frac{B_r}{r!}T^r,
  \qquad B_1=-\frac12,
\end{equation}
be the generating series for the Bernoulli numbers.
For a parameter $t$ independent of $X$, let
$(\tau_tg)(X)=g(X-t)$.
Taylor's formula gives $\tau_t=e^{-t\partial_X}$, and hence
\[
  B(-t\partial_X)(\tau_t-I)=-t\partial_X.
\]
Thus the Bernoulli operator converts a finite difference into a
derivative. The series acts finitely on polynomials and is invertible,
since $B(0)=1$.

We now define the polynomial obstruction used in the symmetric
argument. Let $X,Y,Z$ be independent auxiliary variables,
distinct from the source variables $x,y,z$.
For a symmetric polynomial $f$, set
\begin{equation}\label{eq:Jdef}
\begin{aligned}
  \psi_1&=f(Z-2Y,-X,X-Y),\\
  \psi_2&=\psi_1-f(Z-X,X-2Y,-Y)
                 +f(Z-2Y,Y-X,X-2Y),\\
  J_f&=ZB(-Y\partial_X)\psi_1
          -YB(-Z\partial_X)\psi_2.
\end{aligned}
\end{equation}
This is the \emph{Bernoulli obstruction}. Its substitutions come from
the graph construction in Section~\ref{sec:symmetric}, where
Proposition~\ref{thm:J-vanishes} proves $J_f=0$ for symmetric solutions.
Every source triple has coordinate sum $Z-3Y$, independent of $X$.
Since the Bernoulli operators differentiate only in $X$, every symmetric
polynomial $v$ satisfies
\begin{equation}\label{eq:J-w-mult}
  J_{w^rv}=(Z-3Y)^rJ_v
  \qquad (r\ge0).
\end{equation}
In particular, if $v=w^2u$, then $J_v=0$ if and only if $J_u=0$.

\subsection{Extracting symmetric binary periods}
\label{app:period-extraction}

The next lemma describes the odd degree symmetric kernel of $J$.
It does not assume the norm equation $Nv=0$.

\begin{lemma}\label{lem:period-extraction}
Let $m>0$ be odd and let $v\in V_m$ be symmetric. Then we have
\begin{equation}\label{eq:common-kernel}
 J_v=0\quad\Longleftrightarrow\quad
 v\in\bigoplus_{j\ge0}w^{2j}L(\mathcal B_{m+1-2j}).
\end{equation}
Every lift on the right is polynomial, and the expression is unique.
\end{lemma}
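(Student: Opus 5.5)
The plan has three parts: the easy inclusion, uniqueness together with a dimension count, and the reverse inclusion via a filtration argument.

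\textbf{The easy inclusion.} Every element on the right lies in the kernel of $J$. By \eqref{eq:J-w-mult}, $J_{w^{2j}L(q)}=(Z-3Y)^{2j}J_{L(q)}$, so it suffices to show $J_{L(q)}=0$ for each $q\in\mathcal B_{d}$ with $d=m+1-2j$ even. Write $wL(q)=\Phi(q)$, and recall that every source triple in \eqref{eq:Jdef} has coordinate sum $Z-3Y$. Applying \eqref{eq:J-w-mult} with $r=1$ gives
\[
(Z-3Y)J_{L(q)}=J_{\Phi(q)},
\]
so it is enough to show $J_{\Phi(q)}=0$.

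To do this, expand $\Phi(q)$ at each of the three substituted triples. The result is a sum of values of $q$ at pairs of linear forms in $X,Y,Z$. The terms in which $X$ appears with a fixed increment $Y$ or $Z$ pair up as finite differences $(\tau_Y-I)$ or $(\tau_Z-I)$, by \eqref{eq:binary-addition}. The Bernoulli operators then turn these into derivatives via $B(-t\partial_X)(\tau_t-I)=-t\partial_X$. The prefactors $Z$ and $Y$ in $J$ are arranged so that the two resulting derivative terms $-ZY\partial_X(\cdot)$ cancel. Any terms that are independent of $X$ cancel by the oddness and symmetry of $q$. This is a finite check, and I would organize it by tracking which arguments depend on $X$.

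\textbf{Uniqueness and a dimension bound.} Lifts are polynomial, as shown above \eqref{eq:polynomial-lift}. For directness of the sum, restrict to the plane $w=0$. There $v|_{w=0}=L(q_0)|_{w=0}$, where $q_0$ is the $j=0$ component. Moreover, $L(q)|_{w=0}$ determines $q$: we have $\Phi(q)=wL(q)$, and differentiating in the normal direction gives $L(q)|_{w=0}=\partial_w\Phi(q)|_{w=0}$. Setting $z=0$ and reading off the appropriate coefficient then recovers $q(x,y)$, because $q$ vanishes on the axes. So $q_0$ is determined by $v$. Subtracting $L(q_0)$ and dividing by $w^2$ (legitimate once we know $w^2$ divides the difference, see below) reduces to degree $m-2$, and induction on $m$ gives uniqueness.

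\textbf{The reverse inclusion, which is the main obstacle.} Suppose $J_v=0$. The plan is to filter by powers of $w$: write the restriction of $v$ to $w=0$ and show that it equals $L(q)|_{w=0}$ for some $q\in\mathcal B_{m+1}$. Then show that the difference $v-L(q)$, which vanishes on $w=0$, is divisible by $w^2$ and not merely by $w$. Once that holds, \eqref{eq:J-w-mult} gives $J_{(v-L(q))/w^2}=0$ and induction applies.

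To extract $q$, set up the identity $J_v=0$ and invert the Bernoulli operator, which is invertible since $B(0)=1$. This recovers finite differences in $X$ of $\psi_1$ and $\psi_2$. Specializing $Z$ (for instance $Z=3Y$, so that the source triples lie on $w=0$) turns $J_v=0$ into a functional equation on $v|_{w=0}$. That equation should say $v|_{w=0}(x,y,-x-y)=\partial_X$ of a discrete antiderivative; take $q$ to be this antiderivative. Then check that $q$ satisfies \eqref{eq:binary-addition} and the parity and axis conditions, using the symmetry of $v$ and the oddness of $m$.

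The divisibility by $w^2$ is, I expect, the hardest step. Symmetric polynomials of odd degree that are divisible by $w$ should satisfy a parity constraint on the first-order coefficient in $w$, forcing it to come from the same mechanism. I would try comparing the coefficient of $(Z-3Y)^1$ in $J$, which should isolate $\partial_w v|_{w=0}$ and show that it vanishes once the $L(q)$ part is removed. If that fails, the fallback is a dimension count: compare $\dim\ker J$ on odd-degree symmetric forms with $\sum_j\dim\mathcal B_{m+1-2j}$ by computing the rank of $J$ via its leading term in $Z$.
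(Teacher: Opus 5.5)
There is a genuine gap at the step that produces $q$. Specializing to $Z=3Y$ carries no information: for \emph{every} symmetric $v\in V_m$ with $m$ odd, $J_v(X,Y,3Y)=0$ identically. Put $h(X,Y)=v(-Y,X,Y-X)$, $\mathcal D=-Y\partial_X$ and $\tau=e^{\mathcal D}$. Odd homogeneity and symmetry give $\psi_1|_{Z=3Y}=-h$ and $\psi_2|_{Z=3Y}=-(1+\tau+\tau^2)h$, hence
\[
 J_v(X,Y,3Y)=-3YB(\mathcal D)h+YB(3\mathcal D)(1+\tau+\tau^2)h=0
\]
by $B(3t)(1+e^t+e^{2t})=3B(t)$. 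So you get no functional equation on $v|_{w=0}$ from which to build a discrete antiderivative; indeed every $h$ divisible by $Y$ is already of the form $(\tau-1)e$. The restriction to $w=0$ alone cannot tell you which period to subtract.

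The content of $J_v=0$ begins at first order in $Z-3Y$, and it couples $h$ to the normal-derivative data $H(X,Y)=(\partial_1+\partial_2)v(-Y,X,Y-X)$. With $p(X,Y)=-\int_0^XH(s,Y)\,ds$ and $C_v=p(X-Y,Y)-p(X,Y)-h$, one has $\partial_ZJ_v|_{Z=3Y}=\tau^2B(3\mathcal D)B(\mathcal D)C_v$. That operator is invertible, so $J_v=0$ gives $C_v=0$. The period must then be built from $p$, not from $h$, via $q=-(Xp(Y,-X)+Yp(X,Y))/(m+1)$, so that $q_Y=-p$. You still have to prove $q\in\mathcal B_{m+1}$:
\begin{itemize}
\item differentiating $C_v=0$ makes $H$ even and symmetric;
\item $c_+\mid v$ (from $J_v(X,0,Z)=Zv(Z,-X,X)$) together with $C_v(0,Y)=0$ gives $q(Y,Y)=0$;
\item the three-term defect $D_q$ satisfies $(\partial_X+\partial_Y)D_q=h(X,Y)+h(Y,X)=0$.
\end{itemize}
Your remark about the $(Z-3Y)^1$ coefficient points at the right object but assigns it to the wrong step. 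The dimension-count fallback is not a plan, since computing the rank of $J$ is the whole problem.

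Two smaller points. First, once $C_v=0$ is available, the $w^2$-divisibility you expect to be hardest is routine. Since $p_{L(q)}=-q_Y=p_v$, the difference $v-L(q)$ has zero normal data. By symmetry all three pair sums of its first derivatives vanish on $w=0$, so each first derivative vanishes there, and Euler's identity plus one differentiation gives $w^2\mid v-L(q)$.

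Second, your uniqueness justification fails as written. On $w=0$, setting $z=0$ only restricts to the line $x+y=0$, where $L(q)$ vanishes because $c_+\mid L(q)$. The injectivity of $q\mapsto L(q)|_{w=0}$ is nevertheless true. Differentiating \eqref{eq:binary-addition} in $Y$ and using the parities gives $L(q)(-Y,X,Y-X)=g(X,Y)-g(X-Y,Y)$ with $g=q_Y$. Its vanishing forces $g=cY^m$, and then $q=0$ by the axis conditions.

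The easy inclusion is fine in outline. Concretely, the two frame polynomials of $\Phi(q)$ are $(1-\tau_Y)r$ and $(1-\tau_Z)r$ with $r=q(X,Y)-q(X,Z-2Y)$, and the Bernoulli identity makes the two terms of $J_{\Phi(q)}$ cancel.
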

\begin{proof}
We first read the first order data on $w=0$, then reconstruct a binary
period, and finally divide the remainder by $w^2$.

\smallskip\noindent\emph{The first order data.}
First, direct substitution gives $J_v(X,0,Z)=Z v(Z,-X,X)$. Thus $J_v=0$ implies vanishing
on each opposite pair plane, and hence $c_+\mid v$.
Write $\partial_i$ for differentiation in the $i$th argument of $v$, and put
\[
 h(X,Y)=v(-Y,X,Y-X),\quad
 H(X,Y)=(\partial_1+\partial_2)v(-Y,X,Y-X),\quad
 p(X,Y)=-\int_0^X H(s,Y)\,ds.
\]
Define
\[
 C_v(X,Y)=p(X-Y,Y)-p(X,Y)-h(X,Y),\qquad
 \mathcal D=-Y\partial_X,\qquad \tau=e^{\mathcal D}.
\]
The operator $\tau$ translates $X$ to $X-Y$. We claim that
\begin{equation}\label{eq:normal-jet}
 J_v(X,Y,3Y)=0,\qquad
 \left.\partial_ZJ_v\right|_{Z=3Y}
     =\tau^2 B(3\mathcal D)B(\mathcal D)C_v.
\end{equation}
Here all power series are finite operators on the fixed polynomial degree.
To verify the claim, let $v_i=(\partial_i v)(-Y,X,Y-X)$ and set
$\psi_i^0=\psi_i|_{Z=3Y}$,
$\psi_i^1=(\partial_Z\psi_i)|_{Z=3Y}$. Odd homogeneity and symmetry give
\[
 \psi_1^0=-h,\quad \psi_2^0=-(1+\tau+\tau^2)h,\qquad
 \psi_1^1=v_1,\quad \psi_2^1=(1+\tau)v_1-\tau^2v_3.
\]
The chain rule also gives $h_X=v_2-v_3$, $h_Y=-v_1+v_3$, and $p_X=-(v_1+v_2)$.
Differentiate the defining expression for $J_v$, including its Bernoulli
operator. The required cancellations are exactly
\begin{align*}
 B(3t)(1+e^t+e^{2t})&=3B(t),\\
 -B(t)+tB'(3t)(1+e^t+e^{2t})+te^{2t}B(3t)
     &=-e^{2t}B(3t)B(t),\\
 B(\mathcal D)(\tau-1)&=\mathcal D.
\end{align*}
The first two identities follow by substituting $B(t)=t/(e^t-1)$ and
clearing denominators. The third is its defining finite-difference identity.
This proves \eqref{eq:normal-jet}. The operator on its right is invertible,
since each factor has constant term one. Consequently $J_v=0$ gives $C_v=0$.

\smallskip\noindent\emph{Reconstruction of the binary period.}
Symmetry and homogeneity give
\begin{equation}\label{eq:H-symmetry}
 h_X=H(X,Y)-H(Y-X,Y),\qquad H(-Y,-X)=H(X,Y).
\end{equation}
Differentiate $C_v=0$ in $X$ and use the first identity to obtain
$H(-X,Y)=H(X,Y)$. Since $H$ has even total degree, it is separately even
and symmetric. Thus $p$ is odd in $X$ and even in $Y$, and
$p_X(X,Y)=p_X(Y,-X)$. Define
\begin{equation}\label{eq:q-extract}
 q(X,Y)=-\frac{Xp(Y,-X)+Yp(X,Y)}{m+1}.
\end{equation}
Euler's identity for the degree $m$ polynomial $p$ gives $q_Y=-p$.
The displayed formula and the parities of $p$ show that $q$ is symmetric,
odd in each variable, and zero on both axes. Since $c_+\mid v$, we have
$h(0,Y)=0$. Evaluating $C_v=0$ at $X=0$ therefore gives $p(Y,Y)=0$,
and hence $q(Y,Y)=0$.

Set
\[
 D_q=q(X,Y)-q(X-Y,Y)+q(X,X-Y).
\]
Using $q_Y=-p$, symmetry of $q$, and oddness of $p$ in its first
variable, we obtain
\[
 (\partial_X+\partial_Y)D_q
 =-p(Y,X)-p(X,Y)+p(X-Y,Y)-p(X-Y,X)
 =h(X,Y)+h(Y,X)=0.
\]
Here $C_v=0$ gives the second equality. A homogeneous polynomial of degree $m+1$
annihilated by this derivative is a multiple of $(X-Y)^{m+1}$.
At $Y=0$ it equals $q(X,X)=0$. Thus $D_q$ vanishes, which proves
$q\in\mathcal B_{m+1}$.

\smallskip\noindent\emph{The lift and descent in degree.}
We check directly that every lift has zero obstruction. For
$q\in\mathcal B_d$, put $t=Z-2Y$ and
$r(X,Y,Z)=q(X,Y)-q(X,t)$. If $\tau_u$ translates $X$ to $X-u$, the two
frame polynomials of $\Phi(q)$ are
\[
 ((1-\tau_Y)r,\ (1-\tau_Z)r).
\]
This follows by expanding the three cyclic sums and using
\eqref{eq:binary-addition}. Since
$B(-u\partial_X)(1-\tau_u)=u\partial_X$, the two terms of
$J_{\Phi(q)}$ cancel. The identity $\Phi(q)=wL(q)$ and
\eqref{eq:J-w-mult} then give $J_{L(q)}=0$.
Cancellation on each opposite pair plane also gives $c_+\mid L(q)$.

The polynomial extracted from $L(q)$ is the original $q$. Indeed,
differentiate $\Phi(q)=wL(q)$ once in each of the first two variables
and restrict to $w=0$. The resulting first-plane data give
$p_{L(q)}=-q_Y$, which recovers \eqref{eq:q-extract}.
For a symmetric positive degree homogeneous polynomial, $p_v=0$ is
equivalent to $w^2\mid v$: its three pair sums of first derivatives
then vanish on $w=0$, so twice each individual derivative vanishes there.
Euler's identity gives $w\mid v$, and differentiation gives a second
factor of $w$. The converse is immediate.
It follows that $v-L(q)=w^2u$. The quotient $u$ is symmetric and satisfies
$J_u=0$ by \eqref{eq:J-w-mult}. Repeat in degree $m-2$. The final remainder
has degree below three and is divisible by $c_+$, so it is zero.
The extracted first-plane data at each step also prove uniqueness.
\end{proof}

The next form of the extraction supplies the two seeds for the symmetric
extension. Its only negative exponent occurs in odd degree and accounts
for the possible simple pole.
\begin{corollary}\label{cor:period-towers}
If a symmetric solution $f\in V_n$ satisfies $J_f=0$, then uniquely
\begin{equation}\label{eq:sym-tower}
 f=\sum_{j\ge0}w^{e_j}\Phi(q_j),\qquad
 \begin{cases}
 e_j=2j,\ q_j\in\mathcal B_{n-2j},& n\text{ even},\\
 e_j=2j-1,\ q_j\in\mathcal B_{n+1-2j},& n\text{ odd}.
 \end{cases}
\end{equation}
The sum is finite. In odd degree, its first term is interpreted as the
polynomial lift $L(q_0)$.
\end{corollary}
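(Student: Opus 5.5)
We reduce the statement to Lemma~\ref{lem:period-extraction} by splitting on the parity of $n$ and translating between the lifts $L(q)$ and the cyclic sums $\Phi(q)=wL(q)$. Throughout we use that $J$ is linear in its argument and satisfies the multiplicativity \eqref{eq:J-w-mult}. Finiteness of each sum is automatic, since $\mathcal B_d=0$ for $d<0$.

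\emph{Odd $n$.} Here we apply Lemma~\ref{lem:period-extraction} directly with $m=n$ and $v=f$. Since $f$ is symmetric and $J_f=0$, the lemma writes $f$ uniquely as
\[
f=\sum_{j\ge0}w^{2j}L(q_j),\qquad q_j\in\mathcal B_{n+1-2j}.
\]
For $j\ge1$ we have $w^{2j}L(q_j)=w^{2j-1}\Phi(q_j)$, which matches the exponent $e_j=2j-1$. For $j=0$ the term $L(q_0)$ is exactly the formal expression $w^{-1}\Phi(q_0)$, as stipulated in the statement. Uniqueness of the tower then follows from uniqueness in the lemma, because $\Phi$ and $L$ determine each other via $\Phi(q)=wL(q)$.

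\emph{Even $n$.} For $n=0$ there is nothing to prove: Lemma~\ref{lem:sector-decomp} gives $f=0$, and $\mathcal B_0=0$. For positive even $n$, Lemma~\ref{lem:sym-boundary-factors} shows $w\mid f$, so $f=wv$ with $v$ symmetric of odd degree $n-1$. By \eqref{eq:J-w-mult},
\[
0=J_f=(Z-3Y)J_v,
\]
so $J_v=0$. Applying Lemma~\ref{lem:period-extraction} with $m=n-1$ gives $v=\sum_j w^{2j}L(q_j)$ with $q_j\in\mathcal B_{n-2j}$. Multiplying by $w$ yields $f=\sum_j w^{2j}\Phi(q_j)$, which is the claimed form with $e_j=2j$. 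For uniqueness, suppose $f$ has such an expansion. Dividing by $w$ (every term is divisible by $w$, since $\Phi(q)=wL(q)$) gives an expansion of $v$ of the form in the lemma. Uniqueness there, and division in the integral domain $\QQ[x,y,z]$, then give uniqueness of the tower.

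The only point requiring care is the even case. We must make sure that divisibility by $w$ comes from Lemma~\ref{lem:sym-boundary-factors}, which uses the norm equation, and not from the extraction lemma, which does not assume it. We must also check that the quotient $v$ inherits symmetry and the vanishing of $J$; this is exactly what \eqref{eq:J-w-mult} provides.
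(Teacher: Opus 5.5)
Your proposal is correct and follows the paper's proof: split by parity, apply Lemma~\ref{lem:period-extraction} directly to $f$ in odd degree, and in positive even degree use Lemma~\ref{lem:sym-boundary-factors} to write $f=wv$, deduce $J_v=0$ from \eqref{eq:J-w-mult}, then apply the lemma to $v$ and multiply by $w$. You also spell out the uniqueness step by dividing by $w$, which the paper leaves implicit.
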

\begin{proof}
For $n=0$, the assertion follows from Lemma~\ref{lem:sector-decomp}.
Assume $n>0$. In even degree, Lemma~\ref{lem:sym-boundary-factors} gives $f=wv$ with
$v$ symmetric of odd degree. Equation~\eqref{eq:J-w-mult} gives $J_v=0$.
Apply Lemma~\ref{lem:period-extraction} and multiply its expansion by $w$.
In odd degree apply that lemma directly to $f$.
\end{proof}

\subsection{Classical cusp forms and outer critical values}
\label{sec:classical-conventions}

We fix the conventions and $L$-value nonvanishing needed for the
boundary comparison and pole removal.

Let $\GL_2^+(\RR)$ denote the group of real $2\times2$ matrices
with positive determinant. For a holomorphic function $h$ on
$\mathfrak H$, an integer $k$, and
$A=\left(\begin{smallmatrix}a&b\\c&d\end{smallmatrix}\right)
\in\GL_2^+(\RR)$, define
\[
  (h|_k A)(\tau)
    =(\det A)^{k/2}(c\tau+d)^{-k}
       h\left(\frac{a\tau+b}{c\tau+d}\right).
\]
For a finite index subgroup $\Lambda\subset\SL_2(\ZZ)$, a weight $k$
cusp form is holomorphic on $\mathfrak H$, satisfies $h|_k\gamma=h$
for $\gamma\in\Lambda$, and has a Fourier expansion with only positive
powers at every cusp. This gives exponential decay in each cusp
coordinate and the bound
\[
  (\Im\tau)^{k/2}|h(\tau)|=O(1)
  \qquad(\tau\in\mathfrak H).
\]

We now specialize to the full-level space
$S_k=S_k(\SL_2(\ZZ))$.
For a positive integer $m$, define
\[
\begin{aligned}
  \mathcal H_m
    &=\left\{
       \begin{pmatrix}a&b\\0&d\end{pmatrix}:
       a,b,d\in\ZZ,\ a,d>0,\ ad=m,\ 0\le b<d
       \right\},\\
  T_mh
    &=m^{k/2-1}\sum_{A\in\mathcal H_m}h|_k A.
\end{aligned}
\]
For a prime $p$, this becomes
\[
  (T_ph)(\tau)
    =p^{k-1}h(p\tau)
       +\frac1p\sum_{b=0}^{p-1}
          h\left(\frac{\tau+b}{p}\right).
\]
Suppose that $h$ is a simultaneous eigenform for these operators,
normalized by
\[
  h(\tau)=\sum_{m\ge1}a_h(m)e^{2\pi i m\tau},
  \qquad a_h(1)=1.
\]
Then $T_mh=a_h(m)h$, and the Hecke relations give
\[
  L(h,s)=\sum_{m\ge1}\frac{a_h(m)}{m^s}
        =\prod_p L_p(h,s),
  \qquad
  L_p(h,s)=\bigl(1-a_h(p)p^{-s}+p^{k-1-2s}\bigr)^{-1}
\]
in the common half-plane of absolute convergence.
Write $L^{(3)}(h,s)=\prod_{p\ne3}L_p(h,s)$, omitting the factor at $3$.
The completed function $\Lambda(h,s)=(2\pi)^{-s}\Gamma(s)L(h,s)$,
where $\Gamma$ is Euler's gamma function, has entire continuation by
its Mellin integral. These conventions agree with
\eqref{eq:analytic-period}
\cite[Sections~5.1--5.3 and 5.9--5.10]{DiamondShurman}.
The next lemma concerns the last two critical integers in $1,\ldots,k-1$.

\begin{lemma}\label{lem:outer-values}
If $h$ is a normalized level one cusp eigenform of weight $k>6$, then
$L(h,k-2)$ and $L(h,k-1)$ are nonzero. The same holds after omitting any
finite set of Euler factors.
\end{lemma}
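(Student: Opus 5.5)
The plan is to use absolute convergence of the Euler product at $s=k-2$ and $s=k-1$, together with Deligne's bound. Write $a_h(p)=p^{(k-1)/2}(\alpha_p+\bar\alpha_p)$ with $|\alpha_p|=1$, which Deligne's theorem gives. The Euler factor then factors as
\[
L_p(h,s)^{-1}=\bigl(1-\alpha_p p^{(k-1)/2-s}\bigr)\bigl(1-\bar\alpha_p p^{(k-1)/2-s}\bigr).
\]
Put $\sigma=\Re s-(k-1)/2$. The product $\prod_p L_p(h,s)$ converges absolutely, to a nonzero limit, whenever $\sigma>1$, because $\sum_p p^{-\sigma}<\infty$ and no factor vanishes. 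For $s=k-2$ we have $\sigma=(k-3)/2$, and for $s=k-1$ we have $\sigma=(k-1)/2$. Both exceed $1$ exactly when $k>5$, so the hypothesis $k>6$ suffices.

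In that range the Dirichlet series equals the Euler product. Each factor $L_p(h,s)$ is finite and nonzero, since $|\alpha_p p^{-\sigma}|<1$. An absolutely convergent infinite product of nonzero factors is nonzero. So $L(h,k-2)\neq0$ and $L(h,k-1)\neq0$. Omitting finitely many Euler factors removes finitely many nonzero finite numbers, so nonvanishing persists; this covers $L^{(3)}$ in particular.

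To avoid relying on Deligne, I would instead use Hecke's elementary bound $a_h(p)=O(p^{k/2})$. Combined with the factorization $|\alpha_p\beta_p|=p^{k-1}$ and the trivial estimate on roots, this gives $|\alpha_p|,|\beta_p|\le Cp^{k/2}$ up to adjusting. The same argument then works once $k-2-k/2>1$, that is $k>6$. This matches the stated hypothesis exactly, and suggests it is the route intended.

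The main obstacle is the elementary-bound version. It requires care that the roots $\alpha_p,\beta_p$ of $X^2-a_h(p)X+p^{k-1}$ satisfy $\max(|\alpha_p|,|\beta_p|)\le Cp^{k/2}$ uniformly. This follows from $|a_h(p)|\le Cp^{k/2}$ and the quadratic formula, since $|\alpha_p|\le|a_h(p)|+p^{(k-1)/2}$. After that, convergence of $\sum_p p^{k/2-(k-2)}=\sum_p p^{2-k/2}$ requires $k>6$. I would present the Hecke-bound argument, and remark that Deligne's theorem improves the range.
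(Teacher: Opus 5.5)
Your first argument, using Deligne's bound, is correct: every factor satisfies $|\alpha_p p^{(k-1)/2-s}|<1$, and the proof works for $k>5$. But it imports a very deep theorem. The paper uses only the elementary Hecke bound, which is exactly where the hypothesis $k>6$ comes from.

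The elementary version, which you say you would present, has a gap at the step you dismiss as ``the same argument.'' In the Deligne case you got finiteness of each Euler factor from $|\alpha_p p^{-\sigma}|<1$. From $|\alpha_p|,|\beta_p|\le Cp^{k/2}$ you only get $|\alpha_p p^{-(k-2)}|\le Cp^{2-k/2}$. This is below $1$ only for $p$ large relative to $C$. For the finitely many small primes, nothing you wrote rules out $1-a_h(p)p^{-s}+p^{k-1-2s}=0$, that is, an infinite local factor. The constant is harmless for the tail, since $\sum_p|a_h(p)|p^{-\Re s}$ is still summable. However, an absolutely convergent product is guaranteed to have a nonzero limit only when no individual factor degenerates, so $L(h,k-2)\ne0$ does not yet follow.

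The missing idea is a root bound without the constant. The paper gets it from the whole local series rather than from $a_h(p)$ alone. The Hecke recurrence gives $\sum_{r\ge0}a_h(p^r)t^r=(1-a_h(p)t+p^{k-1}t^2)^{-1}$. The bound $a_h(p^r)=O(p^{rk/2})$ makes this series holomorphic on $|t|<p^{-k/2}$, because the implied constant disappears under $r$th roots. The rational function can therefore have no pole in that disc, so $|\alpha_p|,|\beta_p|\le p^{k/2}$ exactly. Hence every local factor is finite and nonzero for $\Re s>k/2$.

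An equivalent repair: $\sum_r a_h(p^r)p^{-rs}$ is a subseries of the absolutely convergent Dirichlet series. Multiplying it by the polynomial $1-a_h(p)t+p^{k-1}t^2$ at $t=p^{-s}$ gives $1$, so the local denominator cannot vanish.

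You should also state the Hecke bound as $a_h(n)=O(n^{k/2})$ for all $n$, not just for primes. Identifying $L(h,s)$ with its Euler product needs absolute convergence of the Dirichlet series, which holds for $\Re s>k/2+1$. This gives the same threshold $k-2>k/2+1$.
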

\begin{proof}
The Fourier coefficient integral at height $y=1/m$ and the bound
$y^{k/2}|h(\tau)|\ll1$ give $a_h(m)=O(m^{k/2})$.
Thus the Dirichlet series converges absolutely for $\Re s>k/2+1$.
To check each local denominator as well, the Hecke recurrence gives
\[
 \sum_{r\ge0}a_h(p^r)t^r=(1-a_h(p)t+p^{k-1}t^2)^{-1}.
\]
The coefficient bound makes this holomorphic for $|t|<p^{-k/2}$.
Consequently, if the denominator is $(1-\alpha_pt)(1-\beta_pt)$, then
$|\alpha_p|,|\beta_p|\le p^{k/2}$. Each local factor is therefore finite
and nonzero for $\Re s>k/2$. In the smaller half-plane
$\Re s>k/2+1$, the sum over primes of the absolute deviations of the
local denominators from one converges. Their product is nonzero there.
Since $k-2>k/2+1$ when $k>6$, both asserted values lie in that half-plane.
Removing finitely many nonzero factors preserves nonvanishing.
\end{proof}

\section{Arithmetic classes and polynomial support}
\label{sec:ordinary}\label{sec:restricted}

Proposition~\ref{thm:good-apartment} identifies solutions with
polynomial valued functionals on a restricted incidence graph.
Proposition~\ref{thm:boundary-evaluation} proves two-variable support
for functionals on the full graph. The next two sections construct
the required extensions.

\subsection{Cohomology and the restricted incidence graph}

Unless otherwise specified, coefficient modules are rational vector
spaces with linear group actions. For a left $H$-module $M$, let $C^r(H,M)$ be the functions
$H^r\to M$ with the group coboundary, and let $H^r(H,M)$ be its
cohomology. We use right modules for group homology $H_r(H,M)$.

The invariant and coinvariant spaces are
\[
  M^H=\{m\in M:hm=m\text{ for every }h\in H\},
  \qquad
  M_H=M/\langle hm-m:h\in H,\ m\in M\rangle.
\]
In particular, $H^0(H,M)$ is the invariant space $M^H$.
In degree one, a cocycle is a \emph{crossed homomorphism},
a map $b:H\to M$ satisfying
\[
  b(gh)=b(g)+g\,b(h).
\]
Two such maps represent the same cohomology class if their
difference has the form $g\mapsto gm-m$ for some $m\in M$.
In degree two, a cocycle is a function $z:H\times H\to M$
satisfying
\[
  g\,z(h,k)-z(gh,k)+z(g,hk)-z(g,h)=0.
\]
A two-coboundary has the form
$g\,b(h)-b(gh)+b(g)$ for a function $b:H\to M$.
These are the formulas used for the connecting class in
Section~\ref{sec:symmetric}
\cite[Chapters~III--IV]{Brown}.

Write $\Hom_H(V,M)$ for linear maps satisfying $\varphi(hv)=h\varphi(v)$,
and $\Ext_H^1(V,M)$ for equivalence classes of extensions
$0\to M\to E\to V\to0$.
For spaces, $H_r$ and $H^r$ denote singular homology and cohomology
with the specified local system. A tilde denotes reduced homology.
On a simply connected cover the coefficients are constant, with
transport under deck transformations given by the group action.

For $1\to U\to H\to L\to1$, we use the Hochschild--Serre spectral sequence
\begin{equation}\label{eq:HS-orientation}
  E_2^{p,q}=H^p(L,H^q(U,M))
  \ \Longrightarrow\ H^{p+q}(H,M).
\end{equation}
It computes $U$-cohomology first, followed by $L$-cohomology
\cite[Chapter~VII]{Brown}.

Our maximal parabolic groups stabilize rational lines or planes and
are block upper triangular in adapted coordinates. The subgroup with
identity diagonal blocks is the \emph{unipotent radical}, and the diagonal
blocks give the \emph{Levi quotient}. For an arithmetic subgroup,
$U$ is a lattice in the radical and $L$ its Levi quotient. A radical
element is a \emph{unipotent shear}
\[
  u(B)=
  \begin{pmatrix}
    I_r&B\\
    0&I_s
  \end{pmatrix},
  \qquad B\in M_{r\times s}(\ZZ),
\]
where $I_r,I_s$ are identity matrices. It acts on the two coordinate
blocks by $(x,y)\mapsto(x+By,y)$.

The arithmetic Levi groups occurring here have free subgroups
of finite index, so their rational cohomology vanishes above
degree one \cite[Chapter~I]{SerreTrees}.
Finite quotients are handled by averaging over $\QQ$.
Consequently, only the columns $p=0,1$ of
\eqref{eq:HS-orientation} can contribute in these applications.

We now specify the arithmetic group and its polynomial coefficients.
Let $\FF_3=\ZZ/3\ZZ$, write $e_i$ for the standard coordinate
vectors and $E_{ij}$ for the matrix units, and put
$U_{ij}=I+E_{ij}$ for $i\ne j$.
Set
\[
  G=\GL_3(\ZZ),\qquad
  \ell=(1,1,1)\in(\FF_3^3)^*,\qquad
  K=\{g\in G:\ell g=\ell\pmod3\}.
\]
Thus $K$ fixes the covector $\ell$ modulo three, not merely its line.

For $t=0,1$, our coefficient modules are
\[
  E_{n,t}
    =\Sym^n((\QQ^3)^*)\otimes\det^{-t}.
\]
Their underlying vector space is $V_n=\QQ[x,y,z]_n$.
The determinant twist changes the action, not the polynomials:
\[
  g\cdot f(\mathbf x)
    =\det(g)^{-t}f(g^{-1}\mathbf x),
  \qquad
  \mathbf x=(x,y,z)^{\mathsf t}.
\]
The choices $t=0$ and $t=1$ will correspond to the alternating
and symmetric cases, respectively.

We distinguish this left action from direct substitution and
from the right action used in homology:
\[
  (f\mid g)(\mathbf x)=f(g\mathbf x),\qquad
  f\mid_t g=g^{-1}\cdot f=\det(g)^t(f\mid g).
\]
The unindexed bar $\mid$ always denotes direct substitution,
without a determinant twist. We use the left action in cohomology
and equivariant $\Hom$, and the right action $\mid_t$ in homology.
The left action also induces a right action on the dual module:
\[
  (\Psi\cdot g)(v)=\Psi(g\cdot v).
\]
For a rational line or plane $W\subset\QQ^3$, consider its integral
lattice $W\cap\ZZ^3$. We call $W$ \emph{good} if this lattice
contains a vector whose coordinate sum is nonzero modulo three,
and \emph{bad} otherwise. Equivalently, $\ell$ is nonzero on the
image of $W\cap\ZZ^3$ in $\FF_3^3$.
For example, $\QQ e_1$ is good, whereas $\QQ(e_1-e_2)$ is bad.
The condition is imposed separately on each subspace:
the good plane $\QQ e_1+\QQ e_2$ contains the bad line
$\QQ(e_1-e_2)$.

Let $\mathcal T$ be the graph whose vertices are the rational
lines and planes in $\QQ^3$, with an edge joining a line to a
plane precisely when the line lies in the plane. This is the
rational Tits building for $\GL_3$, a spherical building of
type $A_2$ \cite{AbramenkoBrown}.
Let $\mathcal T^\uparrow$ be the subgraph consisting of the good
vertices and all edges between them. Define
\[
  \St=\widetilde H_1(\mathcal T;\QQ),
  \qquad
  \St^\uparrow=\widetilde H_1(\mathcal T^\uparrow;\QQ).
\]
The first is the \emph{Steinberg module}. For a graph, first homology
consists of finite rational edge chains with zero boundary. Thus
$\St^\uparrow\subset\St$, and both are $K$-modules.

An ordered basis $(v_1,v_2,v_3)$ of $\QQ^3$ determines three lines
$L_i=\QQ v_i$ and three planes $P_{ij}=\QQ v_i+\QQ v_j$.
The cycle
\[
  L_1\longrightarrow P_{12}\longrightarrow L_2
  \longrightarrow P_{23}\longrightarrow L_3
  \longrightarrow P_{31}\longrightarrow L_1
\]
is the \emph{apartment} determined by this ordered basis.
Permuting the basis vectors changes its orientation by the sign
of the permutation. The apartment of the standard basis
$(e_1,e_2,e_3)$ is called the \emph{identity apartment}. All its
vertices are good.

The next proposition identifies solutions by evaluation on the identity
apartment. Its presentation uses the matrices
\[
  A=
  \begin{pmatrix}
    0&-1&0\\
    1&-1&0\\
    0&0&1
  \end{pmatrix},
  \qquad
  C=
  \begin{pmatrix}
    0&1&0\\
    0&0&1\\
    1&0&0
  \end{pmatrix},
\]
and let $S_0$ interchange the first two coordinate vectors.
Here $A$ and $C$ are the matrix versions of the substitutions
in \eqref{eq:polynomial-operators}, with the coefficient actions
fixed above. All three matrices lie in $K$.

The group algebra $\QQ K$ records rational linear combinations of
translates of the identity apartment. The relations $1+A+A^2$, $C-1$,
and $S_0+1$ encode the norm, cyclic invariance, and the transposition
sign. The determinant twist converts the last sign into the required
polynomial symmetry.

The analogous presentation for the full incidence graph is due
to Bykovskii. Church--Putman give a topological proof using
augmented frames \cite[Theorem~B]{ChurchPutman}.
The congruence-restricted presentation requires the additional
argument in Appendix~\ref{app:topology}.

\begin{proposition}\label{thm:good-apartment}
The graph $\mathcal T^\uparrow$ is connected, and we have
\begin{equation}\label{eq:good-apartment-presentation}
 \St^\uparrow\cong
 \frac{\QQ K}{\QQ K(1+A+A^2)+\QQ K(C-1)+\QQ K(S_0+1)}.
\end{equation}
Evaluation on the identity apartment gives isomorphisms
\[
 \Hom_K(\St^\uparrow,E_{n,0})\cong W_n^{\mathrm{alt}}(\QQ),
 \qquad
 \Hom_K(\St^\uparrow,E_{n,1})\cong W_n^{\mathrm{sym}}(\QQ).
\]
\end{proposition}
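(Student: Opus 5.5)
The plan splits into two parts: a topological presentation of $\St^\uparrow$, which is the real content, and a formal translation of that presentation into the polynomial equations.

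\emph{Presentation.} I would prove \eqref{eq:good-apartment-presentation} by adapting the augmented-frame argument of Church--Putman \cite[Theorems~B and C]{ChurchPutman} to the good subgraph. The steps are as follows.

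First, connectivity of $\mathcal T^\uparrow$ and surjectivity of the apartment map. Every good plane contains a good line. Any two good lines $\QQ v,\QQ v'$ (with $v,v'$ primitive of nonzero coordinate sum mod $3$) can be joined through a good plane, possibly after passing through an intermediate good line $\QQ(v+kv'')$. This gives connectivity. For surjectivity, I would show that $\St^\uparrow$ is spanned by apartments of \emph{good frames}, meaning integral bases $(v_1,v_2,v_3)$ all of whose lines and planes are good. The method is a Putman-style induction on a complexity (the maximal determinant of adjacent primitive vectors in a cycle), retracting cycles onto apartments.

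Second, identify the good-frame bases with $K$-translates of the standard basis. Since $\ell(e_i)=1$, the group $K$ acts simply transitively, up to signs, on bases with $\ell(v_i)=1$. Signs are absorbed by the determinant twist and the sign of the line representatives.

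Third, the relations. They would come from the augmented frames in the good complex: a basis together with $v_1+v_2$, or its negative variants, that remains good. These produce exactly the three-term relation $1+A+A^2$. Here $A$ permutes $e_1,e_2,-e_1-e_2$ modulo $3$, and the coordinate sum stays $\equiv\pm1$. Permutations of the basis produce $C-1$ and $S_0+1$. To prove there are no further relations, I would show that the complex of good augmented frames is highly enough connected, following Church--Putman's Theorem~C.

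\emph{Main obstacle.} I expect the hard step to be the connectivity of the good augmented-frame complex. Bad lines such as $\QQ(e_1-e_2)$ sit inside good planes, so the standard retraction must avoid bad vertices. This is why the paper defers the argument to Appendix~\ref{app:topology}. I would handle it by a link argument over the good-line condition mod $3$.

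\emph{Evaluation.} Given the presentation, $\Hom_K(\St^\uparrow,E_{n,t})$ is identified with the elements $f\in E_{n,t}$ annihilated by the right ideal. Concretely, $\varphi\mapsto f=\varphi(\text{identity apartment})$, and the condition is $(1+A+A^2)\cdot f=0$, $C\cdot f=f$, $S_0\cdot f=-f$.

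Translating to polynomials, $g\cdot f=\det(g)^{-t}f(g^{-1}\mathbf x)$, and $\det A=\det C=1$, $\det S_0=-1$. Up to the harmless inversions $A\leftrightarrow A^2=A^{-1}$ and $C\leftrightarrow C^{-1}$, these conditions become $Nf=0$, $Cf=f$, and $Sf=(-1)^{t+1}f$.

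So $t=1$ gives the symmetric solutions and $t=0$ gives the alternating ones. By Lemma~\ref{lem:sector-decomp}, $S$ preserves the solution space, so this identifies the two $\Hom$ spaces with $W_n^{\mathrm{sym}}$ and $W_n^{\mathrm{alt}}$ respectively.
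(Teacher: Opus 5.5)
Your outline has the same architecture as the paper's proof. The shared ingredients are the good subcomplex $Y$ of Church--Putman's partial augmented-frame complex, $K$ acting simply transitively on bases normalized by $\ell(v_i)\equiv1$, relations from good augmented frames giving $K$-translates of $1+A+A^2$, and the determinant twist turning $S_0+1$ into symmetry or alternation. Your direct connectivity argument is correct and simpler than the paper's: two distinct good lines span a plane containing a good line, hence a good plane, so no intermediate line is needed.

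What is missing is the bridge from the augmented-frame complex to $\St^\uparrow=\widetilde H_1(\mathcal T^\uparrow;\QQ)$. Connectivity of $Y$ by itself says nothing about relations in $\St^\uparrow$ until you compare $\mathcal T^\uparrow$ with the subcomplex $Y^0\subset Y$ of simplices whose span is a proper subspace.
\begin{itemize}
\item The paper makes this comparison with Quillen's Theorem A applied to the span map.
\item The fiber over a good line is a point.
\item The fiber over a good plane is that plane's \emph{good} filled Farey complex. Its contractibility is a separate rank-two input you have not identified.
\item The paper gets it because bad vertices are pairwise nonadjacent and have bi-infinite good paths as links. Their open stars can therefore be deleted from the contractible Farey complex without changing the homotopy type.
\end{itemize}
Once $H_1(Y^0;\QQ)\cong\St^\uparrow$, the relative sequence of $(Y,Y^0)$ together with $\widetilde H_0(Y)=H_1(Y)=H_2(Y)=0$ gives $\St^\uparrow\cong H_2(Y,Y^0;\QQ)$. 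This is good frames modulo boundaries of good augmented frames, which is the presentation, and connectedness of $Y^0$ comes out as a byproduct.

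This also makes your separate ``Putman-style'' complexity induction for generation unnecessary, since $H_1(Y)=0$ already gives surjectivity. As you describe it, that induction is also unsupported. Each reduction step must insert an intermediate vector that is itself good, and you give no argument that such a vector always exists.

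Finally, you need only rational acyclicity of $Y$ through degree two, not connectivity. The paper proves it in Appendix~\ref{app:topology} by filtering chains by their bad face. It then kills the three possible incoming differentials into the good column, coming from bad-vertex links, bad-edge squares, and bad-triangle complement differences, using explicit good augmented tetrahedra. That is the concrete content your ``link argument'' would have to supply.
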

\begin{proof}
A partial augmented frame is a subset of an integral basis, possibly
together with one additional primitive line represented by a signed sum of
two basis vectors. Let $Y$ be the full good subcomplex of the complex of
partial augmented frames of Church--Putman
\cite[Theorem C]{ChurchPutman}. It is connected
and has $H_1(Y;\QQ)=H_2(Y;\QQ)=0$. Appendix~\ref{app:topology}
proves the required restriction argument. If $Y^0$ consists of simplices spanning
proper subspaces, pass to the poset of its nonempty simplices, ordered
by inclusion. The span map takes this poset to the poset of good proper
rational subspaces. The inverse image of the lower interval at a line is
a point. At a plane it is the face poset of its good filled Farey complex.
These spaces are contractible by Appendix~\ref{app:topology}. Quillen's
Theorem A~\cite{Quillen}, applied to these posets, gives a homotopy equivalence
of their order complexes and hence a
$K$-equivariant homology identification $H_1(Y^0;\QQ)=\St^\uparrow$.
The relative complex of $(Y,Y^0)$ starts in degree two. Its generators are
good unimodular frames, and its relations are good augmented frames.

Every good primitive line has a unique primitive integral representative of $\ell$-value $1$
modulo three. Normalized good bases are therefore a single $K$-orbit.
In a good augmented core, reduction modulo three forces all signs in its
three-vector relation to agree. The normalized vectors sum to zero.
For $r=-e_1-e_2$, its boundary, modulo proper-span faces, is
$\partial[r,e_1,e_2,e_3]=[I]+[A]+[A^2]$.
Column permutations give $C-1$ and $S_0+1$. The relative homology sequence
now gives \eqref{eq:good-apartment-presentation}, and also connectedness
of $Y^0$. The determinant twist reverses the transposition condition on
the polynomial. Finally $A^{-1}=A^2$, so the left-action norm is the same
polynomial norm.
\end{proof}

\subsection{Ordinary cycles}

We call full-building classes \emph{ordinary cycles}. Their finite
coefficient model makes apartment evaluation explicit.

Let $\mathscr X$ be the space of positive definite ternary real quadratic
forms modulo positive scaling. Determinant one representatives give
\[
  \mathscr X\cong\SL_3(\RR)/\SO(3),\qquad
  g\SO(3)\longleftrightarrow Q=(gg^{\mathsf t})^{-1},
\]
where $\SO(3)$ is the group of real orthogonal matrices of
determinant one. The space $\mathscr X$ has real dimension five.
For $g\in G$, we use the left action
\[
  g\cdot Q=g^{-\mathsf t}Qg^{-1}.
\]
The corresponding right action is $Q\mapsto g^{\mathsf t}Qg$.
The analytic appendix uses the Gram matrix $gg^{\mathsf t}=Q^{-1}$ rather
than $Q$, so the distinction between these actions matters.

Since $K$ stabilizes $\ell$ modulo three, its coset space is finite:
reduction gives the bijection
\[
  K\backslash G\xrightarrow{\sim}
  \mathcal S=\FF_3^3\setminus\{0\},\qquad
  Kg\longmapsto\ell g\pmod3.
\]
Thus the induced module consists of functions on the $26$ nonzero
row covectors:
\[
  F:\mathcal S\longrightarrow V_n.
\]
For $g\in G$, the right action is
\[
  (F\mid_t g)(r)
    =\det(g)^t\bigl(F(rg^{-1})\mid g\bigr).
\]
Covectors are multiplied modulo three, and the polynomial bar is untwisted.

To determine which lists represent homology classes, we use
Ash's well-rounded retract \cite{AshRetract}. This is a
contractible, three-dimensional cell complex inside $\mathscr X$.
Its points are the classes of forms whose shortest nonzero
integral vectors span $\RR^3$. For this construction we choose
minimum-one representatives, rather than determinant one
representatives:
\[
  \min_{0\ne v\in\ZZ^3}v^{\mathsf t}Qv=1.
\]
Its equivariant cellular complex computes rational group homology
\cite[Chapter~VII, Section~7]{Brown}. Since there are no cells above
degree three, top cycles represent $H_3(K,E_{n,t})$. The retract is
three-dimensional, whereas the arithmetic quotient is five-dimensional.
Poincar\'e--Lefschetz duality on its Borel--Serre compactification
relates degree three homology to degree two cohomology.

The standard top-dimensional cell is stabilized by the signed
permutation matrices. For such a matrix $h=D\pi$, where $D$
is diagonal with entries $\pm1$ and $\pi$ is a permutation matrix,
define
\[
  \chi_0(h)=\sgnchar(\pi),\qquad
  \chi_1(h)=\prod_iD_{ii}.
\]
The full apartment presentation \cite[Theorem~B]{ChurchPutman} gives the
following dual model, agreeing with Allison--Ash--Conrad
\cite{AllisonAshConrad}. Identity apartment evaluation is $F(\ell)$.

\begin{lemma}\label{prop:26-model}
The group $H_3(K,E_{n,t})$, with the right coefficient action defined above,
is represented by precisely the functions $F:\mathcal S\to V_n$ such that
\begin{align}
 &F(rh)=\chi_t(h)F(r)\mid h,\label{eq:fiber-signed}\\
 &F(r)+F(rA^2)\mid A+F(rA)\mid A^2=0,\label{eq:fiber-norm}
\end{align}
for every $r\in\mathcal S$ and every signed permutation matrix $h$.
Every bar in \eqref{eq:fiber-signed}--\eqref{eq:fiber-norm} is untwisted.
The determinant factor has already been included in $\chi_t$.
The coordinate $F(\ell)$ is the value on the identity apartment of the
corresponding ordinary Steinberg functional.
\end{lemma}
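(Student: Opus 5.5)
The plan is to compute $H_3(K,E_{n,t})$ through Ash's well-rounded retract $\mathscr W$, using Shapiro's lemma to move from $K$ to $G$. Since $K$ has finite index in $G$, we have
\[
H_3(K,E_{n,t})\cong H_3\bigl(G,\Ind_K^G E_{n,t}\bigr),
\]
and the induced module is exactly the space of functions $F:\mathcal S\to V_n$ with the right action $\mid_t$ written above. The rational $G$-equivariant cellular chain complex of the contractible retract $\mathscr W$ computes rational group homology, because cell stabilizers are finite. That complex is
\[
C_r=\bigoplus_{\sigma} (M\otimes\QQ_{\sigma})_{G_\sigma},
\]
with $\sigma$ running over orbit representatives of $r$-cells and $\QQ_\sigma$ the orientation character. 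Since $\mathscr W$ has no $4$-cells, $H_3$ is the kernel of $\partial:C_3\to C_2$.

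The first step is to use that $G$ acts transitively on the top cells of $\mathscr W$, the standard one being the cube-like cell at the form $I$. Its stabilizer is the group of signed permutation matrices, and it acts on the cell's orientation by a character. Taking coinvariants of a finite group over $\QQ$ is the same as taking the corresponding invariants. So a $3$-chain is a single element $F\in\Ind E_{n,t}$ transforming under a signed permutation $h$ by the orientation character times $\det(h)^t$. Checking on generators, namely sign changes and transpositions, this combined character is $\chi_t$ once the twist is moved into the untwisted bar. This gives \eqref{eq:fiber-signed}.

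The second step is to impose $\partial F=0$. The codimension-one faces of the top cell fall into two types. The first type are faces shared with another translate of the standard cell by a signed permutation or a unipotent-type element; these already cancel by the symmetry condition. The second type are the hexagonal faces, where three top cells meet, related by the order-three element $A$ together with its stabilizer. At each such face the cycle condition says that the three contributions $F$, $F\mid_t A$ and $F\mid_t A^2$ sum to zero. Unwinding the induced action, $(F\mid_t A)(r)=F(rA^{-1})\mid A$ with $\det A=1$ and $A^{-1}=A^2$, gives \eqref{eq:fiber-norm}.

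Rather than track Ash's cell orientations by hand, I would instead invoke the full apartment presentation of Church--Putman \cite[Theorem~B]{ChurchPutman}. It says $\St\cong\QQ G/(\QQ G(1+A+A^2)+\QQ G(C-1)+\QQ G(S_0+1))$. Combining this with Borel--Serre duality $H_3(K,E)\cong H^0(K,\St\otimes E)$ up to the dualizing twist, which is legitimate since the virtual cohomological dimension of $\GL_3(\ZZ)$ is $3$, identifies classes with $K$-equivariant functionals on $\St$. Such a functional is determined by its values on the apartments $g\cdot[e_1,e_2,e_3]$ for $g\in K\backslash G\cong\mathcal S$, that is, by $F(r)$. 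The three relations then translate directly: $C-1$ and $S_0+1$ together with the diagonal signs give \eqref{eq:fiber-signed}, and $1+A+A^2$ gives \eqref{eq:fiber-norm}. The coordinate at $r=\ell$ is the value on the identity apartment, which is the final claim; this agrees with Allison--Ash--Conrad \cite{AllisonAshConrad}.

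The main obstacle is bookkeeping. One has to verify that the determinant twist, the orientation character, and the left/right conventions combine to give exactly $\chi_t$, and in particular that diagonal sign matrices, which fix every vertex of the apartment, act by $\prod_i D_{ii}$ for $t=1$ but trivially for $t=0$. I would settle this by evaluating on the generators $\mathrm{diag}(-1,1,1)$ and $S_0$ separately.
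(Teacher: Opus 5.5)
Your final route (induction to $G$, rational Borel--Serre duality, and the Church--Putman apartment presentation of $\St$) is the paper's proof. Several statements along the way need correcting.

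\emph{The presentation.} The presentation you display, $\St\cong\QQ G/(\QQ G(1+A+A^2)+\QQ G(C-1)+\QQ G(S_0+1))$, is false for $G=\GL_3(\ZZ)$.
\begin{itemize}
\item That is the shape of the \emph{restricted} presentation in Proposition~\ref{thm:good-apartment}. There the diagonal sign matrices are absent only because they do not lie in $K$.
\item For $G$ you need the relations $\QQ G(h-\sgnchar(\pi))$ for every signed permutation $h=D\pi$.
\item Without them the quotient is strictly larger than $\St$. The degree ten solution $f_{\mathrm s}$ is cyclic, symmetric and norm-zero, so it gives a nonzero $G$-map from your quotient to $E_{10,1}$. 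But $\Hom_G(\St,E_{10,1})=0$, because invariance under the twisted diagonal signs forces separate oddness, which is impossible in even degree.
\end{itemize}
Your later phrase ``together with the diagonal signs'' shows you intend the right relations, so state the presentation with them.

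\emph{The duality statement.} It should read $H_3(K,E)\cong H^0(K,\Hom(\St,E))=\Hom_K(\St,E)$. The space $(\St\otimes E)^K$ is zero, since its elements have finite edge support while $K$-orbits of edges are infinite.

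More importantly, you cannot leave the ``dualizing twist'' open. A $\det$ twist would interchange $\chi_0$ and $\chi_1$, which is exactly the bookkeeping you flagged as the main obstacle. The twist is trivial, as the paper notes: $Q\mapsto g^{\mathsf t}Qg$ has determinant $(\det g)^4>0$ on ternary symmetric matrices. Equivalently, $\GL_3(\ZZ)=\SL_3(\ZZ)\times\{\pm I_3\}$, and $-I_3$ acts trivially on $\mathscr X$.

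\emph{The cellular sketch.} The sketch you set aside is not correct as stated, and it could not replace the presentation theorem. The codimension-one faces of the top cell are of two kinds, and neither cancels by symmetry.
\begin{itemize}
\item Three-valent faces have minimal vectors $\pm v_1,\pm v_2,\pm v_3,\pm(v_1+v_2)$. These give the relation $1+A+A^2$.
\item Four-valent faces have minimal vectors $\pm v_1,\pm v_2,\pm v_3,\pm(v_1+v_2+v_3)$. The form with all off-diagonal Gram entries equal to $-1/3$ lies on one.
\end{itemize}
Your ``unipotent-type'' faces are the second kind. They are shared by four top cells, not two. Their cycle condition is a four-term relation that the signed-permutation symmetry does not imply. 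Deriving it from the three-term relations is exactly the content of the Bykovskii--Church--Putman presentation.
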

\begin{proof}
Let $E=E_{n,t}$. Rational Borel--Serre duality gives
\[
 H^3(K,E^*)\cong(\St\otimes E^*)_K,
 \qquad H_3(K,E)\cong\Hom_K(\St,E),
\]
where the second isomorphism is obtained by dualizing the first,
with the coefficient conventions fixed above. The virtual cohomological
dimension is three, and the orientation character is trivial
\cite{BorelSerre}. Induction over the finite coset space
$K\backslash G$ identifies the latter space with $G$-equivariant
maps from $\St$ to the induced coefficient module
\cite[Chapter~III, Section~5]{Brown}.

The full apartment presentation \cite[Theorem~B]{ChurchPutman}
has signed-permutation relations and the relation $1+A+A^2$.
The first says that $F\mid_t h=\sgnchar(\pi)F$, which is
\eqref{eq:fiber-signed}. The second says that
$F+F\mid_t A+F\mid_t A^2=0$, which is \eqref{eq:fiber-norm}
after evaluating at $r$. These relations are sufficient by the
presentation theorem. Under induction, the identity coset is indexed
by $\ell$, so evaluation there recovers the original apartment value.
Lemma~\ref{lem:apartment-boundary-pairing} fixes the geometric
normalization of this evaluation.
\end{proof}

The cellular description also applies to arbitrary rational
coefficient modules: averaging over finite cell stabilizers is
exact, and the retract has no cells above degree three.
In particular, for the full group $G$ with determinant-twisted
polynomial or rational function coefficients, a top-cycle
coefficient is symmetric, odd in each variable, and annihilated
by $N$. We use this generality in Section~\ref{sec:hecke-pole},
where the coefficient module is enlarged to allow simple poles.

\subsection{Apartment evaluation and boundary duality}

For an ordinary class $z$, the polynomial of interest is $F_z(\ell)$.
We express this evaluation as a boundary pairing.

The Borel--Serre bordification $\overline{\mathscr X}$ adds faces indexed
by rational parabolic subgroups \cite{BorelSerre}. Maximal faces correspond
to lines and planes, and their intersections to flags $L\subset P$.
Its boundary is homotopy equivalent to the rational Tits building, so
apartments have geometric representatives there.

To handle torsion, pass to a neat normal congruence subgroup: for each
of its elements, the subgroup of $\CC^\times$ generated by the eigenvalues
contains no nontrivial root of unity. Such a subgroup exists, and its
quotient of $\overline{\mathscr X}$ is a compact manifold with corners.
After the geometric constructions, take coinvariants in homology and
invariants in cohomology under the finite covering group. Rational
averaging is exact. We write $\overline{\mathscr X}_G$ and
$\partial\overline{\mathscr X}_G$ for these equivariant quotients,
without assuming that the quotient by $G$ itself is a manifold.

Let
\[
  M=\Ind_K^G E_{n,t}
\]
with the preceding coefficient actions, and let $\mathcal M$ be its local
system. Shapiro's lemma gives
\[
  H_3(\overline{\mathscr X}_G;\mathcal M)
    \cong H_3(K,E_{n,t}).
\]
The top-cell coefficient $F_z\in M$ of Lemma~\ref{prop:26-model}
is determined by $z$, since the retract has no degree four cells.
Its value $F_z(\ell)$ records the identity apartment.

To express evaluation as a scalar pairing, choose
$\varphi\in M^*$. The determinant one diagonal forms
\[
  \operatorname{diag}(a,b,(ab)^{-1}),
  \qquad a,b>0,
\]
form a two-dimensional flat in $\mathscr X$.
Its compactified image defines the \emph{coordinate modular
symbol} \cite{AshRudolph}. With constant coefficient $\varphi$
on the universal flat, it gives a relative homology class
\[
  a_\varphi\in
  H_2(\overline{\mathscr X}_G,
      \partial\overline{\mathscr X}_G;\mathcal M^*).
\]
Under the homotopy equivalence between the universal boundary
and the rational Tits building, the boundary of the flat
represents the coordinate apartment. We orient the flat and
the averaged standard top-cell generator so that their local
intersection number is $+1$. With this normalization, the next
lemma compares evaluation of a boundary class with its boundary
intersection pairing. We use Poincar\'e--Lefschetz duality with
the indicated local coefficients
\cite[Sections~3.3 and 3.H]{Hatcher}.

\begin{lemma}\label{lem:apartment-boundary-pairing}
Let $j$ be the inclusion of the boundary, and let
$\operatorname{PD}_{\partial}:H_3(\partial\overline{\mathscr X}_G;\mathcal M)
\to H^1(\partial\overline{\mathscr X}_G;\mathcal M)$
be boundary Poincar\'e duality. The diagram
\begin{equation}\label{eq:apartment-boundary-diagram}
\begin{CD}
 H_3(\partial\overline{\mathscr X}_G;\mathcal M)
       @>{j_*}>> H_3(\overline{\mathscr X}_G;\mathcal M)\\
 @V{\operatorname{PD}_{\partial}}VV
       @VV{z\mapsto\varphi(F_z)}V\\
 H^1(\partial\overline{\mathscr X}_G;\mathcal M)
       @>{\omega\mapsto\langle\omega,\partial a_\varphi\rangle}>>\QQ
\end{CD}
\end{equation}
commutes. In particular, for $\lambda\in V_n^*$ and
$\varphi_\lambda(F)=\lambda(F(\ell))$, we have
\begin{equation}\label{eq:apartment-boundary-evaluation}
 \lambda\bigl(F_{j_*c}(\ell)\bigr)
    =\bigl\langle\operatorname{PD}_{\partial}c,
                     \partial a_{\varphi_\lambda}\bigr\rangle.
\end{equation}
This is compatible with induction of coefficients and with
passage to a finite index subgroup.
\end{lemma}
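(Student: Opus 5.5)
The plan is to reduce the commutativity of \eqref{eq:apartment-boundary-diagram} to the standard compatibility between the long exact sequence of the pair $(\overline{\mathscr X}_G,\partial\overline{\mathscr X}_G)$ and Poincar\'e--Lefschetz duality. For a compact oriented $5$-manifold with boundary (first on the neat cover, then averaged), Lefschetz duality identifies $H_3(\overline{\mathscr X}_G;\mathcal M)$ with $H^2(\overline{\mathscr X}_G,\partial\overline{\mathscr X}_G;\mathcal M)$. Boundary duality identifies $H_3(\partial;\mathcal M)$ with $H^1(\partial;\mathcal M)$. Under these identifications, $j_*$ corresponds, up to a universal sign fixed by orientation conventions, to the connecting map $\delta:H^1(\partial)\to H^2(\overline{\mathscr X}_G,\partial)$; see \cite[Section~3.3]{Hatcher}. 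The pairing of a relative class $\delta\omega$ with the relative cycle $a_\varphi$, formed with the evaluation $\mathcal M\otimes\mathcal M^*\to\QQ$, equals $\langle\omega,\partial a_\varphi\rangle$. This is the naturality of the Kronecker pairing with respect to connecting homomorphisms. Thus both routes around the square compute $\langle \operatorname{PD}(j_*c), a_\varphi\rangle$, which is the intersection number of the class $j_*c$ with the flat $a_\varphi$.

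It remains to identify the right vertical map $z\mapsto\varphi(F_z)$ with the intersection pairing $z\cdot a_\varphi$ for every $z\in H_3(\overline{\mathscr X}_G;\mathcal M)$, not only for boundary classes. I would represent $z$ by its cellular top cycle on Ash's well-rounded retract, with coefficient $F_z\in M$ on the averaged standard top cell, as in Lemma~\ref{prop:26-model}. Next I would deform the coordinate flat so that it meets the retract transversally. The diagonal flat meets the well-rounded retract only at the class of the identity form, which lies in the interior of the standard $3$-cell. This follows because a diagonal form is well rounded only when its three diagonal entries are equal, and since scaling is factored out, this forces the identity. The dimensions $3+2=5$ are complementary, so the intersection is a single transverse point. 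Its local intersection number is $+1$ by the chosen orientations, and its coefficient is $\varphi(F_z)$. The flat also meets the $G$-translates of the cell only through this one orbit point, since distinct translates correspond to distinct cosets. Hence $z\cdot a_\varphi=\varphi(F_z)$.

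To justify using the retract inside the bordification, I would note that the retraction $\overline{\mathscr X}\to$ retract is a $G$-equivariant deformation. Intersection numbers with a relative cycle whose boundary lies in $\partial\overline{\mathscr X}_G$ depend only on homology classes, so computing them on the retract representative is legitimate. The special case \eqref{eq:apartment-boundary-evaluation} follows by taking $\varphi=\varphi_\lambda$. Compatibility with induction follows from Shapiro's lemma commuting with all maps in the square: the coset at $\ell$ is the identity coset. Compatibility with passage to finite index subgroups follows by transfer, using exactness of rational averaging.

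I expect the main obstacle to be the local intersection computation. One must verify that the flat meets the retract exactly once, at an interior point of the top cell, in the equivariant quotient. One must also check that the twisted coefficient conventions ($\mid_t$ versus the left action, and the dual action on $\mathcal M^*$) make the local contribution exactly $\varphi(F_z)$ with sign $+1$, rather than a $\chi_t$-twisted or transposed version. I would handle this by checking the stabilizer of the intersection point (the signed permutations) against \eqref{eq:fiber-signed}. The orientation normalization then fixes the sign.
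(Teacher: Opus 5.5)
Your proposal is correct and takes essentially the paper's approach. Both arguments identify $z\mapsto\varphi(F_z)$ with intersection against the coordinate flat, using that the diagonal flat meets the well-rounded retract only at the identity form in the open standard top cell, and both obtain the bottom arrow from naturality of Poincar\'e--Lefschetz duality, with induction and finite index handled by cellular Shapiro and exact rational averaging. The differences are cosmetic: you get the boundary comparison formally from the duality ladder and $\langle\delta\omega,a_\varphi\rangle=\langle\omega,\partial a_\varphi\rangle$, where the paper pushes $c$ into a collar and counts its intersections with $\partial a_\varphi$; and the paper writes out the transversality at the identity via explicit Gram-matrix coordinates, which you only assert.
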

\begin{proof}
Intersection with the flat reads the top-cell coefficient. Moving the
intersection into a boundary collar gives the bottom arrow.

For the first, normalize a form in the standard open top cell by requiring
that $e_1,e_2,e_3$ have length one. Near the identity matrix its Gram matrix
is
\[
 \begin{pmatrix}1&r_{12}&r_{13}\\r_{12}&1&r_{23}\\r_{13}&r_{23}&1\end{pmatrix}.
\]
The diagonal flat meets this cell at the identity, transversely. It meets no
other open top cell: a diagonal form is well rounded only when all three
diagonal entries agree, and at that form the shortest primitive vectors
are precisely $\pm e_i$. The off-diagonal coordinates orient the cell,
and the two diagonal ratios orient the flat. A permutation has its permutation
sign on the cell, while a coordinate sign change has sign $+1$, exactly as
in the top-cell model. The local intersection therefore evaluates the
coefficient by $\varphi$.

Translate this calculation by an integral basis matrix. The cellular
orientation and the dual coefficient transport are respectively the signed
permutation character and $\varphi\cdot g$. Thus the local intersections
assemble with the same relations as the modular symbol pairing
\cite[Section 3, (3--1)]{AllisonAshConrad}. With induced coefficients,
unfolding a chain over $K\backslash G$ labels the translated basis by its
covector $\ell g$. The functional $\varphi_\lambda$ selects the identity
coset, not the sum of all covectors. Hence the intersection of $z$ with
$a_\varphi$ is $\varphi(F_z)$.

For the second comparison, smooth the neat cover's corners and move $c$
slightly inward along a collar, transverse to the relative flat.
The intersections are those of $c$ with the flat's boundary. With the
boundary orientation induced by this collar, the cap-product definition
gives $\langle\operatorname{PD}_{\partial}c,\partial a_\varphi\rangle$.
The identity holds on chains modulo boundaries by naturality of
Poincar\'e--Lefschetz duality \cite{BorelSerre}.

Both constructions commute with coefficient transport and subdivision.
Coset unfolding is the cellular Shapiro map, proving compatibility with
induction. Taking finite group coinvariants in homology and invariants
in cohomology is exact and preserves the pairing, giving descent to the
stated level.
\end{proof}
\subsection{The support principle}

The pairing in Lemma~\ref{lem:apartment-boundary-pairing} now gives support
for both coefficient systems. The remaining task will be extension.

\begin{proposition}\label{thm:boundary-evaluation}\label{prop:top-is-boundary}
If $n>0$, every $K$-equivariant map $\St\to E_{n,t}$ takes the identity
apartment into $U_n$, for $t=0,1$.
\end{proposition}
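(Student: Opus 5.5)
Our plan follows the route sketched in the introduction. We first show that every ordinary class $z\in H_3(\overline{\mathscr X}_G;\mathcal M)$ lies in the image of $j_*$. Then we evaluate boundary classes on the coordinate apartment using Lemma~\ref{lem:apartment-boundary-pairing}.

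\emph{Step 1: the class comes from the boundary.} By Lemma~\ref{prop:26-model} and Shapiro's lemma, a $K$-equivariant map $\St\to E_{n,t}$ is a class $z\in H_3(\overline{\mathscr X}_G;\mathcal M)$. In the long exact sequence of the pair, $z\in\operatorname{im}j_*$ exactly when $z$ maps to zero in $H_3(\overline{\mathscr X}_G,\partial\overline{\mathscr X}_G;\mathcal M)$. Poincar\'e--Lefschetz duality on the five-dimensional compactification identifies this relative group with $H^2(\overline{\mathscr X}_G;\mathcal M)$. Under this identification, the image of $z$ lies in the interior part $H^2_!$, the image of compactly supported cohomology. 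It therefore suffices to show $H^2_!(K,E_{n,t})=0$, or dually $H^3_!=0$. Since $H^\bullet_!$ injects into $L^2$ cohomology, which is computed by harmonic forms, we plan to use Kuga's formula together with the spectral-gap estimate of \cite[Lemma~4.1]{BergeronVenkatesh}. For a nontrivial irreducible algebraic coefficient $\Sym^n$, the relevant Laplacian on forms of degree $2$ (and $3$) is bounded below by a positive constant. Hence there are no harmonic forms in these degrees, and the inner cohomology vanishes. The bound has to be written out explicitly. If the estimate fails for the smallest $n$, then $W_n=0$ in degrees $n\le3$ by Lemma~\ref{lem:sector-decomp}, and we handle the remaining small degrees directly, using the finite model of Lemma~\ref{prop:26-model}.

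\emph{Step 2: boundary evaluation.} Write $z=j_*c$ and $\omega=\operatorname{PD}_\partial c\in H^1(\partial\overline{\mathscr X}_G;\mathcal M)$. By \eqref{eq:apartment-boundary-evaluation}, $\lambda(F_z(\ell))=\langle\omega,\partial a_{\varphi_\lambda}\rangle$. The boundary $\partial a$ is the coordinate hexagon: six edge arcs in the faces $e(L_i)$ and $e(P_{ij})$, joined at six corners $e(L_i\subset P_{ij})$. A Mayer--Vietoris decomposition of the boundary along its faces then splits the pairing into two kinds of contribution. The first comes from the restrictions $\omega|_{e(W)}$ to the six faces. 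The second comes from the corner terms, which are controlled by the coboundary relation joining the two adjacent face cocycles.

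On a face, we use Hochschild--Serre \eqref{eq:HS-orientation} for $1\to U\to P\to L\to1$. Since Levi groups have vcd one, only $E_2^{0,1}=H^1(U,\cdot)^L$ and $E_2^{1,0}=H^1(L,H^0(U,\cdot))$ contribute. The arc of the flat inside a face is a path in the torus directions of the nilmanifold, whose fibres are the unipotent shears. By Kasuya's comparison, the pairing only sees $U$-invariants and $U$-coinvariants. For the line face $\QQ e_1$, the unipotent radical consists of the shears $u(B)$ acting by $x\mapsto x+by+cz$. We aim to show that the resulting pairing factors through the quotient by the image of $(u(B)-1)$. Dually, the $\lambda$ that survive vanish on the ideal generated by a product of two of the variables. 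Concretely, we want the face contribution to be a polynomial annihilated by $\partial_y\partial_z$ (respectively by the analogous operators for the other faces). The corner contributions are handled the same way, using the minimal parabolic, whose Levi quotient is finite. There the $H^0(U_B,\cdot)$-type data involve only monomials in two of the variables, via the elementary shears $U_{ij}$. Summing the six faces and six corners gives $F_z(\ell)\in U_n$.

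\emph{Expected main obstacle.} Step~1 is the hard analytic input. The explicit lower bound has to be made uniform in $n$, and the low-degree exceptions must be isolated. In Step~2 the delicate point is bookkeeping: each face and corner contribution must actually be annihilated by the correct mixed derivative $\partial_x\partial_y\partial_z$. This requires tracking the orientation, the determinant twist $t$, and the untwisted bars consistently.
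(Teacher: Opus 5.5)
Your Step~1 is essentially the paper's argument. Poincar\'e--Lefschetz duality reduces the boundary surjection to the vanishing of $H_c^2\to H^2$. The Bergeron--Venkatesh gap applies for every $n>0$, because $\Sym^n$ and its dual (its Cartan twist) are nonisomorphic. So the constant may depend on $n$, and no small-degree exceptions arise. Strictly, what you need is that $H_c^2\to H^2$ factors through $L^2$ cohomology, which the gap kills; the paper proves this directly via a bounded inverse of the Laplacian.

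The gap is in Step~2. You list the two Hochschild--Serre components of a face class, $E_2^{1,0}=H^1(L_R,M^{N_R})$ and $E_2^{0,1}=H^1(N_R,M)^{L_R}$, but nothing in your plan controls the second one. Kasuya's comparison does not make the pairing ``see only invariants and coinvariants.'' It identifies $H^1(N_R,M)$ with Lie algebra cohomology, which is a nonzero binary symmetric power of degree $n+1$.

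The missing idea is that this module restricts to $\Sym^{n+1}$ or its dual on the derived $\SL_2$ of the Levi. The level-three subgroup acts trivially on the covectors and has no invariants there, since two opposite root unipotents have no common fixed vector in positive degree. Hence $H^1(R,M)=H^1(L_R,M^{N_R})$. In a mapping-cone representative $(A_P,A_Q,u)$, the face cocycles can therefore be chosen inflated from the Levi, with values in $M^{N_R}$. Since $3N_R$ is Zariski dense in the radical, $v(\ell)\in U_n$ for every $v\in M^{N_R}$. So the transported functionals $\Phi_\lambda\cdot g_\sigma$ with $\lambda\in U_n^\perp$ pair to zero with the face terms.

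You have also misplaced the flat. Its boundary inside a face is the Levi geodesic at the identity point of the nilmanifold fibre, not a path in the torus directions. The face pairing therefore reads the Levi component, which is exactly why inflation into radical invariants is what is needed.

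Your corner argument fails as stated. The corner coordinate $u$ is only a $0$-cochain on the Borel stratum with $du=\Res A_P-\Res A_Q$. It is not invariant under the unipotent radical of $B_0$, so there are no ``$H^0(U_B,\cdot)$-type data'' to appeal to.

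The paper instead notes that the transported corner functional annihilates $M^{N_0}$ for the common root group $N_0=\{I+aE_{13}\}$. It then writes this functional as $\sum_i(\Psi_i\cdot n_i-\Psi_i)$ with $n_i\in N_0$ and computes $\langle u,\Psi\cdot n-\Psi\rangle=\langle du,\Psi[n]\rangle=\langle\Res A_P-\Res A_Q,\Psi[n]\rangle=0$. The last equality holds because both inflated face cocycles vanish on the common radical. This step also rests on the vanishing of $H^1(N_R,M)^{L_R}$, which is the key ingredient your proposal lacks.
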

\begin{proof}
\smallskip\noindent\emph{Passage to the boundary.}
Choose a neat normal subgroup $\Gamma'\subset K\cap\SL_3(\ZZ)$ and put
$X_{\Gamma'}=\Gamma'\backslash\mathscr X$.
Proposition~\ref{ar:prop:coercivity}, proved in
Appendix~\ref{app:arithmetic-direct}, gives
\[
 H_c^2(X_{\Gamma'},E_{n,t}\otimes\CC)
       \longrightarrow H^2(X_{\Gamma'},E_{n,t}\otimes\CC)
       \quad\text{equal to zero}.
\]
Extension to $\CC$ is faithful, so the rational map is zero as well.

The five-dimensional symmetric space has trivial orientation character:
on symmetric matrices $Q\mapsto g^{\mathsf t}Qg$ has determinant
$(\det g)^4>0$, and quotienting by positive scaling removes an oriented
radial direction. Poincar\'e--Lefschetz duality identifies the
absolute-to-relative map in degree three homology with the zero map
above. Hence boundary homology surjects onto ordinary top homology.
Put $\Delta=K/\Gamma'$. 
Rational Hochschild--Serre and averaging give
\[
 H_3(K,E_{n,t})\cong H_3(\Gamma',E_{n,t})_{\Delta}.
\]
The same finite-cover descent applies to boundary homology and commutes with
its map to ordinary homology, so exact averaging preserves the
surjection at level $K$. It remains to evaluate an arbitrary boundary
class.

\smallskip\noindent\emph{Face cocycles and radical invariance.}
For evaluation, use $M=\Ind_K^G E_{n,t}$ and
$\partial=\partial\overline{\mathscr X}_G$ as in
Lemma~\ref{lem:apartment-boundary-pairing}. Shapiro's finite covector factor
retains every level three cusp. The two maximal parabolic types for $G$
are represented by the upper integral parabolics $P$ and $Q$ of block types
$(2,1)$ and $(1,2)$, with intersection the Borel group $B_0$.

Take collar neighborhoods of the two unions of closed maximal faces on
the neat cover. Their intersections are Borel strata, with no triple
intersection because a flag has only a line and a plane. Induction and
Shapiro give the Mayer--Vietoris sequence
\begin{equation}\label{eq:MV-boundary}
  H^0(P,M)\oplus H^0(Q,M)\to H^0(B_0,M)\to H^1(\partial,M)
  \to H^1(P,M)\oplus H^1(Q,M)\to H^1(B_0,M).
\end{equation}
To keep the corner signs explicit, use the mapping cone
\[
\begin{gathered}
 \mathcal C^r=C^r(P,M)\oplus C^r(Q,M)\oplus C^{r-1}(B_0,M),\\
 d(a,b,c)=(da,db,\Res a-\Res b-dc).
\end{gathered}
\]
The dual chain differential is
$\partial(x,y,z)=(\partial x+i_Pz,\partial y-i_Qz,-\partial z)$, where $i_P,i_Q$ are the two inclusions. The evaluation pairing is the sum
of the three component pairings. These differentials are adjoint. In
particular a degree-one cocycle is $(A_P,A_Q,u)$ with
$du=\Res A_P-\Res A_Q$.
For $R=P$ or $Q$, let $N_R\cong\ZZ^2$ be its unipotent radical and
$L_R$ its Levi quotient. The
five-term sequence shows
\begin{equation}\label{eq:inflated}
 H^1(R,M)=H^1(L_R,M^{N_R}).
\end{equation}
This means that face cocycles may be chosen to factor through the Levi,
with values fixed by the radical. We verify the necessary vanishing.
Decompose the covectors into $N_R$-orbits and apply Shapiro to their
stabilizer lattices. Their unipotent coefficient systems satisfy the
nilmanifold comparison \cite[Theorem~1.1]{Kasuya}. Lie and group cohomology
therefore agree, naturally for the Levi action, as explained in
Appendix~\ref{ar:sec:constantterms}. We may compute after complexification,
which detects the required rational vanishing.

We now compute the Lie cohomology explicitly. For the root operators $z\partial_x,z\partial_y$, the cocycle equation
for a pair $(a,b)$ is $a_y=b_x$. Such a pair is the gradient of a homogeneous
potential of degree $n+1$. Exact pairs are the gradients of potentials
divisible by $z$, so the quotient is the binary symmetric power of degree
$n+1$. For $x\partial_z,y\partial_z$, subtract an exact pair to remove
its common $z$-dependent part. The remaining binary pairs are taken modulo
$(xg,yg)$, and $(a,b)\mapsto xb-ya$ identifies that quotient with binary
forms of degree $n+1$. These descriptions cover the standard and dual
coefficient conventions.

It remains to take Levi invariants. In either case $H^1(N_R,E_{n,t})$ restricts to $\Sym^{n+1}(\QQ^2)$ or its dual on the
derived subgroup $\mathrm{SL}_2$ of the Levi. The corresponding
level three subgroup fixes every covector. Its two opposite root
unipotents have no common invariant in this positive binary degree.
Consequently $H^1(N_R,M)^{L_R}$ vanishes, which proves \eqref{eq:inflated}.

If $v\in M^{N_R}$, the lattice fixing $\ell$ contains $3N_R$ and is
Zariski dense in the algebraic radical. Therefore $v(\ell)$ is invariant
under its root derivatives and belongs to $U_n$. The same conclusion holds
for invariants of the common root subgroup of two adjacent maximal radicals.

\smallskip\noindent\emph{Evaluation on the coordinate apartment.}
Let $c$ be a boundary top class and $z_c=F_{j_*c}$ its induced top-cell
coefficient. Let $U_n^\perp=\{\lambda\in V_n^*: \lambda|_{U_n}=0\}$.
To prove $z_c(\ell)\in U_n$, take $\lambda\in U_n^\perp$ and
put $\Phi_\lambda(v)=\lambda(v(\ell))$.
Lemma~\ref{lem:apartment-boundary-pairing} gives
\[
 \lambda(z_c(\ell))=\langle\omega,\partial a_{\Phi_\lambda}\rangle,
 \qquad \omega=\operatorname{PD}_{\partial}c\in H^1(\partial,M).
\]
The boundary apartment has successive vertices
$L_1,\ P_{12},\ L_2,\ P_{23},\ L_3,\ P_{31},\ L_1$,
where $L_i=\QQ e_i$ and $P_{ij}=\QQ e_i+\QQ e_j$.
We evaluate its face and corner terms in the mapping-cone convention above.

By \eqref{eq:inflated}, a mapping-cone representative of $\omega$ is
$(A_P,A_Q,u)$, where the face cocycles are inflated and
$du=\Res A_P-\Res A_Q$. Here are the coefficient transports on the six
edges. For an ordered permutation $\sigma$, let $g_\sigma e_i=e_{\sigma(i)}$.
Starting from the flag $L_1\subset P_{12}$, the permutations are
\[
 (1,2,3),\quad(2,1,3),\quad(2,3,1),\quad
 (3,2,1),\quad(3,1,2),\quad(1,3,2),
\]
with incidence signs $+,-,+,-,+,-$, respectively. The transported dual
coefficient is $\Phi_\lambda\cdot g_\sigma$. The left action inside this
formula includes the determinant twist. The listed signs are the separate
cellular orientation signs. Every $g_\sigma$ fixes $\ell$ and preserves
$U_n$. Each transported coefficient therefore annihilates the appropriate
radical invariant space, and all face pairings are zero. Further transport
within a maximal parabolic preserves that space.

For a corner, the common root group is a translate of
$N_0=\{I+aE_{13}:a\in\ZZ\}$. Its coefficient functional annihilates
$M^{N_0}$, and finite-dimensional duality gives
$(M^{N_0})^\perp =\operatorname{span}\{\Psi\cdot n-\Psi:n\in N_0,\ \Psi\in M^*\}$.
Thus the corner zero-chain can be written
$r=\sum_i(\Psi_i\cdot n_i-\Psi_i)$, with $n_i\in N_0$. In the right-module
bar complex, $\partial(\Psi[n])=\Psi\cdot n-\Psi$, so
$s=\sum_i\Psi_i[n_i]$ has $\partial s=r$. Both face cocycles are inflated
from their Levis and vanish on this common radical. Adjointness gives
\[
 \langle u,r\rangle
 =\sum_i\Psi_i(n_i\cdot u-u)
 =\langle du,s\rangle
 =\langle\Res A_P-\Res A_Q,s\rangle=0.
\]
Hence, the corner pairing vanishes by the coboundary equation for $u$
(no invariance of $u$ is required).
We obtain $\lambda(z_c(\ell))=0$ for every annihilator $\lambda$, so
$z_c(\ell)\in U_n$. Lemma~\ref{prop:26-model} completes the proof.
\end{proof}

\section{Symmetric solutions}
\label{sec:symmetric}\label{sec:eisenstein}

Let $f\in V_n$ be a symmetric solution with $n>0$. Constants are
settled by Lemma~\ref{lem:sector-decomp}. We prove $J_f=0$, use
Corollary~\ref{cor:period-towers} to construct an extension, and remove
its possible odd degree pole by Hecke operators. The support theorem
then applies.

\subsection{An obstruction with boundary restrictions}

We realize $J_f$ as a boundary coordinate of a degree two class
attached to the restricted-graph functional.

Conjugate the level covector $\ell=(1,1,1)$ to $e_3^*=(0,0,1)$ by
\[
  R_3=
  \begin{pmatrix}
    1&0&0\\
    0&1&0\\
    -1&-1&1
  \end{pmatrix},
  \qquad
  B_3=R_3^{-1},\qquad
  K^\dagger=B_3KB_3^{-1},\qquad
  \mathcal T^\dagger=B_3\mathcal T^\uparrow.
\]
Since $e_3^*B_3=\ell$, the group $K^\dagger$ fixes $e_3^*$ modulo three.
A subspace is now good when its integral lattice contains a vector with
last coordinate nonzero modulo three. Write $E_n=E_{n,1}$. Transport
of the graph and coefficients gives a $K^\dagger$-equivariant functional
\[
  F:H_1(\mathcal T^\dagger;\QQ)\longrightarrow E_n.
\]
We fix its polynomial normalization. If $V$ is an ordered integral basis
with good column lines, let $\mathcal A(V)$ be its oriented apartment.
Choose column signs so that their last coordinates are one modulo three,
and call the resulting matrix $V_c$. Then the value of $F$ on this apartment is
\begin{equation}\label{eq:frame-evaluation}
  F(\mathcal A(V))
    =\det(V_c)f(V_c^{-1}\mathbf X),
  \qquad \mathbf X=(X,Y,Z)^{\mathsf t}.
\end{equation}
Indeed, $V_cR_3\in K^\dagger$, and the reference apartment
$\mathcal A(B_3)$ has value $f(R_3\mathbf X)$, so equivariance gives
this formula.

Choose a good vertex $v$ and oriented paths $\tau(g)$ from $v$ to $gv$
for $g\in K^\dagger$. Their endpoints make
\begin{equation}\label{eq:zF}
  z_F(g,h)
    =F\bigl(\tau(g)+g\tau(h)-\tau(gh)\bigr)
\end{equation}
well defined. The path terms cancel in the group coboundary, and changing
paths changes $z_F$ by a coboundary. Thus
$a_F=[z_F]\in H^2(K^\dagger,E_n)$ is the connecting class
\cite[Chapter~III]{Brown}, equivalently the pushout along $F$ of
\[
  0\longrightarrow H_1(\mathcal T^\dagger;\QQ)
   \longrightarrow C_1(\mathcal T^\dagger;\QQ)
   \longrightarrow C_0(\mathcal T^\dagger;\QQ)
   \longrightarrow\QQ\longrightarrow0,
\]
where $C_i$ is the rational cellular chain group.
If a subgroup fixes a good vertex $w_0$, choose a path $s$ from $v$ to
$w_0$ and take $\tau(g)=s-gs$. The cycle in \eqref{eq:zF} is then zero.
Hence $a_F$ vanishes on every good-vertex stabilizer.

Consider the bad line $\QQ e_1$. Write $P_L$ for its stabilizer in
$K^\dagger$, $U=\langle U_{12},U_{13}\rangle\cong\ZZ^2$ for its radical,
and $L=P_L/U$ for its Levi quotient. The generators act on $E_n$ by
$e^{-Y\partial_X}$ and $e^{-Z\partial_X}$. A one-cocycle $b$ with values
$b_1,b_2$ therefore satisfies
\[
  (e^{-Y\partial_X}-I)b_2
    =(e^{-Z\partial_X}-I)b_1.
\]
Define its Bernoulli coordinate by
\begin{equation}\label{eq:radical-coordinate}
  \alpha=B(-Y\partial_X)b_1,\qquad
  \beta=B(-Z\partial_X)b_2,\qquad
  \mathcal J(b)=Z\alpha-Y\beta.
\end{equation}
The finite-difference identity gives
$Y\partial_X\beta=Z\partial_X\alpha$, so $\partial_X\mathcal J(b)=0$.
This induces an isomorphism
\[
  H^1(U,E_n)\xrightarrow{\ \sim\ }\QQ[Y,Z]_{n+1}.
\]
Indeed, a coboundary has
$\alpha=-Y\partial_Xe$ and $\beta=-Z\partial_Xe$, so its coordinate is
zero. Conversely, $Z\alpha=Y\beta$ gives $\alpha=Yh$, $\beta=Zh$ because
$Y,Z$ are relatively prime. Choose $e\in E_n$ with $\partial_Xe=-h$.
Invertibility of the Bernoulli operators makes $b$ its coboundary.
For surjectivity, given $J\in\QQ[Y,Z]_d$, $d=n+1$, take
\[
  b_1=\alpha=\frac{J_Z}{d},\qquad
  b_2=\beta=-\frac{J_Y}{d}.
\]
These are independent of $X$, hence satisfy the cocycle equation and are
fixed by the Bernoulli operators. Euler's identity gives $\mathcal J(b)=J$.

For $1\to U\to P_L\to L\to1$, the \emph{mixed} Hochschild--Serre term is
$H^1(L,H^1(U,E_n))$, with one degree from each group. The coordinate above
expresses its coefficients as binary polynomials. A binary cusp is an
orbit of a rational line. A class is \emph{cuspidal} when its restriction
to every cusp stabilizer is zero. The next lemma identifies the mixed
restriction of $a_F$ and shows that its vanishing forces the literal
polynomial $J_f$ to vanish.

\begin{lemma}\label{lem:bad-line-coordinate}
The restriction of $a_F$ to the bad-line stabilizer has only the mixed
Hochschild--Serre component. This component is cuspidal on the binary Levi.
If it is zero, then $J_f=0$ as a polynomial. In the coordinate
\eqref{eq:radical-coordinate}, its cocycle is the negative of the one
whose specified order three generator has value $J_f$.
\end{lemma}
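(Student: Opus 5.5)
We work with the Hochschild--Serre spectral sequence \eqref{eq:HS-orientation} for $1\to U\to P_L\to L\to1$. Since $L$ is an arithmetic Levi group with a free subgroup of finite index, only the columns $p=0,1$ survive rationally. The degree two restriction of $a_F$ therefore has at most two components: the mixed term in $H^1(L,H^1(U,E_n))$ and the term in $H^0(L,H^2(U,E_n))$. The latter is a quotient of $E_n$ by the images of $e^{-Y\partial_X}-I$ and $e^{-Z\partial_X}-I$.

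\textbf{Step 1.} Show that the $H^0(L,H^2(U,E_n))$ component vanishes. Restricted to $U$, the class $a_F$ is computed from the $U$-action on the graph. The radical $U$ fixes every plane containing $\QQ e_1$, and some of these planes are good; for instance, $\QQ e_1+\QQ e_3$ contains $e_3$, whose last coordinate is $1$. Hence $U$ lies in a good-vertex stabilizer, and we showed above that $a_F$ vanishes on every good-vertex stabilizer. So $a_F|_U=0$, which kills the $E_2^{0,2}$ term.

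\textbf{Step 2.} Prove that the mixed component is cuspidal. The binary cusps of $L$ correspond to planes $P\supset\QQ e_1$. Each cusp stabilizer in $L$ lifts into the stabilizer of a flag $\QQ e_1\subset P$, and hence into the stabilizer of the good vertex $P$ whenever $P$ is good. In that case the restriction vanishes by the same argument. The remaining case is a bad plane containing the bad line. There the flag stabilizer fixes a good line inside $P$: any good plane contains good lines, but a bad plane does not. I expect this step to need care. My fallback is to compute the congruence condition for $K^\dagger$ directly, to check that every cusp of the level three Levi is represented by a good plane, or else that the bad-plane cusp stabilizers also fix a good line.

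\textbf{Step 3.} Compute the cocycle. Choose $\tau$ adapted to $P_L$: a base good vertex $v$, for example the plane $\QQ e_1+\QQ e_3$, with paths through it. Compute the mixed cocycle on a pair consisting of a radical generator $U_{12}$ or $U_{13}$ and an order three element of $L$, namely the lift of $A$ acting on the $(e_2,e_3)$ coordinates. The cycle $\tau(g)+g\tau(h)-\tau(gh)$ is then an explicit combination of apartments. Evaluating it with \eqref{eq:frame-evaluation} gives polynomials in the substitutions appearing in \eqref{eq:Jdef}; these are $\psi_1$, and $\psi_2$ as a three-term combination. This yields the $U$-cocycle values $b_1,b_2$ for the generator. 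Applying the coordinate $\mathcal J$ of \eqref{eq:radical-coordinate} then gives exactly $-(ZB(-Y\partial_X)\psi_1-YB(-Z\partial_X)\psi_2)=-J_f$.

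\textbf{Step 4.} Conclude that $J_f=0$ if the mixed component is zero. The cocycle on $L$ is cohomologous to zero, and its value at an element of finite order can be changed by a coboundary term $(\gamma-1)m$. We use the fact that a finite cyclic group has vanishing rational $H^1$, so a coboundary evaluated at the order three generator lies in the image of $\gamma-1$. We then need to show that this forces the specific representative value to vanish. To do so, use the normalization of $\tau$ at a vertex fixed by that generator: $J_f$ lies in the $\gamma$-invariants intersected with $\operatorname{im}(\gamma-1)$, which is zero.

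Steps 2 and 3 are the main obstacles. Step 3 in particular involves careful path bookkeeping and sign conventions.
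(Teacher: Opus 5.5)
Your Steps~1 and~3 follow the paper. The radical fixes the good plane $\langle e_1,e_3\rangle$, so $a_F|_U=0$ and only the mixed term survives. The value at $A_0$ comes from explicit apartments, and the minus sign comes from the mixed edge $z(l,l^{-1}ul)-z(u,l)=-\phi_l(u)$. Your good-vertex argument at the good-plane cusp is also a valid alternative to the paper's reflection argument. Step~2, however, fails at the other cusp.

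The Levi $\Gamma_2$ has exactly two cusp orbits: $\langle e_1,e_3\rangle$ (good, width three, translation $Z\mapsto Z-3Y$) and $\langle e_1,e_2\rangle$ (bad, width one, translation $Y\mapsto Y-Z$). Since $K^\dagger$ preserves goodness, the bad plane is not equivalent to a good one. Its flag stabilizer contains $U_{13}$ and $U_{23}$, whose only common fixed vertices are $\langle e_1,e_2\rangle$ and the lines in it, all bad. So both of your fallbacks are false.

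With $d=n+1$, the cusp $H^1$ there is the $Y^d$-coefficient of the cocycle value on the translation, and the reflection acts on it by $(-1)^{d+1}$. For odd $d$ this group is one-dimensional, so vanishing needs an input from $f$ itself. Lemma~\ref{lem:sym-boundary-factors} gives $w\mid f$, and every argument in \eqref{eq:Jdef} has coordinate sum $Z-3Y$, hence $J_f(Y,3Y)=0$. The cocycle relations then make the translation's value ($\widehat J+\mathcal A\widehat J$ in the paper's transported coordinates) vanish at $Z=0$.

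Step~4 rests on a false claim. For $\gamma$ of order three, the cocycle identity places the value at $\gamma$ in $\ker(1+\gamma+\gamma^2)=\operatorname{im}(1-\gamma)$, which meets the $\gamma$-invariants only in $0$. Asserting that $J_f$ is also $\gamma$-invariant is therefore asserting $J_f=0$ outright, without using the hypothesis that the mixed class vanishes. Nothing in the choice of $\tau$ gives this; if it did, Proposition~\ref{thm:J-vanishes} would need none of its analytic input.

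The value at one finite-order element cannot detect the class; you need the whole Levi. In the paper, the path $s$ is fixed by $B_0$ and by the reflection $R_0$, so the cocycle vanishes there, and $\Gamma_2=\langle A_0,B_0\rangle*\langle R_0\rangle$. If the class is zero, the cocycle is $l\mapsto(1-l)m$ with $m$ fixed by $B_0$ and $R_0$, hence by $U_0=B_0R_0$. So $m=cY^d$ and $J_f=c\bigl(Y^d-(Z-2Y)^d\bigr)$. For odd $d$ the reflection $R_0$ forces $c=0$. For even $d$ it does not, and you need the further fact $J_f(0,Z)=Zf(Z,-X,X)=0$, which follows from $c_+\mid f$.
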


\begin{proof}
\smallskip\noindent\emph{The radical coordinate.}
Choose the base vertex $v=G_0$ and the path $s:G_0\to\QQ e_3\to H_0$, where
$G_0=\langle e_1,e_3\rangle$ and $H_0=\langle e_2,e_3\rangle$.
The bad-line Levi fixes $H_0$ and its radical $U$ fixes $G_0$.
For Levi elements $l$, use $\tau(l)=s-ls$ and $\tau(ul)=u\tau(l)$.
In particular $\tau(u)$ is zero. A binary Levi matrix $l$ acts on the last two
coordinates through $\operatorname{diag}(1,l)$. Put
\[
 A_0=\begin{pmatrix}1&-1\\3&-2\end{pmatrix},\quad
 B_0=\begin{pmatrix}-1&0\\-3&1\end{pmatrix},\quad
 R_0=\begin{pmatrix}-1&0\\0&1\end{pmatrix}.
\]
When acting on $\QQ^3$, these binary matrices are embedded as
$\operatorname{diag}(1,g)$ in the bad-line Levi. Both $B_0,R_0$ fix $s$. The radical cocycle
$\phi_l(u)=F((u-1)\tau(l))$ therefore has zero values for these two Levi
elements. We make the path cancellation explicit. Put
\[
 a=e_3,\quad b=(1,1,2),\quad c=(0,1,2),\quad
 d=(1,0,1),\quad e=(2,1,2),\quad G'=\langle e_1,c\rangle.
\]
Vectors denote their rational lines. Since $A_0e_3=-c$
and $A_0H_0=H_0$, the path $\tau(A_0)=s-A_0s$ is
\[
 G_0\longrightarrow a\longrightarrow H_0\longrightarrow c
 \longrightarrow G'.
\]
For independent vectors $v,w$, write
$\mathfrak e(v,w)=[\QQ v,\langle v,w\rangle]+[\langle v,w\rangle,\QQ w]$.
The apartment convention is
$\mathcal A(v,w,t)=\mathfrak e(v,w)+\mathfrak e(w,t)+\mathfrak e(t,v)$.
Cancelling repeated edges in the two translated paths gives
\[
\begin{aligned}
 (U_{12}-1)\tau(A_0)&=\mathfrak e(a,b)+\mathfrak e(b,c)+\mathfrak e(c,a),\\
 (U_{13}-1)\tau(A_0)&=\mathfrak e(a,d)+\mathfrak e(d,e)
                  +\mathfrak e(e,c)+\mathfrak e(c,a).
\end{aligned}
\]
The lines $b,c,e$ lie in one plane, so
$\mathfrak e(e,b)+\mathfrak e(b,c)=\mathfrak e(e,c)$. Consequently the two translated paths reduce to apartments:
\begin{align*}
 (U_{12}-1)\tau(A_0)&=\mathcal A(e_3,(1,1,2),(0,1,2)),\\
 (U_{13}-1)\tau(A_0)&=\mathcal A(e_3,(1,1,2),(0,1,2))
 +\mathcal A(e_3,(1,0,1),(2,1,2))
       +\mathcal A(e_3,(2,1,2),(1,1,2)).
\end{align*}
The column-sign lists are $(1,-1,-1),(1,1,-1),(1,-1,-1)$.
By \eqref{eq:frame-evaluation}, the preceding apartment identities give
\[
 \phi_{A_0}(U_{12})=\psi_1,
 \qquad
 \phi_{A_0}(U_{13})=\psi_2,
\]
where $\psi_1,\psi_2$ are exactly the symmetric frame polynomials
in \eqref{eq:Jdef}. Applying the radical coordinate
\eqref{eq:radical-coordinate}, we obtain
\[
 \mathcal J(\phi_{A_0})
   =ZB(-Y\partial_X)\psi_1-YB(-Z\partial_X)\psi_2
   =J_f.
\]
Thus $J_f$ is the literal radical coordinate and is independent of $X$.
We write it as $J_f(Y,Z)$ for the rest of this proof.

The mixed bar edge of a two-cocycle is
$e(z)(l)(u)=z(l,l^{-1}ul)-z(u,l)$.
Our choices $\tau(u)=0$ and $\tau(ul)=u\tau(l)$ give
\[
\begin{aligned}
 z_F(l,l^{-1}ul)&=F(\tau(l)-u\tau(l))=-\phi_l(u) \qquad {\text {\rm and}}\qquad
 z_F(u,l)&=F(u\tau(l)-\tau(ul))=0.
\end{aligned}
\]
Thus the mixed edge is $-\phi_l(u)$ and $z_F|_{U\times U}=0$.
We retain this sign, though its reversal would not affect vanishing.
The Levi has rational cohomological dimension one, so no further
filtration term occurs in this kernel: the complete restriction lies in
$H^1(L,H^1(U,E_n))$.
Under $\operatorname{diag}(t,L)$ the coordinate
\eqref{eq:radical-coordinate} transforms by $J(v)\mapsto t^{-2}J(L^{-1}v)$.
Indeed inverse conjugation on the radical contributes $t^{-1}L$, the
coefficient action contributes $t^{-1}\det(L)^{-1}$, and the contraction
$(Z,-Y)$ contributes $\det L$. In particular the independent integral
line sign acts trivially.

\smallskip\noindent\emph{The binary quotient and its cusps.}

At each cusp we take translation coinvariants, then reflection invariants.

Write $C_m$ for the cyclic group of order $m$ and $\mathfrak S_3$ for
the permutation group on three letters. The remaining arithmetic group is
$\Gamma_2=\{g\in\mathrm{GL}_2(\ZZ):(0,1)g=(0,1)\pmod3\}$, and it has a presentation
\begin{equation}\label{eq:Gamma-presentation}
 \Gamma_2=\langle A_0,B_0\rangle*\langle R_0\rangle\cong\mathfrak S_3*C_2.
\end{equation}
To verify completeness, the determinant one subgroup maps injectively to
an index four subgroup of
$\mathrm{PSL}_2(\ZZ)=\SL_2(\ZZ)/\{\pm I_2\}\cong C_2*C_3$.
On its four cosets the two generators act with cycle types $(2,2)$ and
$(3,1)$. The Bass--Serre quotient has four edges and four vertices, one
$C_3$ vertex stabilizer, and all other stabilizers trivial. Schreier
reduction uses the standard matrices
$\mathsf S=\left(\begin{smallmatrix}0&-1\\1&0\end{smallmatrix}\right)$ and
$\mathsf T=\left(\begin{smallmatrix}1&1\\0&1\end{smallmatrix}\right)$.
The representatives $1,\mathsf S,\mathsf S\mathsf T,\mathsf S\mathsf T^2$ give
$\langle A_0,U_0\mid A_0^3=1\rangle$, where
$U_0=\left(\begin{smallmatrix}1&0\\3&1\end{smallmatrix}\right)$ and $A_0=U_0\mathsf T^{-1}$.
Matrix multiplication gives
$R_0U_0R_0=U_0^{-1}$ and
$R_0A_0R_0=U_0^{-1}A_0^{-1}U_0$.
Set $B_0=U_0R_0$ to obtain \eqref{eq:Gamma-presentation}, with no extra
mixed relation.

Transport to $(a,b)=(-Y,Z-2Y)$ and write
$\widehat J(a,b)=J_f(-a,b-2a)$. The actions become
$\mathcal A(a,b)=(-b,a-b)$, $\mathcal B(a,b)=(-a,b-a)$, and
$\mathcal R(a,b)=(-a,b-4a)$.
The positive cocycle $\mathcal J(\phi_l)$ has values $\widehat J,0,0$
on $A_0,B_0,R_0$. Their finite group relations give
$(1+\mathcal A+\mathcal A^2)\widehat J=0$ and
$\widehat J(a,b)=-\widehat J(b,a)$.
For $u=(2\widehat J+\mathcal A\widehat J)/3$, one has
$(1-\mathcal A)u=\widehat J$ and $\mathcal Bu=u$.
Thus in $V=\QQ[a,b]_{n+1}$ the actual mixed class has quotient coordinate
\begin{equation}\label{eq:jF-quotient}
 j_F=-[u]\in V/(V^{\langle\mathcal A,\mathcal B\rangle}+V^{\mathcal R}).
\end{equation}
The minus sign comes from the mixed bar edge above.

Let $\Gamma_1(3)=\Gamma_2\cap\SL_2(\ZZ)$. A binary cusp is an orbit of
a rational line. There are exactly two cusp orbits, represented by the coordinate lines,
with widths one and three. Indeed a primitive column $(p,q)^t$ with
$3\mid q$ can be signed and completed to a matrix in $\Gamma_1(3)$ as its
first column. If $3\nmid q$, it can be so completed as its second column,
adjusting a Bezout complement by a multiple of that column.
These two cases are invariant and exhaustive.
Write $d=n+1$. At the lower cusp the translation is $Z\mapsto Z-3Y$,
whose degree $d$ coinvariant is the $Z^d$ coefficient. Its reflection fixes
that coefficient and reverses the translation, so its full cusp $H^1$
vanishes. At the upper cusp the translation is $Y\mapsto Y-Z$, with
coinvariant the $Y^d$ coefficient. The reflection acts on $H^1$ by
$(-1)^{d+1}$. Thus this restriction is automatically zero for even $d$.

For odd $d$, Lemma~\ref{lem:sym-boundary-factors} gives $w\mid f$.
All three frames have sum $Z-3Y$, so $J_f(Y,3Y)=0$, or
$\widehat J(a,-a)=0$. For the upper translation
$\mathcal A^2\mathcal B\mathcal R$, its cocycle value is
$\widehat J+\mathcal A\widehat J$. On $Z=0$, that is $b=2a$, the cocycle
relations give
$(\widehat J+\mathcal A\widehat J)(a,2a)=-\widehat J(a,-a)=0$.
Both cusp restrictions therefore vanish in every parity.

\smallskip\noindent\emph{Detection of the polynomial.}
Finally suppose $j_F=0$. Write the positive cocycle as $(1-l)m$.
Its zero values at $B_0,R_0$ make $m$ invariant under their product $U_0$,
hence $m=cY^d$. For odd $d$ the reflection makes $c=0$.
For even $d$, its $A_0$ value is
$J_f(Y,Z)=c(Y^d-(-2Y+Z)^d)$.
But $J_f(0,Z)=Zf(Z,-X,X)$ vanishes by the opposite pair vanishing.
Thus $c=0$ here as well. This proves the detection assertion.
\end{proof}

\subsection{The symmetric obstruction vanishes}

We apply the boundary comparison of Proposition~\ref{ar:prop:graph},
proved in Appendix~\ref{app:arithmetic-direct}, to the connecting class
$a_F$.

\begin{proposition}\label{thm:J-vanishes}
Every symmetric homogeneous solution satisfies $J_f=0$.
\end{proposition}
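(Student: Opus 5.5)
By Lemma~\ref{lem:bad-line-coordinate}, it suffices to show that the mixed Hochschild--Serre component of the restriction of $a_F$ to the bad-line stabilizer $P_L$ vanishes. By the same lemma, that component is all of the restriction, and it is cuspidal on the binary Levi $\Gamma_2$. The plan is to regard $a_F\in H^2(K^\dagger,E_n)$ as a degree two class on the arithmetic quotient and control its boundary restrictions with the comparison of Proposition~\ref{ar:prop:graph}. Two families of faces are involved.

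\emph{Retained faces.} The restriction of $a_F$ to every good-vertex stabilizer vanishes, because $\tau(g)=s-gs$ makes the cycle in \eqref{eq:zF} zero. So $a_F$ restricts to zero on the boundary faces indexed by good lines and good planes, and on their corners.

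\emph{Omitted faces.} The only nontrivial boundary data sit on the faces of bad lines and bad planes. Up to $K^\dagger$ these are represented by $\QQ e_1$ and one bad plane. I would first transport the analysis of Lemma~\ref{lem:bad-line-coordinate} to the bad plane. By duality (transpose-inverse) that restriction is again a single mixed term in $H^1(L',H^1(U',E_n))$, described by a binary polynomial of degree $n+1$.

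Next I would apply Proposition~\ref{ar:prop:graph}, which I expect to carry the analytic argument sketched in the introduction. It builds closed forms from cusp forms in $S_{n+3}$, or the relevant level three space, summed over translates, and does the same with dual coefficients. Stokes's theorem then pairs the class with these forms over the boundary. Since $a_F$ is a genuine global class, the sum of its boundary contributions against each such form is zero. The retained-face contributions vanish by the first step. What remains is a linear relation that expresses the line restriction in terms of the plane restriction, together with the dual relation obtained by swapping their roles.

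The argument then splits by parity. In even degree, I expect the sign symmetry of the reflection $R_0$, combined with the transpose-inverse involution, to make the plane restriction equal to minus itself, hence zero. The comparison then forces the bad-line mixed class $j_F$ to vanish. In odd degree, I would suppose the omitted-plane restriction is nonzero and decompose it into Hecke eigencomponents. The comparison integral for each eigencomponent is an explicit nonzero constant times $L^{(3)}(h,k-2)$ or $L^{(3)}(h,k-1)$, and these are nonzero by Lemma~\ref{lem:outer-values}. This produces a nonzero restriction to a retained line face, contradicting the first step. So the plane restriction is zero, and therefore so is the bad-line component.

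Once the mixed component vanishes, the detection part of Lemma~\ref{lem:bad-line-coordinate} gives $J_f=0$ as a polynomial. Small degrees, where $n+3\le 6$ and Lemma~\ref{lem:outer-values} does not apply, I would handle directly: either Lemma~\ref{lem:sector-decomp} kills $f$, or the cusp-form spaces are zero, so the cuspidal mixed class has nowhere to live.

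The main obstacle is the odd-degree comparison. It requires the normalizing integral to be nonzero, and it requires that the cuspidal binary class at level three actually projects onto eigenforms whose outer $L$-values govern the pairing, including old forms coming from level one. I would first try to reduce to newform components, using that removing the Euler factor at $3$ preserves nonvanishing.
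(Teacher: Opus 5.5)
Your outline follows the paper's architecture: vanishing at good vertices, the mixed cuspidal bad-line class of Lemma~\ref{lem:bad-line-coordinate}, the relation $\xi_Q=T\xi_P$ from Proposition~\ref{ar:prop:graph}, a parity argument killing $\xi_P$, then detection. However, the even-degree step fails as you state it, and you never verify the hypotheses of Proposition~\ref{ar:prop:graph}.

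\emph{The even-degree sign.} The reflection $R_0$ lies in the bad-line Levi. On the bad plane $\langle e_1,e_2\rangle$ it acts by the non-central reflection $\operatorname{diag}(1,-1)$, which only splits the plane cohomology into eigenspaces. The transpose-inverse involution is not a symmetry of $a_F$ at all. It exchanges $E_n$ with its dual and $K^\dagger$ with a different congruence group, which is why the paper uses it only to build the dual series $\cE_Q^\vee$. The sign that works is the central element $\operatorname{diag}(-1,-1,1)\in K^\dagger$ of the bad-plane Levi $L_P\cong\GL_2(\ZZ)$. It acts on $H^1(N_P,E_n)\cong\det^{-1}\Sym^{n+1}((\QQ^2)^*)$ by $(-1)^{n+1}=-1$, so $H^1(L_P,H^1(N_P,E_n))=0$. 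Together with $H^2(N_P,E_n)^{L_P}=0$ and vanishing at good planes, this gives $\xi_P=0$ and hence $\xi_Q=0$.

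\emph{The hypotheses of Proposition~\ref{ar:prop:graph}.} It needs zero restriction at every flag and a cuspidal plane class, and neither comes from duality. Every flag contains either a good line, where $a_F$ restricts to zero, or a bad line, where the flag restriction is a binary cusp restriction of the cuspidal bad-line class. Injectivity of the mixed edge map at binary cusps (the Levi has rational cohomological dimension one) then turns flag vanishing into cuspidality of the bad-plane class.

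\emph{The odd-degree level.} The obstacle you name is misplaced, and the idea that actually settles the level is missing. The bad-plane Levi is the full $\GL_2(\ZZ)$, not the level-three group $\Gamma_2$. At $3$, a section supported on the bad-plane orbit $PK_1$ is determined by a $\GL_2(\ZZ_3)$-fixed value, so no newform or oldform analysis at level three arises. What is genuinely at level three is the normal inducing character, which a priori is trivial only on $1+3\ZZ_3$. Without further input, your local computation at $3$ is not justified.

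The paper trivializes this character as follows. The matrix $-I_3$ fixes every rational subspace and acts on $E_n$ by $(-1)^{n+1}=1$ for odd $n$. So the functional and $a_F$ extend to $\langle K^\dagger,-I_3\rangle$. Multiplying by $\operatorname{diag}(-I_2,1)$ gives the independent normal sign $\operatorname{diag}(I_2,-1)$. The character is then globally trivial and the datum is level one.

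Only then is the good-line entry on each eigenform the scalar $-\frac{2\pi i}{9(n+1)}L^{(3)}(h,k-2)/L^{(3)}(h,k-1)$. This is a ratio, so its nonvanishing uses Lemma~\ref{lem:outer-values} for both $L^{(3)}(h,k-2)$ and $L^{(3)}(h,k-1)$, not for just one of them.
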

\begin{proof}
The degrees at most three are zero by Lemma~\ref{lem:sector-decomp}.
For $n\ge4$, let $a_F$ be the connecting class, and complexify its
coefficient system. Vanishing after complexification implies rational
vanishing. The class restricts to zero at every good vertex. Its bad-line
restriction is mixed and cuspidal by Lemma~\ref{lem:bad-line-coordinate}.
Its restriction at each flag is therefore zero: a flag contains either a
good line, where the whole restriction vanishes, or a bad line, where it
is a binary cusp restriction of that mixed class.

Let $P_0$ be the stabilizer of the bad plane $\langle e_1,e_2\rangle$ in
$K^\dagger$. Its radical is $N_P=\langle U_{13},U_{23}\rangle$, and its
Levi quotient is $L_P\cong\GL_2(\ZZ)$. The same gradient calculation used
in the support proof gives
\[
 H^1(N_P,E_n)=\det^{-1}\Sym^{n+1}((\QQ^2)^*),\qquad
 H^2(N_P,E_n)=\det^{-2}\Sym^n((\QQ^2)^*).
\]
The two opposite binary unipotents have no common invariant in positive
degree, so the second module has no $L_P$-invariants. The Levi has rational
cohomological dimension one, leaving only the mixed restriction. At a
binary cusp the mixed edge map is injective for the same dimension reason.
Flag vanishing therefore makes the plane class cuspidal.

Let $\xi_P$ and $\xi_Q$ be the collections of plane and line restrictions.
Let $T$ be the map taking plane cusp data to the line constant term of the
convergent cusp form sum constructed in Appendix~\ref{app:arithmetic-direct}.
Proposition~\ref{ar:prop:graph} gives $\xi_Q=T\xi_P$. It remains to show
that $\xi_P=0$.

In even degree the bad-plane
binary module has odd degree $n+1$, so its central $-I_2$ kills its
cohomology. Since all good-plane restrictions vanish, $\xi_P=0$ and hence
$\xi_Q=0$.

In odd degree the bad-plane datum is a level one cusp form, as
explained in Appendix~\ref{ar:sec:finite}. On each classical Hecke
eigenform its image at the good-line cusp is the nonzero scalar
\eqref{ar:eq:globalentry}, in the seed normalizations of
\eqref{ar:eq:seed}. Thus the good-line map is injective on the bad-plane space, whereas
the good-line restriction of $a_F$ is zero.
Again both $\xi_P$ and $\xi_Q$ vanish.

Lemma~\ref{lem:bad-line-coordinate} now gives $J_f=0$ as a polynomial,
not merely modulo coboundaries.
\end{proof}

\subsection{Extending the binary data}\label{sec:binary-extension}

Return to $\ell=(1,1,1)$ and the determinant-twisted model of
Lemma~\ref{prop:26-model}. The three signed-permutation orbits on
$\mathcal S$ are distinguished by support size. Thus extension requires
two seeds besides $f$. Corollary~\ref{cor:period-towers} will supply them.
We first allow rational functions to isolate the possible odd degree pole.

\begin{lemma}\label{lem:three-seed}
A signed-permutation equivariant function $F$ as in
\eqref{eq:fiber-signed} is uniquely determined by its seeds
$P=F(1,0,0)$, $Q=F(1,1,0)$, and $f=F(1,1,1)$.
Such an $F$ satisfies \eqref{eq:fiber-norm} if and only if the seeds satisfy
\begin{gather*}
 f\text{ symmetric},\quad Q(a,b,c)=Q(b,a,c)=-Q(a,b,-c),\\
 P(a,b,c)=P(a,c,b)=-P(a,-b,c)=-P(a,b,-c),\\
 N_{12}f=N_{12}Q=N_{23}P=0,
\end{gather*}
and
\begin{align}
 Q(a,b,c+a)-Q(a,b,c)&=f(a+c,-c,b),\label{eq:mixed1}\\
 P(a,c+a,b)-P(a,c,b)&=Q(-c,a+c,b).\label{eq:mixed2}
\end{align}
Conversely, seeds with these properties extend uniquely by
\eqref{eq:fiber-signed} to a function satisfying both fiber equations.
The statement holds for polynomial or rational function coefficients.
\end{lemma}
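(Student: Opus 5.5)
The plan is to split the statement into two parts. First, signed-permutation equivariance \eqref{eq:fiber-signed} alone amounts to the stabilizer conditions on the seeds. Second, the norm relation \eqref{eq:fiber-norm}, checked at a representative of each relevant orbit, becomes the listed norm and mixed identities. Throughout, $t=1$, so $\chi_1(h)=\prod_iD_{ii}$ and all bars are untwisted substitutions. None of the steps uses polynomiality, which is why the statement also holds for rational function coefficients.

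\emph{Step 1: orbits and stabilizers.} The signed permutation matrices act on the right on $\mathcal S=\FF_3^3\setminus\{0\}$ by permuting coordinates and changing their signs. The support size of a covector is invariant, and every nonzero entry can be signed to $1$. Hence there are exactly three orbits, represented by $(1,0,0)$, $(1,1,0)$ and $(1,1,1)$, of sizes $6$, $12$ and $8$. Choosing $h$ with $r=r_0h$ shows that $F$ is determined by $P,Q,f$. Conversely, these values extend consistently exactly when each seed is $\chi_1$-equivariant under the stabilizer of its representative. I will compute the three stabilizers mod $3$, remembering that $-1\ne1$ there.
\begin{itemize}
\item For $(1,1,1)$, every entry sign must be $+$, so the stabilizer is the permutation group. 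Since $\chi_1$ is trivial on it, $f$ must be symmetric.
\item For $(1,1,0)$, the stabilizer is generated by the swap of the first two coordinates and the sign change of the third. This gives $Q(b,a,c)=Q(a,b,c)=-Q(a,b,-c)$.
\item For $(1,0,0)$, the stabilizer consists of permutations of the last two coordinates and sign changes there. This gives the listed conditions on $P$.
\end{itemize}
Uniqueness of the extension follows.

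\emph{Step 2: reducing the norm relation to finitely many cases.} The relation at $r$ involves $F(r)$, $F(rA)$ and $F(rA^2)$, where
\[
rA=(r_2,-r_1-r_2,r_3).
\]
Because $A$ does not commute with all signed permutations, the relation must be checked at more than three points. Two reductions cut down the work. First, the relation at $r$ is the same up to substitution as the relations at $rA$ and $rA^2$, so only $\langle A\rangle$-orbits on $\mathcal S$ matter. Second, equivariance transports the relation at $r$ to the relation at $rh$ whenever $h$ normalizes $\langle A\rangle$ up to inversion. This applies to $S_0$ (since $S_0AS_0=A^2$), to $\operatorname{diag}(1,1,-1)$, and to $-I$. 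One must check that this transport swaps the two non-identity terms correctly and that the $\chi_1$ signs match.

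\emph{Step 3: the individual relations.} I will run through the orbit representatives.
\begin{itemize}
\item At $r=(1,1,1)$, $rA=r$, and the relation is $N_{12}f=0$.
\item At $(1,1,0)$, again $rA=r$, giving $N_{12}Q=0$.
\item At $(0,0,1)$, $rA=r$. Writing $F(0,0,1)$ as a transport of $P$ by a coordinate permutation yields $N_{23}P=0$.
\item The remaining orbits mix seeds. Each should reduce to one of two patterns, such as $r=(1,0,1)$ (terms $f,Q,Q$) and $r=(1,0,0)$ or $(1,-1,0)$ (terms $P,P,Q$).
\end{itemize}
In these mixed cases two of the three terms carry the same seed. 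Using the stabilizer parities of Step 1, the three-term relation rewrites as a difference $Q(\cdot+a)-Q(\cdot)$ or $P(\cdot+a)-P(\cdot)$ equal to a substitution of the third seed. After a linear change of variables this is \eqref{eq:mixed1} or \eqref{eq:mixed2}.

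\emph{Main obstacle.} The hard part is the bookkeeping in Steps 2--3. I need an exhaustive list of the classes of $\mathcal S$ under the group generated by $A$, $S_0$, $\operatorname{diag}(1,1,-1)$ and $-I$; I expect about five of them. For each class I must verify that its relation is equivalent to exactly one of the listed conditions, given the parities. I will tabulate the $26$ covectors with their $A$-cycles to make sure no mixed case yields an extra condition beyond \eqref{eq:mixed1}--\eqref{eq:mixed2}, and to fix the exact signs and substitutions in those two identities. The converse direction then follows: seeds with the properties extend by Step 1, and Steps 2--3 show that every fiber relation holds.
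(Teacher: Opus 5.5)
Your proposal is correct and is essentially the paper's proof. Both arguments read the seed symmetries off the stabilizers of $(1,0,0)$, $(1,1,0)$, $(1,1,1)$, take the five displayed equations as the norm relations at $(0,0,1),(1,1,0),(1,1,1),(1,0,1),(1,0,0)$, and prove exhaustion with the transport identities $R(rA)=R(r)\mid A$ and $R(rh)=\pm R(r)\mid h$. Your group $\langle A,S_0,\operatorname{diag}(1,1,-1),-I\rangle$ contains the paper's $\langle A,\operatorname{diag}(-1,-1,1),\operatorname{diag}(1,1,-1)\rangle$ and $S_0$ merges no orbits, so you get exactly the paper's five orbits of sizes $2,2,4,12,6$; the two mixed relations do come out as \eqref{eq:mixed1} and \eqref{eq:mixed2} after your change of variables, with no extra condition.
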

\begin{proof}
The signed-permutation orbits have support sizes one, two, and three. Their
stabilizers give the seed symmetries. The norm equations at
\[
 (0,0,1),\quad(1,1,0),\quad(1,1,1),\quad(1,0,1),\quad(1,0,0)
\]
give the five displayed equations. To see that they exhaust the fibers,
write $R(r)$ for the left side of \eqref{eq:fiber-norm}, and put
$D_{12}=\operatorname{diag}(-1,-1,1)$ and
$D_3=\operatorname{diag}(1,1,-1)$. Direct reindexing gives
\[
 R(rA)=R(r)\mid A,\qquad
 R(rD)=\chi_1(D)R(r)\mid D\quad(D=D_{12},D_3).
\]
The five representative orbits under $\langle A,D_{12},D_3\rangle$ have
sizes $2,2,4,12,6$, respectively. They partition the $26$ covectors.
\end{proof}

For the finite expansion $f=\sum_j w^{e_j}\Phi(q_j)$ in
\eqref{eq:sym-tower}, define the candidate seeds
\begin{equation}\label{eq:PQ-towers}
\begin{aligned}
  Q(a,b,c)
    &=\sum_j(a+b)^{e_j}
        \bigl(q_j(a,c)+q_j(b,c)\bigr),\\
  P(a,b,c)
    &=\sum_j a^{e_j}q_j(b,c).
\end{aligned}
\end{equation}

In even degree both seeds are polynomial. In odd degree only $e_0=-1$
is negative. Oddness of $q_0$ in its first variable makes
$q_0(a,c)+q_0(b,c)$ divisible by $a+b$, so $Q$ is still polynomial.
Only $P$ can have a pole, with term $q_0(b,c)/a$. We remove it in
Section~\ref{sec:hecke-pole}.

The $q_j$ give the required seed symmetries, and
\eqref{eq:binary-addition} gives \eqref{eq:mixed1}--\eqref{eq:mixed2}
term by term. Also $N_2q_j=0$ gives $N_{23}P=0$. Since $N_{12}f=0$,
only $N_{12}Q=0$ remains.

\begin{lemma}\label{lem:first-lift-norm}
Let $f\in\QQ[x,y,z]$ be symmetric with $Nf=0$. If
$Q\in\QQ[a,b,c]$ is symmetric in $a,b$, odd in $c$, and satisfies
\eqref{eq:mixed1}, then $N_{12}Q=0$.
\end{lemma}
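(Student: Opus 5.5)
The plan is to show that the norm $R=N_{12}Q$, viewed as a polynomial in $c$ over $\QQ(a,b)$, is invariant under the translation $c\mapsto c+a$. Then $R$ is constant in $c$. Since $Q$ is odd in $c$ and the substitution $(a,b)\mapsto(-b,a-b)$ does not touch $c$, the norm $R$ is also odd in $c$. An odd polynomial in $c$ that is constant in $c$ is zero, which gives $N_{12}Q=0$.

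Write
\[
 R(a,b,c)=Q(a,b,c)+Q(-b,a-b,c)+Q(b-a,-a,c).
\]
By the symmetry of $Q$ in its first two variables, \eqref{eq:mixed1} has a companion:
\[
 Q(a,b,c+b)-Q(a,b,c)=f(b+c,-c,a).
\]
So for a term $Q(\alpha,\beta,c)$ we may shift $c$ by $\alpha$ or by $\beta$, and each shift produces one explicit $f$-value.

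For the first term we shift by $a$ once. For the second term, with $(\alpha,\beta)=(-b,a-b)$, we reach the shift $a=(a-b)-(-b)$ in two steps: shift by $\beta$, then by $-\alpha$. The backward step is \eqref{eq:mixed1} read at $c+a-b-(-b)$ in reverse. For the third term, with $(\alpha,\beta)=(b-a,-a)$, we use $a=-\beta$, a single backward step. Telescoping then expresses $R(a,b,c+a)-R(a,b,c)$ as a signed sum of about four $f$-values. Their arguments are linear forms in $a,b,c$.

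The main step, and the expected obstacle, is to recognize that signed sum as zero. First I would use the symmetry of $f$ under all permutations of its arguments (it is cyclic and $S$-invariant). This should bring the $f$-values to the shape of a single three-term norm $N_{ij}f$ evaluated at a suitable point. Those norms vanish, since $N_{12}f=0$ and symmetry gives $N_{13}f=N_{23}f=0$. If a single norm does not suffice, I would allow two norm identities, as in the proof of Lemma~\ref{lem:sym-boundary-factors}, and I would match arguments by tracking the sums of the triples. Each $f$-argument triple has a coordinate sum of the form $a+c$ or similar, which constrains the possible pairings.

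Once the difference vanishes identically, $R(a,b,c+a)=R(a,b,c)$. Over the field $\QQ(a,b)$ of characteristic zero, a polynomial in $c$ that is periodic with nonzero period $a$ is constant in $c$, because its derivative in $c$ is periodic and has lower degree, so induction on degree applies. Oddness in $c$ then forces $R=0$. The same argument applies with rational function coefficients, as Lemma~\ref{lem:three-seed} requires.
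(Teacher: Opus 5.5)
Your route is correct, and it is genuinely different from the paper's and shorter. The step you left open does close, and it needs exactly two norm identities, as you guessed. Your telescoping gives
\[
 R(a,b,c+a)-R(a,b,c)=f(a+c,-c,b)+f(a-b+c,-c,-b)-f(a-b+c,-a-c,a-b)-f(c,-a-c,b-a).
\]
Write the norm relation as $f(u,v,w)+f(-v,u-v,w)+f(v-u,-u,w)=0$.

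At $(u,v,w)=(a+c,\,b,\,-c)$ it reads
\[
 f(a+c,b,-c)+f(-b,a-b+c,-c)+f(b-a-c,-a-c,-c)=0 .
\]
By symmetry of $f$, the first two terms are your two positive values.

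At $(u,v,w)=(-c,\,b-a-c,\,-a-c)$ it reads
\[
 f(-c,b-a-c,-a-c)+f(a-b+c,a-b,-a-c)+f(b-a,c,-a-c)=0 .
\]
Its first term is a permutation of the third term of the previous identity. Its last two terms are your two negative values.

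Subtracting the two identities cancels the shared term, so the four-term sum vanishes. Your heuristic about coordinate sums is slightly off: the four sums are $a+b$, $a-2b$, $a-2b$, $b-2a$, and the linking value has sum $b-2a-3c$.

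Periodicity with the nonzero period $a$ over $\QQ(a,b)$ then makes $R$ independent of $c$. If $R$ had $c$-degree $d\ge1$ with leading coefficient $\lambda$, the difference would have leading term $da\lambda c^{d-1}\neq0$. Oddness in $c$ then gives $R=0$.

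The paper avoids your backward steps by differencing each summand of $N_{12}Q$ only along its own native step $a$, $-b$, or $b-a$. It then applies the common triple difference $\Delta_a\Delta_{-b}\Delta_{b-a}$ to all three terms. This produces twelve $f$-values, cancelled by the twelve-norm identity of the appendix table. That argument only yields $\partial_c^3N_{12}Q=0$, hence $N_{12}Q=c\kappa(a,b)$, so a separate evaluation at $c=a$ with two further norm relations is needed to kill $\kappa$.

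Your translation of all three summands by the common step $a$ uses both \eqref{eq:mixed1} and its $a\leftrightarrow b$ companion. This reduces everything to four $f$-values and two norms, and periodicity plus oddness finishes with no residual linear term. The paper's version is more uniform, with only forward steps and one difference per summand, but yours is strictly more economical.

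Your closing remark about rational coefficients is unnecessary here, since $Q$ is polynomial in this lemma. It is nevertheless true, because a $c$-periodic rational function would have infinitely many periodic poles unless it had none.
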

\begin{proof}
Let $\Delta_s$ translate $c$ by $s$ and subtract the original value. Put
\[
 R_1=f(a+c,-c,b),\quad R_2=f(c-b,-c,a-b),\quad
 R_3=f(b-a+c,-c,-a).
\]
The norm and symmetry give
\begin{equation}\label{eq:common-denominator}
 \Delta_{-b}\Delta_{b-a}R_1+
 \Delta_a\Delta_{b-a}R_2+\Delta_a\Delta_{-b}R_3=0.
\end{equation}
Appendix~\ref{app:finite-difference} gives the twelve-norm expansion of
this identity. By \eqref{eq:mixed1}, it becomes
\[
 \Delta_a\Delta_{-b}\Delta_{b-a}(N_{12}Q)=0.
\]
Over $\QQ(a,b)$, each difference equals its nonzero step times $\partial_c$
times an invertible finite power series in $\partial_c$. Hence
$\partial_c^3N_{12}Q=0$, and oddness leaves $N_{12}Q=c\kappa(a,b)$.

Evaluate at $c=a$. With
$g_1=f(a,0,b)$, $g_2=f(-b,0,a-b)$, $g_3=f(b-a,0,-a)$, and
$h=f(a-b,-a,a-b)$, the mixed equation and symmetries give the three values
$g_1,g_2-h,-g_3$. Since
$(Nf)(a,b,0)=g_1+g_2+g_3$ and $(Nf)(b-a,0,-a)=2g_3+h$, we get
\[
 a\kappa=g_1+g_2-h-g_3
        =(g_1+g_2+g_3)-(2g_3+h)=0.
\]
Thus, we have $\kappa=0$.
\end{proof}

In even degree, Lemmas~\ref{lem:three-seed}
and~\ref{lem:first-lift-norm} give a polynomial extension with
identity apartment value $f$, so Proposition~\ref{thm:boundary-evaluation}
gives $f\in U_n$. The odd degree case remains polynomial except for
the possible term $q_0(b,c)/a$.

\subsection{A pole detected by Hecke operators}
\label{sec:hecke-pole}

To eliminate $q_0(b,c)$ using Hecke operators, we enlarge the coefficient
module to allow simple poles along every rational hyperplane. This makes
it stable under rational changes of variables.

Define
\[
  \mathcal M_n
    =V_n+
      \sum_{0\ne L\in(\QQ^3)^*}\frac{V_{n+1}}{L}.
\]
The sums are finite, and each $p/L$ has homogeneous degree $n$.
This is a vector space enlargement, not a localization allowing products
of linear denominators.

For a rational hyperplane $H=\{L=0\}$, define
\[
  \mathcal R_{n,H}
    =(V_{n+1}/LV_n)\otimes_\QQ(\QQ L)^*,
  \qquad
  \mathcal R_n=\bigoplus_H\mathcal R_{n,H}.
\]
The quotient records the numerator on $H$, and the dual factor removes
the choice of its equation. With $L^\vee(L)=1$, set
\[
  \operatorname{res}_H(p/L)=[p]\otimes L^\vee.
\]
Replacing $(p,L)$ by $(cp,cL)$ multiplies the factors by $c,c^{-1}$,
so the residue is unchanged. The direct sum over rational hyperplanes
through the origin has finite support.

Taking residues gives a $\GL_3(\QQ)$-equivariant exact sequence
\begin{equation}\label{eq:residue-exact}
  0\longrightarrow V_n
   \longrightarrow\mathcal M_n
   \xrightarrow{\operatorname{res}}\mathcal R_n
   \longrightarrow0.
\end{equation}
At a generic point of a hyperplane, terms with other pole hyperplanes
are regular, so residues are independent. Choosing numerators lifts every
residue. After combining proportional denominators, zero residue means
the numerator is divisible by its denominator, proving exactness.

This surjectivity is on coefficients, not on homology. The next lemma
identifies top residue classes with periods and constrains those that lift.
Write $[ac^n]q$ for the coefficient of $ac^n$.

\begin{lemma}\label{lem:residue-period}
The map sending $q\in\mathcal B_{n+1}$ to the residue cycle represented by
\[
 F_q(a,b,c)=\frac{q(b,c)}a+\frac{q(a,c)}b+\frac{q(a,b)}c
\] identifies $\mathcal B_{n+1}$ with $H_3(G,\mathcal R_n\otimes\det)$.
Its inverse reads the literal residue at $a=0$, namely $q(b,c)$.
Let $\mathcal L_n$ be the image of $H_3(G,\mathcal M_n\otimes\det)$ in this
space. If $n>0$, then $[ac^n]q(a,c)=0$ for every $q\in\mathcal L_n$.
Moreover $\mathcal L_n$ is stable under the usual binary Hecke operators.
\end{lemma}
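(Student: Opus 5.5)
The plan is to treat the three assertions in order: the identification, then the lifting constraint, then Hecke stability. Throughout I work with the top-cycle model of Lemma~\ref{prop:26-model}, for the full group $G$ and arbitrary rational coefficients, as explained after that lemma. Its retract has no cells above degree three, so $H_3(G,\mathcal R_n\otimes\det)$ is the space of coefficients $\rho\in\mathcal R_n$ that satisfy two conditions: $\rho$ transforms under signed permutations by $\chi_1$, and $(1+A+A^2)\rho=0$. The same description holds with $\mathcal M_n$ in place of $\mathcal R_n$.

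\emph{Identification.} I would first check that $\operatorname{res}(F_q)$ is such a cycle for $q\in\mathcal B_{n+1}$. The symmetry and parity conditions in \eqref{eq:signed-binary} give the signed-permutation law. For the norm, I would compute residues of $N F_q$ hyperplane by hyperplane. The substitution $A$ fixes $c$ and permutes the hyperplanes $a=0$, $b=0$, $a=b$. Collecting the terms with a given denominator, the three residues cancel by \eqref{eq:binary-addition}. At $c=0$ the relation $N_2q=0$ applies directly.

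Conversely, let $\rho$ be a residue cycle. The key step is to show that its support lies on the three coordinate hyperplanes. The support is finite, and it is stable under the relations. I would argue as follows. Take a hyperplane $H$ in the support that is extremal for a height function, namely the size of the primitive covector. The norm relation at $H$ involves residues at $H$, at $A^{-1}H$ and at $A^{-2}H$. Using signed permutations to move $H$ into a normal position, at least one of these two other hyperplanes has strictly larger height unless $H$ is a coordinate hyperplane. This contradicts extremality, much as in the descent for Lemma~\ref{lem:period-extraction}. I expect this support argument to be the main obstacle.

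Once the support is coordinate, the signed-permutation orbit of $\{a=0\}$ makes $\rho$ equal to $\operatorname{res}F_q$, where $q(b,c)$ is the literal residue at $a=0$. Its stabilizer forces $q$ to be symmetric and odd in each variable. The norm relation at the $A$-fixed hyperplane $c=0$ gives $N_2q=0$. Hence $q\in\mathcal B_{n+1}$, and the inverse map is the one stated.

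\emph{Lifting constraint.} Suppose $\operatorname{res}F_q$ lifts to a cycle $F\in\mathcal M_n\otimes\det$. By \eqref{eq:residue-exact}, $F=F_q+p$ with $p\in V_n$ symmetric and odd in each variable. Then $NF_q\in V_n$ and $N F_q=-Np$. I would find a linear functional $\mu$ on $V_n$ with two properties: $\mu$ kills $N p$ for every polynomial $p$ with these parities, and $\mu(NF_q)$ is a nonzero multiple of $[ac^n]q(a,c)$. The natural candidate is a coefficient read off after restricting to a suitable line through the pole. I would compute the principal parts of the three terms of $NF_q$ at $a=0$, using the explicit formula for $F_q$. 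These produce a polynomial remainder whose extreme coefficient is $[ac^n]q$. On the other side, the polynomial $Np$ has this coefficient equal to zero by the parity of $p$, in the spirit of the axis computations in Lemma~\ref{lem:sym-boundary-factors}.

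\emph{Hecke stability.} The modules $\mathcal M_n$ and $\mathcal R_n$ carry $\GL_3(\QQ)$-actions, and the residue map is equivariant. So the double-coset Hecke operators of $G\subset\GL_3(\QQ)$ act on both homology groups and commute with $\operatorname{res}$. Consequently $\mathcal L_n$ is stable under them.

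To identify these operators with the binary ones, I would use Shapiro's lemma for the stabilizer of the hyperplane $a=0$: the module $\mathcal R_n$ is induced from the parabolic subgroup fixing that hyperplane. Choosing Hecke double cosets $\operatorname{diag}(1,\gamma)$, with $\gamma$ running over the binary sets $\mathcal H_m$, and reading the residue at $a=0$, the operator acts on $q(b,c)$ by the usual binary Hecke operator. This is the Hecke-equivariant Shapiro comparison of \cite[Theorem~3.1]{AshDoud}, and it gives the stated stability.
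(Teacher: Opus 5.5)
Your outline follows the paper's route: a height argument forcing coordinate support, a coefficient comparison for liftability, and equivariance of \eqref{eq:residue-exact} plus a Shapiro computation for Hecke stability. However, two steps do not work as you state them.

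\textbf{The support argument.} It is not enough that \emph{at least one} of the normals $vA$, $vA^2$ is taller than $v$. If the other one is still in the support, the norm relation at $H$ contains two nonzero residues and gives no contradiction. You need both to leave the support, and whether they do depends on the normalization. For example, $v=(1,-1,0)$ has $vA=(-1,0,0)$ and $vA^2=(0,1,0)$, and both are shorter. The fix is the paper's. Use signed permutations to write a noncoordinate supported normal of maximal height as $v=(\alpha,\beta,\gamma)$ with $\alpha,\beta>0$ and $\gamma\ge0$. Then $vA=(\beta,-\alpha-\beta,\gamma)$ and $vA^2=(-\alpha-\beta,\alpha,\gamma)$ are primitive, noncoordinate and strictly taller. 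Neither is supported, so the norm relation at $v$ reduces to its single nonzero residue, which is the contradiction.

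\textbf{Hecke stability.} Reading the residue at $a=0$ does not turn the ternary operator into the binary Hecke operator. The double coset of $\operatorname{diag}(1,1,p)$ has $p^2+p+1$ right cosets, and all of them can be chosen to preserve $a=0$. Those of normal scale one contribute each of the $p+1$ binary cosets with weight $p$, since the $p$ representatives over a binary block run in the unipotent direction. The representative $\operatorname{diag}(p,1,1)$ has trivial binary block and contributes $p^2$. So $T_3(p,1)=pT_2(p)+p^2I$, as in \eqref{eq:Hecke-formula}, and stability under $T_2(p)$ follows only after subtracting $p^2I$ and dividing by $p$.

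More seriously, calling this the \emph{usual} binary operator acting on $q(b,c)$ requires a comparison you omit. You must show that the literal residue, pushed through Shapiro and the Hochschild--Serre edge map to $H_1(\GL_2(\ZZ),D_{n+1})$, is the standard binary period coordinate in the same variables, up to one scalar and with no mixing of coefficients. The paper proves this by a cell computation in Lemma~\ref{lem:literal-residue-comparison}. It is essential, because the later pole removal combines Hecke stability with the vanishing of one specific coefficient.

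\textbf{The lifting step.} Your functional should be the coefficient of $c^n$, i.e.\ evaluation at $(0,0,1)$. There $(Np)(0,0,1)=3p(0,0,1)=0$ by oddness of $p$ in $a$. On the other side, $NF_q$ is the polynomial $(a^2q(a,c)-b^2q(b,c)-(a-b)^2q(a-b,c))/(ab(a-b))$, whose $c^n$ coefficient is $3\,[ac^n]q(a,c)$ because $a^3-b^3-(a-b)^3=3ab(a-b)$.
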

\begin{proof}
A residue cycle is symmetric and separately odd, with finite hyperplane
support. Choose a noncoordinate supported primitive normal of maximal
height $|v_1|+|v_2|+|v_3|$, and normalize it by signed permutation to
$v=(\alpha,\beta,\gamma)$ with $\alpha,\beta>0$, $\gamma\ge0$.
The normals $vA=(\beta,-\alpha-\beta,\gamma)$ and
$vA^2=(-\alpha-\beta,\alpha,\gamma)$ remain primitive and have larger
height, so neither is supported. The norm relation at the hyperplane
normal to $v$ would then contain only its nonzero residue, a contradiction.
Thus only coordinate poles occur. Symmetry gives the representative $F_q$
in the statement, with $q$ symmetric and separately odd. Writing $d=a-b$,
direct substitution gives
\begin{equation}\label{eq:N-Fq}
 N_{12}F_q=K_q+\frac{N_2q(a,b)}c,\qquad
 K_q=\frac{a^2q(a,c)-b^2q(b,c)-d^2q(d,c)}{abd}.
\end{equation}
The numerator vanishes at $a=0$, $b=0$, and $a=b$. The three linear
factors $a,b,a-b$ are pairwise relatively prime, so their product divides
the numerator and $K_q$ is polynomial.
Consequently $F_q$ is norm-zero modulo polynomial coefficients if and
only if $N_2q=0$. Together with its symmetry and separate oddness, this
proves the converse as well as the residue identification.

It remains to determine the liftable residue subspace $\mathcal L_n$.
If this residue lifts to a top cycle, write the lift as $F_q+P$.
The polynomial remainder $P$ is symmetric and separately odd, with
$NP=-K_q$. To constrain $q$, put $\lambda=[ac^n]q(a,c)$ and compare the
$c^n$ coefficients. The numerator of $K_q$ contributes
\[
 \lambda\{a^3-b^3-(a-b)^3\}=3\lambda ab(a-b),
\]
so $[c^n]K_q=3\lambda$. Every monomial of $P$ is divisible by $ab$, and
$P$ has total degree $n$. Its $c$-degree is therefore at most $n-2$. The norm
acts only on $a,b$, so the same bound holds for $NP$. Comparing the
$c^n$ coefficient gives $3\lambda=0$ and therefore $\lambda=0$.

A Hecke correspondence consists of restriction, rational matrix transport,
and transfer. Sequence~\eqref{eq:residue-exact} commutes with all three:
rational matrices preserve numerator degree and pole order while permuting
hyperplanes, and the other maps are natural in the coefficients. Thus
liftable residues are Hecke-stable.
Let $T_3(p,1)$ be the double-coset operator of $\operatorname{diag}(1,1,p)$
and $T_2(p)$ the usual binary prime operator.
In the same literal residue coordinate, these operators satisfy
\begin{equation}\label{eq:Hecke-formula}
 T_3(p,1)=pT_2(p)+p^2I.
\end{equation}
Lemma~\ref{lem:literal-residue-comparison} identifies the literal residue
with the binary period coordinate. Appendix~\ref{app:Hecke-coordinate}
then computes the transfer using Hecke-equivariant Shapiro
\cite[Theorem 3.1]{AshDoud}. Naturality
makes $\mathcal L_n$ stable under the left side of \eqref{eq:Hecke-formula}.
Subtracting $p^2I$ and dividing by $p$ gives binary stability.
Only the nonzero coefficient of $T_2(p)$ is needed for stability.
A fixed scalar in the period comparison does not change this relation.
\end{proof}

Put $d=n+1$. By Lemma~\ref{lem:residue-period},
$\mathcal L_n\subseteq\mathcal B_d$ is Hecke-stable and
$[XY^{d-1}]q(X,Y)=0$ for every $q\in\mathcal L_n$.
The coefficient condition alone does not imply vanishing. Hecke
stability is essential.

Suppose that $\mathcal L_n\otimes_\QQ\CC$ is nonzero.
The odd Eichler--Shimura correspondence identifies the Hecke
action on $\mathcal B_d\otimes_\QQ\CC$ with the usual action
on level one cusp forms of weight $k=d+2$
\cite[Theorem~2.1 and Section~5]{PasolPopa}.
Since the Hecke operators commute and the space is finite
dimensional, $\mathcal L_n\otimes_\QQ\CC$ contains a nonzero
simultaneous eigenperiod $q_h$, corresponding to a normalized
cusp eigenform $h$ of weight $k$.
Rescaling $q_h$ within this subspace, we may take it to be the
odd part in $X$ of the period polynomial
\eqref{eq:analytic-period}. Its indicated coefficient is then
\begin{equation}\label{eq:period-extreme-L}
  [XY^{d-1}]q_h
    =-d\int_0^{i\infty}h(z)z^{d-1}\,dz
    =-d\,i^d\frac{\Gamma(d)}{(2\pi)^d}L(h,d).
\end{equation}
A nonzero level one cusp form has weight at least twelve.
Since $d=k-2$, Lemma~\ref{lem:outer-values} gives
$L(h,d)\ne0$. The coefficient in
\eqref{eq:period-extreme-L} is therefore nonzero, contradicting
its vanishing on $\mathcal L_n\otimes_\QQ\CC$.
Thus, we have $\mathcal L_n=0$.

For the odd degree expansion, the three seeds define a rational
$26$-coordinate cycle $F$. Set
\[
  S=\sum_{r\in\mathcal S}F(r)\in\mathcal M_n.
\]
Summing the signed-permutation relations shows that $S$ is
symmetric and odd in each variable. Summing the fiber norm
equations and reindexing by $r\mapsto rA$ gives $NS=0$.
Thus $S$ represents a top cycle for the full group $G$ with
coefficients in $\mathcal M_n\otimes\det$.
By Lemma~\ref{lem:residue-period}, its residue corresponds
to an element of $\mathcal L_n=0$. In particular, its literal
residue along $a=0$ vanishes.

Only $F(1,0,0)$ and $F(-1,0,0)$ contribute to the residue at $a=0$:
$Q,f$ are polynomial, and the other translates of $P$ have poles only
at $b=0$ or $c=0$. Both contributing coordinates equal $P$, since
\[
  F(-1,0,0)=-P(-a,b,c)=P(a,b,c),
\]
because every exponent $e_j=2j-1$ in
\eqref{eq:PQ-towers} is odd. Consequently, we have
\[
  0=\operatorname{res}_{a=0}S=2q_0(b,c),
\]
and hence $q_0=0$. The possible pole in $P$ disappears, so
all three seeds are polynomial.
Proposition~\ref{thm:boundary-evaluation} applies to the
resulting ordinary extension and gives $f\in U_n$.

We have proved symmetric support in both parities. Since $n>0$, we have
\[
  0=(Nf)(0,0,z)=3f(0,0,z).
\]
Symmetry therefore eliminates all pure powers.
Every remaining monomial involves exactly two variables, so
\[
  f=\Phi(q),\qquad q(X,Y)=f(X,Y,0).
\]
The polynomial $q$ is symmetric and vanishes on both axes.

Restricting $Nf=0$ to $z=0$ gives $N_2q=0$.
Evaluating this binary relation at $(X,X)$ and using the axis
conditions gives $q(X,X)=0$.
We may now evaluate the full norm at $(x,0,z)$:
\[
  0=(Nf)(x,0,z)
    =2\{q(x,z)+q(-x,z)\}.
\]
Thus $q$ is odd in its first variable, and symmetry makes it
odd in the second as well.

If $n$ is odd, separate oddness forces $q=0$, and hence $f=0$.
If $n$ is even, these are precisely the conditions on $q$
in Theorem~\ref{thm:symmetric-classification}.
Conversely, let $q$ satisfy those conditions.
In the expansion of $N\Phi(q)$, the three terms involving
only $x,y$ sum to $N_2q(x,y)=0$, while the other six cancel
in pairs by symmetry and separate oddness.
Thus $\Phi(q)$ is a symmetric solution, and its restriction
to $z=0$ is $q$. This completes the symmetric classification.

\section{Alternating solutions}\label{sec:odd-alt}

In even degree we construct the extension and apply
Proposition~\ref{thm:boundary-evaluation}. In odd degree a separate
mod four argument proves vanishing, without the Bernoulli obstruction
or arithmetic cohomology.

\subsection{The even degree extension}

The omitted vertex and edge stabilizers control the extension obstruction.

\begin{proposition}\label{prop:alternating-even}
Every alternating even degree solution belongs to $U_n$.
\end{proposition}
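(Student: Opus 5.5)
Let $f\in V_n$ be an alternating solution with $n$ even and $n>0$. By Proposition~\ref{thm:good-apartment} with $t=0$, it defines a $K$-equivariant functional $F^\uparrow:\St^\uparrow\to E_{n,0}$ whose value on the identity apartment is $f$. The plan is to extend $F^\uparrow$ to a $K$-equivariant map $\St\to E_{n,0}$ and then apply Proposition~\ref{thm:boundary-evaluation} with $t=0$. That proposition gives $f\in U_n$ at once. So all the work lies in the extension.

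\textbf{Step 1: the obstruction class.} We repeat the connecting-class construction from the symmetric case. Choose a good base vertex and paths $\tau(g)$, and form the cocycle $z_F$ as in \eqref{eq:zF}. Its class $a_F\in H^2(K,E_{n,0})$ is the pushout of the four-term sequence $0\to H_1(\mathcal T^\uparrow)\to C_1\to C_0\to\QQ\to0$ along $F^\uparrow$. Adding the bad vertices and the edges that touch them changes $\mathcal T^\uparrow$ into $\mathcal T$. The relative chain complex $C_\bullet(\mathcal T,\mathcal T^\uparrow)$ is a sum of modules induced from the stabilizers of the omitted vertices and edges. A long exact sequence argument then reduces the existence of an extension to two conditions:
\begin{enumerate}
\item the restrictions of $a_F$ to the omitted-vertex stabilizers vanish;
\item the resulting degree-one obstructions on the omitted-edge stabilizers vanish.
\end{enumerate}
By Shapiro's lemma these are classes in $H^2(\mathrm{Stab}_v,E_{n,0})$ and $H^1(\mathrm{Stab}_e,E_{n,0})$. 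There is a further term in the fibre: the extension must also be compatible on $H_1(\mathcal T)/H_1(\mathcal T^\uparrow)$. This is what the cone computation checks. I would organize it exactly as in the mapping-cone sequence \eqref{eq:MV-boundary}.

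\textbf{Step 2: kill these groups with central signs.} Each omitted vertex is a bad line or bad plane, and each omitted edge is a flag containing a bad subspace. Their stabilizers in $K$ contain sign matrices. The condition $\ell g\equiv\ell\pmod 3$ is only modulo three, so, for example, $-I$ lies in $K$ exactly when $-\ell\equiv\ell$, which is false. Sign matrices therefore have to be chosen to fit the congruence. Take the bad line $\QQ(e_1-e_2)$ inside the good plane $\langle e_1,e_2\rangle$. Its stabilizer contains elements acting by $-1$ on the line and as the identity modulo the line. In a Hochschild--Serre decomposition relative to the radical, such an element acts on $H^q(U,E_{n,0})$ by a sign fixed by the parity of the degree. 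With $t=0$ there is no determinant twist, and $n$ is even. The aim is to show that these signs act by $-1$ on every contributing graded piece. Averaging over the finite group then kills the obstruction classes rationally.

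This parallels the even-degree step in Proposition~\ref{thm:J-vanishes}. There, the central $-I_2$ acting on a binary module of odd degree $n+1$ killed the cohomology. Here the radical cohomology again has the form $\Sym^{n+1}$ or $\Sym^n$ twisted by $\det^{-1}$ or $\det^{-2}$, but without the extra determinant twist. For each stabilizer type I will list $(H^0,H^1,H^2)$ of the radical and check the parity of the central element of the Levi on each piece.

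\textbf{Main obstacle.} The difficulty is bookkeeping. I need to find, for every omitted vertex and edge stabilizer, a sign element that lies in $K$ and acts centrally on the relevant Levi. I also need to check that it acts by $-1$ on \emph{all} surviving $E_2$ terms, including $H^0(U)$ and the mixed terms. If some term is not killed by signs, the fallback is to use the flag vanishing already built into $a_F$, namely its zero restriction at good vertices. As in Lemma~\ref{lem:bad-line-coordinate}, I would reduce that term to a cuspidal binary class and show it vanishes directly. Once all obstructions vanish, a polynomial extension exists, and Proposition~\ref{thm:boundary-evaluation} finishes the proof.
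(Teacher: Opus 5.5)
Your architecture matches the paper's. You extend the restricted functional across the omitted bad vertices and edges by showing that $H^2$ of each omitted-vertex stabilizer and $H^1$ of each omitted-edge stabilizer vanish, and then you apply Proposition~\ref{thm:boundary-evaluation}. The bookkeeping, however, is not the mapping cone \eqref{eq:MV-boundary}, which concerns the Borel--Serre boundary. It is the short exact sequence $0\to\St/\St^\uparrow\to\QQ[\text{omitted edges}]\to\QQ[\text{omitted vertices}]\to0$, which is exact because both graphs are connected. Shapiro then places $\Ext^1_K(\St/\St^\uparrow,E)$ between the edge $H^1$'s and the vertex $H^2$'s. So you only need these groups to vanish; no staged analysis of $a_F$ is required. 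Also, since the Levis are virtually free, only $H^1(L,H^1(N,E))$ and $H^0(L,H^2(N,E))$ occur in total degree two. The $H^0(U)$ column needs no sign argument.

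The gap is in Step~2. Your mechanism is a central sign of the Levi acting by $-1$ on each graded piece. It works for the $H^1(L,H^1(N,E))$ terms and for the edge groups. It fails for both $H^0(L,H^2(N,E))$ terms.
\begin{itemize}
\item \emph{Bad plane.} Here $H^2(N_P,E)\cong\det(A)^{-1}\Sym^n$ in the binary coordinates. The central $-I_2$ acts on it by $(-1)^n=+1$, and no sign element acts on it by the scalar $-1$. What kills the invariants is $n>0$: two opposite binary unipotents in the Levi have no common fixed vector in $\Sym^n$.
\item \emph{Bad line.} The independent line sign $\eta$ does kill $H^1(N_L,E)\cong\eta^{-1}\det(D)\Sym^{n+1}$. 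But it acts trivially on the one-dimensional $H^2(N_L,E)\cong\eta^{-n-2}\det(D)$ when $n$ is even. You need the noncentral reflection $\operatorname{diag}(-1,1)\in\Gamma_2$, which acts by $\det D=-1$. A noncentral element suffices because this is an invariants term.
\end{itemize}
Your fallback via cuspidality or flag vanishing of $a_F$ is therefore misdirected, since these groups vanish outright.

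For the edges your sign idea is right, but you must actually compute. Write each edge radical as a rank-two vertex radical extended by the remaining root. Then $H^1$ of the edge radical has two one-dimensional subquotients. For even $n$, each has a weight that is nontrivial in $\eta_1$ or $\eta_2$; this is Table~\ref{tab:edge-weights}. That nontrivial sign is what kills $H^1(H_{BB},E)$ and $H^1(H_{BG},E)$.
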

\begin{proof}
\smallskip\noindent\emph{Vertex stabilizers.}
Assume $n>0$, since the constant case is zero. For the stabilizer calculation
use the group $K^\dagger$ and graph $\mathcal T^\dagger$ introduced in
Section~\ref{sec:symmetric}. The relative graph has two vertex
orbits, represented by the bad line $\QQ e_1$ and bad plane
$\QQ e_1+\QQ e_2$, and two edge orbits: bad line in bad plane and bad line
in a good plane. Integral parabolic reduction modulo three proves this list
exhaustive. The vertex stabilizers are
\[
 H_P=\left\{\begin{pmatrix}A&u\\0&1\end{pmatrix}\right\},\qquad
 H_L=\left\{\begin{pmatrix}\eta&u\\0&D\end{pmatrix}\right\},
\]
where $A\in\mathrm{GL}_2(\ZZ)$, $u\in\ZZ^2$, $\eta=\pm1$, and
$D\in\Gamma_2=\{D\in\mathrm{GL}_2(\ZZ):(0,1)D=(0,1)\pmod3\}$.
The edge stabilizers are
\[
 H_{BB}=\left\{\begin{pmatrix}\eta_1&a&b\\0&\eta_2&c\\0&0&1\end{pmatrix}\right\},\qquad
 H_{BG}=\left\{\begin{pmatrix}\eta_1&a&b\\0&\eta_2&0\\0&3k&1\end{pmatrix}\right\},
\]
with integral entries and independent signs $\eta_i$.

Take $E=\Sym^n((\QQ^3)^*)$ and write $\rho$ for its left
inverse substitution representation. The vertex radicals are
$N_P=\langle U_{13},U_{23}\rangle$ and
$N_L=\langle U_{12},U_{13}\rangle$. On dual polynomial coefficients, the generators act with
$\log\rho(U_{ij})=-x_j\partial_{x_i}$. For $N_P$,
the closed pairs are gradients of potentials, as in
Section~\ref{sec:ordinary}. Modulo exact pairs, restriction of the potential
to $z=0$ gives a binary form of degree $n+1$. Under a general diagonal
matrix this identification has an extra factor $d_3$, which is one on
$H_P$. For $N_L$, the contraction $(\alpha,\beta)\mapsto z\alpha-y\beta$ gives the binary form. Inverse conjugation on its two roots makes this
contraction transform by $\eta^{-1}\det D$ times the dual binary action.
In the table, $\Sym^r(x,y)^*$ abbreviates the degree $r$ binary
polynomials with inverse substitution action, and similarly for $y,z$.
These calculations give its first column.
\begin{table}[!htbp]
\centering
$\begin{array}{|c|c|c|}
\hline
&H^1(N,E)&H^2(N,E)\\\hline
 N_P&\Sym^{n+1}(x,y)^*&\det(A)^{-1}\Sym^n(x,y)^*\\
 N_L&\eta^{-1}\det(D)\Sym^{n+1}(y,z)^*&\eta^{-n-2}\det(D)\\
\hline
\end{array}$
\par\vspace{7pt}
\caption{Cohomology of the two vertex radicals with $E=\Sym^n((\QQ^3)^*)$ and Levi actions.}
\label{tab:radical-cohomology}
\end{table}
\FloatBarrier
The $H^2$ column follows from the top cohomology formula for a rank two
lattice,
$H^2(N,E)=E_N\otimes\bigwedge^2(N\otimes\QQ)^*$, using
$E_{N_P}=\QQ[x,y]_n$ and $E_{N_L}=\QQ x^n$.
These are coinvariants. The corresponding invariant spaces are
$E^{N_P}=\QQ z^n$ and $E^{N_L}=\QQ[y,z]_n$. These invariant spaces
enter the edge calculation below.
Both Levis are virtually free. Thus total degree two in Hochschild--Serre
has only $H^1(L,H^1(N,E))$ and $H^0(L,H^2(N,E))$.
At $H_P$, the central $-I_2$ acts by $-1$ on $H^1$ and positive binary
degree kills the $H^2$ invariants. At $H_L$, the independent line sign kills
$H^1$ and the reflection $\operatorname{diag}(-1,1)\in\Gamma_2$ kills $H^2$.
Hence, we have $H^2(H_P,E)=H^2(H_L,E)=0$.

\smallskip\noindent\emph{Edge stabilizers.}
For either edge radical, extend a rank two radical by its remaining cyclic
root. The exact sequence
$0\to H^1(\ZZ,E^N)\to H^1(N_{\rm edge},E)
\to H^1(N,E)^\ZZ\to0$ is equivariant for
$D_0=\{\operatorname{diag}(\eta_1,\eta_2,1)\}$.
A weight $(a_1,a_2,a_3)$ means the character
$\operatorname{diag}(d_1,d_2,d_3)\mapsto d_1^{a_1}d_2^{a_2}d_3^{a_3}$.
Table~\ref{tab:edge-weights} records the weights of the two
one-dimensional subquotients:
\begin{table}[!htbp]
\centering
$\begin{array}{|c|c|c|}
\hline
\text{edge type}&\text{first subquotient}&\text{second subquotient}\\
\hline
BB&(-1,1,-n)&(0,-n-1,1)\\
 BG&(0,1,-n-1)&(-1,-n,1)\\
\hline
\end{array}$
\par\vspace{7pt}
\caption{Weights of the two degree-one cohomology subquotients for each edge stabilizer. For even $n$, each row has a nontrivial sign character.}
\label{tab:edge-weights}
\end{table}
\FloatBarrier
For type $BB$, use $N_P$ and the remaining root $E_{12}$. The left term
of the five-term sequence comes from $E^{N_P}=\QQ z^n$ and the dual of
$E_{12}$, giving $(-1,1,-n)$. The invariant potential in the right term is
$y^{n+1}$, with the extra $d_3$ factor, giving $(0,-n-1,1)$.
For type $BG$, use $N_L$ and $E_{32}$. The coinvariant of
$E^{N_L}=\QQ[y,z]_n$ under $-y\partial_z$ is represented by $z^n$.
Including the root-dual factor gives $(0,1,-n-1)$. The right term is represented
by $y^{n+1}$ in $\eta^{-1}\det(D)\Sym^{n+1}(y,z)^*$, giving $(-1,-n,1)$.
For $H_{BG}$ the actual remaining lattice root is $3E_{32}$.
Multiplication of its logarithmic differential by the nonzero rational
scalar $3$ changes neither its kernel nor its cokernel.
Since $n$ is even, each weight has a nontrivial first or second sign.
Exact averaging gives $H^1(H_{BB},E)=H^1(H_{BG},E)=0$.

\smallskip\noindent\emph{Extension to the full graph.}
Put $\St^\dagger=H_1(\mathcal T^\dagger;\QQ)$ and
$Q_0=\St/\St^\dagger$. The relative chain sequence and connectedness
of the two graphs give
\[
 0\longrightarrow Q_0\longrightarrow
 \QQ[K^\dagger/H_{BB}]\oplus\QQ[K^\dagger/H_{BG}]
 \longrightarrow
 \QQ[K^\dagger/H_L]\oplus\QQ[K^\dagger/H_P]\longrightarrow0.
\]
Applying $\Hom_{K^\dagger}(-,E)$ and Shapiro gives the exact segment
\[
\begin{split}
 H^1(H_{BB},E)\oplus H^1(H_{BG},E)
 &\longrightarrow\Ext_{K^\dagger}^1(Q_0,E)
 \longrightarrow H^2(H_L,E)\oplus H^2(H_P,E).
\end{split}
\]
The four vanishings make $\Ext_{K^\dagger}^1(Q_0,E)=0$. The sequence
$0\to\St^\dagger\to\St\to Q_0\to0$ shows that
$\Hom_{K^\dagger}(\St,E)\to\Hom_{K^\dagger}(\St^\dagger,E)$ is surjective.
Returning to the original coordinates, we apply
Proposition~\ref{thm:boundary-evaluation} on the identity
apartment. This value is the original polynomial, so it belongs to $U_n$.
\end{proof}

\subsection{Odd degree and reduction modulo four}

Recall that $N=I+A+A^2$, with $A$ defined in
\eqref{eq:polynomial-operators}. The following integral statement
implies that every homogeneous alternating solution of odd degree
vanishes over a field of characteristic zero.

\begin{theorem}\label{mf:thm:main}
Let $f\in\Z[x,y,z]$ be alternating, with every monomial of odd total degree.
If $Nf\in4\Z[x,y,z]$, then $f\in2\Z[x,y,z]$.
Consequently, an alternating homogeneous solution of $Nf=0$ in odd degree
over any characteristic zero field is zero.
\end{theorem}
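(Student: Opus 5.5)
Since the hypotheses and conclusion are compatible with the homogeneous grading over $\ZZ$ ($N$, $C$, $S$ preserve degree), I will reduce at once to $f$ homogeneous of a fixed odd degree $n$. Here "alternating" means $Cf=f$ and $Sf=-f$. The consequence then follows from the integral statement by a standard descent. An alternating homogeneous solution over a characteristic zero field $F$ is an $F$-linear combination of rational solutions, because the system $Cf=f$, $Sf=-f$, $Nf=0$ is defined over $\QQ$. If a nonzero rational solution existed, I would clear denominators and divide by the content to obtain a primitive $f\in\ZZ[x,y,z]$ with $Nf=0\in 4\ZZ[x,y,z]$. The theorem would then give $f\in 2\ZZ[x,y,z]$, contradicting primitivity.

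For the integral statement I will argue by minimal counterexample. Suppose $f$ is alternating of odd degree with $4\mid Nf$ but $f\not\equiv 0\pmod 2$. First I reduce modulo $2$. Over $\FF_2$ the polynomial $\bar f$ is fully symmetric and satisfies $\bar N\bar f=0$. The odd-degree symmetric analysis can be repeated over $\FF_2$ where possible: specialize to the planes $x=y$, $z=0$, and $x+y+z=0$, as in Lemma~\ref{lem:sym-boundary-factors}. My aim is to show that $\bar f$ is divisible by $(x+y)(y+z)(z+x)$, or, even better, that $\bar f$ has a specific shape, namely an element of $\Delta\cdot(\text{symmetric})$ reduced mod $2$. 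Lifting this information, I would write $f=\Delta g+2h$ with $\Delta=(x-y)(x-z)(y-z)$, $g$ symmetric of even degree, and $h$ alternating. Alternating integral polynomials are exactly $\Delta$ times symmetric ones, so over $\ZZ$ we have $f=\Delta g$ outright. The mod-$2$ input therefore has to be used on $g$ rather than on $f$.

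With $f=\Delta g$, the core computation is $N(\Delta g)=\Delta\,g+(A\Delta)(Ag)+(A^2\Delta)(A^2g)$, where $A\Delta$ is an explicit cubic; for example, $N\Delta=3xy(x-y)$ by Lemma~\ref{lem:sector-decomp}. I would expand modulo $4$, writing $g=g_0+2g_1$, so that $N f\equiv N(\Delta g_0)+2N(\Delta g_1)\pmod 4$. The mod-$2$ condition $N(\Delta g_0)\equiv 0\pmod 2$ should force strong constraints on $g_0$ mod $2$. These come from evaluating at points such as $(x,0,z)$, $(x,x,z)$ and using Frobenius-type identities in characteristic $2$. The constraints should show that $N(\Delta g_0)$ is divisible by $2$ with a controlled quotient. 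Then $4\mid Nf$ becomes a linear condition over $\FF_2$ coupling $g_0$ and $g_1$, and odd degree makes it inconsistent unless $g_0\equiv 0$. In effect I am looking for an $\FF_2$-valued invariant, for instance a suitable coefficient or a value at a specific $\FF_4$-point such as $(1,\omega,\omega^2)$, which is fixed by $A$ and $C$. This invariant should read $\bigl((Nf)/2\bigr)\bmod 2$ and equal a nonzero multiple of $g_0$.

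The main obstacle is this mod-$4$ step. It requires identifying exactly which linear functional on $\FF_2[x,y,z]_n$ detects $g_0$ from $\tfrac12 N f \bmod 2$, and showing that the odd-degree hypothesis makes it nondegenerate. My first attempt would be to evaluate at the $\FF_4$-points fixed by $A$ (the eigenlines of the order-three substitution), where $N$ acts as multiplication by $3$ on the relevant isotypic part. Failing that, I would perform an explicit monomial analysis of the lowest $z$-degree terms of $g_0$, inducting on that degree.
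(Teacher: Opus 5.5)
Your descent from the integral statement to the field statement (scaling a rational solution to a primitive integer polynomial, then base change) is fine and is what the paper does. You are also right that everything has to come from a mod-$4$ linear condition over $\FF_2$. But the integral statement is not proved: the step you label the main obstacle is the whole theorem, and the proposal supplies neither of the two properties it requires. With $v=\bar f$, you need an $\FF_2$-linear operator $\beta$, applied to $\tfrac12 N\widetilde v\bmod 2$, which (i) does not depend on the alternating lift $\widetilde v$, i.e.\ kills $\overline N(\bar h)$ for every reduction $\bar h$ of an alternating polynomial (this is what removes your $g_1$), and (ii) is injective on the odd-degree symmetric kernel of $\overline N$.

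Your preliminary steps do not help with either. Divisibility of $\bar f$ by $(x+y)(y+z)(z+x)$ is automatic from $f=\Delta g$. The condition $\overline N v=0$ does not make $v$ small: by Lemma~\ref{mf:lem:kernel} that kernel is a free module of rank eight over $\FF_2[d_4,d_6,d_7]$. It has nonzero odd-degree members such as $(x+y+z)^2(x+y)(y+z)(z+x)$, and its dimension in degree $n$ grows quadratically in $n$.

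This rules out the candidates you propose. No single $\FF_2$-valued invariant, and no evaluation at finitely many $\FF_4$-points, can be injective on a space of growing dimension. The eigenpoint idea also fails on its own terms. An $\omega$-eigenpoint $P$ of $A$ has $z=0$, and over $\ZZ[\omega]$ one has exactly $(Nf)(P)=(1+\omega^n+\omega^{2n})f(P)$, which is identically zero when $3\nmid n$. Moreover, a functional $\phi$ invariant under $A$ satisfies $\phi(\overline N h)=3\phi(h)=\phi(h)$, not $0$, so it does not remove the dependence on the lift. Your fallback, an induction on the lowest $z$-degree, is not developed at all.

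The paper closes this gap in four steps:
\begin{enumerate}
\item It computes the symmetric kernel of $\overline N$ explicitly (Lemma~\ref{mf:lem:kernel}).
\item It builds a polynomial-valued operator $\Pi$, a sum of eighteen substitutions, with $\Pi\overline N g=0$ for every symmetric $g$ (Lemma~\ref{mf:lem:cancellation}).
\item It defines $\beta_-(v)=\Pi(\tfrac12 N\widetilde v^{-}\bmod 2)$. This is lift-independent, and it is linear over $\FF_2[d_4^2,d_6^2,d_7^2]$ because squares of mod-$2$ invariants are invariant mod $4$.
\item A finite computation on $32$ module generators then gives $\beta_-=I+U$ with $U^3=0$.
\end{enumerate}
Using $f$ as its own lift gives $\beta_-(\bar f)=0$, hence $\bar f=0$. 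Without an injective, lift-independent obstruction of this kind, your argument does not close.
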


The integral assertion is stronger than the characteristic zero vanishing.
It is also sharp with respect to the modulus: if
$f=(x+y+z)^2(x-y)(x-z)(y-z)$, direct expansion gives
\[
 Nf=6xy(x-y)(x^2-xy+y^2-z^2).
\]
Thus divisibility of $Nf$ by two does not force $f$ to have even coefficients.

The proof uses an explicit reduced norm-kernel calculation.
We prove the kernel description in Appendix~\ref{mf:app:kernel}.
The finite identities are identities of polynomials, not experimental
checks in a range of degrees.

\subsection{The eight coefficient polynomials}

Alternation becomes symmetry over $k=\FF_2$, so we need the symmetric
kernel of the reduced norm
\[
  \bn g=g(x,y,z)+g(y,x+y,z)+g(x+y,x,z).
\]

Put
\[
  a=x+y+z,\qquad b=xy+xz+yz,\qquad c=xyz,\qquad \cp=ab+c.
\]
The symmetric ring is $k[a,b,c]$. Give its elementary symmetric
generators weights $1,2,3$, respectively, so weighted degree agrees
with total degree in $x,y,z$.

Define
\begin{align}\label{mf:eq:invariants}
  d_4&=a^4+b^2+ac,&
  d_6&=a^2b^2+a^3c+abc+c^2,&
  d_7&=ac(ab+c).
\end{align}
These are the Dickson invariants of degrees four, six, and seven
\cite{Dickson}. Their invariance under invertible linear
substitutions over $k$ follows from the product identity
\begin{equation}\label{mf:eq:product}
  \prod_{(v_1,v_2,v_3)\in k^3}
    (T+v_1x+v_2y+v_3z)
    =T^8+d_4T^4+d_6T^2+d_7T.
\end{equation}
An invertible substitution permutes the eight factors, proving invariance.
Multiplication by these invariants therefore preserves the symmetric
kernel of $\bn$.

\begin{lemma}\label{mf:lem:kernel}
The ring $R=k[d_4,d_6,d_7]$ is a polynomial ring. On symmetric polynomials,
$\ker\bn$ is a free $R$-module on $K_j=a\cp B_j$, $0\le j<8$, where
\begin{equation}\label{mf:eq:Bs}
\begin{split}
(B_0,\ldots,B_7)=\bigl(&1,a,a^2+b,ab,a^3+c,a\cp,\\
                     &a^3(a^2+b),a^6+a^3c+b^3+c^2\bigr).
\end{split}
\end{equation}
\end{lemma}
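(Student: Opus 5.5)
We prove that $R$ is a polynomial ring, then establish freeness by a Hilbert series comparison over $k=\FF_2$.

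For the first claim we use Dickson's theorem: $k[x,y,z]^{\GL_3(\FF_2)}=k[d_4,d_6,d_7]$ is a polynomial ring. A direct argument is also available. The degrees $4,6,7$ multiply to $168=|\GL_3(\FF_2)|$. Moreover, $k[x,y,z]$ is integral over $R$, since each linear form is a root of \eqref{mf:eq:product}. Hence $d_4,d_6,d_7$ form a homogeneous system of parameters, and they are algebraically independent. So $R$ is a polynomial ring with Hilbert series $1/((1-t^4)(1-t^6)(1-t^7))$. Furthermore $k[a,b,c]$ is a finite free $R$-module, being Cohen--Macaulay over a polynomial subring generated by a system of parameters. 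Its rank is $168/6=28$, by the degree comparison $(4\cdot6\cdot7)/(1\cdot2\cdot3)$.

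Next we view $\bn$ as an $R$-linear map on symmetric polynomials. We first check that $\bn$ maps symmetric polynomials into $S_2$-invariants in $x,y$ of a suitable form: over $k$, the substitution $(x,y)\mapsto(y,x+y)$ has order three and commutes appropriately with the swap. Then $\ker\bn$ is an $R$-submodule of the free module $k[a,b,c]$. It is reflexive, being a kernel of a map into a torsion-free module over a regular ring of dimension three, and so it has depth at least two.

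To prove freeness on the stated basis, we proceed in four steps. First, we verify by direct substitution that each $K_j=a\cp B_j$ satisfies $\bn K_j=0$. The factor $a\cp$ should be the key. Indeed $\cp=(x+y)(x+z)(y+z)$ over $k$, and $a=x+y+z$ is fixed by the substitution. The eight $B_j$ are then checked individually, or via a structural identity: $\bn(a\cp g)=a\cdot(\text{twisted norm of }g)$. Second, we show the $K_j$ are $R$-linearly independent. Since $a\cp$ is a nonzerodivisor, it suffices that the $B_j$ are $R$-independent in $k[a,b,c]$. Their degrees are $0,1,2,3,3,4,5,6$, and we would show they extend to part of the degree-28 basis, for instance by reducing modulo the regular sequence $(d_4,d_6,d_7)$ and checking independence in the finite-dimensional quotient $k[a,b,c]/(d_4,d_6,d_7)$. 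Third, we compute the Hilbert series of $\ker\bn$ independently, by determining the rank and cokernel of $\bn$ in each degree. Fourth, we compare: the submodule $\bigoplus RK_j$ has Hilbert series $t^4\sum_j t^{\deg B_j}/((1-t^4)(1-t^6)(1-t^7))$, and equality of the two series forces equality of modules.

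The main obstacle is the third step, computing $\dim\ker\bn$ degree by degree for all degrees rather than in a finite range. Our first approach is to identify the image of $\bn$: we determine its rank as an $R$-module map through its generic rank, working over the fraction field and using Galois descent for the $C_3$ action. Combined with a depth argument, this shows that the quotient of $\ker\bn$ by $\bigoplus RK_j$ is a module of finite length, or zero. If it were nonzero, it would be supported at the irrelevant ideal. We then rule this out by checking that the quotient vanishes modulo $(d_4,d_6,d_7)$, which is a finite computation in degrees at most the socle degree of $k[a,b,c]/(d_4,d_6,d_7)$, namely $17-6=11$, plus the shift by $4$. This reduces everything to an explicit finite check, done as polynomial identities.
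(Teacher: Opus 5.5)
\paragraph{Where the gap is.} Your treatment of $R$ and of $S=k[a,b,c]$ is fine. Integrality gives algebraic independence, and Cohen--Macaulayness makes $S$ free of rank $28$; the paper gets both at once by a Hilbert series comparison over a formal ring $k[u_4,u_6,u_7]$. There is one slip in Step~1: $a$ is not fixed by $(x,y,z)\mapsto(y,x+y,z)$, which sends it to $x+z$. The invariant factor of $a\cp$ is $(x+z)(y+z)(x+y+z)$, while $x+y$ moves, so the proposed structural identity is wrong and the eight $K_j$ must simply be checked directly.

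The genuine gap is Step~3. Put $M=\bigoplus_jRK_j\subseteq N=\ker\bn$ and $I=(d_4,d_6,d_7)$.
\begin{itemize}
\item Generic rank $8$ only shows that $N/M$ is torsion, not that it has finite length. Since $M$ is free and $N$ is torsion-free, agreement of $M$ and $N$ at the height-one primes would already give $N=M$. That codimension-one information is exactly what your sketch never supplies.
\item If finite length were known, no check would be needed at all: $N$ is torsion-free and $M$ is free, so $H^0_{\mathfrak m}(N/M)$ injects into $H^1_{\mathfrak m}(M)=0$.
\item The proposed finite check is unfounded as stated. Verifying $N=M+IN$ in degrees at most $15$ says nothing unless $N$ is known to be generated in those degrees. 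But $N$ is a kernel, not a quotient of $S$, so no such degree bound is available a priori. It would hold if $N$ were a direct summand of $S$, and that is precisely what has to be proved.
\end{itemize}

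\paragraph{How the paper closes it, and a cheaper repair.} The paper supplies exactly this missing splitting. The unnormalized sum of $48$ substitutions
$\pi g=\sum_{P\in\mathcal W}\sum_{U\in\mathcal U}g(UP\mathbf x)$
is $R$-linear and satisfies $\bn\pi=0$ and $\pi=1+\Theta\bn$ on symmetric polynomials. Hence $\pi$ is an idempotent with image $\ker\bn$, so $\ker\bn$ is a direct summand of $S$. Everything then reduces to the $28$-dimensional quotient $S/IS$, where the $K_j$ together with the elements $(1+\pi)m$, $m\in\mathscr M$, form a basis.

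Within your own framework there is also a cheaper repair. In Step~2, check that the $K_j$ themselves, not the $B_j$, are linearly independent in $S/IS$; this is true. Then the $K_j$ extend to a homogeneous $R$-basis of $S$, so $S/M$ is free and $N/M\subseteq S/M$ is torsion-free. Generic rank $8$ then makes $N/M$ torsion, which forces $N=M$. The generic rank is a finite computation, for example via the normal basis theorem for $k(x,y,z)$ over $\operatorname{Frac}(R)$.

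Independence of the $B_j$ alone is not enough. In that case the torsion of $S/M$ maps injectively into the torsion module $S/a\cp S$. You would then additionally need $\ker\bn\subseteq a\cp S$, which you have not shown.
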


The proof and finite data are in Appendix~\ref{mf:app:kernel}. Put
\[
 p=d_4,\qquad q=d_6,\qquad \rho=d_7^2,\qquad
 R_0=k[p,q,\rho],\qquad R_2=k[p^2,q^2,\rho].
\]
The degrees of $K_j$ are
\[
 (\delta_0,\ldots,\delta_7)=(4,5,6,7,7,8,9,10).
\]
Let $\kappa_j=1$ when $\delta_j$ is even and $\kappa_j=0$ otherwise, and set
\begin{equation}\label{mf:eq:Lbasis}
 L_j=K_jd_7^{\kappa_j}.
\end{equation}
Since $d_4,d_6$ have even degree and $d_7$ has odd degree,
Lemma~\ref{mf:lem:kernel} gives the following unique expression for every
odd degree symmetric polynomial in the reduced norm kernel:
\begin{equation}\label{mf:eq:eight}
 v=\sum_{j=0}^7 r_j(p,q,\rho)L_j,\qquad r_j\in R_0.
\end{equation}
This expression is valid in all degrees. It is the reason a finite
calculation can prove Theorem~\ref{mf:thm:main} uniformly.

\subsection{An explicit mod four obstruction}

An \emph{integral lift} of $v\in k[x,y,z]$ is a polynomial
$\widetilde v\in\ZZ[x,y,z]$ reducing to $v$. If $\bn v=0$, its norm
is even, but need not be zero. Divide the norm by two, reduce modulo
two, and apply the following operator $\Pi$ to obtain an obstruction
independent of the lift within each symmetry type.

We write a linear substitution as a triple of linear forms:
$(\lambda_1,\lambda_2,\lambda_3)$ sends $(x,y,z)$ to
$(\lambda_1,\lambda_2,\lambda_3)$. Let $\mathscr T$ consist of
\begin{equation}\label{mf:eq:Tlist}
\begin{gathered}
 (y,z,x+y),\quad (z,y,x+z),\quad (x,z,x+y),\\
 (x,z,y+z),\quad (x+y,z,x),\quad (x+y,z,y),\\
 (z,x+y+z,x+z),\quad (x+y+z,z,y+z),\quad (x,y,x+z),\\
 (x,y,y+z),\quad (y,x+z,x),\quad (y,x+y+z,x+y),\\
 (x+z,y,z),\quad (x+y+z,y,y+z),\quad (x,y+z,y),\\
 (x,x+y+z,x+y),\quad (x,y+z,z),\quad (x,x+y+z,x+z).
\end{gathered}
\end{equation}
Each substitution is invertible over $k$. For $h\in k[x,y,z]$,
define
\begin{equation}\label{mf:eq:Pi}
 \Pi h(x,y,z)=
 \sum_{(\lambda_1,\lambda_2,\lambda_3)\in\mathscr T}
                   h(\lambda_1,\lambda_2,\lambda_3).
\end{equation}

\begin{lemma}\label{mf:lem:cancellation}
For every symmetric $g\in k[x,y,z]$, one has $\Pi\bn g=0$.
\end{lemma}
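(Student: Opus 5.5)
The identity is linear in $g$, and $g$ enters only through evaluations at invertible linear substitutions. The plan is therefore to reduce $\Pi\bn g=0$ to a finite, purely combinatorial matching of substitutions modulo the symmetric group, and to verify that matching from the list \eqref{mf:eq:Tlist}.

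\emph{Step 1: expand the composite.} For $\lambda=(\lambda_1,\lambda_2,\lambda_3)\in\mathscr T$, direct substitution into the definition of $\bn$ gives
\[
 (\bn g)(\lambda_1,\lambda_2,\lambda_3)
 =g(\lambda_1,\lambda_2,\lambda_3)+g(\lambda_2,\lambda_1+\lambda_2,\lambda_3)
  +g(\lambda_1+\lambda_2,\lambda_1,\lambda_3).
\]
Since $g$ is symmetric, each term depends only on the unordered triple of linear forms it is evaluated at. Over $k=\FF_2$, the three unordered pairs $\{\lambda_1,\lambda_2\}$, $\{\lambda_2,\lambda_1+\lambda_2\}$, $\{\lambda_1+\lambda_2,\lambda_1\}$ are exactly the three pairs of distinct nonzero vectors in the plane $P_\lambda=\operatorname{span}(\lambda_1,\lambda_2)$ of linear forms. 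Thus the contribution of $\lambda$ depends only on the pair $(P_\lambda,\lambda_3)$, namely
\[
 \Sigma(P_\lambda,\lambda_3)=\sum_{\{u,v\}\subset P_\lambda\setminus 0}g(u,v,\lambda_3).
\]
Consequently $\Pi\bn g$ is a sum of $54$ values $g(\mu)$, one for each unordered triple $\mu=\{u,v,\lambda_3\}$ with $u,v\in P_\lambda$ and $\lambda\in\mathscr T$.

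\emph{Step 2: reduce to an even-multiplicity check.} Because we are in characteristic two, it suffices to show that every unordered triple of linear forms occurs an even number of times among these $54$. This is sufficient for all symmetric $g$ at once, so no structure of the kernel of $\bn$ is needed.

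\emph{Step 3: tabulate the triples.} For each of the $18$ entries of $\mathscr T$, record the plane $P_\lambda$ (as a set of three nonzero forms in $x,y,z$) and the form $\lambda_3$. Then list the three resulting unordered triples. For example, $(y,z,x+y)$ gives the plane $\{y,z,y+z\}$ with third form $x+y$, hence the triples
\[
 \{y,z,x+y\},\qquad \{y,y+z,x+y\},\qquad \{z,y+z,x+y\}.
\]
Two cancellation mechanisms should appear. First, two entries of $\mathscr T$ may share the same $(P,\lambda_3)$, in which case their contributions cancel wholesale. Second, a single triple $\{u,v,w\}$ can arise from different data: the plane $\operatorname{span}(u,v)$ with third form $w$, the plane $\operatorname{span}(u,w)$ with third form $v$, or $\operatorname{span}(v,w)$ with third form $u$. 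I expect the list to be built so that these cross-incidences close up in pairs. The natural organization is to group the $54$ triples by their span and by the linear relation (if any) among the three forms.

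\emph{Main obstacle.} The only real work is the bookkeeping in Step 3, and it must be done exactly. My first move is to sort the triples by a canonical representative, namely the sorted list of the three forms written as vectors in $k^3$, and then count multiplicities. I would also use the symmetries of $\mathscr T$ to cut the work: the list appears stable under $x\leftrightarrow y$ combined with suitable substitutions, and exploiting this should reduce the check to a few orbits. If some triple turned out to have odd multiplicity, I would recheck the composition convention: whether $\Pi$ applies $h\circ\lambda$ with $\bn$ inside or outside changes which forms pair up.
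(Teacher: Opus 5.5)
Your proposal is correct and is essentially the paper's own proof: expand $\Pi\bn g$ into $54$ terms, use symmetry of $g$ to pass to unordered triples of linear forms, and check that each of the resulting $27$ triples occurs exactly twice, so everything cancels over $\FF_2$. Your regrouping by the plane $P_\lambda$ is a tidy way to organize the bookkeeping, and the tabulation does close up; note that no two entries of $\mathscr T$ share the same pair $(P_\lambda,\lambda_3)$, so all the cancellation comes from your second, cross-incidence mechanism.
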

\begin{proof}
Expand the left side into $54$ terms. Identify triples of linear forms that differ only by a permutation,
which does not change their value on a symmetric polynomial.
There are exactly $27$ resulting triples,
each occurring twice. They cancel in characteristic two. The lists in
\eqref{mf:eq:Tlist} and the three substitutions defining $\bn$ completely
specify these pairings.
\end{proof}

For $v$ as in \eqref{mf:eq:eight}, choose an alternating integral lift
$\widetilde v^{-}$. Such a lift exists: in the displayed generators replace
the factor $a\cp$ by
\[
 a\Delta,\qquad \Delta=(x-y)(x-z)(y-z),
\]
and lift all remaining symmetric expressions integrally. Indeed, we have
$\overline\Delta=\cp$ in characteristic two. Since $\bn v=0$, the coefficients
of $N\widetilde v^{-}$ are even. Define
\begin{equation}\label{mf:eq:beta}
 \beta_-(v)=\Pi\left(\frac{N\widetilde v^{-}}2\bmod2\right).
\end{equation}
Define $\beta_+(v)$ in the same way with any symmetric integral lift.

\begin{lemma}\label{mf:lem:well-defined}
The maps $\beta_\pm$ are independent of the respective choices of lift and
are $R_2$-linear.
\end{lemma}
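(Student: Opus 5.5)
The plan is to show that changing the lift changes $N\widetilde v^{\pm}/2 \bmod 2$ only by something of the form $\bn g$ with $g$ symmetric over $k$. Lemma~\ref{mf:lem:cancellation} then makes $\Pi$ kill that change.

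Let $\widetilde v_1,\widetilde v_2$ be two lifts of the same symmetry type (both alternating, or both symmetric) reducing to $v$. Their difference is $2u$ with $u\in\ZZ[x,y,z]$. Since $S$ preserves integrality and $2u$ is alternating (respectively symmetric) and cyclic, $u$ itself is alternating (respectively symmetric). Its reduction $\bar u\in k[x,y,z]$ is therefore symmetric in both cases, because alternation reduces to symmetry in characteristic two. Then
\[
 \frac{N\widetilde v_1}{2}-\frac{N\widetilde v_2}{2}=Nu\equiv \bn\bar u\pmod 2.
\]
This uses that $N$ reduces modulo two to $\bn$. Indeed, $(Af)(x,y,z)=f(-y,x-y,z)$ reduces to $f(y,x+y,z)$, and $A^2f=f(y-x,-x,z)$ reduces to $f(x+y,x,z)$. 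Applying $\Pi$ and Lemma~\ref{mf:lem:cancellation} gives $\beta(\widetilde v_1)=\beta(\widetilde v_2)$. The only subtle point is to check that the intermediate quantity $N\widetilde v/2$ has integer coefficients for every admissible lift; this holds because $\bn v=0$.

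For $R_2$-linearity, additivity follows directly: a sum of lifts of the same type is a lift of the sum, and $\Pi$ and $N/2$ are additive. For multiplication by $r\in R_2=k[p^2,q^2,\rho]$, choose integral lifts $\widetilde d_4,\widetilde d_6,\widetilde d_7$ of the Dickson invariants that are symmetric and integral. The key observation is that $\widetilde r=\widetilde s^{\,2}$ for the generators $p^2,q^2$, and $\rho=d_7^2$ is likewise a square $\widetilde d_7^{\,2}$. For a square $\widetilde s^{\,2}$ of a symmetric integral polynomial, we need
\[
 N(\widetilde s^{\,2}\widetilde v)\equiv \widetilde s^{\,2}N\widetilde v\pmod 4.
\]
Granting this, $\widetilde s^{\,2}\widetilde v$ is a lift of $s^2v$ of the correct type, with $N(\widetilde s^{\,2}\widetilde v)/2\equiv s^2\,N\widetilde v/2\pmod 2$. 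Since $s^2$ is $\GL_3(k)$-invariant (it lies in the Dickson ring), it factors out of $\Pi$, giving $\beta(s^2v)=s^2\beta(v)$.

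To verify the congruence, write $A\widetilde s=\widetilde s+2t$. This is possible because $\widetilde s$ reduces to an invariant of $\GL_3(k)$, so $A\widetilde s\equiv\widetilde s\pmod 2$. Then $(A\widetilde s)^2=\widetilde s^{\,2}+4(\widetilde s t+t^2)\equiv\widetilde s^{\,2}\pmod 4$, and the same holds for $A^2$. Hence each term satisfies $A^i(\widetilde s^{\,2}\widetilde v)\equiv \widetilde s^{\,2}A^i\widetilde v\pmod 4$. Summing gives the congruence.

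The main obstacle I expect is exactly this squaring step: only invariants of $\GL_3(k)$ (not merely symmetric ones) satisfy $A\widetilde s\equiv\widetilde s\pmod 2$. That is why linearity holds over $R_2$ rather than $R_0$. One should double-check that a product of two squared generators, and $\rho$ itself, are handled the same way, which is immediate since the class of squares of invariant lifts is closed under products.
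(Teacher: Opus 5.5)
Your proof is correct and is essentially the paper's argument: two lifts of the same symmetry type differ by $2u$ with symmetric reduction, so Lemma~\ref{mf:lem:cancellation} kills the change $\Pi\bn\bar u$; and squares of integral lifts of $d_4,d_6,d_7$ are invariant modulo four under the substitutions in $N$, while their reductions commute with $\Pi$, which gives $R_2$-linearity because these squares generate $R_2$. One small clarification to your closing remark: what forces $R_2$ rather than $R_0$ is that invariance modulo two becomes invariance modulo four only after squaring; it is not the $\GL_3(k)$-invariance of the reductions, since $d_4$ and $d_6$ are already $\GL_3(k)$-invariant.
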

\begin{proof}
Two lifts with the same symmetry differ by $2h$ with that symmetry.
The reduction $\overline h$ is symmetric in either case, so the change in
\eqref{mf:eq:beta} is $\Pi\bn\overline h=0$ by
Lemma~\ref{mf:lem:cancellation}.

If an integral polynomial $D$ is invariant modulo two, its square is
invariant modulo four: $(D+2E)^2\equiv D^2\pmod4$.
Consequently, the integral squares of the three polynomials in
\eqref{mf:eq:invariants} are invariant modulo four under the substitutions
in $N$. Their reductions are also invariant under every substitution in
\eqref{mf:eq:Tlist}. Multiplying a lift by any polynomial in those squares
therefore commutes with division of its norm by two followed by reduction
and application of $\Pi$. These squares generate $R_2$.
\end{proof}

\subsection{The triangular calculation and the proof}

Write $v=\sum_{j=0}^7r_j(p,q,\rho)L_j$ as in \eqref{mf:eq:eight}.
Subscripts $p,q$ denote formal derivatives with $\rho$ fixed. Define
\begin{equation}\label{mf:eq:U}
\begin{split}
  U(v)={}&r_{4,p}L_0+\rho r_{5,p}L_1+r_{4,q}L_2\\
         &+(r_4+\rho r_{5,q})L_3
           +\rho(r_{5,q}+r_{7,p})L_4+\rho r_{7,q}L_6.
\end{split}
\end{equation}
Only $r_4,r_5,r_7$ enter $U$.

\begin{lemma}\label{mf:lem:beta}
On the odd reduced norm kernel, we have
\begin{equation}\label{mf:eq:beta-result}
 \beta_+(v)=U(v),\qquad \beta_-(v)=v+U(v).
\end{equation}
Moreover, we have $U^3=0$.
\end{lemma}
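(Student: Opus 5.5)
By Lemma~\ref{mf:lem:well-defined}, both $\beta_+$ and $\beta_-$ are $R_2$-linear. The kernel decomposition \eqref{mf:eq:eight} expresses $v$ as a combination of the eight $L_j$ with coefficients $r_j\in R_0=k[p,q,\rho]$. Over $R_2=k[p^2,q^2,\rho]$, the ring $R_0$ is free on the four monomials $1,p,q,pq$. So the odd reduced norm kernel is a free $R_2$-module on the $32$ elements $p^{\epsilon}q^{\delta}L_j$ with $\epsilon,\delta\in\{0,1\}$ and $0\le j\le7$.

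The right-hand side $U(v)$ is also $R_2$-linear. The formal derivatives $\partial_p,\partial_q$ (with $\rho$ fixed) kill $p^2,q^2,\rho$ in characteristic two, so they are $R_2$-linear. Multiplication by $\rho$ is $R_2$-linear as well. Hence it suffices to verify \eqref{mf:eq:beta-result} on these $32$ generators. For each generator I would do the following.
\begin{enumerate}
\item Write down the explicit lift: a symmetric lift for $\beta_+$, and for $\beta_-$ the alternating lift obtained by replacing $a\cp$ with $a\Delta$.
\item Compute $N\widetilde v/2 \bmod 2$ and apply $\Pi$.
\item Compare the result with $U$, or with $v+U$, evaluated on that generator.
\end{enumerate}

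To cut down the work, I would first check that $p$ and $q$ themselves interact predictably. Write $\widetilde p,\widetilde q$ for fixed integral lifts of $d_4,d_6$. Each is invariant modulo $2$ but not modulo $4$ under the substitutions of $N$, so for each substitution $\sigma$ in $N$ we have $\widetilde p\mid\sigma=\widetilde p+2\theta_\sigma$. Then
\[
N(\widetilde p\,\widetilde v)/2\equiv p\cdot N\widetilde v/2+\textstyle\sum_\sigma \theta_\sigma\,(v\mid\sigma)\pmod 2.
\]
The correction term is what produces the derivative terms in $U$. Computing the defects $\theta_\sigma$ once reduces the $32$ cases to the $8$ base cases $L_j$ together with a Leibniz-type rule. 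Because we are in characteristic two, only first-order defects survive, which is consistent with $U$ being first order in $\partial_p,\partial_q$.

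The base cases $L_j$ are finite polynomial identities in bounded degree (at most $17$). I would carry them out as direct expansions. For the alternating case, I would use that the alternating lift differs from the symmetric one through the factor $\Delta$ versus $\cp$: $\Delta-\widetilde{\cp}$ is $2$ times an explicit polynomial. This should account for the extra summand $v$ in $\beta_-=v+U$.

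Finally, $U^3=0$ follows from the triangular shape of $U$. Its output has components only in $L_0,L_1,L_2,L_3,L_4,L_6$, and it reads its input only through $r_4,r_5,r_7$. Apply $U$ to $U(v)$: the new $r_5$ and $r_7$ are zero, and the new $r_4=\rho(r_{5,q}+r_{7,p})$. So
\[
U^2(v)=\rho(r_{5,q}+r_{7,p})_p\,L_0+\rho(r_{5,q}+r_{7,p})_q\,L_2+\rho(r_{5,q}+r_{7,p})\,L_3.
\]
This has no $L_4$, $L_5$ or $L_7$ component, so $U^3(v)=0$.

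The main obstacle is the base-case verification: the defect computation for $\widetilde p,\widetilde q$ and the $\Pi$-evaluation on the eight $L_j$. This is a large but finite symbolic computation, and I would organize it through the symmetric-function coordinates $a,b,c$ to keep it manageable.
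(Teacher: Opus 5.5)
Your proposal is correct and is essentially the paper's proof. Both arguments reduce, by $R_2$-linearity of $\beta_\pm$ and $U$, to the $32$ generators $p^{\epsilon}q^{\delta}L_j$ and verify those by explicit expansion. Both then get $U^3=0$ because $U$ reads only $r_4,r_5,r_7$ while its image has no $L_5$ or $L_7$ component.

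The one difference is organizational. Your Leibniz rule $\beta_\pm(pv)=p\beta_\pm(v)+\Pi\bigl(\sum_\sigma\bar\theta_\sigma\,(v\mid\sigma)\bigr)$ has a correction term that depends only on $v$ and is linear over $k[d_4,d_6,d_7]$. This is a valid way to organize the finite check, whereas the paper simply expands all $32$ columns, and it also explains why $U$ is first order in $\partial_p,\partial_q$.
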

\begin{proof}
Splitting powers of $p,q$ into even and odd parts gives the $R_2$-basis
$1,q,p,pq$ of $R_0$. Lemma~\ref{mf:lem:well-defined} therefore reduces
\eqref{mf:eq:beta-result} to the $32$ polynomials
\[
 V_{4j}=L_j,\quad V_{4j+1}=qL_j,\quad
 V_{4j+2}=pL_j,\quad V_{4j+3}=pqL_j.
\]
Their degrees are at most $27$. Use the integral polynomial formulas
above to form each norm, divide its even coefficients by two, and apply
the $18$ substitutions in \eqref{mf:eq:Tlist}. Exact expansion gives
$\beta_-(V_\nu)=V_\nu+\beta_+(V_\nu)$ in every column.
All nonzero $\beta_+$ columns are the following:
\begin{equation}\label{mf:eq:table}
\begin{array}{c|l@{\qquad}c|l}
\nu&\beta_+(V_\nu)&\nu&\beta_+(V_\nu)\\ \hline
16&V_{12}&17&V_8+V_{13}\\
18&V_0+V_{14}&19&V_1+V_{10}+V_{15}\\
21&\rho(V_{12}+V_{16})&22&\rho V_4\\
23&\rho(V_5+V_{14}+V_{18})&29&\rho V_{24}\\
30&\rho V_{16}&31&\rho(V_{17}+V_{26})
\end{array}
\end{equation}
The other $22$ columns are zero. This is exactly \eqref{mf:eq:U}, since
formal differentiation in $p,q$ is $R_2$-linear. Thus the finite check
proves the identity in every degree, not only through degree $27$.

The image of $U$ has no $L_5$ or $L_7$ coefficient. Applying $U$ again
therefore leaves only $L_0,L_2,L_3$ terms, and $U$ kills those terms.
Hence, we have $U^3=0$.
\end{proof}

\begin{proof}[Proof of Theorem~\ref{mf:thm:main}]
Reduce $f$ modulo two, obtaining $v$. Alternation becomes symmetry and
$Nf\equiv0\pmod4$ gives $\bn v=0$. Using $f$ itself as the lift in
\eqref{mf:eq:beta}, we get $\beta_-(v)=0$.
By Lemma~\ref{mf:lem:beta}, this gives $(I+U)v=0$.
But $(I+U)^{-1}=I+U+U^2$, so we have $v=0$.

A nonzero rational alternating solution of odd degree can be scaled to
a primitive integer polynomial. The integral assertion would make all
its coefficients even, a contradiction. In each fixed degree the norm
and symmetry conditions are a finite rational linear system. Its kernel
remains zero after extension to any characteristic zero field.
\end{proof}

\section{Conclusion of the proof}\label{sec:assembly}

We assemble the rational cases and extend coefficients. For a commutative
$\QQ$-algebra $R$, define $W_n(R)$ and $\mathscr P_n^0(R)$ by the same
polynomial identities with coefficients in $R$.

\begin{proposition}\label{prop:coefficient-base-change}
For every commutative $\QQ$-algebra $R$ and every $n\ge0$, the natural maps
\begin{equation}\label{eq:coefficient-base-change}
\begin{split}
 R\otimes_{\QQ}W_n&\xrightarrow{\ \sim\ }W_n(R) \qquad {\text {and}}\qquad
 R\otimes_{\QQ}\mathscr P_n^0\xrightarrow{\ \sim\ }\mathscr P_n^0(R)
\end{split}
\end{equation}
are isomorphisms. The same statement holds for the symmetric and alternating
solution spaces and for the even and odd period spaces. The ambient
coefficient-extension maps commute with restriction to a coordinate plane
and with cyclic summation.
\end{proposition}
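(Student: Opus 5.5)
The statement is a general base-change fact for solution spaces of finite linear systems defined over $\QQ$, and I plan to prove it by linear algebra alone. The cohomological and arithmetic results of the earlier sections play no role here.

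\emph{Reduction to a kernel.} Fix $n$. The space $W_n(R)$ is the kernel of an $R$-linear map
\[
 \Lambda_R: R\otimes_\QQ V_n\longrightarrow (R\otimes_\QQ V_n)\oplus(R\otimes_\QQ V_n),\qquad f\longmapsto(Cf-f,\ Nf).
\]
This map is obtained from the rational map $\Lambda=(C-I,N)$ on $V_n$ by tensoring with $R$. The reason is that the substitutions $S$, $C$, and $A$ have integer matrices on the monomial basis. Hence they commute with extension of scalars.

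\emph{Flatness.} Over the field $\QQ$ every module is flat, so $R$ is a flat $\QQ$-module. Tensoring the exact sequence $0\to W_n\to V_n\to V_n\oplus V_n$ with $R$ therefore preserves exactness. This identifies $R\otimes_\QQ W_n$ with $\ker\Lambda_R=W_n(R)$, and the identification is the natural map.

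\emph{The period spaces.} The same argument applies to $\mathscr P_n^0(R)$. That space is the kernel of the rational linear map sending $q\in\QQ[X,Y]_n$ to
\[
 \bigl(q(X,0),\ q(0,Y),\ q(X,Y)+q(-Y,X),\ N_2q\bigr).
\]
All these operations are again defined by integral substitutions.

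\emph{Symmetry and parity subspaces.} For the symmetric and alternating solution spaces, I add the equation $Sf\mp f=0$ to the kernel description and repeat the argument. Equivalently, I apply the idempotent projectors $(1\pm S)/2$, which are defined over $\QQ$ and commute with $\Lambda$. For the even and odd period spaces, I use the projectors $(1\pm\iota)/2$ with $\iota q(X,Y)=q(-X,Y)$; these commute with the defining map.

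\emph{Naturality.} Restriction to $z=0$ and the cyclic sum $\Phi$ are $\QQ$-linear maps between the polynomial spaces, given by integral formulas on monomials. Consequently they commute with $R\otimes_\QQ-$ on the ambient spaces, and so also on the subspaces.

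\emph{Main obstacle.} The only point requiring care is that the maps are the natural ones, meaning the identification of $R\otimes_\QQ V_n$ with $R[x,y,z]_n$ through the monomial basis. I would check this explicitly on monomials.
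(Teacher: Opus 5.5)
Your proposal is correct and is essentially the paper's argument: in monomial bases the defining identities are finite linear systems over $\QQ$, flatness of $R$ over $\QQ$ preserves their kernels (also after adding the symmetry or parity equations), and restriction to $z=0$ and cyclic summation have rational matrices, so they commute with extension of scalars. One small imprecision: $S$ does not literally commute with $C-I$, since $SCS=C^{-1}$. It does preserve $\ker\Lambda$ over every $R$, which is all the projector variant needs, and your primary route of adding the equation $Sf\mp f=0$ avoids the issue entirely.
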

\begin{proof}
In monomial bases, the defining identities are finite linear systems over
$\QQ$. Every $\QQ$-algebra is flat over $\QQ$, so extension of scalars
preserves their kernels. This proves the two isomorphisms and applies
unchanged after imposing symmetry or parity. Restriction to a coordinate
plane and cyclic summation have rational matrices, so they commute with
extension of scalars as well.
\end{proof}

\begin{proof}[Proof of Theorems~\ref{thm:main} and \ref{thm:symmetric-classification}]
Lemma~\ref{lem:sector-decomp} reduces to four homogeneous symmetry-parity
sectors. Section~\ref{sec:symmetric} proves the symmetric classification,
Proposition~\ref{prop:alternating-even} gives alternating even degree
support, and Theorem~\ref{mf:thm:main} gives alternating odd degree
vanishing. Thus every solution has two-variable support and every odd
piece vanishes. The symmetric summand is $f(X,Y,0)$, and its converse
follows from the pairwise cancellation at the end of Section~\ref{sec:symmetric}.

The operator $\partial_x\partial_y\partial_z$ kills exactly the monomials
missing a variable. The remaining monomials have distinct nonzero
derivatives in characteristic zero, proving the differential formulation.

Proposition~\ref{prop:coefficient-base-change} writes every solution over
a characteristic zero field as a linear combination of rational solutions.
It therefore preserves support and odd degree vanishing. Its compatibility
with restriction preserves the symmetric classification and the unique
choice $q=f(X,Y,0)$. This proves both theorems.
\end{proof}

\section{Period polynomials, dimensions, and the common divisor}
\label{sec:period-corollaries}

We identify the alternating binary restrictions, prove the introductory
corollaries, and describe the dependence on the coefficient ring,
including the failure of support in positive characteristic.

\subsection{The complete binary restriction}

Theorem~\ref{thm:symmetric-classification} gives the symmetric case.
For alternating solutions, support and odd degree vanishing leave the
following positive even degree classification.

\begin{lemma}\label{lem:alternating-classification}
Let $F$ be a field of characteristic zero and let $n>0$ be even. An
alternating polynomial $f\in F[x,y,z]_n$ is a solution if and only if
$f=\Phi(q)$, where $q\in F[X,Y]_n$ is alternating, even in each variable,
zero on both axes, and satisfies \eqref{eq:binary-period-intro}.
In that case, we have $q(X,Y)=f(X,Y,0)$.
\end{lemma}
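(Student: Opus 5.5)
We mirror the end of Section~\ref{sec:symmetric}, with the sign changes appropriate to the alternating case. By Proposition~\ref{prop:coefficient-base-change}, the solution spaces over $F$ are obtained from the rational ones by extension of scalars, compatibly with restriction to $z=0$ and with cyclic summation. It therefore suffices to work over $\QQ$.

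For the forward direction, let $f$ be an alternating solution of positive even degree. By Proposition~\ref{prop:alternating-even}, $f\in U_n$, so every monomial of $f$ misses at least one variable. We first remove the pure powers. Evaluating the norm at $(0,0,z)$ gives $0=(Nf)(0,0,z)=3f(0,0,z)$, so the coefficient of $z^n$ vanishes, and cyclic symmetry removes $x^n$ and $y^n$. Every remaining monomial involves exactly two variables. Cyclic invariance then gives $f=\Phi(q)$ with $q(X,Y)=f(X,Y,0)$, and this $q$ vanishes on both axes. Alternation of $f$ under $S$ gives $q(Y,X)=f(Y,X,0)=-f(X,Y,0)=-q(X,Y)$.

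Next we derive the binary conditions on $q$. Restricting $Nf=0$ to $z=0$ gives $N_2q=0$, because the terms $q(y,z)+q(z,x)$ and their $A$-translates vanish at $z=0$ by the axis conditions. For evenness in each variable, evaluate the full norm at $(x,0,z)$, as in the symmetric case. The three substitutions give
\[
f(x,0,z)+f(0,x,z)+f(-x,-x,z).
\]
Expanding each term with $f=\Phi(q)$ and using the axis conditions leaves
\[
q(x,z)+q(z,x)+q(x,z)+q(-x,-x)+q(-x,z)+q(z,-x).
\]
Alternation gives $q(z,x)=-q(x,z)$ and $q(z,-x)=-q(-x,z)$, and also $q(-x,-x)=0$. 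So this sum reduces to $q(x,z)$, while the symmetric analogue gave $2\{q(x,z)+q(-x,z)\}$. I expect this sign bookkeeping to be the main obstacle: the remaining condition has to be extracted from the evaluations at $(x,0,z)$ and its reflection $(0,x,z)$ combined with the binary relation, rather than from a single identity.

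The cleaner route is to use Theorem~\ref{thm:main}'s parity input. Since $n$ is even, $f(-x,-y,-z)=f(x,y,z)$. Apply the norm identity $N_{13}$ (available by cyclic symmetry) at points with one coordinate zero, and compare it with $N_2q=0$ evaluated at $(X,-Y)$. This should yield $q(-X,Y)=q(X,Y)$; alternation of $q$ then gives evenness in the second variable too.

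For the converse, let $q$ have the stated properties. Expand $N\Phi(q)$. The three terms involving only $x,y$ sum to $N_2q(x,y)=0$. The six terms involving $z$ are $q(y,z)+q(z,x)$ and their images under $(x,y)\mapsto(-y,x-y)$ and $(x,y)\mapsto(y-x,-x)$. They cancel in pairs: for instance $q(z,-y)$ cancels against $q(-y,z)$ using alternation together with evenness. Hence $\Phi(q)$ is a solution. It is cyclic by construction and alternating because $q$ is, and its restriction to $z=0$ is $q$ by the axis conditions, which gives the uniqueness statement.
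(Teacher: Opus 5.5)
\textbf{There is a genuine gap: separate evenness of $q$.} Your reduction to $\QQ$, the removal of pure powers, the identification $f=\Phi(q)$ with $q$ alternating and zero on the axes, the relation $N_2q=0$, and the converse are fine. (In the converse, $q(z,-y)$ cancels $q(y,z)$, not $q(-y,z)$, which is not one of the six terms; this is cosmetic.) Evenness is the one substantive step, and neither of your routes proves it.

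\emph{First route.} Your expansion of $(Nf)(x,0,z)$ has a spurious term, since $f(x,0,z)=q(z,x)$ alone. The correct value is
\[
 q(z,x)+q(x,z)+q(-x,-x)+q(-x,z)+q(z,-x),
\]
which is identically zero by alternation. Your version would force $q=0$ for every alternating solution, contradicting $f_{\mathrm a}$.

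\emph{Second route.} This is not merely unfinished; it cannot succeed. For cyclic $f$ one has $N_{13}f(x,y,z)=(Nf)(z,x,y)$ and $N_{23}f(x,y,z)=(Nf)(y,z,x)$. For every alternating, axis-vanishing $q$ with $N_2q=0$, the norm $N\Phi(q)$ vanishes identically on all three coordinate planes. So every identity you invoke holds for $q=XY(X^2-Y^2)$ in degree four. This includes the coordinate-plane norms, $N_2q(X,-Y)=0$, and $f(-x,-y,-z)=f(x,y,z)$. Yet this $q$ is alternating, vanishes on the axes, satisfies $N_2q=0$, and is odd in $X$. (Consistently, $W_4=0$.) Evenness therefore cannot be deduced from these data.

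\textbf{The missing input is the norm at a general point.} Remove the three terms of $N_2q(x,y)$ from $N\Phi(q)$. Alternation turns the remaining six terms into $r(y,z)-r(x,z)+r(x-y,z)$, where $r(u,v)=q(u,v)-q(-u,v)$. Its vanishing makes $r$ additive in its first variable, so in characteristic zero $r(u,v)=u\,h(v)$. Alternation and even total degree give $r(v,u)=-r(u,v)$. On the diagonal this yields $2u\,h(u)=0$, so $h=0$. Thus $q$ is even in $X$, and by alternation also in $Y$.

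In the degree four example, $r(u,z)=2uz(u^2-z^2)$ is not additive. That is exactly where $\Phi(q)$ fails to be a solution, even though all your coordinate-plane checks pass.
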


\begin{proof}
Suppose that $f$ is a solution. Theorem~\ref{thm:main} gives its two-variable
support. The norm at $(0,0,z)$ gives $3f(0,0,z)=0$, so cyclicity removes all
pure powers. Every remaining monomial is supported on exactly two variables.
Cyclicity therefore gives
$f=\Phi(q),\quad q(X,Y)=f(X,Y,0)$.
The polynomial $q$ is alternating and vanishes on the axes. Restriction of
the norm to $z=0$ gives $N_2q=0$.

It remains to prove separate evenness. Put
$r(u,v)=q(u,v)-q(-u,v)$. Expanding $N\Phi(q)$ and using alternation and
$N_2q=0$ gives
\begin{equation}\label{eq:alternating-additivity}
 r(y,z)-r(x,z)+r(x-y,z)=0.
\end{equation}
Indeed, after removing the three terms of $N_2q(x,y)$, the six remaining
terms are
\[
 q(y,z)-q(x,z)+q(x-y,z)-q(-y,z)+q(-x,z)-q(y-x,z),
\]
which is the left side of \eqref{eq:alternating-additivity}.
Thus, we have $r(u+v,z)=r(u,z)+r(v,z)$. Differentiating in $v$ at $v=0$ shows that
$\partial_ur(u,z)$ is independent of $u$. Since $r(0,z)=0$, we have
$r(u,z)=u h(z)$ for a polynomial $h$.

Since $q$ is alternating and homogeneous of even total degree,
we have $r(v,u)=-r(u,v)$.
On the diagonal, this means that $2u h(u)=0$, so $h=0$ in characteristic zero.
Hence, we have $q(-u,v)=q(u,v)$. Alternation gives evenness in the other variable.
Conversely, for a binary polynomial with these properties, the three
terms of $N_2q$ sum to zero and the six terms displayed above cancel. Hence
$\Phi(q)$ is a solution. The axis normalization recovers $q$ on $z=0$.
\end{proof}

\begin{proof}[Proof of Corollary~\ref{cor:complete-periods}]
The degree zero spaces are zero. The relation $q(X,Y)+q(-Y,X)=0$, applied
twice, gives $q(-X,-Y)=q(X,Y)$. Thus $\mathscr P_n^0=0$ when $n$ is odd,
as is $W_n$ by Theorem~\ref{thm:main}.

Let $n>0$ be even. The involution $q(X,Y)\mapsto q(-X,Y)$ preserves
the binary period relations \cite[Section~2, (2.2)--(2.4)]{PasolPopa}.
To match conventions, homogenize by $P(X)\mapsto Y^nP(X/Y)$.
The matrix $U^2$ in that reference gives our substitution
$(X,Y)\mapsto(-Y,X-Y)$. Since $U^3=-I$ and $-I$ acts trivially in
even degree, the two three-term norms agree. The involution also preserves
the axis conditions, so $(q(X,Y)\pm q(-X,Y))/2$ gives the two normalized
parity projections.

This is a statement about binary periods, not coordinate sign projections
of the ternary norm. The two-term relation makes the odd part symmetric
and separately odd, and the even part alternating and separately even.
In the notation of Section~\ref{sec:polynomial}, we have
$\mathscr P_n^{0,\mathrm{odd}}=\mathcal B_n$ and $\mathscr P_n^{0,\mathrm{ev}}=\mathcal B_n^0$.
Theorem~\ref{thm:symmetric-classification} and
Lemma~\ref{lem:alternating-classification} identify these spaces with the
two sectors exhausting $W_n$ by Lemma~\ref{lem:sector-decomp}. Both inverse
maps are $\Phi$. Since the summands vanish on the axes, its restriction to
$z=0$ is exactly $q$. This proves the rational isomorphism and uniqueness.
Compatibility with restriction and $\Phi$ in
Proposition~\ref{prop:coefficient-base-change} extends the result to $F$.
\end{proof}

\subsection{Cusp form periods and dimensions}

Let $n>0$ be even, $k=n+2$, and $h\in S_k$.
Expanding \eqref{eq:analytic-period} and using the Mellin transform,
with the normalization of $\Lambda(h,s)$ in
Section~\ref{sec:classical-conventions}, gives
\begin{equation}\label{eq:period-critical-coefficients}
  r_h(X,Y)
    =\sum_{j=0}^n(-1)^{n-j}\binom nj
       i^{j+1}\Lambda(h,j+1)X^{n-j}Y^j.
\end{equation}
Thus the coefficients are explicit multiples of the completed
$L$-values at the critical integers $1,\ldots,k-1$
\cite[Section~5.10]{DiamondShurman}.

Define the even and odd parts in $X$ by
\[
\begin{aligned}
  r_h^{\mathrm{ev}}(X,Y)
    &=\frac{r_h(X,Y)+r_h(-X,Y)}2,\\
  r_h^{\mathrm{odd}}(X,Y)
    &=\frac{r_h(X,Y)-r_h(-X,Y)}2.
\end{aligned}
\]
Both satisfy the period relations. The odd part is odd in both
variables and vanishes on the axes. The even part is normalized uniquely by
\begin{equation}\label{eq:even-period-normalization}
  r_h^{\mathrm{ev},0}(X,Y)
    =r_h^{\mathrm{ev}}(X,Y)
       -r_h^{\mathrm{ev}}(1,0)(X^n-Y^n).
\end{equation}
This subtraction preserves the period relations. Hence
$r_h^{\mathrm{odd}}$ and $r_h^{\mathrm{ev},0}$ belong to
$\mathscr P_n^{0,\mathrm{odd}}(\CC)$ and
$\mathscr P_n^{0,\mathrm{ev}}(\CC)$, respectively.

\begin{proof}[Proof of Corollary~\ref{cor:dimensions}]
Eichler--Shimura identifies cusp forms with the two period spaces modulo
coboundaries \cite[Theorem~2.1]{PasolPopa}. At level one the coboundary
space is spanned by $X^n-Y^n$. It lies in the even part, and
\eqref{eq:even-period-normalization} selects its unique axis-normalized
representative. Thus the two period maps give
\[
 \begin{aligned}
 S_{n+2}&\xrightarrow{\sim}\mathscr P_n^{0,\mathrm{odd}}\otimes\CC,
 &h&\mapsto r_h^{\mathrm{odd}},\\
 S_{n+2}&\xrightarrow{\sim}\mathscr P_n^{0,\mathrm{ev}}\otimes\CC,
 &h&\mapsto r_h^{\mathrm{ev},0}.
 \end{aligned}
\]
Composing with $\Phi$ proves the claimed
identifications. The coefficient-extension proposition transfers the
complex dimensions to every characteristic zero field. Odd degrees and
degree zero vanish by the classification. Finally,
the graded space $S_*:=\bigoplus_{k\ge0}S_k$ satisfies
$S_* =\Delta_{\mathrm{mod}}\CC[E_4,E_6]$, where $E_4,E_6$ are the
normalized Eisenstein series of weights four and six and
$\Delta_{\mathrm{mod}}$ is the weight twelve discriminant cusp form. This gives
$\sum_k\dim S_k\,t^k=t^{12}/((1-t^4)(1-t^6))$
\cite[Chapter VII]{SerreArithmetic}. Shift $k$ to $n+2$ and multiply by two
to obtain \eqref{eq:W-Hilbert}.
\end{proof}

\subsection{The exact common divisor}

\begin{proof}[Proof of Corollary~\ref{cor:common-divisor}]
By the
classification, each nonzero sector is $\Phi(q)$ for a normalized binary
period of one of the two parities, and in both cases we have
\[
 q(u,u)=q(u,-u)=0,\qquad q(-u,v)=-q(v,u).
\]
For the symmetric case the first diagonal vanishing follows by evaluating
the binary norm at $(u,u)$ and using the axis values. For the alternating
case it follows directly from alternation. The other identities follow
from the parity conditions. Thus, we have
$\Phi(q)(u,-u,v)=0$.
Cyclicity supplies the factors $x+y$, $y+z$, and $z+x$.
The construction of $L(q)$ in \eqref{eq:polynomial-lift} already proves divisibility
by $x+y+z$ for either symmetry. The four linear factors are pairwise
nonassociate, so their product $D$ divides each sector and every solution.

To prove that $D$ is the entire common divisor, use the degree ten basis
$f_{\mathrm{s}}=\Phi(q_{\mathrm{s}})$ and
$f_{\mathrm{a}}=\Phi(q_{\mathrm{a}})$ from the introduction. Expansion gives
\begin{equation}\label{eq:sharp-divisor-factorization}
\begin{split}
 f_{\mathrm{a}}={}&D(x,y,z)(x-y)(x-z)(y-z)
 \cdot(x-y-z)(x-y+z)(x+y-z).
\end{split}
\end{equation}
All ten linear factors are distinct. On the other hand, we have
\[
 f_{\mathrm{s}}(u,u,v)=2q_{\mathrm{s}}(u,v),\qquad
 f_{\mathrm{s}}(u+v,u,v)=2q_{\mathrm{s}}(u,v).
\]
Neither is identically zero, and symmetry of $f_{\mathrm{s}}$ shows
that none of the six additional factors in
\eqref{eq:sharp-divisor-factorization} divides it. Therefore, we have
$\gcd(f_{\mathrm{s}},f_{\mathrm{a}})=D$ up to a nonzero constant.
Since both are solutions and $D$ divides every solution, the result follows.
\end{proof}

\subsection{Changing the coefficient ring}

For a commutative $\QQ$-algebra $R$,
Proposition~\ref{prop:coefficient-base-change} extends the classification,
odd degree vanishing, and divisibility by $D$ to $R$, including rings
with nonzero nilpotents.

More generally, a commutative ring $R$ with torsion free additive group
embeds in $R_\QQ=R\otimes_\ZZ\QQ$. A solution's restriction
$q(X,Y)=f(X,Y,0)$ already lies in $R[X,Y]$. The identity $f=\Phi(q)$
and the binary period relations descend coefficientwise from $R_\QQ$,
as do support and odd degree vanishing.

Divisibility by $D$ also descends to $R$. Its leading coefficient
in lexicographic order $x>y>z$ is one, so polynomial division by
$D$ can be performed over $R$ without dividing any coefficients.
The remainder becomes zero over $R_\QQ$, and injectivity makes
it zero over $R$ as well. The greatest common divisor assertion
is made only over fields.

In positive characteristic, the support conclusion fails.
Let $p$ be a prime, and let $\overline f_{\mathrm s}$ be the
reduction modulo $p$ of the integral degree ten symmetric
solution from the introduction. This reduction is nonzero,
since the coefficients of $f_{\mathrm s}$ include $4$ and $-25$.
Define
\[
  \Pi_p(x,y,z)
    =\prod_{0\ne\lambda\in\FF_p^3}
       (\lambda_1x+\lambda_2y+\lambda_3z).
\]
Every invertible linear substitution over $\FF_p$ permutes
the factors, so $\Pi_p$ is invariant under $\GL_3(\FF_p)$.
In particular, it is invariant under cyclic permutation and
the substitutions in $N$. Hence
\[
  N(\Pi_p\overline f_{\mathrm s})
    =\Pi_pN\overline f_{\mathrm s}=0,
\]
and $\Pi_p\overline f_{\mathrm s}$ is cyclic.
It is a nonzero polynomial of degree $p^3+9$.
Since the factors of $\Pi_p$ include $x$, $y$, and $z$,
every monomial of this solution involves all three variables.
Thus it cannot be a sum of polynomials in two variables.
The same example works over every field of characteristic $p$.

In characteristic two there is a smaller example of degree
eleven:
\[
  g=xyz\,\overline D^{\,2}\in\FF_2[x,y,z],
\]
where $\overline D$ denotes the reduction of $D$.
For the integral polynomial $q(X,Y)=X^3Y+XY^3$, direct
substitution gives
\[
  N_2q(X,Y)=2(X^2-XY+Y^2)^2.
\]
Modulo two, we have $\overline D=\Phi(\overline q)$ and hence
$N\overline D=0$. Moreover, $xyz\,\overline D$ is the product
of the seven nonzero linear forms in three variables over
$\FF_2$, so it is invariant under $\GL_3(\FF_2)$.
Therefore $g=(xyz\,\overline D)\overline D$ is a nonzero
solution. It violates both two-variable support and
odd degree vanishing.

\appendix
\section{The restricted apartment presentation}
\label{app:topology}

We prove the acyclicity required in Proposition~\ref{thm:good-apartment}.
Let $X=\mathrm{BA}_3(\ZZ)$ be the partial augmented-frame complex.
Its vertices are primitive integral lines $v^\pm$, identifying $v,-v$.
A frame comes from an integral basis. An augmented frame adds the line
of a signed sum of two basis vectors. Their subsets are the simplices
of $X$. Let $Y$ be the full good subcomplex for $\ell=(1,1,1)$ modulo three.

Church--Putman \cite[Theorem~C]{ChurchPutman} prove that $X$ is
$2$-connected. Deleting vertices can create homology, so we must analyze
the deleted faces and fill the resulting obstructions explicitly.

We will use the corresponding rank two fact. The full augmented-frame
complex in rank two is the contractible filled Farey complex
\cite[Remark~1.4]{ChurchPutman}. Two bad vertices cannot be adjacent,
since their reductions lie in the same one-dimensional subspace.
The link of a bad vertex is therefore a bi-infinite path of good
vertices. Its star is a cone on this contractible path, so removing
that vertex and the interior of its star preserves the homotopy type.
For a map of a sphere and a chosen nullhomotopy, only finitely many
simplices are involved. Removing those bad stars successively proves
that the good rank two subcomplex is connected and weakly contractible,
and hence contractible by Whitehead's theorem
\cite[Chapter~4]{Hatcher}.

\subsection{The bad-face filtration}

For an all-bad simplex $\beta$, its good link
$L_\beta=\operatorname{lk}_X(\beta)\cap Y$ consists of good simplices
disjoint from $\beta$ whose join with $\beta$ lies in $X$.
Write $\operatorname{or}(\beta)$ for its rational orientation line.

Every simplex is the join of its bad face and its good face. Filter
the augmented chain complex of $X$ by the dimension $p$ of the bad
face, allowing $p=-1$ for an entirely good simplex. There are no
all-bad basis triangles or tetrahedra: their reductions would have to
span $\FF_3^3$ inside $\ker\ell$. The only all-bad triangles are
augmented rank two frames. The filtration therefore has four columns,
and gives the spectral sequence
\begin{equation}\label{eq:bad-filtration}
 E^1_{p,q}=
 \begin{cases}
  \widetilde H_q(Y;\QQ),&p=-1,\\[3pt]
  \displaystyle\bigoplus_{\substack{\beta\text{ all-bad}\\\dim\beta=p}}
    \operatorname{or}(\beta)\otimes\widetilde H_q(L_\beta;\QQ),
       &0\le p\le2,
 \end{cases}
 \quad\Longrightarrow\quad\widetilde H_{p+q+1}(X;\QQ).
\end{equation}
We use augmented homology, including degree $q=-1$.
The differentials have bidegree
$d^r:(p,q)\mapsto(p-r,q+r-1)$.

A chain in the $\beta$-summand is a join $\beta*\gamma$, with boundary
\[
 \partial(\beta*\gamma)
   =(\partial\beta)*\gamma+(-1)^{p+1}\beta*(\partial\gamma).
\]
The second term computes good link homology at fixed $p$, and the first
removes a bad vertex. This gives the $E^1$-page and its indexing.
Boundedness of the filtration and finite support of chains ensure
convergence for these infinite complexes
\cite[Chapter~VII, Section~2]{Brown}.

Two types of links are immediate. If a bad augmented triangle spans
the saturated plane with basis $b,c$, and $w$ is an integral complement,
its good link is the nonempty discrete set
\[
 \{(w+rb+sc)^\pm:(r,s)\in\ZZ^2\}.
\]
For the bad edge $\{b^\pm,c^\pm\}$, the same vertices form a square-grid
graph, with edges given by differences $\pm b$ and $\pm c$.
It is connected, and its first homology is generated by unit squares.
We next treat the bad-vertex links.

\subsection{The bad-vertex link}\label{app:bad-vertex-link}

Fix a bad primitive vector $b$, and put $L_b=\operatorname{lk}_X(b^\pm)\cap Y$.
The covector $\ell$ descends to a nonzero covector on
$(\ZZ^3/\ZZ b)/3(\ZZ^3/\ZZ b)$. Let $Z$ be the good rank two
augmented-frame complex in this quotient. It is contractible by the
rank two argument above. Projection gives a simplicial map
$p:L_b\to Z$.

Over $u\in Z$, the lifts $u_k=u_0+kb$, $k\in\ZZ$, of a primitive
$u_0$ form a bi-infinite path $P_u$ with edges $[u_k,u_{k+1}]$.
Over $[u,v]$, all cross-edges $[u_k,v_l]$ occur, together with
\[
 [u_k,u_{k+1},v_l],\qquad [u_k,v_l,v_{l+1}].
\]
Thus this part of the link is the $2$-skeleton of the join $P_u*P_v$.
Over an augmented base triangle, choose compatible representatives and
lifts with $u_0=u_1+u_2$. The remaining triangles are
\[
 \theta_{k,l}
  =[u_1+kb,u_2+lb,u_0+(k+l)b],\qquad(k,l)\in\ZZ^2.
\]
These exhaust the possibilities: the additive relation of an augmented
frame containing $b$ either involves $b$, giving a path-edge triangle,
or relates the other three lines, giving one of the $\theta_{k,l}$.

\begin{lemma}\label{lem:bad-link-generators}
The link $L_b$ is connected and $H_1(L_b;\QQ)=0$.
Its second homology is generated by the tetrahedral boundaries
\[
 \partial[u_k,u_{k+1},v_l,v_{l+1}]
\]
over base edges, and by the prism boundaries joining $\theta_{k,l}$
to $\theta_{k+1,l}$ or to $\theta_{k,l+1}$ over augmented base triangles.
\end{lemma}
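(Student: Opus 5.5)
The plan is to study $L_b$ through the projection $p:L_b\to Z$ and use that $Z$ is contractible. The fibre over a vertex $u$ is the path $P_u$, which is contractible. Over an edge $[u,v]$ we have the $2$-skeleton of the join $P_u*P_v$. Over an augmented triangle we have the prism-type complex generated by the $\theta_{k,l}$ together with the edge pieces. This suggests a Mayer--Vietoris (or Leray-type) spectral sequence for the cover of $L_b$ by the preimages of closed stars, or more simply by the preimages of simplices of $Z$. Concretely, I would filter the chains of $L_b$ by the dimension of the image simplex in $Z$, giving a spectral sequence with
\[
E^1_{s,t}=\bigoplus_{\sigma\in Z,\ \dim\sigma=s}\operatorname{or}(\sigma)\otimes H_t(\text{relative fibre over }\sigma)\Longrightarrow H_{s+t}(L_b;\QQ).
\]
In this sequence the relative fibre over $\sigma$ is the preimage of $\sigma$ modulo the preimage of $\partial\sigma$.

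\emph{First step: compute the local pieces.} Over a vertex, $P_u$ is contractible, so it contributes only $H_0=\QQ$. Over an edge, $P_u*P_v$ is contractible, because a join of contractible spaces is contractible. Its $2$-skeleton therefore has $H_1=0$, and its $H_2$ is generated by the boundaries of its $3$-simplices. Using the path structure, these reduce to the tetrahedra $[u_k,u_{k+1},v_l,v_{l+1}]$. Over an augmented triangle, each $\theta_{k,l}$ is glued along its three edges into the three edge pieces. The union of the three edge-piece $2$-skeleta with the $\theta$'s should again have $H_1=0$. Its new $H_2$ classes come from closing up adjacent $\theta$'s: $\theta_{k,l}$ and $\theta_{k+1,l}$ differ by moving $u_1$ and $u_0$ one step along their paths. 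The region between them is a prism, triangulated inside the edge pieces $[u_1,u_0]$ and $[u_1,u_2],[u_2,u_0]$, and its boundary gives a $2$-cycle. Similarly, $\theta_{k,l}\to\theta_{k,l+1}$ gives a second family of prisms. The claim is that tetrahedral boundaries together with these prism boundaries span everything.

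\emph{Second step: the spectral sequence collapses.} The fibre homologies in degree $t\le 1$ form a coefficient system on $Z$ that is constant $\QQ$ in $t=0$ and zero in $t=1$. Concretely, the relative fibre contributions in $t=0,1$ vanish except for vertex $H_0$. The degree-$0$ row is therefore the chain complex of $Z$ with rational coefficients. Since $Z$ is contractible, this row gives $H_0(L_b)=\QQ$ and kills its contributions to $H_1$ and $H_2$. Hence $L_b$ is connected and $H_1(L_b;\QQ)=0$. Total degree $2$ then receives only the fibrewise $H_2$ terms over edges and triangles, possibly modified by $d^1$ from the degree-$0$ row in total degree $3$. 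Since $Z$ is $2$-dimensional with $H_3(Z)=0$ and $H_2(Z)=0$, no extra generators arise. Thus $H_2(L_b)$ is spanned by the images of the local $H_2$ generators, which are exactly the listed tetrahedral and prism boundaries.

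\emph{Main obstacle.} The delicate part is the triangle-fibre computation. I have to verify that adding the $\theta_{k,l}$ does not create $H_1$. Each $\theta$ has boundary a loop in the edge pieces, and that loop is already null there because the edge-piece $H_1$ vanishes. So adding $\theta$'s only adds $H_2$, never $H_1$. I also have to show that the new $H_2$ is generated by differences of adjacent $\theta$'s, i.e.\ by prisms. For this I would write a general $2$-cycle as $\sum c_{k,l}\theta_{k,l}+(\text{edge-piece chain})$. Its boundary condition forces $\sum c_{k,l}=0$ on each connected family, because each $\theta$ boundary maps to the same generator of relative homology. The finitely supported coefficients with zero sum on $\ZZ^2$ are spanned by differences of horizontal and vertical neighbours, which are exactly the prism classes. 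The remainder is a cycle in the edge pieces, covered by the tetrahedral boundaries. I expect the sign and orientation bookkeeping in identifying $\partial\theta_{k,l}$ with a fixed relative class to be the part needing care.
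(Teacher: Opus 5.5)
Your last paragraph has the right mechanism, but Step~2 is false as stated for $L_b$, and one local claim in your ``main obstacle'' paragraph would actually contradict the lemma.

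\textbf{The $E^1$ page is misdescribed.} In $L_b$ the preimage of a closed edge $e=[u,v]$ is only the $2$-skeleton of $P_u*P_v$. So $H_2(p^{-1}e,p^{-1}\partial e)=H_2((P_u*P_v)^{(2)})$ is a nonzero row-one term; it is spanned by the tetrahedral boundaries. Equivalently, the fibre over an interior point of $e$ is the square-grid graph, which is not contractible. Over a triangle $\tau$ the relative complex is $\QQ[\ZZ^2]$ in degree two, one generator per $\theta_{k,l}$. So the row-zero term there is $\QQ[\ZZ^2]$, not $\QQ$. Hence row zero is not the chain complex of $Z$, and row one is not zero.

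\textbf{The boundary of $\theta_{k,l}$ is not null.} $\partial\theta_{k,l}$ projects onto the boundary loop of $\tau$ and generates $H_1(p^{-1}(\partial\tau);\QQ)\cong\QQ$. If it were null in the edge pieces, $\theta_{k,l}$ minus a filling would be a $2$-cycle with $\theta$-coefficient sum $1$. No combination of prism and tetrahedral boundaries produces such a cycle, so the lemma would be false. Your zero-sum argument silently uses the correct fact.

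\textbf{How to repair the direct route.} On $L_b$, row zero is $\bigoplus_\tau\QQ[\ZZ^2]\to C_1(Z)\to C_0(Z)$, with first differential $\partial_Z\circ\varepsilon$, where $\varepsilon$ is the augmentation. The only term off row zero is $E^1_{1,1}=\bigoplus_e H_2((P_u*P_v)^{(2)})$.
\begin{enumerate}
\item Since $\dim Z=2$ and $H_2(Z)=0$, $\partial_Z$ is injective on $C_2(Z)$. This is where your remark about $H_2(Z)$ belongs.
\item Hence $E^2_{2,0}=\bigoplus_\tau\ker\varepsilon$, which is spanned by neighbour differences, and $E^2_{1,0}=H_1(Z)=0$.
\item Apart from this $d^1$, no differential touches total degree at most two.
\end{enumerate}
So $L_b$ is connected and $H_1(L_b)=0$. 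Moreover $H_2(L_b)$ is an extension of $\bigoplus_\tau\ker\varepsilon$ by $\bigoplus_e H_2((P_u*P_v)^{(2)})$. The tetrahedral boundaries span the subobject, and the prism boundaries lift the neighbour differences.

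\textbf{Comparison with the paper.} The paper avoids this bookkeeping by first attaching $3$-cells along exactly the proposed spheres, forming $L_b^+$. The edge pieces then become full joins and the triangle pieces become connected grids. The resulting $E^1$ page is precisely the clean one you wrote down: row zero is $C_*(Z)$, and the only other term lies in total degree three. This gives $H_1(L_b^+)=H_2(L_b^+)=0$, and generation follows from the surjection $H_3(L_b^+,L_b;\QQ)\to H_2(L_b;\QQ)$. Your corrected direct route exhibits the extension structure of $H_2(L_b)$ explicitly. The paper's coning trick avoids the lifting step.
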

\begin{proof}
Attach a $3$-cell along each displayed sphere, obtaining $L_b^+$.
Filter this complex by the dimension of its image in $Z$:
\[
 F_{-1}=\varnothing\subset F_0\subset F_1\subset F_2=L_b^+.
\]
We compute the homology of each successive pair.

Over a vertex, the path $P_u$ is contractible. Over an edge $[u,v]$,
the tetrahedral cells complete the join $P_u*P_v$. Relative to the
endpoint fibers, the contribution is
\[
 H_i(P_u*P_v,P_u\sqcup P_v;\QQ)
   =\begin{cases}\QQ,&i=1,\\0,&i\ne1.\end{cases}
\]
This follows from the long exact sequence of the pair: the join is
contractible and the two endpoint fibers are contractible components.
Any cross-edge from $P_u$ to $P_v$ represents the generator.

Over an augmented base triangle, the relative complex has only the
coherent triangles and the prism cells. It is
\begin{equation}\label{eq:link-relative-grid}
 \QQ[\mathcal E(\ZZ^2)]\xrightarrow{\delta}\QQ[\ZZ^2],
 \qquad\delta[\nu,\nu']=\theta_{\nu'}-\theta_\nu,
\end{equation}
in degrees three and two, where $\mathcal E(\ZZ^2)$ consists of the
oriented horizontal and vertical edges of the integer grid.
The side faces disappear in the relative quotient. Connectedness of
the grid makes $\operatorname{coker}\delta=\QQ$, while its kernel
contributes only in degree three.

Consequently, through total degree two the spectral sequence
$E^1_{p,q}=H_{p+q}(F_p,F_{p-1};\QQ)$ is concentrated in the row
$q=0$. This row is the simplicial chain complex of $Z$: cross-edges
have the expected endpoints, and coherent triangles have the expected
three boundary edges. The only additional term, $E^1_{2,1}$, has
zero possible targets $E^1_{1,1}$ and $E^2_{0,2}$. Since $Z$ is
contractible, $L_b^+$ is connected and has zero homology in degrees
one and two.

Only $3$-cells were added to $L_b$. The long exact sequence of
$(L_b^+,L_b)$ therefore preserves connectedness and first homology,
and gives a surjection
\[
 H_3(L_b^+,L_b;\QQ)\longrightarrow H_2(L_b;\QQ).
\]
The generators on the left map to the attaching spheres, which are
precisely the two families in the statement.
\end{proof}

\subsection{The three possible obstructions}

In the good column $p=-1$ of \eqref{eq:bad-filtration}, the connected
bad-vertex and bad-edge links and nonempty bad-triangle links rule out
incoming differentials to $\widetilde H_0(Y)$. For $H_1(Y)$, the possible
sources are
\[
 H_1(L_b),\qquad\widetilde H_0(L_e),\qquad
 \widetilde H_{-1}(L_\tau),
\]
for bad vertices $b$, edges $e$, and triangles $\tau$.
All vanish by the link calculations.

For $H_2(Y)$, the possible sources are $H_2(L_b)$ on the first page,
$H_1(L_e)$ on the second, and $\widetilde H_0(L_\tau)$ on the third.
On later pages one takes their surviving subquotients. We show that
each incoming differential is zero on the generators described above.
Brackets below denote oriented simplices of primitive lines.

\smallskip\noindent\emph{The first differential.}
Let $(b,c,w)$ be an integral basis with $b,c$ bad and $w$ good. Put
\[
 A=w,\quad B=w+b,\quad C=w+c,\quad D=w+b+c,
 \qquad W=A+D=B+C.
\]
All five vectors are good: if $\ell(w)=1$ modulo three, then
$\ell(W)=2$. The all-good augmented-frame chain
\begin{equation}\label{eq:square-cap}
 [W,B,C,D]-[W,A,C,D]+[W,A,B,D]-[W,A,B,C]
\end{equation}
has boundary $\partial[A,B,C,D]$. This fills each tetrahedral
generator from Lemma~\ref{lem:bad-link-generators}. Indeed, normalize
the two lifted quotient vectors to have the same nonzero covector
value. Their difference is a bad vector $c$, giving this configuration.

For a prism generator, write
\[
 Z=X+Y,\qquad X'=X+b,\qquad Z'=Z+b=X'+Y.
\]
Its sphere is the boundary of the two all-good augmented tetrahedra
\begin{equation}\label{eq:prism-cap}
 [X,Y,Z,X']+[Y,Z,X',Z'].
\end{equation}
Interchanging $X,Y$ treats the other parameter direction. Hence every
generator of $H_2(L_b)$ has zero image in the good column.

\smallskip\noindent\emph{The second differential.}
A generator of $H_1(L_e)$ is a unit square in the good link of a bad
edge. In the notation above, its fillings in the two bad-vertex links
are
\[
 [A,B,C]+[B,D,C],\qquad [A,B,D]+[A,D,C].
\]
Their difference is $-\partial[A,B,C,D]$, which bounds
\eqref{eq:square-cap} in $Y$. These fillings therefore give zero
incoming second differential.

\smallskip\noindent\emph{The third differential.}
Let $u,v,s=u+v$ be a bad augmented core, let $w$ be a good complement,
and put $w'=w+u$ and $q=v-w$. Consider
\begin{align}\label{eq:triangle-cycle}
 Z_0={}&-[w,q,v,s]-[w,v,u,s]-[w,u,w',s]+[w,q,v,w']\nonumber\\
      &+[w,q,w',s]+[w,v,u,w']+[q,v,w',s]+[v,u,w',s].
\end{align}
Each tetrahedron is an augmented frame, using one of
\[
 v=w+q,\qquad s=u+v,\qquad w'=w+u,\qquad s=q+w'.
\]
Expansion gives $\partial Z_0=0$. Its terms with three bad vertices are
$-[w,v,u,s]+[v,u,w',s]$, so its highest filtered component represents,
up to orientation, the adjacent complement difference $[w']-[w]$.
Since this component lifts to the absolute cycle $Z_0$, all its
successive differentials are zero. Translating $w$ and interchanging
$u,v$ gives every adjacent difference in the complement lattice.
These generate $\widetilde H_0(L_\tau)$, so the incoming third
differential is zero.

There is no outgoing differential from the good column. All incoming
differentials in total degrees at most two have now been excluded.
The vanishing of the corresponding homology of $X$ therefore gives
\[
 \widetilde H_0(Y;\QQ)=H_1(Y;\QQ)=H_2(Y;\QQ)=0.
\]
This is the restricted acyclicity needed in
Proposition~\ref{thm:good-apartment}.

\section{Coefficient comparisons for the symmetric argument}\label{app:coefficient-comparisons}

We prove the seed finite-difference identity and identify literal residues
with binary periods and their Hecke action. The comparison must preserve
individual coefficients: pole removal uses an extreme period.

\subsection{The finite-difference identity}\label{app:finite-difference}

The left side of \eqref{eq:common-denominator} is
$\sum_{i=1}^{12}\epsilon_i(Nf)(T_i)$ with the substitutions in
Table~\ref{tab:finite-difference}, read down the left column and then
the right. In the thirty-six expanded terms, those from $Af,A^2f$
cancel in opposite-sign pairs with the same unordered arguments.
The twelve terms from $f$ are the three second differences.
This proves the identity for every symmetric $f$ with $Nf=0$,
without a homogeneity or degree assumption.

\begin{table}[!htbp]
\centering
$\begin{array}{|r|l|r|l|}
\hline
\epsilon_i&T_i&\epsilon_i&T_i\\
\hline
+&(b,a+c,-c)&-&(b-c,b,a-b+c)\\
 -&(b,b+c,a-b-c)&+&(b,a-c,c)\\
 +&(c-b,a-b,-c)&-&(a-b,a-b+c,-a-c)\\
 -&(a-b-c,a-b,c-a)&+&(-b-c,a-b,c)\\
 +&(-a,b-a+c,-c)&-&(-a-c,-a,b+c)\\
 -&(-a,-a+c,b-c)&+&(-a,b-a-c,c)\\
\hline
\end{array}$
\par\vspace{7pt}
\caption{The signed substitutions expressing  \eqref{eq:common-denominator} as a sum of three-term norms.}
\label{tab:finite-difference}
\end{table}
\FloatBarrier

\subsection{The literal residue and its Hecke action}
\label{app:Hecke-coordinate}

We compare the literal $1/a$ residue used in Section~\ref{sec:hecke-pole}
with the binary period coordinate, without mixing coefficients.

Let $H=\{a=0\}$, let $P_H$ be its stabilizer in $G=\GL_3(\ZZ)$, and
put $D_d=\QQ[b,c]_d$ with the right substitution action. All rational
hyperplanes form one $G$-orbit, so Shapiro reduces the direct sum residue
module to its fiber at $H$. The top homology of the rank two radical of
$P_H$ gives the Hochschild--Serre edge map:
\[
\begin{aligned}
 \mathrm{Sh}:H_3(G,\mathcal R_n\otimes\det)
 &\xrightarrow{\sim}H_3(P_H,\mathcal R_{n,H}\otimes\det),\\
 \kappa:H_3(P_H,\mathcal R_{n,H}\otimes\det)
 &\xrightarrow{\sim}H_1(\GL_2(\ZZ),D_{n+1}).
\end{aligned}
\]
The map $\operatorname{res}_H$ reads the literal residue, and
$\operatorname{edge}_2$ reads the coefficient of the standard oriented
binary well-rounded edge. The next lemma compares them with compatible
orientation and averaging conventions.

\begin{lemma}\label{lem:literal-residue-comparison}
With compatible orientation and averaging normalizations, the diagram
\begin{equation}\label{eq:residue-comparison-diagram}
\begin{CD}
 H_3(G,\mathcal R_n\otimes\det)
     @>{\mathrm{Sh}}>>H_3(P_H,\mathcal R_{n,H}\otimes\det)\\
 @V{\operatorname{res}_H}VV @VV{\kappa}V\\
 \mathcal B_{n+1}
     @< {\operatorname{edge}_2}<<H_1(\mathrm{GL}_2(\ZZ),D_{n+1})
\end{CD}
\end{equation}
commutes. Changing these normalizations multiplies the comparison by one
fixed nonzero scalar, independent of the polynomial. In particular, it does
not mix its coefficients. The composite identifies the residue with the
usual odd period polynomial in the same variables $b,c$.
\end{lemma}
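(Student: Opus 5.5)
We will prove the commutativity of \eqref{eq:residue-comparison-diagram} by computing both composites on the explicit residue cycle $F_q$ of Lemma~\ref{lem:residue-period}. Since $\operatorname{res}_H$ is already an isomorphism onto $\mathcal B_{n+1}$ with inverse $q\mapsto[F_q]$, it suffices to show that $\operatorname{edge}_2\circ\kappa\circ\mathrm{Sh}$ sends $[F_q]$ to $c_0\,q(b,c)$, for one nonzero constant $c_0$ depending only on the normalizations.

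\textbf{Step 1: Shapiro.} The $G$-orbit of $H=\{a=0\}$ is all rational hyperplanes. Realized on the well-rounded retract, the Shapiro map sends a top cycle to its component in the fiber $\mathcal R_{n,H}$, summed over coset representatives of $P_H\backslash G$. For $F_q$, only the two signed coordinate permutations fixing $H$ up to sign contribute a pole along $a=0$, namely $r=\pm e_1^*$, exactly as in the residue computation in Section~\ref{sec:hecke-pole}. The result is the $P_H$-cycle whose coefficient on the standard top cell is $[q(b,c)]\otimes a^\vee$, possibly times the factor $2$ arising from $\pm$. This factor is absorbed into $c_0$.

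\textbf{Step 2: the edge map $\kappa$.} We write $P_H\cong N_H\rtimes\GL_2(\ZZ)$ with $N_H\cong\ZZ^2$. The module $\mathcal R_{n,H}=(V_{n+1}/aV_n)\otimes(\QQ a)^*$ is identified with $D_{n+1}=\QQ[b,c]_{n+1}$, twisted by the character of $a^\vee$. The radical acts on $V_{n+1}/aV_n$ through substitutions $b\mapsto b+\alpha a$, $c\mapsto c+\beta a$, which are trivial modulo $a$, so $N_H$ acts trivially on the coefficients. Hence $H_2(N_H,\mathcal R_{n,H})=\mathcal R_{n,H}\otimes\wedge^2N_H$. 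The determinant twist $\det$ and the dual line $(\QQ a)^*$ combine with $\wedge^2 N_H$ to cancel, leaving the untwisted right action of $\GL_2(\ZZ)$ on $D_{n+1}$. This check of characters is a routine diagonal-matrix computation of the same kind as Table~\ref{tab:radical-cohomology}. Because the Levi has rational cohomological dimension one, Hochschild--Serre degenerates and $\kappa$ is an isomorphism onto $H_1(\GL_2(\ZZ),D_{n+1})$.

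On cells, the standard ternary top cell fibres over the standard binary well-rounded edge, with the rank-two radical torus as fibre. Its coefficient therefore maps to the coefficient of the standard edge, up to the fixed product of orientation signs for the fibre and base. This is the main obstacle: we must verify that the Ash retract cell and the product (radical torus)$\times$(binary edge) carry compatible orientations, so that the comparison is a global scalar and not a polynomial-dependent sign. We will first do this at the identity cell, reusing the off-diagonal coordinate orientation from the proof of Lemma~\ref{lem:apartment-boundary-pairing}. We will then transport it equivariantly, noting that signed permutations in $P_H$ act by the same characters on both sides.

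\textbf{Step 3: the binary edge coordinate.} A class in $H_1(\GL_2(\ZZ),D_{n+1})$ is determined by its coefficient on the standard oriented edge $[0,i\infty]$. That coefficient satisfies the symmetry, separate oddness and three-term relations; this is the standard modular-symbol presentation (Manin, or Church--Putman rank two). These relations define $\mathcal B_{n+1}$, so $\operatorname{edge}_2$ lands in $\mathcal B_{n+1}$ and returns $c_0\,q(b,c)$, with no mixing of coefficients. Finally, under Eichler--Shimura \cite[Theorem~2.1]{PasolPopa}, the edge coefficient of the class attached to $h$ is the odd period polynomial $r_h^{\mathrm{odd}}$ in the variables $(b,c)$, up to the same convention-dependent constant. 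This gives the last assertion of the lemma.
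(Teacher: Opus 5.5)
Your outline follows the paper's route. The Levi acts on $H_2(U,\mathcal R_{n,H}\otimes\det)$ by plain substitution on $D_{n+1}$, and only the $(1,2)$ Hochschild--Serre term survives. The radical $U\cong\ZZ^2$ acts trivially on the residue fibre, $\kappa$ is integration over the radical torus on the standard top cell, and the binary edge coefficient is matched with periods. The gap is that the step carrying the proof is asserted rather than proved, and it is not the orientation check you single out.

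Since $\kappa$ is defined homologically through $U\to P_H\to\{\pm1\}\times\GL_2(\ZZ)$, the sentence ``its coefficient therefore maps to the coefficient of the standard edge'' needs more. You need a $P_H$-equivariant cellular model fibred over the binary retract $W_2$ with free $\ZZ^2$-fibres, together with a comparison to $W_3$. Concretely, you must show that the $P_H$-translates of the standard top cell $\sigma_0$, which carry the support of $\mathrm{Sh}[F_q]$, form a $P_H$-invariant subcomplex equivariantly homeomorphic to $\RR^2\times W_2$. That subcomplex is then contractible with finite stabilizers, so its rational chains are a projective $P_H$-resolution. The comparison theorem then identifies its inclusion into $W_3$ with the canonical map, and only on this model is $\kappa$ literally integration over $\ZZ^2\backslash\RR^2$.

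The paper builds this core with $G(w,C)$:
\begin{itemize}
\item the covering-radius bound $\delta_C(w)^2\le1/2$ gives positivity and excludes short vectors with $|k|\ge2$;
\item the Voronoi inequalities show that $w$ in the Voronoi cell of $0$ and $C=C(t)$ sweep out exactly $\sigma_0$;
\item $p^{\mathsf t}G(w,C)p=G(w',C')$ gives equivariance.
\end{itemize}
After that, orientation is only the positive Jacobian $\det C$ of $(w,t)\mapsto(Cw,t)$. The absence of coefficient mixing is simply that the residue coefficient is literally $q(b,c)$ along each fibre.

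Two smaller corrections. First, Step~1 is not the $26$-covector computation of Section~\ref{sec:hecke-pole}. Here $\mathrm{Sh}$ is Shapiro for $\mathcal R_n=\bigoplus_{H'}\mathcal R_{n,H'}$ over $G$, and $\pm e_1^*$ define the same hyperplane, so they produce no factor $2$. The relevant count is the three cosets of the order-$16$ stabilizer of $H$ in the order-$48$ stabilizer of $\sigma_0$, one for each coordinate residue of $F_q$. You should also check that these three contributions add rather than cancel; this follows from the equivariance of $F_q$ under the cell stabilizer with its orientation character. Second, the binary well-rounded edge $\{C(t):|t|\le1/2\}$ is not the modular symbol $[0,i\infty]$. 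The symbol crosses that edge once, and this intersection is what identifies $\operatorname{edge}_2$ with the odd part of $\int_0^{i\infty}h(z)(zc-b)^{n+1}\,dz$ in the literal variables.
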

\begin{proof}
The stabilizer of $a=0$ in $\mathrm{GL}_3(\QQ)$ consists of
$g=\left(\begin{smallmatrix}\lambda&0\\u&C\end{smallmatrix}\right)$.
On the determinant-twisted residue its action is
$q\mapsto\det(C)q(C(b,c))$. Its integral radical $U=\ZZ^2$ acts trivially
on that fiber. Right homology acts on $H_2(U,\QQ)$ by inverse conjugation,
with factor $\lambda^2/\det C$. The combined factor is therefore $\lambda^2$.
On the integral Levi this is one. The only total-degree-three
Hochschild--Serre term is $(1,2)$, giving
\[
 H_3(P_H,\mathcal R_{n,H}\otimes\det)
   \cong H_1(\mathrm{GL}_2(\ZZ),D_{n+1}).
\]
This is $\kappa$. Shapiro is Hecke-equivariant for this
finite-support system \cite[Theorem 3.1]{AshDoud}. The following cell
calculation specifies the coordinate. The standard
binary well-rounded edge has minimum-one Gram matrices
$C(t)=\left(\begin{smallmatrix}1&t\\t&1\end{smallmatrix}\right)$, $|t|\le1/2$.
A coordinate sign flip reverses the edge, whereas a coordinate swap fixes
it. The coefficient is separately odd and symmetric. The three
edges meeting at a hexagonal vertex give the binary norm.
For a minimum-one binary well-rounded form $C$ and $w\in\RR^2$, put
\[
 \delta_C(w)^2=\min_{z\in\ZZ^2}(w-z)^tC(w-z),\qquad
 G(w,C)=\begin{pmatrix}1+w^tCw-\delta_C(w)^2&w^tC\\Cw&C\end{pmatrix}.
\]
For $C(t)$ the covering radius squared is $1/(2(1+|t|))\le1/2$.
For $t\ge0$ this follows from the Voronoi inequalities
$|w_1+tw_2|,|tw_1+w_2|\le1/2$ and $|w_1-w_2|\le1$.
All vertices have that squared length. A sign flip handles $t\le0$.
Thus the Schur complement is positive.

Translate $w$ into the Voronoi cell of zero. The squared length of an
integral vector $(k,z)$ for $G(w,C)$ is
$k^2(1-w^tCw)+(z+kw)^tC(z+kw)$.
For $k=0$ the binary minimum gives the lower bound one. For $k=\pm1$,
the same bound follows from the Voronoi inequality. If $|k|\ge2$, the first
term alone is at least $k^2/2\ge2$.
The vector $(1,0)$ and two independent binary shortest vectors have length
one, so this is a ternary well-rounded form.
For $p=\left(\begin{smallmatrix}\eta&0\\u&D\end{smallmatrix}\right)$, with $\eta=\pm1$,
put $C'=D^tCD$, $w'=D^{-1}(u+\eta w)$. Then a direct computation gives
$p^tG(w,C)p=G(w',C')$.
The map is injective, and its image is the union of integral parabolic
translates of the standard ternary top cell: reduce the binary block and
translate $w$ to its Voronoi polygon. Conversely, the inequalities for
$(1,z)$ in that cell are exactly the same Voronoi inequalities.

Let $W_3$ be the ternary retract and $W_2$ the binary retract. The map
$i:(w,C)\mapsto G(w,C)$ identifies this with $\RR^2\times W_2$ and is
$P_H$-equivariant. Both this core and $W_3$ are contractible. After equivariant
subdivision, their augmented cellular chains are rational projective
resolutions for $P_H$: the cell stabilizers are finite and averaging is
exact. The cellular inclusion lifts the identity of the trivial module,
so the comparison theorem for resolutions identifies its induced map with
the canonical group homology map \cite[Chapter~I, Sections~7--8]{Brown}.

The radical acts trivially on the residue fiber. Quotienting its
$\RR^2$ coordinate by $\ZZ^2$ gives the oriented torus that represents
$H_2(U,\QQ)$. Fiber integration on the cellular cross product is
$[\ZZ^2\backslash\RR^2]\otimes(e\otimes q) \longmapsto e\otimes q$, where $e$ is an oriented binary edge and the torus has lattice volume one.
Since the Levi has rational cohomological dimension one, bidegree $(1,2)$
is the only term in total degree three. This fiber map is therefore
precisely the Hochschild--Serre map $\kappa$.

For the standard binary edge, take $w$ in the Voronoi cell of zero.
Then $\delta_C(w)^2=w^tCw$ and the first diagonal entry of $G(w,C)$ is one.
The image is the standard ternary cell, and its residue coefficient at
$a=0$ is the same polynomial $q(b,c)$ throughout the fiber. No derivative
or change of the variables $b,c$ occurs in integration. The Jacobian from
$(w_1,w_2,t)$ to $(Cw,t)$ is positive. The ternary cell stabilizer has order
$48$ and the stabilizer also preserving $H$ has order $16$. Their three
cosets give the three coordinate residues in $F_q$. Cellular averaging and
orientation can consequently change this calculation only by one common
nonzero scalar. This proves the commutative diagram after the stated
normalization.

Finally, the binary modular symbol from $0$ to $i\infty$ crosses the
standard edge once. Pairing with an arbitrary dual binary coefficient
identifies its edge coefficient with the odd part of
$\int_0^{i\infty}h(z)(zc-b)^{n+1}\,dz$, with the same orientation
normalization. Thus the comparison is the usual period map in the literal
variables \cite[Theorem 2.1 and Section 5]{PasolPopa}.
\end{proof}

\smallskip
\noindent\emph{The Hecke transfer.}
For the double coset of $\operatorname{diag}(1,1,p)$, a complete set
of right coset representatives is
\[
 \begin{pmatrix}
 1&0&0\\
 0&1&0\\
 b&c&p
 \end{pmatrix}
 \quad(0\le b,c<p),
 \qquad
 \begin{pmatrix}
 1&0&0\\
 a&p&0\\
 0&0&1
 \end{pmatrix}
 \quad(0\le a<p),
 \qquad
 \operatorname{diag}(p,1,1).
\]
These $p^2+p+1$ representatives classify index $p$ column lattices by
normalizing the last nonzero coordinate of their annihilating covector
modulo $p$.

Every representative preserves the hyperplane
$H=\{a=0\}$, so the Hecke correspondence acts on its residue fiber. After separating
the normal coordinate from the two coordinates on $H$, the first family
has, for fixed $c$, binary block
\[
 C_c=
 \begin{pmatrix}
 1&0\\
 c&p
 \end{pmatrix},
 \qquad 0\le c<p,
\]
while the parameter $b$ occurs only in the unipotent direction. The second
family has binary block
$ C_\infty=
 \left(\begin{smallmatrix}
 p&0\\
 0&1
 \end{smallmatrix}\right),$
with the parameter $a$ again lying in the unipotent direction.

The first two families give $p$ representatives above each binary Hecke
coset $C_0,\ldots,C_{p-1},C_\infty$. The final representative
$\operatorname{diag}(p,1,1)$ has binary block $I$ and normal scale $p$.

To compute a fiber contribution, write $t=\operatorname{diag}(\lambda,C)$
and set $U_t=\ZZ^2\cap(C/\lambda)\ZZ^2$ and
$e_t=[\ZZ^2:U_t]$. Normalize fundamental classes by covolume in the same
rational plane. Transfer contributes $e_t$, inverse conjugation on the
radical's top homology contributes $\lambda^2/\det C$, and the residue
coefficient contributes $\det C$. Their product is $e_t\lambda^2$.

For $\lambda=1$, the radical quotient has $p$ elements, represented
by $(0,b)$ for $C=C_c$ and by $(a,0)$ for $C=C_\infty$.
Thus $e_t=p$, and the representatives already constitute the transfer sum.
For $\lambda=p$ and $C=I$, one has $U_t=\ZZ^2$ and $e_t=1$,
so the contribution is $p^2$. Therefore, we have
\[
 \begin{aligned}
 T_3(p,1)|_{\mathrm{res}}
 &=
 p\left(
     \sum_{c=0}^{p-1}[C_c]+[C_\infty]
   \right)
 +p^2[I]  
 =pT_2(p)+p^2I.
 \end{aligned}
\]
Here the brackets denote the usual binary double-coset
correspondences. They should not be interpreted as plain substitutions on
an arbitrary polynomial. This proves \eqref{eq:Hecke-formula}.
Multiplying the literal coordinate by a fixed nonzero scalar does not
change this operator relation.

It remains to check that this binary correspondence has
the standard analytic Hecke normalization on period polynomials. Put
$d=n+1$, and let $h$ be a cusp form of weight $d+2$. For a matrix
$g=
 \left(\begin{smallmatrix}
 a&b\\
 c&e
 \end{smallmatrix}\right)$
of positive determinant $m$, the fractional linear action gives
$((gzY-X)\mid g) =\frac{m(zY-X)}{cz+e}, \quad d(gz)=\frac{m\,dz}{(cz+e)^2}$.
Hence, the differential
$h(z)(zY-X)^d\,dz$ transforms with the usual weight $d+2$ Hecke factor.  The
homological correspondence above is the standard Hecke operator on the
period polynomial of $h$. This is the Hecke-equivariance of the
period map in
\cite[Proposition 5.2, Corollary 5.3, and Remark 5.6]{PasolPopa}.

\section{The analytic boundary comparison}\label{app:arithmetic-direct}
The spectral gap criterion gives the boundary surjection in
Proposition~\ref{thm:boundary-evaluation}. Convergent sums of cusp form
differential forms give the plane--line relation used in
Proposition~\ref{thm:J-vanishes}. These are Eisenstein series constructions.
We prove their required constant term identities directly.

Throughout this appendix, calculations with differential forms are made on
a neat principal congruence subgroup $\Gamma\subset\SL_3(\ZZ)$,
of level divisible by three. Such a subgroup can be chosen normal in
both $K$ and $K^\dagger$. Its stabilizer of an integral coordinate flag
is a semidirect product of its unipotent lattice and its integral Levi.
All rational flags are obtained by integral changes of basis, which
preserve the principal congruence subgroup. At the end we take
invariants under the finite covering group. The coefficient pairing and
all restriction maps commute with this averaging. We use
$X_\Gamma=\Gamma\backslash\mathscr X$, where
$\mathscr X=\SL_3(\RR)/\SO(3)$, and write $\overline X_\Gamma$ for its
Borel--Serre compactification. The convention is unchanged for a rational
conjugate of $\Gamma$.

\subsection{A spectral gap and compact support vanishing}\label{ar:sec:coercive}
We need compactly supported closed two-forms to be exact in ordinary
cohomology. The spectral gap criterion of Bergeron--Venkatesh supplies
this vanishing. We explain its application to our noncompact quotient.

Put $G_{\RR}=\SL_3(\RR)$ and $K_c=\SO(3)$, and let $\mathfrak p$ be the
space of symmetric trace zero real $3\times3$ matrices, with complexification
$\mathfrak p_{\CC}$. Use the invariant metric induced by
$\operatorname{tr}(UV)$ on $\mathfrak p$.
For $E=\Sym^n\CC^3$ or its dual, let $\rho$ be its representation and
choose the Hermitian metric induced by the standard metric on $\CC^3$.
The flat-bundle metric at $gK_c$ is $\|v\|_g=\|\rho(g)^{-1}v\|$.
Let $D$ be the flat exterior differential, $D^*$ its $L^2$ adjoint, and
$\Delta_j=DD^*+D^*D$ the Laplacian in exterior degree $j$.

\begin{proposition}\label{ar:prop:coercivity}
For every $n>0$ there is a constant $c_n>0$, independent of $\Gamma$,
such that every compactly supported smooth $E$-valued form on $X_\Gamma$
satisfies
\begin{equation}\label{ar:eq:coercivity}
 \|D\omega\|^2+\|D^*\omega\|^2\ge c_n\|\omega\|^2.
\end{equation}
Consequently the natural map
$H_c^2(X_\Gamma,E)\longrightarrow H^2(X_\Gamma,E)$ is zero.
The conclusion holds for either determinant twist after descent from a
neat normal subgroup.
\end{proposition}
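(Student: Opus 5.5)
The plan is to follow the Bergeron--Venkatesh strategy: a Kuga (Matsushima) formula for the Hodge Laplacian with twisted coefficients, together with a pointwise lower bound for its zeroth order term. The vanishing statement then follows from the resulting coercivity by a Hodge-theoretic argument on $L^2$ forms.

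\textbf{Step 1: Kuga's formula.} Lift everything to $G_{\RR}$. An $E$-valued $j$-form on $X_\Gamma$ becomes a $K_c$-equivariant function $\Gamma\backslash G_{\RR}\to\bigwedge^j\mathfrak p_{\CC}^*\otimes E$. In the untwisted frame $v\mapsto\rho(g)^{-1}v$ the flat differential splits as $D=d+\rho(\cdot)$, where $d$ comes from the right regular action of $\mathfrak p$ and $\rho$ is the coefficient action of $\mathfrak p$ on $E$. Kuga's formula \cite[(3.6.1)]{BergeronVenkatesh} then reads
\[
 \Delta_j=-\Omega+\rho(\Omega_{\mathfrak g})\quad\text{pointwise},
\]
up to the Casimir of $K_c$ acting on $\bigwedge^j\mathfrak p^*\otimes E$. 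For compactly supported $\omega$, integration by parts involves no boundary terms, so
\[
 \|D\omega\|^2+\|D^*\omega\|^2=\langle\Delta_j\omega,\omega\rangle .
\]

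\textbf{Step 2: the pointwise bound.} The key input is \cite[Lemma~4.1]{BergeronVenkatesh}. It writes $\langle\Delta\omega,\omega\rangle$ as a sum of a nonnegative term $\sum\|(X_i\otimes1-1\otimes\rho(X_i))\omega\|^2$-type and a zeroth order term. The zeroth order term is the operator
\[
 \tfrac12\sum_i\rho(X_i)^*\rho(X_i)-(\text{commutator term}),
\]
summed over an orthonormal basis $X_i$ of $\mathfrak p$. The claim of that lemma is that this operator is bounded below by a positive constant times the identity whenever the highest weight $\lambda$ of $E$ is not fixed by the Cartan involution ("strongly non-self-dual"), uniformly in $\Gamma$. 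The constant is computed as $\min\|w\lambda-\theta w\lambda\|^2$-type quantities over weights.

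For $\SL_3$ the Cartan involution sends $\Sym^n$ to its dual, so for $n>0$ the highest weight $(n,0)$ is not $\theta$-stable. The relevant constant is positive in all degrees $j$ and depends only on $n$. This gives \eqref{ar:eq:coercivity}. The same argument applies to the dual coefficient system, since it corresponds to the weight $(0,n)$.

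\textbf{Step 3: vanishing.} Let $\omega$ be a closed compactly supported $2$-form. Coercivity implies that $\Delta_2$, taken as the Friedrichs extension on the closure of $C_c^\infty$ forms in the graph norm, has spectrum in $[c_n,\infty)$. Hence it is invertible on $L^2$. Set $\eta=D^*\Delta^{-1}\omega$. Since $D\omega=0$ and $D$ commutes with $\Delta^{-1}$ on this domain, we have
\[
 \omega=D\eta+D^*D\Delta^{-1}\omega=D\eta .
\]
The form $\eta$ is smooth by elliptic regularity, so $\omega$ is exact as a smooth form. By de Rham's theorem for flat bundles on the open manifold $X_\Gamma$, its class in $H^2$ vanishes. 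Because $\eta$ need not be compactly supported, this yields vanishing in ordinary cohomology only, which is exactly what is asserted.

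\textbf{Step 4: twists and descent.} The determinant twist is trivial on $\SL_3$, so for a neat normal $\Gamma\subset\SL_3(\ZZ)$ the two twists agree. For the groups $K$ and $K^\dagger$, whose elements may have determinant $-1$, one takes invariants under the finite quotient. The map $H^2_c\to H^2$ is equivariant, and rational averaging is exact, so it remains zero after descent.

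\textbf{Main obstacle.} The hardest point is Step 3 in the noncompact setting. One must check that the domain issues work out: $\Delta^{-1}\omega$ should lie in the domain where $D$ and $D^*$ commute with $\Delta^{-1}$, and $D\Delta^{-1}\omega=\Delta^{-1}D\omega=0$ should hold. The approach I would try first is to argue through the weak formulation. For any $\phi$ in the $L^2$ domain, the difference $\omega-DD^*\Delta^{-1}\omega=D^*D\Delta^{-1}\omega$ is $L^2$-orthogonal to itself, using closedness of $\omega$ and density of compactly supported forms, which is guaranteed by completeness of the metric.
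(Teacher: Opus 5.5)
\textbf{Step 2 has a genuine gap.} Your Steps 1, 3 and 4 agree with the paper: Kuga's formula, the completeness and cutoff argument giving a bounded inverse $G_2$ and the primitive $D^*G_2\omega$, and finite descent.

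What the proof of \cite[Lemma~4.1]{BergeronVenkatesh} supplies is not a pointwise bound on a zeroth-order term. It is a statement over the unitary dual: $\Lambda_{E^*}-\Lambda_\pi\ge\varepsilon_E$ for every irreducible unitary $\pi$ with $\Hom_{K_c}(\bigwedge^j\mathfrak p_\CC\otimes E^*,\pi)\ne0$. The pointwise version you describe is false here. On $K_c$-equivariant functions, Kuga's formula reads $\Delta_j=\sum_iR(X_i)^*R(X_i)+(c_E-C_K)$. Here $c_E$ is the Casimir scalar of $E$ and $C_K$ is the $K_c$-Casimir on the fibre $\bigwedge^j\mathfrak p^*\otimes E$. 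Replacing $R(X_i)$ by $R(X_i)+t\rho(X_i)$ in the sum of squares only subtracts $t^2\sum_i\rho(X_i)^2\ge0$ from the remainder, since the cross terms have zero real part. For the flat connection, $t=\pm1$, the remainder is already zero in degree $0$.

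In degree two the fibre contains the $\SO(3)$-type of spin $n+3$: $\mathfrak p$ has spin $2$, and $\bigwedge^2\mathfrak p$ has spins $1$ and $3$. With the trace form, $c_E=\tfrac23n^2+2n$, while $C_K=\tfrac12(n+3)(n+4)$ on that type. So the remainder is $\tfrac16n^2-\tfrac32n-6<0$ for $1\le n\le 11$. Positivity appears only after the first-order term is used as well, that is, after using that unitary representations containing this $K$-type cannot have Casimir near $c_E$.

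Consequently, integrating by parts on compactly supported forms does not yield \eqref{ar:eq:coercivity}. What is missing is the passage from the unitary-dual bound to the noncompact quotient; note that the cited lemma is stated for compact quotients. The paper supplies this passage as follows. It identifies $L^2\Omega^j(X_\Gamma,E)$ with $\Hom_{K_c}(\bigwedge^j\mathfrak p_\CC\otimes E^*,L^2(\Gamma\backslash G_\RR))$. It then decomposes $L^2(\Gamma\backslash G_\RR)$ as a direct integral of irreducible unitary representations, so the uniform bound controls every constituent, continuous spectrum included.

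If you want a genuinely pointwise argument, the right first-order operator is Parthasarathy's Dirac operator, not a sum of squares. On compactly supported forms one has $\Delta=\mathcal D^2+\|\lambda+\rho\|^2-\|\mu+\rho_c\|^2$, with $\mu$ ranging over $\widetilde K$-types of $S\otimes E$; here the spinors of $\mathfrak p$ restrict to spin $3/2$. This gives the positive constant $n^2/6$ in the trace normalization. It is also where the quantity $\min_w\|(w(\lambda+\rho))|_{\mathfrak a}\|^2$ that you allude to actually comes from. Such a route would avoid the direct integral, but as written your Step 2 does not establish the inequality.
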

\begin{proof}
The Cartan involution $g\mapsto(g^{-1})^{\mathsf t}$ exchanges
$\Sym^n\CC^3$ with its dual. For $t>1$, the matrix
$\operatorname{diag}(t,t,t^{-2})$ has largest eigenvalues $t^n$ and
$t^{2n}$ on these two spaces, respectively. They are therefore
nonisomorphic when $n>0$.

The proof of \cite[Lemma~4.1]{BergeronVenkatesh} gives a positive constant
$\varepsilon_E$ such that
\[
 \Lambda_{E^*}-\Lambda_\pi\ge\varepsilon_E
 \quad\text{whenever}\quad
 \Hom_{K_c}(\textstyle\bigwedge^j\mathfrak p_{\CC}\otimes E^*,\pi)\ne0,
\]
for any irreducible unitary representation $\pi$ of $G_{\RR}$ and any $j$.
Here $\Lambda$ is the eigenvalue of the Casimir, the invariant
second order operator. We use the estimate for unitary representations,
not the reference's compact quotient conclusion. Kuga's formula
\cite[(3.6.1)]{BergeronVenkatesh} identifies the Laplacian on the indicated
cochains with $\Lambda_{E^*}-\Lambda_\pi$, since $E$ and $E^*$ have the same
Casimir scalar. The reference uses the Killing metric, which is a positive
multiple of ours, so rescaling changes only the positive constant.

Now identify square-integrable forms with
\[
 L^2\Omega^j(X_\Gamma,E)
 \cong\Hom_{K_c}(\textstyle\bigwedge^j\mathfrak p_{\CC}\otimes E^*,
                         L^2(\Gamma\backslash G_{\RR})).
\]
The right regular representation is a direct integral of irreducible
unitary representations, the continuous analogue of a direct sum. The
uniform bound controls every constituent, including the continuous
spectrum, and gives \eqref{ar:eq:coercivity}. Taking the smaller constant
for $E$ and its dual gives $c_n$ for both.

On the complete quotient $X_\Gamma$, choose compactly supported cutoffs
$\chi_R\to1$ with $|d\chi_R|\le C/R$. The commutators with $D,D^*$ are
exterior multiplication and contraction by $d\chi_R$. These bounds and
local smoothing show that closing the differential on
compactly supported forms gives its distributional domain, and likewise
for the adjoint. The estimate extends to the closed quadratic form.
Its self-adjoint Laplacian $\Delta_j$ therefore has a bounded inverse $G_j$.

The identity $D^2=0$ makes $D$ intertwine the adjacent Laplacians and their
resolvents on this closed complex. For compactly supported closed
$\omega$ of degree two, we have $DG_2\omega=G_3D\omega=0$, and hence
\[
 \omega=(DD^*+D^*D)G_2\omega=D(D^*G_2\omega).
\]
Local elliptic regularity makes this primitive smooth. It need not have
compact support, which is exactly the claimed vanishing of the map to
ordinary cohomology. Determinant twists are trivial on $\SL_3$, and
finite group averaging descends a primitive from a neat normal cover.
\end{proof}

Poincar\'e--Lefschetz duality now gives the boundary surjection in
Proposition~\ref{thm:boundary-evaluation}, for either coefficient system
and its dual.

\subsection{Cuspidal face pairings and Stokes's theorem}\label{ar:sec:stokes}
We compare restrictions by pairing global classes with dual coefficients.
The face pairings sum to zero only after accounting for corner classes.
Let $P$ and $Q$ now denote the upper rational parabolic subgroups of block
types $(2,1)$ and $(1,2)$, respectively. Thus $P$ stabilizes the plane
$\langle e_1,e_2\rangle$ and $Q$ the line $\QQ e_1$.
Their intersection is the upper triangular group $B$.
For a face of type $R\in\{P,Q\}$, let $N_R$ be its arithmetic unipotent
radical and $L_R$ its arithmetic Levi quotient. The radical is a lattice
in $\RR^2$, and the Levi has a finite index subgroup acting on a modular curve.
We use the same letters for rational conjugates at the other cusps.

For a coefficient system $E$, define $\cC_R(E)$ to be the direct sum over
faces of type $R$ of the subspaces
\[
 H^1\bigl(L_R,H^1(N_R,E)\bigr)
\]
whose restrictions at all rational cusps of the binary Levi vanish.
These are the mixed cuspidal groups defined in
Section~\ref{sec:symmetric}. They inject into degree two face cohomology
by Hochschild--Serre, since $L_R$ has rational cohomological dimension one.
Write $r_R$ for restriction when all face components are mixed cuspidal.

The evaluation pairing $E\otimes E^*\to\CC$ and the oriented faces give
perfect pairings
\begin{equation}\label{ar:eq:facepair}
 \langle\ ,\ \rangle_R:\cC_R(E)\times\cC_R(E^*)\longrightarrow\CC.
\end{equation}
Here is the coefficient calculation. The two-generator Koszul complex for
the radical pairs degree one with degree one into its top orientation
line. This is Poincar\'e duality on its compact torus fiber with dual local
systems. On the base, the long exact sequence for a compactified modular curve
identifies cuspidal cohomology with the image of compactly supported
cohomology. Poincar\'e--Lefschetz duality identifies the transpose of
this compact-support-to-ordinary map with the corresponding map for the
dual local system. The induced pairing between the two images is therefore
nondegenerate: the annihilator of either image is the kernel of the
transpose map. In the classical period description this is a nonzero
multiple of the Petersson pairing between holomorphic and antiholomorphic
classes \cite[Theorem~3.3]{PasolPopa}. Combining the two pairings proves
\eqref{ar:eq:facepair}. The two internal orientation characters multiply
to the face orientation. All determinant and reflection actions are retained
before taking finite group invariants, so the descended pairing is still
perfect.

\begin{lemma}\label{ar:lem:stokes}
Suppose $a\in H^2(X_\Gamma,E)$ and $b\in H^2(X_\Gamma,E^*)$ have only
mixed cuspidal restrictions on maximal faces and zero restrictions on all
flag strata. Then, we have
\begin{equation}\label{ar:eq:stokes}
 \langle r_Pa,r_Pb\rangle_P+\langle r_Qa,r_Qb\rangle_Q=0.
\end{equation}
The orientations are included in the two pairings.
\end{lemma}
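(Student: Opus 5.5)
The plan is to derive \eqref{ar:eq:stokes} from Stokes's theorem on the compact manifold with corners $\overline X_\Gamma$. First I would represent $a$ and $b$ by closed smooth forms $\alpha$ and $\beta$ of degree two, with values in $E$ and $E^*$. The evaluation pairing then gives a closed scalar four-form $\alpha\wedge\beta$. Since $\dim X_\Gamma=5$, one has $\int_{\partial\overline X_\Gamma}\alpha\wedge\beta=\int_{\overline X_\Gamma}d(\alpha\wedge\beta)=0$. So the task is to show that this boundary integral equals the left side of \eqref{ar:eq:stokes}.

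I would carry this out in the mapping-cone language already used in the support proof. The boundary is the union of the closed maximal faces $e(P')$ and $e(Q')$, which meet along the corner strata $e(B')$. Its cohomology is computed by the cone $\mathcal C$, and the boundary class $\iota^*a$ is a triple $(a_P,a_Q,u)$ with $du=\Res a_P-\Res a_Q$. The first step is to use the hypothesis that the restrictions to all flag strata vanish. This lets me choose representatives so that $a_P$ and $a_Q$ restrict to exact forms on every corner. I would then normalize the primitives so that $u$ is itself closed, and in fact zero in corner cohomology. The same is done for $b$. The second step is to split the pairing $\langle\iota^*a,\iota^*b\rangle_{\partial}$ of boundary cohomology into three parts: the face pairings over $P$-type faces, the face pairings over $Q$-type faces, and corner terms. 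The corner terms have the shape $\int_{e(B)}u\wedge\Res b_{*}$ together with its mirror. These corner terms vanish because $u$ can be taken zero on a corner where the restriction is zero. What remains is the sum of the face integrals. On each face, the Hochschild--Serre identification of the mixed cuspidal term, from \eqref{ar:eq:facepair}, identifies the face integral with $\langle r_Ra,r_Rb\rangle_R$. Concretely, the face fibers over the Levi modular curve with a $2$-torus fiber, and the integral splits as the torus Koszul pairing in degree $(1,1)$ times the cuspidal pairing on the curve. Cuspidality makes the curve integral converge and independent of the chosen representatives. The third step is to check orientations. The boundary orientation induced from $\overline X_\Gamma$ must agree with the product of the torus and curve orientations used in the face pairings, up to the conventions absorbed into those pairings. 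I would also confirm that the pure components $H^0(L,H^2(N,\cdot))$ and $H^2(L,\cdot)$ do not appear; this holds because only mixed components are present. Finally, I would take invariants under the finite covering group to descend from the neat subgroup.

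The main obstacle is the corner analysis. The faces are not closed manifolds: their own boundaries are the corners. So the face integrals are well defined only after adjusting the representatives of $a_P$ and $a_Q$ by primitives near the corners, and these adjustments must not change the value $\langle r_R a,r_Rb\rangle_R$. My first attempt would be to use the vanishing of the cusp restrictions of the mixed classes, which is the cuspidality condition on the binary Levi. With it I would modify the representatives so that they are compactly supported away from the corners on each face. The corner contributions then disappear identically, and the face integrals become the compactly supported pairings defining \eqref{ar:eq:facepair}.
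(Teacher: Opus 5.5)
Your proposal is correct and follows essentially the paper's route. You apply Stokes on $\overline X_\Gamma$, use the mapping-cone model of the boundary, use flag vanishing to make the face representatives vanish near the corners, and identify what remains with the two compactly supported face pairings before averaging over the finite covering group. The one difference is in disposing of the corner coordinate $u$: the paper leaves $u$ merely closed and kills its contribution through the representative $d\chi\wedge u$ (face--corner products vanish by support, corner--corner products because $d\chi\wedge d\chi=0$), whereas you set $u=0$. That is legitimate if you first make the global representative vanish on a neighborhood of each corner, since the restriction there is zero and the neighborhood retracts onto the flag, but your stated justification for it is thinner than this.
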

\begin{proof}
Use smooth representatives on $\overline X_\Gamma$, whose inclusion of
the interior is a homotopy equivalence. The cup product has degree four,
and the integral of its restriction over the oriented boundary is zero
by Stokes's theorem.

We identify the integral before discarding any corner coordinate.
Let $F_P,F_Q$ be the two unions of maximal faces and let $F_B$ be the
union of their flag intersections. Use collar thickenings when forming a
cover. A de Rham model for the boundary is the mapping cone
\[
 \Omega^r(F_P;E)\oplus\Omega^r(F_Q;E)\oplus\Omega^{r-1}(F_B;E),\qquad
 d(\alpha,\beta,u)=(d\alpha,d\beta,
             \alpha|_{F_B}-\beta|_{F_B}-du).
\]
Choose closed face representatives. Their restrictions to a flag collar
are exact, since the collar retracts onto the flag and the restricted
cohomology classes are zero. Extend a primitive with a cutoff, and
subtract its differential. The new face representatives vanish on a
smaller collar. After the same change of representative in the mapping
cone, its corner coordinate $u$ is closed.

The corresponding corner class is represented on the boundary by
$d\chi\wedge u$, with $u$ pulled back from the flag. Choose the same
transverse cutoff $\chi$ for the two classes. Face--corner products vanish
because the face representatives vanish on its support. Corner--corner
products vanish because $d\chi\wedge d\chi=0$. Distinct flag collars can
be taken disjoint, since there is no triple intersection of maximal faces
in rank three. The remaining integral is the sum of the two compactly
supported face pairings. Stokes therefore gives \eqref{ar:eq:stokes}.
This argument changes representatives rather than assuming that the
original corner cochain is zero. Finite averaging descends the identity.
\end{proof}

\subsection{Closed cusp form seeds and their sums}\label{ar:sec:series}
We construct global classes by summing closed plane-face forms. The
coefficient weight gives convergence without meromorphic continuation.
For the remainder of the appendix let
\[
 E=E_n\otimes_\QQ\CC=\Sym^n((\CC^3)^*)\otimes\det^{-1},\qquad
 k=n+3>6,
\]
with the left action $\rho(g)f(\mathbf X)=\det(g)^{-1}f(g^{-1}\mathbf X)$,
where $\mathbf X=(X,Y,Z)^{\mathsf t}$.
Plane horospherical coordinates are specified as follows. Write a point of
$\mathscr X$ as $p\SO(3)$ with
\[
 p=\begin{pmatrix}A&tu\\0&t\end{pmatrix},\qquad
 t>0,\quad \det A=t^{-1},\quad u=(u_1,u_2)^{\mathsf t},\quad
 \tau=A i\in\mathfrak H.
\]
Right rotation of $A$ leaves $u,t,\tau$ unchanged. Here $t$ is the
normal height and $(u,\tau)$ are plane-face coordinates.
Let $\Gamma_P=\Gamma\cap P(\QQ)$, and let $\Lambda_P$ be its projection
onto the determinant one binary Levi. If $h\in S_k(\Lambda_P)$, set
\begin{equation}\label{ar:eq:seed}
 \mathcal B=\tau Y-X+(u_1-\tau u_2)Z,\qquad
 \omega_h=h(\tau)\mathcal B^n(du_1-\tau du_2)\wedge d\tau.
\end{equation}
The symbol $\mathcal B$ in this subsection is a linear polynomial, not the
binary period space $\mathcal B_d$. The form is independent of $t$.
Antiholomorphic forms are obtained by complex conjugation, including the
coefficient polynomial and differential.

\begin{lemma}\label{ar:lem:seed}
The form $\omega_h$ is closed and is equivariant for $\Gamma_P$ with
coefficients $E$. Its radical class is the binary degree $n+1$ cusp form
class, up to a nonzero scalar.
\end{lemma}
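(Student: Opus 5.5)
The plan is to verify the three assertions of Lemma~\ref{ar:lem:seed} separately: closedness by a direct exterior-derivative computation, $\Gamma_P$-equivariance by transforming the coordinates $(u,\tau)$ and the polynomial $\mathcal B$, and the class identification through the radical Koszul calculation of Section~\ref{sec:symmetric}.

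\emph{Closedness.} In the flat bundle, the differential of $\omega_h$ is the ordinary exterior derivative of its coefficient polynomial, because the trivialization is by constant sections on $\mathscr X$. Since $h$ is holomorphic in $\tau$, the only possibly nonzero part of $d\bigl(h\mathcal B^n(du_1-\tau\,du_2)\bigr)$ comes from the $u$-derivatives, wedged against $d\tau$. We have $\partial_{u_1}\mathcal B=Z$ and $\partial_{u_2}\mathcal B=-\tau Z$, so
\[
 d_u(\mathcal B^n)\wedge(du_1-\tau\,du_2)=n\mathcal B^{n-1}Z\,(du_1-\tau\,du_2)\wedge(du_1-\tau\,du_2)=0.
\]
The term $-h\mathcal B^n\,d\tau\wedge du_2\wedge d\tau$ also vanishes. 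Hence $d\omega_h=0$. The form is independent of $t$, so no $dt$ terms appear.

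\emph{Equivariance.} Take $\gamma=\left(\begin{smallmatrix}D&v\\0&1\end{smallmatrix}\right)\in\Gamma_P$ with $\det D=1$. Left multiplication sends $p$ to a matrix of the same shape, with $A\mapsto DA$, $t$ unchanged, $tu\mapsto Dtu+vt$, and $\tau\mapsto D\tau$. Write $D=\left(\begin{smallmatrix}a&b\\c&d\end{smallmatrix}\right)$. The key identity is that the row vector $(-1,\tau)$ satisfies
\[
 (-1,\,D\tau)\,D=\frac{(-1,\,\tau)}{c\tau+d}.
\]
Consequently $\mathcal B$, $du_1-\tau\,du_2$, and $d\tau$ pick up the factors $(c\tau+d)^{-1}$, $(c\tau+d)^{-1}$, and $(c\tau+d)^{-2}$ respectively, after the coefficient change of variables $\rho(\gamma)$. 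The total factor is $(c\tau+d)^{-(n+3)}=(c\tau+d)^{-k}$, which the weight $k$ modularity of $h$ cancels. Translations $v$ shift $u$ by constants, and the $Z$-coefficient $(u_1-\tau u_2)$ absorbs the unipotent substitution $X\mapsto X+v_1Z$, $Y\mapsto Y+v_2Z$. The determinant twist is trivial on $\SL$. I would check the sign conventions of the left action $\rho(g)f=\det(g)^{-1}f(g^{-1}\mathbf X)$ carefully here, since this is the most error-prone step.

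\emph{Radical class.} Restrict $\omega_h$ to a torus fiber at fixed $\tau$ and take its $du$-components. These give a one-cocycle $(b_1,b_2)=(h\mathcal B^n,-\tau h\mathcal B^n)$ for the root operators $Z\partial_X, Z\partial_Y$. By the gradient/potential description in the proof of Proposition~\ref{thm:boundary-evaluation}, the pair is the gradient of the potential $\mathcal B^{n+1}/(n+1)$, up to sign. Modulo potentials divisible by $Z$, this potential is $h(\tau)(\tau Y-X)^{n+1}\,d\tau/(n+1)$. This is exactly the Eichler--Shimura integrand of weight $n+3$ and degree $n+1$. The Levi class in $H^1(L_P,\Sym^{n+1})$ is therefore the Eichler integral class of $h$, up to a nonzero scalar coming from $n+1$ and the lattice covolume, which proves the lemma. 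I expect the main obstacle to be matching the Koszul/Hochschild--Serre normalization precisely (signs and the contraction direction), but only nonvanishing of the scalar is claimed.
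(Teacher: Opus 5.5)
Your proposal is correct and is essentially the paper's proof. Closedness holds because every term of $d\omega_h$ repeats $\beta$ or $d\tau$. Equivariance holds because $\mathcal B^n$, $\beta$ and $d\tau$ together acquire $(c\tau+d)^{-n-3}=(c\tau+d)^{-k}$, which the modularity of $h$ cancels. The radical class comes from the gradient potential $-(\tau Y-X)^{n+1}/(n+1)$ followed by Eichler--Shimura.

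Two small points, neither a gap. First, what you want is the $d\tau$-coefficient of $\omega_h$, since its literal restriction to a torus fiber is zero. Read it at $u=0$ or in the periodic gauge, as the paper does; your reduction modulo $Z$ makes this harmless. Second, the paper also records the transformation law for general $\operatorname{diag}(A,t)$, which it reuses for convergence but which this lemma does not need.
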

\begin{proof}
Put $\beta=du_1-\tau du_2$. Then, we have
\[
 d\mathcal B=(Y-u_2Z)d\tau+Z\beta,\qquad
 d\beta=-d\tau\wedge du_2.
\]
Each term in $d(h\mathcal B^n\beta\wedge d\tau)$ has a repeated factor
$\beta$ or $d\tau$, and is zero. A radical translation replaces $u$ by
$u+v$, and its coefficient action replaces $(X,Y)$ by
$(X-v_1Z,Y-v_2Z)$, proving equivariance under the lattice.

For the Levi calculation let
$g=\operatorname{diag}(A,t)$,
$A=\left(\begin{smallmatrix}a&b\\c&d\end{smallmatrix}\right)$, and
$\delta=\det A>0$. We have $u'=Au/t$ and
$\tau'=(a\tau+b)/(c\tau+d)$. Direct substitution gives
\[
 \mathcal B(u',\tau',\mathbf X)
    =\frac{\delta}{c\tau+d}\mathcal B(u,\tau,g^{-1}\mathbf X),\quad
 \beta'=\frac{\delta}{t(c\tau+d)}\beta,\quad
 d\tau'=\frac{\delta}{(c\tau+d)^2}d\tau.
\]
After including the determinant factor in $\rho$, this becomes
\begin{equation}\label{ar:eq:seed-transform}
 g^*\omega_h=\rho(g)\omega_{\delta^{k/2}(h|_k A)}.
\end{equation}
At the neat integral stabilizer, $t=\delta=1$, and the modular transformation
law proves equivariance. The finite reflection projection supplies the
corresponding full-level statement.

At $u=0$, the radical cochain is
$(\tau Y-X)^n(1,-\tau)$. In the potential description of plane-radical
cohomology, it is the gradient of
$-(\tau Y-X)^{n+1}/(n+1)$. This is precisely the required binary
coefficient, with its nonzero scalar specified. Eichler--Shimura theory
\cite[Theorem~2.1]{PasolPopa} therefore shows that the holomorphic and
antiholomorphic seeds span
$\cC_P(E)$ after including all plane cusps and finite reflections.
\end{proof}

For a datum at one plane cusp, define
\begin{equation}\label{ar:eq:Eseries}
 \cE_P(\omega_h)=\sum_{\gamma\in\Gamma_P\backslash\Gamma}
                      \rho(\gamma)^{-1}\gamma^*\omega_h.
\end{equation}
Equivariance removes the choice of coset representative. Rationally
conjugate the finitely many other plane cusps to $P$ and sum the same
construction over them.

\begin{proposition}\label{ar:prop:convergence}
The series \eqref{ar:eq:Eseries} converges normally with every derivative
on compact subsets of $\mathscr X$. It defines a closed global two-form.
\end{proposition}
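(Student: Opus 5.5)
The plan is the classical Poincaré-series argument: a majorant that reduces convergence to a lattice-point count in the plane directions, followed by termwise closedness.

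\emph{Unfolding.} Since the seed \eqref{ar:eq:seed} is independent of $t$ and each $\gamma^*$ commutes with $d$, each term $\rho(\gamma)^{-1}\gamma^*\omega_h$ is a closed $E$-valued two-form. Closedness of the sum will follow once normal convergence is proved together with all derivatives on compact sets, because $d$ then commutes with the sum. The sum will be $\Gamma$-equivariant by reindexing cosets. The work is therefore to bound the pointwise norm of the terms, in the flat-bundle metric $\|v\|_g=\|\rho(g)^{-1}v\|$ together with the invariant metric on forms, uniformly for $g$ in a compact set $\Omega\subset\mathscr X$.

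\emph{Pointwise bound.} Both metrics are $G_\RR$-invariant, so the norm of $\rho(\gamma)^{-1}\gamma^*\omega_h$ at $x$ equals the norm of $\omega_h$ at $\gamma x$. It is therefore enough to bound $\|\omega_h\|$ at a point with plane coordinates $(u,t,\tau)$. I will write $\omega_h$ at $p$ by pulling back along $p$. By \eqref{ar:eq:seed-transform} and radical equivariance, $p^*\omega_h$ at the base point equals $\rho(p)$ applied to a fixed form with coefficient $\delta^{k/2}(h|_kA)$, where $\det A=t^{-1}$. The flat norm cancels $\rho(p)$. The factor $\delta^{k/2}|h|_k A(i)|$ is $t^{-k/2}(\Im\tau)^{k/2}|h(\tau)|$, up to the normalization of the determinant. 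Cuspidality gives $(\Im\tau)^{k/2}|h(\tau)|=O(1)$, so I expect
\[
\|\omega_h\|_{(u,t,\tau)}\ll t^{-k/2}\cdot t^{c}
\]
for an explicit $c$ coming from the $\det^{-1}$ twist and the two-form factors $\beta\wedge d\tau$. In the coordinates $\beta'=\delta\beta/(t(c\tau+d))$ this comes to a total power $t^{-\sigma}$ with $\sigma$ growing linearly in $k$. I will check the exponent by redoing the substitution behind \eqref{ar:eq:seed-transform} with $\rho(p)$ included. The argument needs only that $\sigma$ exceeds the abscissa.

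\emph{Convergence.} By the previous step the series is dominated by the height series $\sum_{\gamma\in\Gamma_P\backslash\Gamma}t(\gamma x)^{-\sigma}$, which is the maximal-parabolic Eisenstein series for the plane parabolic evaluated at a real parameter. For the $(2,1)$ parabolic, $t(\gamma x)$ is a power of the norm $\|c\,g\|$, where $c$ is the bottom row of $\gamma$, a primitive integral covector. Up to that power, the sum is a subsum of the Epstein-type series $\sum_{0\neq c\in\ZZ^3}\|cg\|^{-s}$, which converges uniformly on compacta once $s>3$. This is where the hypothesis $k=n+3>6$ should enter: I expect it to give exactly $s>3$, matching the abscissa of the Epstein zeta function in three variables. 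Derivatives are handled the same way. Left-invariant vector fields applied to $\omega_h$ produce finitely many terms of the same shape, with $h$ replaced by derivatives of $h$ that still satisfy a cusp-type bound, or with polynomial factors in $u$ and $\tau$ controlled on a Siegel domain. Hence the same majorant applies.

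\emph{Main obstacle.} I expect the hardest step to be getting the exact power of $t$ right, so that the majorant exponent lands above $3$ precisely under $k>6$, and showing that the derivative bounds do not lose powers of $t$. If the straightforward bound falls short, I would first use holomorphy of $h$ to trade each derivative for a Cauchy estimate on a disc of hyperbolic radius one. That keeps the same $(\Im\tau)^{k/2}$ decay, so the derivative bounds cost no additional power of $t$.
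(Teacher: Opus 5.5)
Your proposal is correct and follows the paper's proof essentially step for step. You cancel $\rho(p)$ via \eqref{ar:eq:seed-transform}, bound each summand by $\det(A)^{k/2}(\Im Ai)^{k/2}|h(Ai)|\ll t^{-k/2}$ with $t=\|e_3^*\gamma g\|$, dominate by the Epstein sum $\sum_{v\neq0}\|v\|^{-k/2}$, and control derivatives through normalized-weight bounds on invariant derivatives of $h$; your Cauchy-estimate variant is equivalent to the paper's raising and lowering operators. Closedness of each term comes from Lemma~\ref{ar:lem:seed}, not from $t$-independence. The extra factor $t^{c}$ you worried about is trivial ($c=0$): \eqref{ar:eq:seed-transform} already absorbs the $\det^{-1}$ twist and the factors from $\beta\wedge d\tau$, which is why $k=n+3$. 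So $\sigma=k/2$ exactly, and $k>6$ is precisely the Epstein condition $\sigma>3$.
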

\begin{proof}
At the base point write $\gamma=pk_0$ with $p$ in plane coordinates and
$k_0\in\SO(3)$. The bottom row of $p$ is $(0,0,t)$, so
$t=\|e_3^*\gamma\|$ and $\det A=t^{-1}$. In the coefficient transported
pullback, the factor $\rho(p)$ in \eqref{ar:eq:seed-transform} cancels.
The remaining compact action has bounded norm. Furthermore,
\[
 |(h|_k A)(i)|
   =\left(\frac{\delta}{|ci+d|^2}\right)^{k/2}|h(Ai)|
   =(\Im Ai)^{k/2}|h(Ai)|
\]
is bounded. Equation~\eqref{ar:eq:seed-transform} therefore bounds the
norm of a summand by $Ct^{-k/2}$.
The cosets inject, with a bounded multiplicity depending only on the
congruence subgroup, into primitive integral bottom rows. Consequently, we have
\[
 \sum_{\gamma}\|\text{summand at }\gamma\|
 \le C'\sum_{0\ne v\in\ZZ^3}\|v\|^{-k/2}<\infty,
\]
since $k/2>3$. On a compact set, the norms of $vg$ and $v$ are uniformly
comparable. Rational conjugation changes the lattice by fixed denominators
and does not change convergence.

For derivatives, write $k_0g=p(k_0,g)k_1(k_0,g)$ for
$k_0\in\SO(3)$ and $g$ in a fixed neighborhood of the identity.
The parabolic and compact factors, and each fixed derivative of them,
are bounded uniformly in $k_0$. The parabolic character
$\delta^{k/2}$ is unchanged. A right derivative therefore produces
finitely many binary invariant derivatives of the cusp vector,
compact coefficient matrices, and constants from the character.
The binary derivatives are compositions of weight raising,
weight lowering, and rotation operators. Each is bounded in its
normalized weight: this follows from smoothness on the compact part of
a fundamental region and from the exponentially convergent Fourier
expansion at every cusp. Thus the same summable row majorant applies
to every fixed smooth seminorm. Termwise differentiation is legitimate,
and Lemma~\ref{ar:lem:seed} proves closedness.
\end{proof}

Choose holomorphic and antiholomorphic seeds whose face classes sum to
$\xi\in\cC_P(E)$, accounting for the factor $-1/(n+1)$ in
Lemma~\ref{ar:lem:seed}. Denote their sum over plane cusps by $\cE_P(\xi)$.
Its identity constant term represents exactly $\xi$.

\subsection{Constant terms as cohomological restrictions}\label{ar:sec:constantterms}
We show that fiber averaging computes actual restriction classes,
then unfold the sums. Neither step takes a limit at the boundary.

\smallskip\noindent\emph{Restriction and averaging on the fiber.}
Write $\mathcal N_R$ for the real unipotent radical of a maximal
parabolic $R$, $N_R=\Gamma\cap\mathcal N_R$ for its lattice, and
$\Gamma_R=\Gamma\cap R$. Its boundary model is
$e(R)=\mathcal N_R\times\mathfrak H$. Fixing the positive normal
coordinate defines a $\Gamma_R$-equivariant map
\[
 i_{R,t}:e(R)\longrightarrow\mathscr X.
\]
Since $e(R)$ is contractible and $\Gamma_R$ acts freely, the quotient
$\Gamma_R\backslash e(R)$ is a classifying space. Thus $i_{R,t}$ realizes
group restriction. Different positive heights give equivariantly
homotopic maps. At a flag use two normal coordinates.

We describe averaging with its coefficient transport. On a unipotent
fiber, a flat section has $s(\gamma u)=\rho(\gamma)s(u)$ for
$\gamma\in N_R$. Its periodic gauge is $\rho(u)^{-1}s(u)$, and its
flat differential is
\[
 D_{\mathcal N}=d+\rho(\theta)\wedge,
 \qquad \theta=u^{-1}du.
\]
Express a form in the left-invariant coframe and average each coefficient
over the compact nilmanifold $N_R\backslash\mathcal N_R$, with total
Haar volume one. This average is a cochain map: each invariant vector
field has integral zero on a derivative, while the matrices $\rho(\theta)$
and the structure constants of the coframe are constant. Its image is
the finite-dimensional Lie algebra cochain complex with coefficients $E$.
The coefficient transport is part of this averaging operation.

The nilmanifold theorem with unipotent coefficients
\cite[Theorem~1.1]{Kasuya} states that the canonical inclusion $\iota$ of
Lie algebra cochains induces an isomorphism on fiber cohomology.
Its hypotheses hold here: $\mathcal N_R$ is simply connected and nilpotent, $N_R$ is a
lattice, and the polynomial coefficient representation is unipotent on
$\mathcal N_R$. The same holds for the dual, and determinant twists are
trivial on the radical. Since the averaging map satisfies
$\operatorname{Av}\circ\iota=I$, it induces the inverse cohomology
isomorphism. Both maps commute with the lattice-preserving Levi action,
which preserves normalized Haar measure and transports the coframe and
coefficients together. Filtering by base degree gives the comparison for
the whole face bundle.

The same theorem applies at a flag, whose radical is the Heisenberg
group. Its Lie algebra differential retains
$d\theta_{13}=-\theta_{12}\wedge\theta_{23}$, so this application does not
replace the flag fiber by a torus.

The averaged pullback along $i_{R,t}$ therefore represents the
restriction class, with coefficient transport included.

\smallskip\noindent\emph{Unfolding in the coefficient trivialization.}
The invariant-frame coefficient of a form $\omega$ is
\[
 \widetilde\omega(g)(Y_1,\ldots,Y_r)
   =\rho(g)^{-1}\omega_{g\SO(3)}
          (dL_gY_1,\ldots,dL_gY_r),\qquad Y_i\in\mathfrak p,
\]
where $L_g$ is left translation. If $\omega$ is $\Gamma$-equivariant,
then $\widetilde\omega(\gamma g)=\widetilde\omega(g)$. Moreover the
invariant-frame coefficient of \eqref{ar:eq:Eseries} is simply
$\sum_\gamma\widetilde\omega_h(\gamma g)$. Constant term integrals below
are taken in this trivialization, which agrees with the periodic fiber
gauge above after restriction to a face.

For the unfolding, we record congruence conditions prime by prime. Let
$\QQ_p$ be the $p$-adic field, $\ZZ_p$ its integer ring, and
$\mathbb A=\RR\times\prod_p'\QQ_p$ the restricted product with respect to
$\ZZ_p$. Use Lebesgue measure at infinity and $\vol(\ZZ_p)=1$ at finite
primes, so $\QQ\backslash\mathbb A$ has volume one. This notation packages
the congruence coset calculation: rational row classes give global cosets,
and the Chinese remainder theorem records their congruence conditions.

At principal level we retain every determinant component. Restriction and
unipotent averaging preserve each component. After averaging to the
original level, the finite compact determinant contains all local units,
and rational lattice reduction gives one component over $\QQ$.

An \emph{induced section} $\phi$ is the coefficient valued cusp datum
transforming under the plane parabolic by \eqref{ar:eq:seed-transform}.
Its real component is the two-form above, and its finite components record
the congruence cosets.

Average \eqref{ar:eq:Eseries} in the invariant-frame trivialization
and unfold. The elementary row reductions use the matrices
\[
 s_{23}=\begin{pmatrix}1&0&0\\0&0&-1\\0&1&0\end{pmatrix},\qquad
 w_0=\begin{pmatrix}0&0&1\\1&0&0\\0&1&0\end{pmatrix},\qquad
 u(s,t)=\begin{pmatrix}1&s&t\\0&1&0\\0&0&1\end{pmatrix}.
\]
The first matrix interchanges the second and third coordinate lines.
Two planes are either equal or distinct. A line is either contained in
or complementary to the source plane. A full flag has three relative
positions. Gaussian elimination respecting these subspaces gives the
following complete list. In a zero row the indicated target root is
conjugated into the source binary root $E_{12}$, possibly with a minus
sign, which does not affect its average.
\[
\begin{array}{c|c|c}
\text{target}&\text{representative}&\text{contribution or vanishing root}\\ \hline
P&I&\xi\\
P&s_{23}&E_{13}\longmapsto -E_{12}\\
Q&I&E_{12}\longmapsto E_{12}\\
Q&w_0^{-1}&\text{two-root integral}\\
B&I,
\ s_{23},\ w_0^{-1}&E_{12},\ E_{13},\ E_{23}
                           \longmapsto \pm E_{12}
\end{array}
\]
The binary root average is zero by cuspidality. This statement includes
the polynomial coefficient: for a binary period form
$h(\tau)(\tau Y-X)^d\,d\tau$, the periodic gauge on a cusp with
$\tau=u+iv$ replaces the coefficient by $(ivY-X)^d$. Its remaining
$u$-dependence is $h(u+iv)$, whose average over the cusp width is zero.
The gradient and contraction representatives of radical cohomology have
the same property. Thus none of the zero entries discards a polynomial
factor before averaging.

The identity plane cell returns $\xi$, since radical volume is one.
For the complementary line cell, the target radical intersects the
conjugated source parabolic radical trivially, leaving the integral
\begin{equation}\label{ar:eq:intertwiner}
 (T\phi)(g)=\int_{\mathbb A^2}
               \phi(w_0^{-1}u(s,t)g)\,ds\,dt.
\end{equation}
Equivalently this is the product of its real integral and its finite
congruence integrals when the datum is factorizable. The unfolding is
absolutely convergent by Proposition~\ref{ar:prop:convergence}. The
integrated radical quotient is compact, so Fubini applies. The preceding
zero mode argument gives the cohomological identities
\begin{equation}\label{ar:eq:constantterms}
 r_P\cE_P(\xi)=\xi,\qquad r_Q\cE_P(\xi)=T\xi,\qquad
 r_B\cE_P(\xi)=0
\end{equation}
for all $\xi\in\cC_P(E)$. Here $r_B$ denotes restriction at each flag.
These are cohomological restriction identities at finite height.

It remains to exclude a nonmixed face component. For a maximal face stabilizer $R=N_R\rtimes L_R$, the only terms in total
degree two of Hochschild--Serre are $(1,1)$ and $(0,2)$, because
$L_R$ has rational cohomological dimension one. Thus there is an exact sequence
\[
 0\longrightarrow H^1(L_R,H^1(N_R,E))
 \longrightarrow H^2(R,E)
 \xrightarrow{\operatorname{res}_{N_R}} H^2(N_R,E)^{L_R}
 \longrightarrow0.
\]
Choose a flag in this face. Its stabilizer contains $N_R$, so zero
restriction at that flag implies zero restriction to $N_R$. Exactness
puts the face class in the mixed term. The restriction of this term
to a flag is its binary cusp restriction: the analogous mixed edge map
for the binary cusp stabilizer is injective for the same
cohomological dimension reason.

\smallskip\noindent\emph{The dual series and the boundary relation.}
We also need the construction for the dual coefficient system. Let $J$ be
the antidiagonal permutation matrix and define
\[
 \jmath(g)=J(g^{-1})^{\mathsf t}J.
\]
This exchanges $P,Q$ and pulls $E$ back to its dual, after the fixed linear
change of coefficient coordinates by $J$. Apply the preceding construction
to $\jmath(\Gamma)$ and pull back. For each $\eta\in\cC_Q(E^*)$, it gives
a closed global class $\cE_Q^\vee(\eta)$ with
\begin{equation}\label{ar:eq:dualconstantterms}
 r_Q\cE_Q^\vee(\eta)=\eta,\qquad
 r_P\cE_Q^\vee(\eta)=T^\vee\eta,\qquad
 r_B\cE_Q^\vee(\eta)=0.
\end{equation}
The normal height exponent and convergence bound are unchanged.
Here $T^\vee$ is the second constant term map. We assume neither
invertibility nor an adjoint normalization.

\begin{proposition}\label{ar:prop:graph}
Let $n\ge4$. If a global degree two class with coefficients $E$ has only
mixed cuspidal maximal face restrictions $(\xi_P,\xi_Q)$ and zero flag
restrictions, then $\xi_Q=T\xi_P$.
\end{proposition}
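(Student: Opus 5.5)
The plan is to combine the Stokes identity of Lemma~\ref{ar:lem:stokes} with the dual series $\cE_Q^\vee$ and then invoke perfectness of the face pairings \eqref{ar:eq:facepair}. Let $a$ be the given class, with mixed cuspidal face restrictions $(\xi_P,\xi_Q)$ and zero flag restrictions. For an arbitrary $\eta\in\cC_Q(E^*)$, set $b=\cE_Q^\vee(\eta)$. This is a closed global class with coefficients $E^*$, and by \eqref{ar:eq:dualconstantterms} its restrictions are $r_Qb=\eta$, $r_Pb=T^\vee\eta$ and $r_Bb=0$. Before applying Lemma~\ref{ar:lem:stokes}, I first check that $T^\vee\eta$ is mixed cuspidal. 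Flag vanishing gives zero restriction to the plane radical, so the exact Hochschild--Serre sequence at the end of \S\ref{ar:sec:constantterms} puts the class in the mixed term. Injectivity of the binary cusp edge map then turns flag vanishing into vanishing at every binary cusp. Applying Lemma~\ref{ar:lem:stokes} to the pair $(a,b)$ gives
\[
 \langle\xi_P,T^\vee\eta\rangle_P+\langle\xi_Q,\eta\rangle_Q=0 .
\]

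Next I apply the same lemma to the pair $(\cE_P(\xi_P),b)$. The hypotheses hold by \eqref{ar:eq:constantterms}, with the cuspidality of $T\xi_P$ argued as above. This gives
\[
 \langle\xi_P,T^\vee\eta\rangle_P+\langle T\xi_P,\eta\rangle_Q=0 .
\]
Subtracting the two identities yields $\langle\xi_Q-T\xi_P,\eta\rangle_Q=0$ for every $\eta\in\cC_Q(E^*)$. Because the pairing $\langle\ ,\ \rangle_Q$ is perfect, $\xi_Q=T\xi_P$. The hypothesis $n\ge4$ enters only through $k=n+3>6$, which Proposition~\ref{ar:prop:convergence} needs for convergence of both series, since the dual series uses the same height exponent. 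All identities are proved at the neat level $\Gamma$ and then descended by finite averaging, which commutes with restriction and preserves the pairings.

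The main obstacle is justifying \eqref{ar:eq:dualconstantterms}, and the cuspidality of $T\xi_P$ and $T^\vee\eta$, carefully enough for the Stokes lemma to apply. That lemma needs the restrictions of both global classes to vanish on \emph{every} flag stratum, not only at the identity cosets. I would handle this the same way as the unfolding table: each flag double coset conjugates some root into the binary root $E_{12}$, and there the average vanishes by cuspidality. Because the polynomial-coefficient gauge is included, this vanishing holds at the level of cohomology classes at finite height. The transfer along $\jmath$ must also respect the orientation conventions of the face pairings. Any fixed sign that appears, however, enters the two Stokes identities in the same way and cancels in the subtraction, so only the matching of conventions between the two applications needs checking, not an absolute normalization.
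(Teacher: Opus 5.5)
Your proposal is correct and matches the paper's proof. The paper also pairs the given class and $\cE_P(\xi_P)$ in turn with $\cE_Q^\vee(\eta)$ through Lemma~\ref{ar:lem:stokes}, subtracts the two identities, and concludes from the perfect line pairing \eqref{ar:eq:facepair}; your check that $T\xi_P$ and $T^\vee\eta$ are mixed cuspidal is the Hochschild--Serre argument given at the end of Section~\ref{ar:sec:constantterms}.
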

\begin{proof}
Pair the class with $\cE_Q^\vee(\eta)$ in
Lemma~\ref{ar:lem:stokes}, for arbitrary $\eta\in\cC_Q(E^*)$:
\[
 \langle\xi_P,T^\vee\eta\rangle_P+\langle\xi_Q,\eta\rangle_Q=0.
\]
Pair $\cE_P(\xi_P)$ with the same dual class:
\[
 \langle\xi_P,T^\vee\eta\rangle_P+\langle T\xi_P,\eta\rangle_Q=0.
\]
Subtract and use the perfect line pairing \eqref{ar:eq:facepair}.
This proves graph containment without computing the dimension of the
global image.
\end{proof}

\subsection{The real constant term entry}\label{ar:sec:real}
To compute the required real entry, take $h(\tau)=(\tau+i)^{-k}$.
This is a local holomorphic lowest rotation weight vector, not a modular
form. In the disk coordinate $w=(\tau-i)/(\tau+i)$, write a vector as
$(\tau+i)^{-k}F(w)$. The weight $k$ action differentiates to
$\partial_w$ and $w^2\partial_w+kw$, up to fixed nonzero normalizations.
They send $w^m$ to $m w^{m-1}$ and $(m+k)w^{m+1}$. Thus the constant
vector generates all rotation weight vectors
$(\tau-i)^m/(\tau+i)^{k+m}$, with no vanishing raising factor.

The integral is equivariant for the binary Levis identified by the block
permutation. Its value on the lowest vector therefore determines its
values on these one-dimensional rotation weight spaces. Conjugation gives
the antiholomorphic component.

Set
\[
 L=w_0^{-1}u(s,t)=\begin{pmatrix}0&1&0\\0&0&1\\1&s&t\end{pmatrix},
 \qquad r^2=1+s^2+t^2.
\]
For the Gram matrix $LL^{\mathsf t}$, the plane coordinates and transported
coefficient in \eqref{ar:eq:seed} are
\[
 u=(s,t)/r^2,\qquad \tau=\frac{-st+ir}{1+s^2},\qquad
 aX+bY+(i/r)Z,
\]
where
\[
 a=\frac{s-it/r}{1+s^2},\qquad
 b=-\frac{1+ist/r}{1+s^2}.
\]
Let $v_1=E_{12}$, $v_2=E_{13}$ be line-radical tangent vectors and
$x_0=E_{23}$ the horizontal binary tangent. With
$\beta=du_1-\tau du_2$, differentiation gives
\begin{equation}\label{ar:eq:tangententry}
 (\beta\wedge d\tau)(v_2,x_0)=-i/r^3.
\end{equation}
To verify this differentiation, set $G=LL^{\mathsf t}$,
$R=G_{33}$, $v=G_{\{1,2\},3}$, and
$C=G_{\{1,2\},\{1,2\}}-vv^{\mathsf t}/R$. Then the plane coordinates are
\[
 u=v/R,\qquad \Re\tau=C_{12}/C_{22},\qquad
 \Im\tau=\sqrt{\det C}/C_{22}.
\]
For a tangent matrix $V$, substitute
$\dot G=L(V+V^{\mathsf t})L^{\mathsf t}$ into these formulas.
For $V=v_2,x_0$ the resulting derivatives satisfy
$d\tau(x_0)=s\,d\tau(v_2)+1$ and
$\beta(x_0)=s\beta(v_2)+t/r^2$. Substitution gives
\eqref{ar:eq:tangententry}.

This entry detects the mixed cohomology class. For a line-radical closed
pair $(\alpha,\beta_0)$, its polynomial coordinate is $Z\alpha-Y\beta_0$.
Exact pairs have zero contraction. At
$X=Z=0,Y=1$, this coordinate reads the negative $v_2$ component.
To exclude a contribution in radical degree two, act by
$a_v=\operatorname{diag}(e^{2v},e^{-v},e^{-v})$. Under the transported
action $\rho(a_v)^{-1}a_v^*$, a monomial of $X$-degree $j$ in radical
cohomological degree one has character $e^{(3-n+3j)v}$.
The integral \eqref{ar:eq:intertwiner} has character $e^{(6-k)v}$:
the source plane character contributes $e^{-kv}$, while changing its
two radical integration coordinates contributes $e^{6v}$.
Since $6-k=3-n$, the degree-one component has $j=0$. A radical degree two
term would require $6-n+3j=3-n$, or $j=-1$, and is absent.
The binary rotation weight $k=n+3$ is the extreme polynomial weight
$n+1$ plus the differential weight two. Hence the holomorphic mixed
component is one-dimensional for this local datum, and the coefficient
just computed determines its scalar.

Relative to the line seed
$h(\tau)(\tau Z-Y)^n(\tau\,dv_1+dv_2)\wedge d\tau$, where $dv_i$
are the covectors dual to the radical coordinates, the scalar is obtained
by dividing
\begin{equation}\label{ar:eq:realintegral}
 I_n=-i\int_{\RR^2}r^{-3}h(\tau)b^n\,ds\,dt
\end{equation}
by $(-1)^n(2i)^{-k}$, its corresponding value at $\tau=i$.
Use $s=\sqrt{r^2-1}\sin\theta$, $t=\sqrt{r^2-1}\cos\theta$, and write
$c=\cos\theta$, $v=\sin\theta$. Then, we have
\[
 \tau=\frac{irc+v}{c-irv},\qquad
 b=-\frac{c-iv/r}{c-irv},\qquad
 \tau+i=\frac{i(r+1)e^{-i\theta}}{c-irv},\qquad
 r^{-3}ds\,dt=r^{-2}dr\,d\theta.
\]
The angular integral is
\[
 \int_0^{2\pi}e^{ik\theta}(c-irv)^3(c-iv/r)^n\,d\theta
       =\frac{2\pi(r+1)^k}{2^k r^n}.
\]
Indeed $c-iav=((1-a)e^{i\theta}+(1+a)e^{-i\theta})/2$, and
$k=n+3$ means that only the negative frequency from every factor contributes
to the constant term. The remaining integral is
$\int_1^\infty r^{-n-2}\,dr=1/(n+1)$. Therefore the real entry is
\begin{equation}\label{ar:eq:cinfty}
 c_\infty=-\frac{2\pi i}{n+1}\ne0.
\end{equation}
Changing orientation or period normalization rescales this nonzero entry.
The conjugate component has the conjugate scalar. Reflection projection
uses the full twisted coefficient systems, so equivariance preserves
nonvanishing without identifying the two untwisted parities.

\subsection{The finite entries and injectivity}\label{ar:sec:finite}
We prove that bad-plane data map injectively to a good line. The prime $3$
determines support, and lattice counts at the other primes give the Euler
factors. We then combine these with the real entry and reflection
signs.

\smallskip\noindent\emph{The level three entry.}
At the prime $3$, let
\[
 K_1=\{g\in\GL_3(\ZZ_3):e_3^*g=e_3^*\pmod3\}.
\]
There are two $K_1$-orbits on planes and two on lines, according to whether
their reductions are annihilated by $e_3^*$. A plane section supported on
the bad-plane orbit is supported on the double coset $PK_1$. It is
determined by its value $v_0$ at the identity. This value is fixed by the
$\GL_2(\ZZ_3)$ subgroup of $P\cap K_1$. It is therefore zero unless the
binary datum has such a fixed vector. Normalize a nonzero section $\phi$ by
$\phi(1)=v_0$, and put
\[
 g_0=\begin{pmatrix}0&0&1\\0&1&0\\1&0&0\end{pmatrix}.
\]
This represents a good-line cusp. We have
\[
 w_0^{-1}u(s,t)g_0=\begin{pmatrix}0&1&0\\1&0&0\\t&s&1\end{pmatrix}
       \in PK_1\quad\Longleftrightarrow\quad s,t\in3\ZZ_3.
\]
For necessity, its last row must be a scalar multiple of the last row of a $K_1$ matrix. The last entry is one, so the scalar is a unit and the other
two entries must lie in $3\ZZ_3$. Conversely the displayed matrix itself lies
in $K_1$ when that condition holds. The integral is therefore supported on
$(3\ZZ_3)^2$, its integrand is $v_0$, and
\begin{equation}\label{ar:eq:local3}
 (T_3\phi)(g_0)=\frac19v_0.
\end{equation}
Here $T_3$ denotes the local factor at the place $3$ of $T$, not the
rank three Hecke operator $T_3(p,1)$ used in the residue argument.

For even $n$, the central $-I_2$ kills the odd degree bad-plane binary
cohomology. Assume now that $n$ is odd. The matrix $-I_3$ fixes every
rational line and plane and acts on
$E_n$ by $(-1)^{n+3}=1$. Thus the restricted functional and its connecting
class extend to $\langle K^\dagger,-I_3\rangle$, the stabilizer of the
projective level covector. At the bad plane, combining $-I_3$ with the
binary sign $\operatorname{diag}(-I_2,1)$ supplies the independent normal
sign $\operatorname{diag}(I_2,-1)$.

The normal inducing character is therefore trivial on all units at $3$,
not only on $1+3\ZZ_3$. It is unramified elsewhere and trivial at the real
place. Applying it to a positive rational prime then shows that its value
on that prime's uniformizer is one, since every other local value is one.
Thus the character is globally trivial. The binary datum is fixed by the
full finite maximal compact subgroup, so it has level one and even weight
$k=n+3$.

The bad plane and the good line both have full-level binary Levi.
Indeed the stabilizer of the good line $\QQ e_3$ in $K^\dagger$ is
\[
 \left\{\begin{pmatrix}D&0\\3u&1\end{pmatrix}:
             D\in\GL_2(\ZZ),\ u\in\ZZ^{1\times2}\right\},
\]
whereas the bad-plane stabilizer has upper block form with the same
binary group. Their coefficient systems have different determinant
factors, which we retain below.

\smallskip\noindent\emph{The unramified factors as lattice sums.}
We compute each remaining local factor directly from classical Hecke
operators. Fix a normalized eigenform $h$ of full level and weight $k$.
Its finite-spherical datum is the lift fixed by
$\GL_2(\widehat\ZZ)$, where $\widehat\ZZ=\prod_p\ZZ_p$.
On $\GL_2^+(\RR)$ put
\[
 F_h(g)=(\det g)^{k/2}(ci+d)^{-k}h(gi),\qquad
 g=\begin{pmatrix}a&b\\c&d\end{pmatrix}.
\]
For a finite adelic matrix $g_f$, choose $\gamma\in\GL_2^+(\QQ)$ with
$\gamma g_f\in\GL_2(\widehat\ZZ)$ and set
$v_h(g_\infty,g_f)=F_h(\gamma g_\infty)$.
Such a $\gamma$ is a rational basis change for the lattice specified by
$g_f$. Its determinant sign can be changed by an element of $\GL_2(\ZZ)$.
Two positive determinant choices differ by $\SL_2(\ZZ)$, so this is
well defined. Reflections supply the other real component.
Positive scalar matrices act trivially, and the even weight gives the
same assertion for the negative scalar. Write $v_h|A$ for right
translation by $A$ at one finite place. No irreducible local
representation is needed for this notation.

Let $p\ne3$, and for a lattice $\Lambda\subset\ZZ_p^2$ of index $p^r$
choose a matrix $A_\Lambda$ whose columns are a basis of $\Lambda$.
Right $\GL_2(\ZZ_p)$-invariance makes the basis choice immaterial.
The classical Hecke normalization gives
\begin{equation}\label{ar:eq:lattice-hecke}
 \sum_{[\ZZ_p^2:\Lambda]=p^r}v_h|A_\Lambda
       =p^{r(1-k/2)}a_h(p^r)v_h.
\end{equation}
Here is the left/right comparison in this normalization. Column lattices
have representatives $A=\left(\begin{smallmatrix}a&0\\b&d\end{smallmatrix}\right)$,
with $ad=m=p^r$ and $0\le b<d$. At finite coordinate one, right translation
by $A$ changes the real function to $F_h(A^{-1}g)$. For
$S=\left(\begin{smallmatrix}0&-1\\1&0\end{smallmatrix}\right)$, we have
\[
 m A^{-1}=S A^{\mathsf t}S^{-1}.
\]
The scalar $m$ and left multiplication by $S$ do not change $F_h$.
The transposes $A^{\mathsf t}$ run over $\mathcal H_m$, so their sum is
$m^{1-k/2}T_m$, evaluated at $S^{-1}g$. The eigenvalue equation and
$\SL_2(\ZZ)$-invariance remove that last $S^{-1}$. This proves
\eqref{ar:eq:lattice-hecke} at finite coordinate one, and hence everywhere:
a full-level adelic function is determined there by the same rational
lattice reduction used to define $v_h$.

Call $\Lambda$ \emph{cyclic} if $\ZZ_p^2/\Lambda$ is cyclic. A lattice
of index $p^r$ is noncyclic exactly when it is contained in $p\ZZ_p^2$. In
that case it is $p\Lambda'$ with $[\ZZ_p^2:\Lambda']=p^{r-2}$.
Subtracting these lattices in \eqref{ar:eq:lattice-hecke} gives
\begin{equation}\label{ar:eq:cyclic-hecke}
 \sum_{\substack{[\ZZ_p^2:\Lambda]=p^r\\\Lambda\ \mathrm{cyclic}}}
       v_h|A_\Lambda
  =p^{r(1-k/2)}
       \{a_h(p^r)-p^{k-2}a_h(p^{r-2})\}v_h,
\end{equation}
where $a_h(p^{-1})=a_h(p^{-2})=0$.

We now relate these lattices to the integral. In
$L(s,t)=w_0^{-1}u(s,t)$, integral changes of $s,t$ are right
$\GL_3(\ZZ_p)$-operations. For
$\max(|s|_p,|t|_p)=p^r$, $r\ge1$, the primitive pair
$p^r(s,t)$ modulo $p^r$ defines the cyclic lattice
\[
 \Lambda_{s,t}=\{(u,v)\in\ZZ_p^2:su+tv\in\ZZ_p\}.
\]
Each such lattice has exactly $(1-p^{-1})p^r$
primitive defining pairs modulo $p^r$, differing by units. Each pair
corresponds to a coset of $\ZZ_p^2$ in the integration variables and
therefore has measure one.

Choose a basis $A_\Lambda$ with determinant $p^r$. Lift its columns
$(u,v)$ to $(-su-tv,u,v)$ in $\ZZ_p^3$, and complete them by a vector
on which the row $(1,s,t)$ has value $p^{-r}$. Such a vector exists by
B\'ezout's identity. The resulting matrix $k_0$ is integral and has
determinant one, and
\[
 L(s,t)k_0=\begin{pmatrix}A_\Lambda&*\\0&p^{-r}\end{pmatrix}.
\]
The plane transformation law \eqref{ar:eq:seed-transform}, with the
product formula for rational determinants, gives the finite factor
$|\det A_\Lambda|_p^{k/2}=p^{-rk/2}$ times $v_h|A_\Lambda$.
The normal character is trivial. Thus \eqref{ar:eq:cyclic-hecke} shows
that the shell of valuation $r$ contributes
\[
 (1-p^{-1})p^{-r(k-2)}
        \{a_h(p^r)-p^{k-2}a_h(p^{r-2})\}v_h.
\]
The integral over $\ZZ_p^2$ contributes $v_h$.

Put $t_p=p^{-(k-2)}$ and
$H_p(T)=\sum_{r\ge0}a_h(p^r)T^r
       =(1-a_h(p)T+p^{k-1}T^2)^{-1}$.
Summing the shells gives the scalar
\begin{align*}
 c_p
 &=1+(1-p^{-1})\{(1-p^{k-2}t_p^2)H_p(t_p)-1\}\\
 &=\frac{1-a_h(p)t_p/p+p^{k-1}t_p^2/p^2}
         {1-a_h(p)t_p+p^{k-1}t_p^2}
   =\frac{L_p(h,k-2)}{L_p(h,k-1)}.
\end{align*}
Lemma~\ref{lem:outer-values} gives absolute convergence.
The Chinese remainder theorem separates the lattice sums into commuting
prime power Hecke sums, acting on $h$ by $c_p$. We may therefore take their
product. Combining it with \eqref{ar:eq:local3} and the real integral gives
\begin{equation}\label{ar:eq:globalentry}
 -\frac{2\pi i}{9(n+1)}
       \frac{L^{(3)}(h,k-2)}{L^{(3)}(h,k-1)}\ne0.
\end{equation}
The scalar is expressed in the induced section and differential form
normalizations fixed above. Only its nonvanishing is used.

\smallskip\noindent\emph{Reflection signs and injectivity.}
The reflection conventions are important here. In binary degree
$d=n+1$, the plane coefficient is $\det^{-1}\Sym^d((\CC^2)^*)$,
whereas the line coefficient has no binary determinant twist. Hence a
reflection acts with opposite signs on the two coefficient systems if
that twist is suppressed. All maps above retain these determinant factors and are equivariant
on the full coefficient systems.
At the real place the holomorphic and antiholomorphic scalars are
complex conjugates, $-2\pi i/(n+1)$ and $2\pi i/(n+1)$, as required
by the reflection. Consequently equivariance survives the finite
reflection projection.

The level one cusp space has a simultaneous Hecke eigenbasis
\cite[Section~5.8]{DiamondShurman}. The correspondence preserves each
eigenform and acts on its holomorphic and antiholomorphic components by
the nonzero entries above. Distinct eigenvalue systems determine distinct
Fourier expansions, so images of normalized eigenforms cannot cancel.
Reflection projection commutes with the correspondence. Injectivity thus
descends to the arithmetic component required in Proposition~\ref{thm:J-vanishes}.

\section{The reduced norm kernel}\label{mf:app:kernel}

We prove Lemma~\ref{mf:lem:kernel} over $k=\FF_2$. In $S=k[a,b,c]$, the
generators and invariants of Section~\ref{sec:odd-alt} have weights
$1,2,3$ and $4,6,7$, respectively. The point is to obtain a basis valid
in every degree, not only in the finite quotient used below.

\subsection{A projector defined by linear substitutions}

Let $\mathcal W$ be the six permutation matrices and let
\[
 \mathcal U=
 \left\{\begin{pmatrix}1&s&t\\0&1&u\\0&0&1\end{pmatrix}:
                         s,t,u\in k\right\}.
\]
For a symmetric polynomial $g$, define
\[
 \pi g(\vect)=\sum_{P\in\mathcal W}\sum_{U\in\mathcal U}g(UP\vect),
 \qquad \vect=(x,y,z)^{\mathsf t}.
\]
This is a sum of substitutions in characteristic two, not a normalized
average. Its value is symmetric, since permuting $x,y,z$ permutes
$\mathcal W$.

For the remaining projector identities, let $\mathscr H$ be the following
set of substitutions, in the notation of \eqref{mf:eq:Tlist}:
\[
\begin{gathered}
 (y,z,x),\quad(y,z,x+y),\quad(z,y,x+z),\\
 (x,z,y),\quad(x,z,x+y),\quad(x,z,y+z),\\
 (x+y,z,x),\quad(x+y,z,y),\quad(x,y,z),\\
 (x,y,x+z),\quad(x,y,y+z),\quad(y,x+z,x),\\
 (x+z,y,z),\quad(x,y+z,y),\quad(x,y+z,z).
\end{gathered}
\]
Set $\Theta h=\sum_{\lambda\in\mathscr H}h(\lambda)$.
Expansion and cancellation of identical triples of arguments, using
symmetry of $g$, give
\begin{equation}\label{mf:eq:projector}
 \bn\pi g=0,\qquad \pi g=g+\Theta\bn g.
\end{equation}
These operator identities hold in every degree. They give $\pi^2=\pi$
and $\operatorname{im}\pi=\ker\bn$, with $\pi$ acting identically on
that kernel. The product identity \eqref{mf:eq:product} makes $\pi$
linear over $R=k[d_4,d_6,d_7]$.

In the notation of Lemma~\ref{mf:lem:kernel}, direct substitution gives
\begin{equation}\label{mf:eq:projected-exps}
 \bigl(\pi(a^2b),\pi(ab^2),\pi(b^3),\pi(b^2c),
       \pi(ac^2),\pi(bc^2),\pi(c^3),\pi(b^5)\bigr)
       =(K_0,K_1,K_2,K_3,K_4,K_5,K_6,K_7).
\end{equation}
In particular, we have $\pi K_j=K_j$.

\subsection{A basis of the invariant quotient}

Put $I=(d_4,d_6,d_7)\subset S$. We use lexicographic monomial order
$a>b>c$. Polynomial division and Buchberger's criterion
\cite[Chapter~2]{CoxLittleOShea} give the monic Gr\"obner basis
\begin{align*}
 &a^4+ac+b^2,\quad a^3b^2+ac^2+b^2c,\quad
 a^3c+a^2b^2+abc+c^2,\\
 &a^2b^3+ab^2c+b^4+bc^2,\quad a^2bc+ac^2,\quad
 a^2c^2+b^4,\\
 &ab^4+c^3,\quad ab^3c,\quad abc^2+b^3c+c^3,\quad
 ac^3+b^5,\\
 &b^6,\quad b^4c+bc^3,\quad b^2c^2,\quad c^4.
\end{align*}
Thus a basis of $S/I$ consists of the residue classes of the $28$
standard monomials
\[
\begin{split}
 &1,b,b^2,b^3,b^4,b^5,c,bc,b^2c,b^3c,c^2,bc^2,c^3,bc^3;\\
 &a,ab,ab^2,ab^3,ac,abc,ab^2c,ac^2;\\
 &a^2,a^2b,a^2b^2,a^2c;\qquad a^3,a^3b.
\end{split}
\]
Here a standard monomial is one not divisible by the leading monomial
of any element of the displayed Gr\"obner basis.

We need a basis adapted to the projector. Let
\begin{equation}\label{mf:eq:complement-exps}
\begin{split}
 \mathscr M=\{&1,a,b,a^2,c,ab,a^3,b^2,ac,bc,ab^2,a^2c,\\
              &c^2,b^3,a^2b^2,ac^2,bc^2,b^4,b^3c,bc^3\}.
\end{split}
\end{equation}
The residue classes of
\[
 K_0,\ldots,K_7,\qquad (1+\pi)m\quad(m\in\mathscr M)
\]
form another basis of $S/I$. The following normal forms verify the
change of basis over $\FF_2$. Here $\operatorname{NF}_I$ is the remainder on division
by the displayed Gr\"obner basis:
\[
\begin{array}{c|l@{\qquad}c|l}
 j&\operatorname{NF}_I(K_j)&j&\operatorname{NF}_I(K_j)\\ \hline
 0&a^2b+ac&4&ab^3+b^2c\\
 1&a^3b+a^2c&5&ab^2c\\
 2&abc+b^3+c^2&6&c^3\\
 3&b^2c&7&b^5
\end{array}
\]
For $m\in\mathscr M$, the normal form of $(1+\pi)m$ is $m$,
except in the five cases
\[
\begin{array}{c|l}
 m&\operatorname{NF}_I((1+\pi)m)\\ \hline
 ab^2&a^3b+a^2c+ab^2\\
 a^2c&a^3b\\
 b^3&abc+c^2\\
 ac^2&ab^3+ac^2+b^2c\\
 bc^2&ab^2c+bc^2.
\end{array}
\]
Applying the substitutions and reducing by the Gr\"obner basis gives
these formulas. They span each weighted degree of $S/I$. In degree five,
for example, they successively give $a^3b$, $a^2c$, $ab^2$, and $bc$.
Degree six gives $c^2$, $abc$, $b^3$, and $a^2b^2$, and the other degrees
are equally triangular. The $28$ spanning classes therefore form a basis.

\subsection{Lifting the basis to the polynomial ring}

To avoid assuming algebraic independence of the invariants, begin with
the formal polynomial ring $R'=k[u_4,u_6,u_7]$, acting on $S$ by
$u_i\mapsto d_i$. The $28$ homogeneous lifts above generate $S$ over
$R'$: reduce a homogeneous polynomial modulo $I$, subtract the
corresponding linear combination of the lifts, and write the remainder
as $d_4s_4+d_6s_6+d_7s_7$. Each $s_i$ has smaller weighted degree,
so induction applies.

The degrees of the lifts have generating polynomial
\[
 \frac{(1-t^4)(1-t^6)(1-t^7)}{(1-t)(1-t^2)(1-t^3)}.
\]
Consequently the free graded $R'$-module on these lifts has the same
Hilbert series as $S$. The surjection just constructed is therefore
an isomorphism in each degree. In particular, the map $R'\to S$ is
injective, the invariants are algebraically independent, and these
lifts form an $R$-basis of $S$.

The projector is identity on the $K_j$ and zero on $(1+\pi)m$.
Taking its image in this $R$-basis proves
\[
 \ker\bn=\bigoplus_{j=0}^7 RK_j,
\]
as asserted in Lemma~\ref{mf:lem:kernel}.

\section{AxiomProver Lean certificate}\label{sec:AxiomProver}

This paper was written through human--AI collaboration with AxiomProver,
an AI system for mathematical research through formal proof under
development at Axiom Math. The collaboration included literature search,
numerical experimentation, and the proposal and rejection of candidate
strategies before the proofs took shape. The authors set the direction,
made the mathematical judgments, and take full responsibility for the
correctness of the results.

Theorems~\ref{thm:main} and \ref{thm:symmetric-classification} and the
essential argument leading to them are formalized unconditionally in
Lean~4.35.0. The repository contains the formal statements, proofs, and
instructions for checking the formalization:
\begin{center}
\url{https://github.com/AxiomMath/TernaryPoly}
\end{center}
The formalization was developed collaboratively by humans and AxiomProver
and audited by humans and AI. It depends on Lean~4.35.0 and Mathlib, with
the dependency revisions pinned in the repository's Lake configuration.
The proof development is free of admissions and uses only Lean's standard
logical axioms. Mathematical and source attribution is retained in the
source headers.

Comparator checks the two main theorem statements against a separate
challenge containing independently specified definitions. The solution
imports the proved targets. Instructions for running Comparator and its
independent kernel checks are included in the repository.

\section*{Acknowledgements}
\noindent The authors thank Henrik Bachmann and Jan-Willem van Ittersum for correspondence
related to the conjecture, and A.~Raghuram for correspondence related to
Eisenstein cohomology.

\end{document}